\newtheorem{theorem}{Theorem}[section]
\newtheorem{lemma}[theorem]{Lemma}
\newtheorem{definition}[theorem]{Definition}
\newtheorem{proposition}[theorem]{Proposition}
\newtheorem{corollary}[theorem]{Corollary}
\newtheorem{remark}[theorem]{Remark}
\newtheorem{example}[theorem]{Example}
\newtheorem{question}[theorem]{Question}
\numberwithin{equation}{section}
\newcommand{\bR}{\mathbf{R}}
\newcommand{\bL}{\mathbf{L}}
\newcommand{\Tot}{\operatorname{Tot}}
\newcommand{\im}{\operatorname{im}}
\newcommand{\colim}{\operatorname{colim}}
\newcommand{\Sym}{\operatorname{Sym}}
\newcommand{\Spec}{\operatorname{Spec}}
\newcommand{\id}{\operatorname{id}}
\newcommand{\dg}{{\operatorname{dg}}}
\newcommand{\Proj}{\operatorname{Proj}}
\DeclareMathAlphabet{\mathpzc}{OT1}{pzc}{m}{it}
\newcommand{\matfak}[4]{{\xymatrix@C3ex{{{#1}} \ar@<0.3ex>[r]^-{{#2}} & {{#3}} \ar@<0.3ex>[l]^-{{#4}}}}}
\newcommand{\op}{\operatorname}
\newcommand{\ra}{\rightarrow}
\newcommand{\la}{\leftarrow}
\newcommand{\sra}{\twoheadrightarrow}
\newcommand{\hra}{\hookrightarrow}
\newcommand{\xra}[1]{\xrightarrow{#1}}
\newcommand{\xla}[1]{\xleftarrow{#1}}
\newcommand{\sira}{\xra{\sim}}
\newcommand{\xsira}[1]{\xrightarrow[\sim]{#1}}
\newcommand{\sila}{\overset{\sim}{\leftarrow}}
\newcommand{\xsra}[1]{\overset{#1}{\twoheadrightarrow}}
\newcommand{\xhra}[1]{\overset{#1}{\hookrightarrow}}
\newcommand{\mar}{\ar@{|->}}
\newcommand{\sar}{\ar@{->>}}
\newcommand{\iar}{\ar@{^{(}->}}
\newcommand{\gar}{\ar@{=}}
\newcommand{\gleichar}{\ar@{}|{=}}
\newcommand{\congar}{\ar@{}|{\cong}}
\newcommand{\Bl}[1]{{\mathbb{#1}}}
\newcommand{\DZ}{\Bl{Z}}
\newcommand{\DN}{\Bl{N}}
\newcommand{\DA}{{\Bl{A}}}
\newcommand{\DP}{{\Bl{P}}}
\newcommand{\DL}{\Bl{L}}
\newcommand{\Hom}{\op{Hom}}
\newcommand{\Mor}{\op{Mor}}
\newcommand{\Sh}{{\op{Sh}}}
\newcommand{\Qcoh}{{\op{Qcoh}}}
\newcommand{\Coh}{{\op{Coh}}}
\newcommand{\mfPerf}{{{\mathfrak P}{\mathfrak e}{\mathfrak r}{\mathfrak f}}}
\newcommand{\Sg}{{\op{Sg}}}
\newcommand{\MF}{\op{MF}}
\newcommand{\InjSh}{{\op{InjSh}}}
\newcommand{\InjQcoh}{{\op{InjQcoh}}}
\newcommand{\Locfree}{\op{Locfree}}
\newcommand{\FlatQcoh}{\op{FlatQcoh}}
\newcommand{\Acycl}{\op{Acycl}}
\newcommand{\Ex}{\op{Ex}}
\newcommand{\Cechmor}{{\mathrm{\check{C}mor}}}
\newcommand{\Cechobj}{{\mathrm{\check{C}obj}}}
\newcommand{\alt}{{\op{alt}}}
\newcommand{\ord}{{\op{ord}}}
\newcommand{\DSh}{\op{DSh}}
\newcommand{\DQcoh}{\op{DQcoh}}
\newcommand{\DCoh}{\op{DCoh}}
\newcommand{\bfMF}{\mathbf{MF}}
\newcommand{\DLocfree}{\op{DLocfree}}
\newcommand{\DFlatQcoh}{\op{DFlatQcoh}}
\newcommand{\AcyclMF}{\op{AcyclMF}}
\newcommand{\AcyclCoh}{\op{AcyclCoh}}
\newcommand{\AcyclQcoh}{\op{AcyclQcoh}}
\newcommand{\co}{{\op{co}}}
\newcommand{\sheafHom}{{\mkern3mu\mathcal{H}{om}\mkern3mu}}
\newcommand{\End}{\op{End}}
\newcommand{\pr}{\op{pr}}
\newcommand{\ol}[1]{{\overline{#1}}}
\newcommand{\ul}[1]{{\underline{#1}}}
\newcommand{\charakteristik}{\op{char}}
\newcommand{\Kern}{\op{ker}}
\newcommand{\Bild}{\op{im}}
\newcommand{\coker}{\op{cok}}
\newcommand{\cokern}{\op{cok}}
\newcommand{\cek}{\vee}
\newcommand{\inv}{^{-1}}
\newcommand{\can}{\op{can}}
\newcommand{\primp}{\mathfrak{p}}
\newcommand{\transposed}{{\operatorname{t}}}
\newcommand{\tp}{{\transposed}}
\newcommand{\sat}{\op{sat}}
\newcommand{\tzmat}[4]{{\left[\begin{smallmatrix} {#1} & {#2} \\ {#3} & {#4} \end{smallmatrix}\right]}}
\newcommand{\tildew}[1]{\widetilde{#1}}
\newcommand{\Var}{{\op{Var}}}
\newcommand{\comp}{\circ}
\newcommand{\opp}{{\op{op}}}
\newcommand{\gr}{\op{gr}}
\newcommand{\free}{\op{free}}
\newcommand{\mathovalbox}[1]{{\text{\ovalbox{${#1}$}}}}
\newcommand{\define}[1]{{\textbf{#1}}}
\numberwithin{equation}{section}
\newcommand{\zvek}[2]{{\left[\begin{smallmatrix} {#1} & {#2} \end{smallmatrix}\right]}}
\newcommand{\svek}[2]{{\left[\begin{smallmatrix} {#1} \\ {#2} \end{smallmatrix}\right]}}
\newcommand{\Cone}{\op{Cone}}
\renewcommand{\epsilon}{{\varepsilon}}
\newcommand{\strict}{\op{strict}}
\newcommand{\tria}{\op{tria}}
\newcommand{\thick}{\op{thick}}
\newcommand{\anticomm}{\circleddash}
\author{Valery A.~Lunts \and Olaf M.~Schn{\"u}rer}
\address{
  Department of Mathematics\\
  Indiana University\\
  Rawles Hall\\
  831 East 3rd Street\\
  Bloomington, IN 47405\\
  USA
}
\email{vlunts@indiana.edu} 
\email{oschnure@indiana.edu, olaf.schnuerer@math.uni-bonn.de}
\title[Matrix factorizations and semi-orthogonal
decompositions]{Matrix factorizations and semi-orthogonal
  decompositions for blowing-ups} 
\begin{document}

\begin{abstract}
  We study categories of matrix factorizations. These categories
  are defined for any regular function on a suitable regular
  scheme. Our paper has two parts.
  In the first part we develop the foundations; 
  for example we discuss derived direct and inverse image
  functors  
  and dg enhancements.
  In the second part we prove that the category of matrix
  factorizations on the 
  blowing-up of a suitable regular scheme $X$ along a
  regular closed 
  subscheme $Y$ has a semi-orthogonal decomposition into
  admissible subcategories in terms of
  matrix factorizations on $Y$ and $X.$
  This is the analog of a well-known theorem for bounded derived
  categories of coherent sheaves, and is an essential step in our
  forthcoming article 
  \cite{valery-olaf-matfak-motmeas-in-prep} which
  defines a Landau-Ginzburg motivic measure using
  categories of matrix factorizations.
  Finally we explain some applications.
\end{abstract}

\maketitle
\tableofcontents

\section{Introduction}
\label{sec:introduction}

Let $X$ be a 
separated regular Noetherian scheme of finite
Krull dimension over a field $k,$ 
for example a regular quasi-projective scheme over $k.$
Let $W \in
\Gamma(X, \mathcal{O}_X)$ be a regular function on $X.$
A matrix factorization $E$ of $W$ is a diagram
\begin{equation*}
  E =  (\matfak{E_1}{e_1}{E_0}{e_0})
\end{equation*}
of locally free sheaves of finite type (= vector bundles) 
on $X$ such that $e_0e_1=W \id_{E_1}$ and $e_1e_0=W \id_{E_0}.$
These diagrams are the objects of a differential $\DZ_2$-graded
category. Its homotopy category is a triangulated category, and
the category $\bfMF(X,W)$ of matrix factorizations of $W$ is
defined as a certain Verdier quotient of this triangulated
category, see \cite{orlov-mf-nonaffine-lg-models}.

Let $\pi \colon \tildew{X} \ra X$ be the
blowing-up of $X$ along 
a regular equi-codimensional closed
subscheme $Y.$
Consider the pullback diagram
\begin{equation*}
  \xymatrix{
    {E} \ar[r]^-{j} \ar[d]^-{p} &
    {\tildew{X}} \ar[d]^-{\pi} \\
    {Y} \ar[r]^-i &
    {X.}
  }
\end{equation*}

The usual construction of the blowing-up
endows $\tildew{X}$ with a line bundle
$\mathcal{O}_{\tildew{X}}(1).$ 
We denote its restriction to $E$ by
$\mathcal{O}_E(1).$
We denote the pullback functions of $W$ to $Y,$ $\tildew{X}$ and
$E$ by the same symbol. Then $\pi$ and $p$ induce (left derived)
inverse image 
functors $\pi^* \colon  \bfMF(X, W) \ra \bfMF(\tildew{X}, W)$
and $p^* \colon  \bfMF(Y, W) \ra \bfMF(E, W).$ Similarly, $j$ gives rise
to a (right derived) direct image functor 
$j_* \colon  \bfMF(E, W) \ra \bfMF(\tildew{X}, W)$ (strictly speaking this
functor does not land in $\bfMF(\tildew{X},W)$ but in an
equivalent bigger category). 
Now we can state our main theorem.
It is the analog of a well-known result for bounded derived
categories of coherent sheaves.

\begin{theorem}
  [see Theorem~\ref{t:semi-orthog-2-mf}]
  \label{t:semi-orthog-2-mf-intro}
  Assume that the codimension $r$ of $Y$ in $X$ is $\geq 2,$ and
  let $l \in \DZ.$ Then
  the functors
  $\pi^*  \colon \bfMF(X,W)\ra \bfMF(\tildew{X},W)$ 
  and 
  \begin{equation*}
    j_*(\mathcal{O}_E (l)\otimes p^*(-)) \colon 
    \bfMF(Y,W) \ra \bfMF(\tildew{X}, W)  
  \end{equation*}
  are full and faithful.
  Their essential images 
  $\pi^*\bfMF(X,W)$ and
  $\bfMF(Y,W)_{l}$
  in $\bfMF(\tildew{X}, W)$ are admissible
  subcategories, and we have a semi-orthogonal decomposition
  \begin{equation*}
    \bfMF(\tildew{X},W)=
    \Big\langle \bfMF(Y,W)_{-r+1}, \dots,\bfMF(Y,W)_{-1}, \pi^*
    \bfMF(X,W)\Big\rangle.
  \end{equation*}
\end{theorem}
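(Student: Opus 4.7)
The plan is to adapt Orlov's classical semi-orthogonal decomposition for bounded derived categories of coherent sheaves on a blowing-up to the matrix factorization setting, making systematic use of the derived pullback, derived pushforward, projection formula, base change, and adjunctions developed in Part 1 of the paper. The geometric inputs are standard: the exceptional divisor $E = \pi\inv(Y)$ is a projective bundle over $Y$ of relative dimension $r-1$, and $\mathcal{O}_E(1)$ is the restriction of the tautological bundle $\mathcal{O}_{\tildew{X}}(1)$; the key vanishings are $R\pi_* \mathcal{O}_{\tildew{X}} = \mathcal{O}_X$, $Rp_* \mathcal{O}_E = \mathcal{O}_Y$, and $Rp_* \mathcal{O}_E(l) = 0$ for $l \in \{-r+1, \dots, -1\}$; finally, the conormal bundle of $E$ in $\tildew{X}$ is $\mathcal{O}_E(1)$. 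Via the framework of Part 1 these facts transfer directly to $\bfMF$.

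Fully faithfulness of $\pi^*$ follows from the projection formula and $R\pi_* \mathcal{O}_{\tildew{X}} = \mathcal{O}_X$, which give $R\pi_* \pi^* G \cong G$ and hence make the unit of the $(\pi^*, R\pi_*)$-adjunction an isomorphism. Fully faithfulness of $\Phi_l := j_*(\mathcal{O}_E(l) \otimes p^*(-))$ is analogous: combine the adjunction $(j_*, j^!)$ with base change $Li^* R\pi_* \cong Rp_* Lj^*$ applied to the pullback square and the projection formula on $E$, then invoke $Rp_*\mathcal{O}_E = \mathcal{O}_Y$.

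Semi-orthogonality is a sequence of $\Hom$-vanishings. For $G \in \bfMF(X, W)$, $F \in \bfMF(Y, W)$, and $l$ in the admissible range, base change and the projection formula reduce $\Hom(\pi^* G, \Phi_l(F))$ to $\Hom_{\bfMF(Y,W)}(i^* G, F \otimes Rp_* \mathcal{O}_E(l)) = 0$. For $l_2 > l_1$ both in $\{-r+1,\dots,-1\}$, the $\Hom$ from $\Phi_{l_2}$ to $\Phi_{l_1}$ reduces, via the adjunction $(j_*, j^!)$ and the Koszul-type formula for $j^! j_*$ (involving the conormal bundle $\mathcal{O}_E(1)$ and a single shift for this codimension-one inclusion), to vanishings of $Rp_* \mathcal{O}_E(m)$ with $m \in \{l_1 - l_2, l_1 - l_2 + 1\}$, whose exponents again lie in the forbidden range for $Rp_*$.

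The main obstacle is generation: showing that the listed admissible subcategories together triangulate-generate $\bfMF(\tildew{X}, W)$. The standard tactic is to show that the right orthogonal of $\langle \bfMF(Y,W)_{-r+1}, \dots, \bfMF(Y,W)_{-1}, \pi^* \bfMF(X,W)\rangle$ inside $\bfMF(\tildew{X}, W)$ is zero. An object $F$ in this orthogonal satisfies $R\pi_* F = 0$ (which, since $\pi$ is an isomorphism away from $E$ and via an iteration argument using $\mathcal{O}(-nE)$-twists, forces $F$ to be supported in a suitable derived sense on $E$) and $Rp_*(\mathcal{O}_E(-l) \otimes j^! F) = 0$ for $l \in \{-r+1, \dots, -1\}$, which via the standard analysis of $Rp_*$ on the projective bundle $E \to Y$ forces $Lj^* F = 0$; combining the two yields $F = 0$. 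Faithfully translating the ``supported on $E$'' step into the Verdier quotient defining $\bfMF(\tildew{X}, W)$---in particular handling the iterated $\mathcal{O}(-nE)$-twist argument familiar from the bounded-derived-category setting, which does not apply verbatim to $\DZ_2$-graded matrix factorizations---will be the principal technical hurdle, and should rely on the dg enhancements and functorial formalism from Part 1.
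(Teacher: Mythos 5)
Your treatment of full faithfulness and of semi-orthogonality is essentially sound (the paper deduces semi-orthogonality from the coherent-sheaf statement via Lemma~\ref{l:orthog-mf} rather than redoing the $\bR p_*\mathcal{O}_E(m)$ computations, but your direct route works). There are, however, two genuine gaps. First, you repeatedly invoke a right adjoint $j^!$ of $j_*$, and admissibility of $\bfMF(Y,W)_l$ (which the statement asserts) depends on having both adjoints of $j_*$. In the $\DZ_2$-graded setting this adjoint is not off-the-shelf: the paper manufactures it from the isomorphism $j_*\comp D_E \cong [1](1)\, D_{\tildew{X}}\comp j_*$ of Proposition~\ref{p:j-ls-and-duality-MF-setting}, whose proof is a substantial explicit local computation with two-term resolutions of $j_*M$. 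Your proposal neither constructs $j^!$ nor addresses admissibility, so this step is missing rather than merely compressed.

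Second, and more seriously, your generation argument is off by one. If $F$ is orthogonal to all the listed pieces, you obtain $\bR\pi_*F=0$ together with $\bR p_*(\mathcal{O}_E(m)\otimes j^!F)=0$ for $m=1,\dots,r-1$ only --- that is, $j^!F$ is orthogonal to $r-1$ of the $r$ semi-orthogonal components of $\bfMF(E,W)$ from Theorem~\ref{t:semi-orthog-1-mf}, which does \emph{not} force $j^!F=0$ on the $\DP^{r-1}$-bundle $E$. (The ``supported on $E$'' step you single out as the main hurdle is in fact handled routinely by Proposition~\ref{p:support-mf}.) The missing ingredient is precisely Corollary~\ref{c:kuznetsov-method}: one must show that one further twisted copy of $\bfMF(Y,W)$ already lies in the triangulated envelope of the listed subcategories, which the paper does locally by resolving $i_*M$ by matrix factorizations, comparing $\pi^*$ of the Koszul resolution with the acyclic Koszul complex on $\tildew{X}$, and identifying the cohomologies of $\bL\pi^*(i_*M)$ with $j_*(\Omega^s_{E/Y}(s)\otimes p^*M)$. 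Only after this extra generation statement does the orthogonality count close (Proposition~\ref{p:left-orthog-of-pi-0-bis-r-1} deliberately works with $r$ twists), and a separate local-to-global argument is then needed because the Koszul computation is only valid on suitable affine charts. Without an argument supplying this extra twist, the proposed proof of completeness fails.
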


This result is proved in the second part
(section~\ref{sec:semi-orth-decomp}) 
of this article. As a predecessor we prove 
Theorem~\ref{t:semi-orthog-1-mf} which provides semi-orthogonal
decompositions for projective space bundles.
We also discuss some applications.

In the first part 
(section~\ref{sec:categories-curved-dg})
we discuss general results on
categories of matrix factorizations.
Certainly categories of global matrix factorizations have been
around 
for a while 
\cite{lin-pomerleano, orlov-mf-nonaffine-lg-models}
but there is no systematic treatment of the
general theory, with the exception of
\cite{positselski-coh-analogues-matrix-fact-sing-cats, positselski-two-kinds}
which contains many of our results
(usually in a more general context).
Here is an outline of the main results.
First we define triangulated categories
$\DCoh(X,W)$ and $\DQcoh(X,W)$
in essentially the same way as $\bfMF(X,W)$
by using coherent (resp.\ quasi-coherent) sheaves instead of
vector bundles.
There are natural functors 
\begin{equation*}
  \bfMF(X,W) \ra \DCoh(X,W) \ra \DQcoh(X,W).
\end{equation*}
We show that the first functor is an equivalence and the second
one is 
full and faithful (see Theorem~\ref{t:equivalences-curved-categories}).

Assume that $Y$ is another separated regular Noetherian scheme of 
finite Krull dimension over $k.$
Let $\pi \colon  Y \ra X$ be a morphism of schemes over $k.$
The usual direct and inverse image functors
$\pi_*$ and $\pi^*$ between
categories of quasi-coherent sheaves
give rise to functors 
$\bR \pi_* \colon  \DQcoh(Y,W) \ra \DQcoh(X,W)$ and 
$\bL \pi^* \colon  \DQcoh(X,W) \ra \DQcoh(Y,W).$ 
This is deduced from the general theory of
derived functors. Moreover, 
there is an adjunction $(\bL \pi^*, \bR \pi_*)$
(see Theorem~\ref{t:derived-inverse-and-direct-image}).
Similarly, we define functors 
$\bR\sheafHom(-,-)$ and $(- \otimes^\bL -).$

We then describe 
several (differential $\DZ_2$-graded) enhancements of $\bfMF(X,W)$
(and 
$\DQcoh(X,W)$) and show that they are equivalent
(see section~\ref{sec:enhancements}).
They are constructed using injective quasi-coherent sheaves,
Drinfeld dg quotient categories, and \v{C}ech resolutions,
respectively. 
Finally we show 
that the subcategory of compact objects in
$\DQcoh(X,W)$ is the Karoubi envelope of $\bfMF(X,W),$  
and that $\bfMF(X,W)$ has a classical generator
(see section~\ref{sec:compact-generators}).

In two appendices
we collect some results on admissible subcategories and
semi-orthogonal decompositions 
(appendix~\ref{sec:app:remind-admiss-subc})
and on embeddings of Verdier
quotients (appendix~\ref{sec:app:embedd-verd-quot}).

This article is part of our project to construct
motivic measures  
using categories of matrix factorizations.
We sketch our main results. They will appear in forthcoming
articles.

We now assume that $k$ is algebraically closed and of
characteristic zero.  
Denote
by $K_0(\Var_{\DA^1})$ the motivic Grothendieck group
of varieties over $\DA^1:=\DA^1_k.$ 
Given $W \colon  X \ra \DA^1$ and $V \colon Y \ra \DA^1$ we define
$W * V \colon  X \times Y \ra \DA^1$ by $(W*V)(x,y)=W(x)+V(y).$ This
operation turns $K_0(\Var_{\DA^1})$ into a commutative ring.
By a Landau-Ginzburg motivic measure we mean a morphism of
rings from 
$K_0(\Var_{\DA^1})$ to some other ring.

Given a smooth 
variety $X$ and $W \colon  X \ra \DA^1$
we define the 
category 
of singularities of $W$ as
\begin{equation*}
  % \label{eq:D-Sg-W}
  \bfMF(W):=\prod_{a \in k} \textbf{MF}(X, W-a).
\end{equation*}
Only finitely many factors of this product are non-zero, and
$\bfMF(W)$ vanishes if and only if $W$ is a smooth morphism.
Let $\bfMF(W)^{\dg,\natural}$ be a suitable enhancement of 
the Karoubi envelope of $\bfMF(W).$
If $W$ is a proper morphism, $\bfMF(W)^{\dg,\natural}$ is a
saturated dg (= differential $\DZ_2$-graded) category.

We denote by $K_0(\sat_k^{\DZ_2})$ the
free abelian group generated by the
quasi-equivalence classes of saturated dg (= differential
$\DZ_2$-graded) categories with
relations coming from semi-orthogonal decompositions into
admissible subcategories on the level of homotopy categories.
The tensor product of dg categories induces a ring structure on
$K_0(\sat_k^{\DZ_2}).$ One may think of $K_0(\sat_k^{\DZ_2})$ as a Grothendieck
ring of suitable pretriangulated dg categories.
Here is the main result of
the forthcoming article 
\cite{valery-olaf-matfak-motmeas-in-prep}. 

\begin{theorem}
  \label{t:category-of-singularities-induces-ring-morphism-intro}
  There is a unique morphism
  \begin{equation}
    \label{eq:MF-mot-meas}
    \mu \colon  K_0(\Var_{\DA^1})\ra K_0(\sat_k^{\DZ_2})
  \end{equation}
  of rings (= a Landau-Ginzburg motivic measure) that maps
  $[X,W]$ to 
  the class of $\bfMF(W)^{\dg,\natural}$ whenever $X$ is a smooth
  variety and
  $W \colon  X \ra \DA^1$ is a proper morphism. 

  In particular, $\mu$ is a morphism of abelian groups and maps 
  $[X,W]$ to 
  the class of $\bfMF(W)^{\dg,\natural}$ whenever $X$ is a smooth
  (connected) variety and
  $W \colon  X \ra \DA^1$ is a projective morphism. These two properties
  determine $\mu$ uniquely.
  % There is a unique morphism
  % \begin{equation}
  %   \label{eq:MF-mot-meas}
  %   K_0(\Var_{\DA^1})\ra K_0(\sat_k^{\DZ_2})
  % \end{equation}
  % of rings (= a Landau-Ginzburg motivic measure) that maps
  % $[X,W]$ to 
  % the class of $\bfMF(W)^{\dg,\natural}$ whenever $X$ is a smooth
  % variety and
  % $W: X \ra \DA^1$ is a proper morphism. This remains true if we
  % restrict to projective morphisms.
\end{theorem}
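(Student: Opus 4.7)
The plan is to invoke a relative version of Bittner's presentation for $K_0(\Var_{\DA^1})$ and to verify the defining relations by means of the semi-orthogonal decompositions established in Theorems~\ref{t:semi-orthog-1-mf} and~\ref{t:semi-orthog-2-mf-intro}, followed by multiplicativity via a Thom--Sebastiani type equivalence. Since $k$ is algebraically closed of characteristic zero, one extends Bittner's theorem to the relative setting over $\DA^1$: the group $K_0(\Var_{\DA^1})$ is generated by classes $[X, W]$ with $X$ smooth connected projective over $k$ and $W \colon X \to \DA^1$ any (hence automatically projective) morphism, subject to the blow-up relations
\begin{equation*}
[\tildew{X}, W \circ \pi] - [E, W|_E] = [X, W] - [Y, W|_Y]
\end{equation*}
for every smooth closed subvariety $Y \subset X$ with blow-up $\pi \colon \tildew{X} \to X$ and exceptional divisor $E.$ Bittner's argument, which relies on weak factorization and scissor relations, carries over verbatim in the relative setting.

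Having this presentation, I would define $\tildew{\mu}$ on the free abelian group on isomorphism classes of pairs $(X, W)$ with $X$ smooth projective by $\tildew{\mu}([X, W]) := [\bfMF(W)^{\dg,\natural}];$ this is well-defined in $K_0(\sat_k^{\DZ_2})$ since properness of $W$ ensures that $\bfMF(W)^{\dg,\natural}$ is saturated. The decisive step is to verify the blow-up relation. For each $a \in k$ (only finitely many contribute non-trivially) I apply Theorem~\ref{t:semi-orthog-2-mf-intro} to the shifted potential $W - a,$ lift the decomposition to the enhancements $\bfMF(-)^{\dg,\natural}$ of section~\ref{sec:enhancements}, and assemble over $a$ to obtain
\begin{equation*}
[\bfMF(W \circ \pi)^{\dg,\natural}] = (r - 1) \cdot [\bfMF(W|_Y)^{\dg,\natural}] + [\bfMF(W)^{\dg,\natural}]
\end{equation*}
in $K_0(\sat_k^{\DZ_2}),$ where $r = \codim_X Y \geq 2.$ Analogously, Theorem~\ref{t:semi-orthog-1-mf} applied to the projective bundle $E \to Y$ of relative dimension $r - 1$ yields
\begin{equation*}
[\bfMF(W|_E)^{\dg,\natural}] = r \cdot [\bfMF(W|_Y)^{\dg,\natural}].
\end{equation*}
Subtracting these identities gives precisely the blow-up relation, so $\tildew{\mu}$ descends to a group morphism $\mu \colon K_0(\Var_{\DA^1}) \to K_0(\sat_k^{\DZ_2}).$ The case $r = 1$ is vacuous since then $\tildew{X} = X$ and $Y = E.$

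The remaining step is ring compatibility. I would establish a Thom--Sebastiani equivalence
\begin{equation*}
\bfMF(V * W)^{\dg,\natural} \simeq \bfMF(V)^{\dg,\natural} \otimes \bfMF(W)^{\dg,\natural}
\end{equation*}
of saturated dg categories for smooth projective $(X, V)$ and $(Y, W)$ over $\DA^1,$ using external tensor products of matrix factorizations. Together with additivity this shows that $\mu$ is a ring morphism sending $[\Spec k, 0]$ to the unit. Uniqueness of $\mu$ subject to the stated properties is then immediate, since the Bittner generators exhaust $K_0(\Var_{\DA^1})$ as an abelian group; the in-particular clause follows since smooth projective pairs are in particular smooth proper, and every smooth proper morphism reduces to the projective case via Chow's lemma combined with the blow-up relation already verified.

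The principal obstacle will be the careful passage from the triangulated semi-orthogonal decompositions of Theorems~\ref{t:semi-orthog-1-mf} and~\ref{t:semi-orthog-2-mf-intro} to identities in $K_0(\sat_k^{\DZ_2}),$ since the latter group is defined via admissible decompositions of pretriangulated dg categories. One must verify that the dg enhancements constructed in section~\ref{sec:enhancements}, together with the derived pullback $\bL \pi^*$ and pushforward $\bR j_*$ of section~\ref{sec:categories-curved-dg}, lift the decomposition to an admissible semi-orthogonal decomposition at the saturated dg level, and that this passes through the Karoubi completion $(-)^\natural$ and through the product over $a \in k.$ A secondary substantial difficulty is the Thom--Sebastiani identification at the saturated dg level, which must be proved compatibly with Karoubi envelopes so that it yields the required tensor-product relation in $K_0(\sat_k^{\DZ_2}).$
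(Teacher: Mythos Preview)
Your proposal matches the paper's own outline essentially point for point: Bittner's relative presentation, verification of the blow-up relation via the semi-orthogonal decompositions of Theorems~\ref{t:semi-orthog-1-mf} and~\ref{t:semi-orthog-2-mf-intro}, and multiplicativity via a Thom--Sebastiani equivalence. However, you should be aware that this theorem is \emph{not} proved in the present paper. It is announced in the introduction as the main result of the forthcoming companion article \cite{valery-olaf-matfak-motmeas-in-prep}; the paragraph following the theorem statement explicitly says that saturatedness of $\bfMF(W)^{\dg,\natural}$ for proper $W$, multiplicativity of $\mu$, and even the precise definition of $K_0(\sat_k^{\DZ_2})$ are all deferred to that article. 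What the present paper contributes are the semi-orthogonal decompositions and their dg lifts (Corollaries~\ref{c:semi-orthog-1-mf-dg-level} and~\ref{c:semi-orthog-2-mf-dg-level}), which address precisely the first of the two obstacles you flag in your final paragraph.

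So your plan is correct and coincides with the authors' stated strategy, and your identification of the two ``principal obstacles'' (dg-level admissibility compatible with Karoubi completion, and the Thom--Sebastiani equivalence at the saturated dg level) is accurate---these are exactly the pieces the authors postpone. There is nothing to compare against beyond that sketch: the paper itself does not carry out the saturatedness or multiplicativity arguments, and neither does your proposal in detail.
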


Since $K_0(\Var_{\DA^1})$ has a presentation whose relations come from
suitable blowing-ups (see
\cite[Thm.~5.1]{bittner-euler-characteristic}),
Theorem~\ref{t:semi-orthog-2-mf-intro}
and its predecessor Theorem~\ref{t:semi-orthog-1-mf}   
essentially imply that there is a unique morphism 
$\mu \colon  K_0(\Var_{\DA^1}) \ra K_0(\sat_k^{\DZ_2})$
of abelian
groups 
sending $[X,W]$ to the class of $\bfMF(W)^{\dg,\natural}$ if $X$
is a smooth variety
and $W$ is a proper morphism.
% as in 
% Theorem~\ref{t:category-of-singularities-induces-ring-morphism-intro}. 
Here we implicitly use the fact mentioned above that
$\bfMF(W)^{\dg,\natural}$ is a saturated dg category for proper
$W.$ This fact and multiplicativity of $\mu$ is established in 
\cite{valery-olaf-matfak-motmeas-in-prep}.
% (and also the fact that it is well-defined). 
We also give a careful definition of $K_0(\sat_k^{\DZ_2})$ there.  

Theorem~\ref{t:category-of-singularities-induces-ring-morphism-intro}
above was motivated by 
and is a relative version of 
a result by 
A.~Bondal, M.~Larsen and the first author
(see \cite[8.2]{bondal-larsen-lunts-grothendieck-ring}): 
they construct a morphism of rings 
\begin{equation*}
  K_0(\Var_k) \ra K_0(\sat_k^\DZ)  
\end{equation*}
(= a motivic measure)
that maps the class of a smooth projective variety $X$ over $k$
to the 
class of the standard enhancement of $D^b(\Coh(X))$ by bounded
below complexes of injective sheaves with bounded coherent
cohomologies; here $K_0(\Var_k)$ is the Grothendieck group of
varieties over $k,$ and 
$K_0(\sat_k^\DZ)$ is defined similarly as 
$K_0(\sat_k^{\DZ_2})$ starting from saturated differential
$\DZ$-graded categories.

In the article \cite{olaf-folding-derived-categories-in-prep} we show that the above two motivic measures are connected by a
commutative diagram  
\begin{equation*}
  \xymatrix{
    {K_0(\Var_k)} \ar[r] \ar[d]
    & {K_0(\sat_k^{\DZ})} \ar[d] \\
    {K_0(\Var_{\DA^1})} \ar[r]^-{\mu} 
    & {K_0(\sat_k^{\DZ_2})} 
  }
\end{equation*}
of ring morphisms where the vertical morphism on the left maps
$[X]$ to $[X,0]$ and the vertical morphism on the right is
induced by folding a differential $\DZ$-graded category into a
differential $\DZ_2$-graded category (and taking its triangulated
envelope).
The upper (resp.\ lower) horizontal arrow maps 
$\DL_k:=[\DA^1]$ (resp.\ $\DL_{\DA^1}:=[\DA^1,0]$) to 1.

In the article \cite{valery-olaf-fiber-measure-in-prep}
we prove that the motivic vanishing fiber map 
\begin{equation*}
  \phi \colon  K_0(\Var_{\DA^1}) \ra \mathcal{M}^{\hat{\mu}}_k
\end{equation*}
to the equivariant Grothendieck ring 
$\mathcal{M}^{\hat{\mu}}_k$ is also a Landau-Ginzburg motivic
measure (here 
$\hat{\mu}$ is the projective limit of the group schemes $\mu_n$
of $n$-th roots of unity). We show
that it is related to the above measure
\eqref{eq:MF-mot-meas} via Euler characteristics with compact
support on one hand and Euler characteristics of periodic cyclic
homology on the other hand.

\subsection*{Acknowledgments}
\label{sec:acknowledgements}

We thank Maxim Kontsevich for sharing his insights with us and for
many useful discussions. We thank Alexander Kuznetsov for
explanations 
concerning Theorem~\ref{t:basic-semi-orthog-2}.
Furthermore we have benefited from discussions and correspondence
with Dmitri Orlov, Vladimir Drinfeld, Daniel Huybrechts, Tobias
Dyckerhoff, Bertrand To\"en and J\'anos Koll\'ar. 

The second author was
supported by a postdoctoral fellowship of 
the German Academic Exchange Service (DAAD)  
when finishing this article.
Before that he was partially supported by
the Collaborative Research Center SFB Transregio 45
and 
the priority program SPP 1388 
of the
German Science foundation (DFG). He thanks these institutions.

\section{Categories of curved dg sheaves}
\label{sec:categories-curved-dg}

As described in the introduction we discuss foundational results
on categories of matrix factorizations.
Our main references for this section were
\cite{positselski-coh-analogues-matrix-fact-sing-cats,
  positselski-two-kinds,
  orlov-mf-nonaffine-lg-models}.
Some of the ideas are also contained in
\cite{lin-pomerleano}. 

Let $k$ be a fixed field. All schemes considered are schemes over
$k.$ We say that a scheme $X$
satisfies condition~\ref{enum:srNfKd} 
if
\begin{enumerate}[label=(srNfKd)]
\item
  \label{enum:srNfKd}
  $X$ is a separated regular Noetherian scheme of finite
  Krull dimension. 
\end{enumerate}
For example, any regular quasi-projective scheme satisfies
condition~\ref{enum:srNfKd}.
Note that any coherent $\mathcal{O}_X$-module on an
\ref{enum:srNfKd}-scheme $X$ is a quotient of a locally free
$\mathcal{O}_X$-module of finite type (by theorems of Kleiman
\cite[Ex.~III.6.8]{Hart} and Auslander and Buchsbaum
\cite[Thm.~20.3]{matsumura-comm-ring}); in particular, such a
scheme satisfies condition (ELF) in
\cite{orlov-mf-nonaffine-lg-models}.

Fix a scheme $X$ satisfying condition~\ref{enum:srNfKd}. 
Let $W
\in \Gamma(X,\mathcal{O} _X)$ be a global regular function which
we consider as a morphism $W \colon  X \ra \DA^1:=\DA^1_k=\Spec k[T].$
We do not assume that the
morphism $W$ is flat, for example $W$ may be the zero function.

In this section graded means $\DZ_2$-graded (where $\DZ_2=\DZ/2\DZ$)
if not explicitly stated otherwise, and
differential graded is often abbreviated by dg.
We use lower indices when referring to the graded components of a
$\DZ_2$-graded object.

The usual notions and results for differential $\DZ$-graded
categories (quasi-equivalence, pretriangulated dg category,
(Drinfeld) dg quotient, etc.) have obvious counterparts
in the world of differential $\DZ_2$-graded categories.

\subsection{Definition of various categories}
\label{sec:defin-vari-categ}

By a sheaf on $X$ we mean an $\mathcal{O}_X$-module.
We denote by $\Sh(X)$ the category of all sheaves on $X,$
and by $\Qcoh(X)$ and $\Coh(X)$ the full subcategories of
quasi-coherent and coherent sheaves, respectively.
By $\InjSh(X)$ (resp.\ $\InjQcoh(X)$) we denote the full
subcategory of injective objects in $\Sh(X)$ (resp.\
$\Qcoh(X)$). We write $\Locfree(X)$ (resp.\ $\FlatQcoh(X)$) for
the full subcategory of $\Qcoh(X)$ consisting of locally free
sheaves (of possibly infinite rank) (resp.\ of quasi-coherent
sheaves that are flat over $\mathcal{O}_X$).

We recall some results from 
\cite[II.\S 7]{hartshorne-residues-duality}
and deduce some well-known consequences.

\begin{theorem}
  [{\cite[II.\S 7]{hartshorne-residues-duality}}]
  \label{t:injective-in-qcoh-vs-all-OX}
  (Here $X$ can be any locally Noetherian scheme.)
  \begin{enumerate}
  \item
    \label{enum:qcoh-embeds-in-inj-OX-that-qcoh}
    Every object of $\Qcoh(X)$ can be embedded in an object of
    $\InjSh(X) \cap \Qcoh(X).$ 
  \item
    \label{enum:inj-qcoh-equal-inj-OX-that-qcoh}
    The injective objects in $\Qcoh(X)$ are precisely the
    injective objects of $\Sh(X)$ that are
    quasi-coherent, $\InjQcoh(X) = \InjSh(X) \cap
    \Qcoh(X).$
  \item 
    \label{enum:inj-qcoh-restrict-to-inj-qcoh}
    If $I \in \Qcoh(X)$ is an injective object and $U \subset
    X$ is open, then $I|_U \in \Qcoh(U)$ is again injective.
  \item 
    \label{enum:inj-and-inj-qcoh-arbitrary-sums}
    Any direct sum of objects of $\InjSh(X)$ (resp.\
    $\InjQcoh(X)$) is in $\InjSh(X)$ (resp.\ $\InjQcoh(X)$).
  \end{enumerate}
\end{theorem}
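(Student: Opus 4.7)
The plan is to deduce all four statements essentially from the reference \cite{hartshorne-residues-duality}, supplying brief sketches where the arguments are purely categorical.

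For part (a), the strategy is to cover $X$ by affine opens $U_i = \Spec A_i$ (with each $A_i$ Noetherian), embed each module $F(U_i)$ into an injective $A_i$-module $J_i$, sheafify to obtain $\tildew{J_i}$ on $U_i$, push forward via the open immersion $j_i \colon U_i \hra X$, and finally embed $F$ into the product $\prod_i j_{i*}\tildew{J_i}$. The two nontrivial ingredients are that $\tildew{J_i}$ is injective in $\Sh(U_i)$ when $A_i$ is Noetherian, and that $j_{i*}$ preserves injectivity of quasi-coherent sheaves on affines; both are established in \cite[II.\S7]{hartshorne-residues-duality}. This is the technical heart of the theorem and the main obstacle; everything else reduces to formal categorical manipulations.

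Part (b) follows from (a) by a standard splitting argument. The inclusion $\InjSh(X) \cap \Qcoh(X) \subset \InjQcoh(X)$ is immediate because the inclusion $\Qcoh(X) \hra \Sh(X)$ is exact and fully faithful, so an extension problem in $\Qcoh(X)$ can always be solved in $\Sh(X)$. Conversely, given $I \in \InjQcoh(X)$, embed $I \hra J$ with $J \in \InjSh(X) \cap \Qcoh(X)$ using (a); injectivity of $I$ in $\Qcoh(X)$ splits this monomorphism, exhibiting $I$ as a direct summand of $J$ in $\Sh(X)$, whence $I \in \InjSh(X)$.

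For part (c), I would invoke the extension-by-zero functor $j_! \colon \Sh(U) \ra \Sh(X)$ attached to the open immersion $j \colon U \hra X$, which is an exact left adjoint to $j^* = (-)|_U$ in the category of $\mathcal{O}$-modules. Hence the functor $\Hom_{\Sh(U)}(-, I|_U) \cong \Hom_{\Sh(X)}(j_!(-), I)$ is exact whenever $I$ is injective in $\Sh(X)$, proving that $I|_U$ is injective in $\Sh(U)$; since $I|_U$ is also quasi-coherent, part (b) applied on $U$ upgrades this to injectivity in $\Qcoh(U)$.

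Finally, part (d) for $\Sh(X)$ is the classical fact that on a locally Noetherian scheme arbitrary direct sums of injective $\mathcal{O}_X$-modules are injective; this is proved by a sheafified Baer criterion, testing against the generating family of quotients $\mathcal{O}_X/\mathcal{I}$ by coherent ideals and using that $\Hom(\mathcal{O}_X/\mathcal{I}, -)$ commutes with filtered colimits by local Noetherianity. The statement for $\InjQcoh(X)$ then follows at once by combining this with part (b), since a direct sum of quasi-coherent sheaves computed in $\Sh(X)$ is again quasi-coherent (quasi-coherence is local, and sums of modules sheafify compatibly on affine opens).
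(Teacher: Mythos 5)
Your parts (b) and (c) coincide with the paper's own arguments: the splitting argument for (b) and the $(j_!,j^*)$ adjunction together with exactness of $j_!$ for (c) are exactly what the paper does, and your deduction of the $\InjQcoh(X)$ case of (d) from the $\InjSh(X)$ case (using that the inclusion $\Qcoh(X)\subset\Sh(X)$ preserves direct sums) also matches. For (a) and for the $\InjSh(X)$ case of (d) the paper simply cites Hartshorne's \emph{Residues and Duality} (Thm.~II.7.18 and Cor.~7.9), which is also your ultimate fallback, so the overall route is the same.

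The two supplementary sketches you offer for the cited results are, however, not correct as written. In (a), your monomorphism lands in the \emph{product} $\prod_i j_{i*}\tildew{J_i}$, and an infinite product of quasi-coherent sheaves is in general not quasi-coherent (localization does not commute with infinite products); the argument works verbatim only when a finite affine cover exists, i.e.\ for quasi-compact $X$, whereas the statement is asserted for an arbitrary locally Noetherian scheme, where Hartshorne's actual proof is more delicate. In (d), a Baer-type test against the quotients $\mathcal{O}_X/\mathcal{I}$ by coherent ideals cannot establish injectivity in $\Sh(X)$: $\mathcal{O}_X$ is not a generator of the category of all $\mathcal{O}_X$-modules (a generating family is $\{j_{U!}\mathcal{O}_U\}_{U\subset X \text{ open}}$), so extending maps from coherent subsheaves of $\mathcal{O}_X$ proves nothing about injectivity in $\Sh(X)$. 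Neither point undermines the theorem, since both statements are correctly attributed to Hartshorne, but as self-contained proofs these two sketches have genuine gaps.
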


\begin{proof}
  \ref{enum:qcoh-embeds-in-inj-OX-that-qcoh}:
  This is \cite[Thm.~II.7.18]{hartshorne-residues-duality}.

  \ref{enum:inj-qcoh-equal-inj-OX-that-qcoh}:
  The inclusion $\supset$ is obvious. For the inclusion
  $\subset$ let $F \in \Qcoh(X).$ Then $F \subset J$ for $J \in
  \InjSh(X) \cap \Qcoh(X)$
  by \ref{enum:qcoh-embeds-in-inj-OX-that-qcoh}. 
  If $F$ is injective
  in $\Qcoh(X),$ this inclusion splits, and hence $F$ is an
  injective object of $\Sh(X).$

  \ref{enum:inj-qcoh-restrict-to-inj-qcoh}:
  By 
  \ref{enum:inj-qcoh-equal-inj-OX-that-qcoh}, $I$ is an injective
  $\mathcal{O}_X$-module.
  Let $j \colon  U \ra X$ be the inclusion. 
  We have the adjunction $(j_!,
  j^!=j^*)$ (of functors between $\Sh(X)$ and $\Sh(U)$).
  Since $j_!$ is exact this shows that
  $j^*(I)$ is an injective $\mathcal{O}_U$-module. It is
  quasi-coherent, so we can use
  \ref{enum:inj-qcoh-equal-inj-OX-that-qcoh} again.

  \ref{enum:inj-and-inj-qcoh-arbitrary-sums}:
  The statement for $\InjSh(X)$ is precisely
  \cite[Cor.~7.9]{hartshorne-residues-duality},
  and the statement for $\InjQcoh(X)$ then follows from
  \ref{enum:inj-qcoh-equal-inj-OX-that-qcoh}
  since the inclusion $\Qcoh(X) \subset \Sh(X)$ preserves direct
  sums 
  (for Noetherian $X$ one can also use 
  \cite[Prop.~7.2]{hartshorne-residues-duality}
  and the example before that proposition).
\end{proof}

\begin{definition}
  \label{d:Wcdg-sheaves}
  The dg (differential $\DZ_2$-graded) category $\Sh(X,W)$ is
  defined as follows. Its objects are
  \define{$W$-curved dg sheaves on $X,$} i.\,e.\ diagrams 
  \begin{equation*}
    E =  (\matfak{E_1}{e_1}{E_0}{e_0})
  \end{equation*}
  in $\Sh(X)$ satisfying
  $e_{i+1} e_i =W 
  \id_{E_i},$ for $i \in \DZ_2.$
  The morphism space between two $W$-curved dg sheaves $E,$ $E'$
  is the graded module
  \begin{equation*}
    \Hom_{\Sh(X,W)} (E,E'):=\bigoplus_{l \in \DZ_2}
    \Big(\bigoplus_{i \in \DZ_2}\Hom_{\mathcal{O}_X}(E_i,E'_{i+l})\Big)
  \end{equation*}
  with differential $d(g)= e' \comp g -
  (-1)^{|g|}g \comp e$ where $g$ is homogeneous of degree $|g|.$ 

  Denote by $\Qcoh(X,W),$ $\Coh(X,W),$ 
  $\MF(X,W),$ 
  $\InjQcoh(X,W),$ 
  $\Locfree(X,W),$
  and 
  $\FlatQcoh(X,W)$
  the full dg subcategories of $\Sh(X,W)$ consisting of objects
  whose components are quasi-coherent sheaves,
  coherent sheaves,
  locally free sheaves of finite type (= vector
  bundles),
  injective quasi-coherent sheaves, 
  locally free sheaves,
  and flat quasi-coherent sheaves,
  respectively.
  Objects of $\MF(X,W)$ are called 
  \define{matrix factorizations of $W.$}
\end{definition}

The shift $[1]E$ of a $W$-curved dg sheaf $E$ as above is defined as
\begin{equation*}
  [1]E = (\matfak{E_0}{-e_0}{E_1}{-e_1}).
\end{equation*}

Given a dg category $\mathcal{C},$ the category
$Z_0(\mathcal{C})$ and the homotopy category $[\mathcal{C}]$
of $\mathcal{C}$ are defined as usual: they have
the same objects as $\mathcal{C},$ but
$\Hom_{Z_0(\mathcal{C})}(E,E')=Z_0(\Hom_\mathcal{C}(E,E'))$ and
$\Hom_{[\mathcal{C}]}(E,E')=H_0(\Hom_\mathcal{C}(E,E')).$

\begin{remark}
  The categories $Z_0(\Sh(X,W)),$ $Z_0(\Qcoh(X,W))$ and
  $Z_0(\Coh(X,W))$ are abelian categories.  A sequence in
  $Z_0(\MF(X,W)),$ 
  $Z_0(\InjQcoh(X,W)),$ 
  $Z_0(\Locfree(X,W))$
  or
  $Z_0(\FlatQcoh(X,W))$
  will be called exact if
  it is exact in the ambient abelian category $Z_0(\Qcoh(X,W)).$
\end{remark}

Let $F= (\ldots \ra F^i \xra{d^i_F} F^{i+1} \ra \ldots)$
be a complex in $Z_0(\Sh(X,W)).$
We define its totalization
$\Tot(F) =: T = (\matfak{T_1}{t_1}{T_0}{t_0}) \in \Sh(X,W)$ by
\begin{equation*}
  T_l :=\bigoplus_{i \in \DZ,\; j \in \DZ_2, \atop i+j\equiv l \mod 2} F^i_j
\end{equation*}
for $l \in \DZ_2$
and $t_l|_{F^i_j} = (d^i_F)_j + (-1)^i f^i_j,$
for $l, j \in \DZ_2$ and $i \in \DZ$ satisfying $i+j \equiv l \mod
2.$

If $g \colon  E \ra E'$ is a morphism in $Z_0(\Sh(X,W))$ we define its
cone 
$\Cone(g)$ to be the totalization of the complex $(\ldots \ra 0 \ra E
\xra{g} E' \ra 0 \ra \ldots)$ with $E'$ in degree zero.
This shows that
$\Sh(X,W)$ is a
pretriangulated
dg category, and similarly for
$\Qcoh(X,W),$ $\Coh(X,W),$
$\MF(X,W),$
$\InjQcoh(X,W),$ 
$\Locfree(X,W)$ and $\FlatQcoh(X,W).$
In particular, the homotopy categories
$[\Sh(X,W)],$
$[\Qcoh(X,W)],$ $[\Coh(X,W)],$ 
$[\MF(X,W)],$
$[\InjQcoh(X,W)],$ 
$[\Locfree(X,W)]$ and $[\FlatQcoh(X,W)]$
are triangulated\footnote{
  Our (standard) triangles and the (standard) triangles
  in \cite{orlov-mf-nonaffine-lg-models} differ in the sign
  of the last morphism. However the associated homotopy
  categories are equivalent as triangulated categories.
  For this one may use
  \cite[10.1.10.i]{KS} or the equivalence that multiplies the
  differentials $e_0,$ $e_1$ of all objects $E$ by $-1$ and is
  the identity on morphisms.
} 
categories.

\begin{remark}
  \label{rem:cohomology-ModXW-not-defined}
  Notice that one cannot define the cohomology of
  an object $E \in \Sh(X,W)$ (unless $W=0$), but we can define the
  cohomology of a complex $F$ as above. In particular, it
  makes sense to ask whether $F$ is exact.
\end{remark}

\begin{definition}
  \label{d:absolute-derived-categories}
  Denote by $\Acycl[\Sh(X,W)]$ the
  full triangulated subcategory 
  of $[\Sh(X,W)]$
  classically generated by
  the totalizations of all short exact sequences
  \begin{equation*}
    0 \ra F^1 \ra F^2\ra F^3\ra 0
  \end{equation*}
  with $F^i \in \Sh(X,W).$ (Instead of short exact sequences one
  can take all bounded exact complexes, see
  Lemma~\ref{l:totalization}.\ref{enum:totalization-bounded-exact-complex} below.)
  By definition, $\Acycl[\Sh(X,W)]$ is a thick subcategory of
  $[\Sh(X,W)],$ i.\,e.\
  a strict full triangulated subcategory closed under
  direct summands.

  Following
  \cite{positselski-two-kinds,positselski-coh-analogues-matrix-fact-sing-cats} 
  we define
  the \define{absolute derived category
    $\DSh(X,W)$ of $W$-curved dg sheaves}
  as the Verdier quotient
  \begin{equation*}
    \DSh(X,W):=[\Sh(X,W)]/\Acycl[\Sh(X,W)].
  \end{equation*}
  Similarly, we consider the full subcategories
  $\Acycl[\Qcoh(X,W)] \subset [\Qcoh(X,W)],$ 
  $\Acycl[\Coh(X,W)] \subset [\Coh(X,W)],$ 
  $\Acycl[\MF(X,W)] \subset [\MF(X,W)],$ 
  $\Acycl[\Locfree(X,W)] \subset [\Locfree(X,W)],$ 
  $\Acycl[\FlatQcoh(X,W)] \subset [\FlatQcoh(X,W)],$ 
  and the corresponding Verdier quotients
  \begin{align*}
    \DQcoh(X,W) & =[\Qcoh(X,W)]/\Acycl[\Qcoh(X,W)],\\
    \DCoh(X,W) & =[\Coh(X,W)]/\Acycl[\Coh(X,W)],\\
    \bfMF(X,W) & =[\MF(X,W)]/\Acycl[\MF(X,W)],\\
    \DLocfree(X,W) & =[\Locfree(X,W)]/\Acycl[\Locfree(X,W)],\\
    \DFlatQcoh(X,W) & =[\FlatQcoh(X,W)]/\Acycl[\FlatQcoh(X,W)].
  \end{align*}
  The triangulated category
  $\bfMF(X,W)$ is called
  the \define{category of matrix factorizations of $W.$}
\end{definition}

There is another characterization of $\Acycl[\MF(X,W)]$
given in Corollary~\ref{c:acycl-vs-locally-contractible} below.
We will be mainly interested in the category 
$\bfMF(X,W).$

\begin{remark}
  \label{rem:X-disconnected}
  Let $X_1, \dots ,X_m$ be the connected components of $X.$ Then
  \begin{equation*}
    \DSh(X,W) = \prod_{i=1}^m \DSh(X_i,W),
  \end{equation*}
  and similarly for all other categories defined above. So to study
  these categories one may assume that $X$ is connected (if needed),
  and then the map $W$ is either flat or else 
  constant (here constant means that $W(X)$
  consists of a single point in $\DA^1$ which is then
  necessarily closed; if we think of $W$ as an element of
  $\Gamma(X, \mathcal{O}_X)$ it means that $W \in k \subset \Gamma(X, \mathcal{O}_X)$). 
\end{remark}

Here is a useful lemma.

\begin{lemma}
  \label{l:totalization}
  Let 
  $\mathcal{M}$ be $\Sh(X,W),$ $\Qcoh(X,W),$ $\Coh(X,W),$ 
  $\MF(X,W),$ 
  $\Locfree(X,W),$
  or
  $\FlatQcoh(X,W).$
  \begin{enumerate}
  \item
    \label{enum:ses-gives-triangle}
    Any short exact sequence
    $0 \ra F^{-1} \xra{p} F^{0} \xra{q} F^1 \ra 0$
    in $Z_0(\mathcal{M})$ gives rise to a triangle
    $F^{-1} \xra{p} F^{0} \xra{q} F^1 \ra [1]F^{-1}$
    in
    $\op{D}\mathcal{M}$
    (where $\op{D}\MF(X,W):= \bfMF(X,W)$).
  \item 
    \label{enum:totalization-bounded-exact-complex}
    Let 
    $F= (\ldots \ra  0 \ra F^1 \xra{f^1} F^2 \xra{f^2} F^3 \xra{f^3}
    F^4 \ra \dots \ra F^n \ra 0 \ra \ldots)$
    be a bounded exact complex in $Z_0(\mathcal{M}).$ 
    Then $\Tot(F) \in \Acycl[\mathcal{M}].$
  \item
    \label{enum:brutal-truncation-and-totalization}
    If
    $F= (\ldots \ra 0 \ra P^a \ra \ldots \ra P^b \xra{v}
    I^{b+1} \ra \ldots \ra I^c \ra 0 \ra \ldots)$
    is a bounded
    complex
    in $Z_0(\mathcal{M})$ that is composed of two bounded complexes
    $P$ and $I$ as indicated,
    there is a standard triangle
    \begin{equation*}
      [1]\Tot(P) \xra{v} \Tot(I) \ra \Tot(F) \ra \Tot(P)
    \end{equation*}
    in $[\mathcal{M}].$
    If $F$ is exact, $[1]\Tot(P) \xra{v} \Tot(I)$ is an
    isomorphism in 
    $\op{D}\mathcal{M}.$
  \item
    \label{enum:totalization-of-finite-complex-components-null-homotopic}
    Let $F$ be a bounded complex in $Z_0(\mathcal{M}).$ 
    If
    each $F^i$ is isomorphic to $0$ in $[\mathcal{M}],$ then
    $\Tot(F)= 0$ in $[\mathcal{M}].$ Similarly, if each $F^i$ is
    in $\Acycl[\mathcal{M}],$ then $\Tot(F) \in
    \Acycl[\mathcal{M}].$
  \end{enumerate}
\end{lemma}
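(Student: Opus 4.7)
The plan is to prove the four parts in the order (c) first statement, (d), (a), (b), (c) second statement; this ordering avoids circular dependencies by extracting the purely pretriangulated dg content first. The first statement of (c) is formal: $F$ is obtained by gluing the lower-degree subcomplex $P$ and the higher-degree subcomplex $I$ across the single morphism $v \colon P^b \to I^{b+1}$, and under totalization $\Tot(F)$ identifies as a $W$-curved dg sheaf with the mapping cone of a natural morphism $\tilde v \colon [1]\Tot(P) \to \Tot(I)$ extending $v$ (at the level of underlying graded objects $\Tot(F) = \Tot(P) \oplus \Tot(I)$ with the total differential twisted by $v$). The standard triangle in $[\mathcal{M}]$ is then the cone triangle. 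Part (d) follows by induction on the length of $F$: peel off one term via brutal truncation, apply the triangle from (c), and use that being zero in $[\mathcal{M}]$ (respectively, lying in $\Acycl[\mathcal{M}]$) is preserved under extensions.

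For (a), consider the standard triangle $F^{-1} \xra{p} F^0 \to \Cone(p) \to [1]F^{-1}$ in $[\mathcal{M}]$, where $\Cone(p) = \Tot(F^{-1} \xra{p} F^0)$, and let $\phi \colon \Cone(p) \to F^1$ be the map extending $q$. Using the construction from (c), the cone of $\phi$ is isomorphic in $[\mathcal{M}]$ to $\Tot$ of the given three-term exact complex $0 \to F^{-1} \to F^0 \to F^1 \to 0$, which lies in $\Acycl[\mathcal{M}]$ by the very definition of that subcategory. Thus $\phi$ becomes an isomorphism in $\op{D}\mathcal{M}$, and replacing $\Cone(p)$ by $F^1$ in the standard triangle produces the triangle claimed in (a).

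For (b) I induct on the length $n$ of $F$. The case $n \leq 3$ is either trivial or precisely the definition of $\Acycl[\mathcal{M}]$. For $n \geq 4$, in the abelian cases $\mathcal{M} \in \{\Sh(X,W), \Qcoh(X,W), \Coh(X,W)\}$ (and for $\FlatQcoh(X,W)$, since flatness is preserved under kernels of surjections of flat modules), form the image $Z^3 = \ker d^3 = \im d^2 \in \mathcal{M}$ and split $F$ via a short exact sequence of complexes $0 \to G \to F \to H \to 0$ with $G = (F^1 \to F^2 \twoheadrightarrow Z^3)$ of length three and $H = (Z^3 \hookrightarrow F^3 \to \ldots \to F^n)$ exact of length $n-1$, using the degree-wise sequences $0 \to Z^i \to F^i \to Z^{i+1} \to 0$. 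Totalizing yields a short exact sequence in $Z_0(\mathcal{M})$, and (a) converts it into a triangle in $\op{D}\mathcal{M}$ whose outer terms lie in $\Acycl[\mathcal{M}]$ by the inductive hypothesis, so $\Tot(F) \in \Acycl[\mathcal{M}]$. The remaining statement of (c) is then immediate: if $F$ is exact then (b) gives $\Tot(F) = 0$ in $\op{D}\mathcal{M}$, and the triangle from the first statement of (c) forces $\tilde v$ to be an isomorphism there. The main obstacle is the cases $\mathcal{M} \in \{\MF(X,W), \Locfree(X,W)\}$ in (b), where $Z^3$ need not lie in $\mathcal{M}$; handling these requires either resolving $Z^3$ inside $\mathcal{M}$ (feasible thanks to condition \ref{enum:srNfKd}, which supplies enough vector bundles) or using a different splitting that stays inside $\mathcal{M}$ throughout, with careful bookkeeping to reduce to short exact sequences in $\mathcal{M}$.
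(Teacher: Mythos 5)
Your treatment of (a), (c) and (d) is correct and essentially the paper's own argument: the first claim of (c) is the observation that $\Tot(F)$ is, as a $W$-curved dg sheaf, the cone of the map $[1]\Tot(P)\to\Tot(I)$ induced by $v$; (d) is the induction on length using that the zero objects of $[\mathcal{M}]$, respectively the objects of $\Acycl[\mathcal{M}]$, are closed under cones; and (a) is obtained by identifying the cone of the canonical map $\Cone(p)\to F^1$ with the totalization of the three-term exact complex, which lies in $\Acycl[\mathcal{M}]$ by definition.

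The one genuine gap is the point you flag yourself at the end, concerning (b) for $\mathcal{M}=\MF(X,W)$ and $\mathcal{M}=\Locfree(X,W)$, and it is a non-obstacle that you have misdiagnosed rather than an obstacle requiring new machinery. For a \emph{bounded exact} complex $F$ in $Z_0(\mathcal{M})$ the syzygies automatically lie in $\mathcal{M}$ in all six cases, provided you compute them from the right end: exactness at $F^n$ makes $F^{n-1}\sra F^n$ surjective, and the kernel of a surjection of vector bundles (resp.\ of locally free sheaves, using \cite[Cor.~4.5]{bass-big-projective-free}, resp.\ of flat quasi-coherent sheaves) is again a vector bundle (resp.\ locally free, resp.\ flat); iterating, every $Z^i=\ker f^i=\im f^{i-1}$ is an object of $\mathcal{M}$, in particular your $Z^3$. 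This is precisely the parenthetical remark in the paper's proof of (b) (``the kernel of $F^{n-1}\sra F^n$ is in $\MF(X,W)$, and we can iterate this argument''). Of your two proposed workarounds, the second (``a different splitting that stays inside $\mathcal{M}$'') is therefore unnecessary, and the first (``resolving $Z^3$ inside $\mathcal{M}$'') would not work as stated: replacing $Z^3$ by a resolution changes the complex, and comparing the resulting totalizations is essentially the statement you are trying to prove. Apart from this, your d\'evissage of $F$ into the subcomplex $G=(F^1\to F^2\sra Z^3)$ and the quotient $H=F/G$ (note that $H^3$ is $F^3/Z^3\cong Z^4$, not $Z^3$) is the same argument that the paper packages as the mapping cone of a morphism of complexes $R\to S$.
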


\begin{proof}
  \ref{enum:ses-gives-triangle}:
  We have standard triangles
  \begin{equation*}
    F^{-1} \xra{p} F^0 \xra{\svek 10} \Cone(p) \xra{\zvek 01}
    [1]F^{-1}, 
  \end{equation*}
  where $\Cone(p)=F^0\oplus [1]F^{-1}$ as a graded sheaf,
  and
  \begin{equation*}
    \Cone(p) \xra{\zvek q0} F^1 \xra{\svek 10} \Cone(\zvek q0)
    \xra{\zvek 01} [1]\Cone(p)
  \end{equation*}
  in $[\mathcal{M}].$
  Note that
  \begin{equation*}
    \Cone(\zvek q0) = F^1 \oplus [1] \Cone(p)=
    F^1 \oplus [1] F^0 \oplus F^{-1}
  \end{equation*}
  has differential
  $  \left[
    \begin{smallmatrix}
      f^1 & q & 0\\
      0 & -f^0 & -p\\
      0 & 0 & f^{-1}
    \end{smallmatrix}
  \right]$
  and hence is the totalization of the exact complex $0 \ra F^{-1}
  \xra{-p} F^0 \xra{q} F^1 \ra 0$ with $F^0$ in odd degree.
  This implies that $\Cone(p) \xra{\zvek q0} F^1$ becomes an
  isomorphism in 
  $\op{D}\mathcal{M}.$

  \ref{enum:totalization-bounded-exact-complex}:
  Factor $F^2 \ra F^3$ in $Z_0(\Sh(X,W))$ into an epimorphism followed by a
  monomorphism, $F^2 \xra{p} Q \xra{i} F^3,$
  and note that $Q \in \mathcal{M}$ (for example, if 
  $\mathcal{M}=\MF(X,W),$ the kernel of $F^{n-1} \sra F^n$ is in
  $\MF(X,W),$ and we can iterate this argument).
  Consider the vertical morphism of horizontal
  complexes
  \begin{equation*}
    \xymatrix{
      {R \colon } \ar[d]^u &
      {0} \ar[r] \ar[d] &
      {0} \ar[r] \ar[d] &
      {0} \ar[r] \ar[d] &
      {Q} \ar[r]^{-i} \ar[d]^1 &
      {F^3} \ar[r]^{-f^3} \ar[d] &
      {F^4} \ar[r] \ar[d] &
      {\dots} \ar[r] &
      {0} \\
      {S \colon } &
      {0} \ar[r] &
      {F^1} \ar[r]^{f^1} &
      {F^2} \ar[r]^p &
      {Q} \ar[r] &
      {0} \ar[r] &
      {0} \ar[r] &
      {\dots} \ar[r] &
      {0}
    }
  \end{equation*}
  We leave it to the reader to check that
  the mapping cone $\Cone(u)$ of this morphism is isomorphic to
  $F$ in the
  homotopy category of complexes in $Z_0(\mathcal{M}).$ 
  Hence $\Tot(\Cone(u)) \cong \Tot(F)$ in $[\mathcal{M}].$
  On the other hand we have a short exact sequence $0 \ra \Tot(S)
  \ra \Tot(\Cone(u)) \ra [1]\Tot(R) \ra 0$ in $Z_0(\mathcal{M})$
  and hence a triangle $\Tot(S) \ra \Tot(\Cone(u)) \ra [1]\Tot(R)
  \ra [1]\Tot(S)$ in $\op{D} \mathcal{M}$ 
  by
  \ref{enum:ses-gives-triangle}. 
  By induction $\Tot(S)$ and $\Tot(R)$ are in
  $\Acycl[\mathcal{M}],$ and hence $\Tot(F) \cong \Tot(\Cone(u))
  \in \Acycl[\mathcal{M}].$

  \ref{enum:brutal-truncation-and-totalization}: Obvious.
  If $F$ is exact, use
  \ref{enum:totalization-bounded-exact-complex}. 

  \ref{enum:totalization-of-finite-complex-components-null-homotopic}:
  We argue by induction on the number of $i$ with $F^i\not=0$ in
  $\mathcal{M}.$ If
  this number is $\leq 1$ the claim is obvious.
  Otherwise let $i \in \DZ$ be non-maximal with $F^i \not=0.$
  Let $w_{\leq i}F$ be the
  complex obtained from $F$ by replacing all terms in degrees $>i$
  by $0,$ and define $w_{>i}F$ similarly.
  As in \ref{enum:brutal-truncation-and-totalization}, there is a
  standard triangle
  \begin{equation*}
    [1]\Tot(w_{\leq i}F) \ra \Tot(w_{>i}F) \ra \Tot(F) \ra \Tot(w_{\leq i}F)
  \end{equation*}
  in $[\mathcal{M}].$ By induction the first two terms are
  isomorphic to zero in 
  $[\mathcal{M}]$ (resp.\ are in $\Acycl[\mathcal{M}]$),
  hence so is $\Tot(F).$
\end{proof}

\subsection{Matrix factorizations and the category of singularities}
\label{sec:matr-fact-categ}

In case the morphism $W \colon X\ra \DA^1$ is flat we recall
an important theorem proved in \cite{orlov-mf-nonaffine-lg-models}. Recall that the category of singularities
$D_\Sg(Y)$ of a Noetherian scheme $Y$
is defined as the Verdier quotient
\begin{equation*}
  D_\Sg(Y):=D^b(\Coh(Y))/\mfPerf(Y),
\end{equation*}
where 
$D^b(\Coh(Y))$ is the bounded derived category of coherent
sheaves on $Y$ and
$\mfPerf(Y)$ is the category of perfect complexes.

Let $X_0$ be the scheme-theoretical zero fiber of the morphism
$W \colon X\ra \DA^1.$ Given $E = (\matfak{E_1}{e_1}{E_0}{e_0})
\in \bfMF(X,W)$ the
cokernel of the map $e_1$ is annihilated by $W,$ hence it comes from
an object in $\Coh(X_0).$ We denote this object by $\coker e_1.$

\begin{theorem}
  [\cite{orlov-mf-nonaffine-lg-models}]
  \label{t:factorizations=singularity}
  Assume that the morphism
  $W \colon X\ra \DA^1$ is flat. Then the
  above construction induces a functor
  \begin{equation*}
    \coker  \colon \bfMF(X,W)\ra D_\Sg(X_0)
  \end{equation*}
  which is an equivalence of triangulated categories.
\end{theorem}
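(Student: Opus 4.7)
The plan follows Orlov's approach in three steps: well-definedness of $\coker$, essential surjectivity via a folding construction, and fully faithfulness via comparison of Hom-groups.

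\textbf{Well-definedness.} For $E = (\matfak{E_1}{e_1}{E_0}{e_0})$, the relation $e_1 e_0 = W \id_{E_0}$ forces $W$ to annihilate $\coker(e_1)$, so $\coker(e_1) \in \Coh(X_0)$. A closed degree-$0$ morphism trivially induces a map of cokernels, while a null-homotopy factors the induced map through the cokernel of a trivial matrix factorization of the form $(P \xra{W} P \xra{\id} P)$, which is locally free on $X_0$ and hence perfect; so $\coker$ descends to a functor $[\MF(X,W)] \to D_\Sg(X_0)$. For the further quotient by $\Acycl[\MF(X,W)]$, one checks that the totalization of a short exact sequence $0 \to E' \to E \to E'' \to 0$ in $Z_0(\MF(X,W))$ produces, via the snake lemma applied to the rows $e_1, e'_1, e''_1$, a short exact sequence of cokernels with perfect outer terms, hence a perfect middle term in $D_\Sg(X_0)$. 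Triangulatedness follows from an analogous snake-lemma analysis of mapping cones.

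\textbf{Essential surjectivity.} Given $F \in \Coh(X_0)$, view $F$ as a coherent $\mathcal{O}_X$-module supported on $X_0$. Under condition (srNfKd), $F$ admits a finite resolution $0 \to P_n \to \cdots \to P_0 \to F \to 0$ by vector bundles on $X$. Since $W \cdot F = 0$, multiplication by $W$ on $P_\bullet$ is null-homotopic; choose a degree $-1$ chain homotopy $s$ with $ds + sd = W \cdot \id$. Folding by parity yields $\DZ_2$-graded vector bundles $E_j := \bigoplus_{i \equiv j \bmod 2} P_i$ with operators $d + s$; after inductively correcting $s$ so that $(d+s)^2 = W \cdot \id$ on the nose (using flatness of $W$ to kill obstructions), one obtains $E_F \in \MF(X,W)$. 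Comparing with $F \cong \coker(P_1 \to P_0)$ shows $\coker(E_F) \cong F$ in $D_\Sg(X_0)$, the discrepancy being supported on vector bundles on $X_0$ and hence perfect.

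\textbf{Fully faithfulness} is the main obstacle. The plan is to identify both sides of
\begin{equation*}
  \Hom_{\bfMF(X,W)}(E, E') \;\cong\; \Hom_{D_\Sg(X_0)}(\coker E, \coker E')
\end{equation*}
with a common third group. For the left-hand side, one shows that acyclic objects are right orthogonal to $\MF$ inside $[\MF(X,W)]$ (using finite projective dimension on $X$ to truncate any representing complex into a genuine matrix factorization), so $\Hom_{\bfMF}(E, E') = H_0 \Hom_{\MF(X,W)}(E, E')$. For the right-hand side, use that the unfolded two-periodic complex
\begin{equation*}
  \cdots \to E_1 \xra{e_1} E_0 \xra{e_0} E_1 \xra{e_1} E_0 \lra \coker(e_1) \to 0
\end{equation*}
is an infinite locally free $\mathcal{O}_X$-resolution of $\coker(e_1)$; its restriction to $X_0$ computes stable Ext in sufficiently high degree, which equals $\Hom_{D_\Sg(X_0)}$ by definition of the category of singularities. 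Matching these two descriptions through the dg-Hom complex and checking naturality yields the desired bijection. The main subtlety is to handle the Verdier quotient by $\Acycl$ on the left symmetrically with the quotient by $\mfPerf$ on the right, i.e.\ to verify that both quotients capture the same ``stable'' information; this reduces to the orthogonality statement above together with the local Eisenbud correspondence interpreted globally via the finite-resolution construction of the previous paragraph.
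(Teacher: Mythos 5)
The paper does not prove this statement at all: it is quoted from Orlov \cite{orlov-mf-nonaffine-lg-models} and used as a black box (e.g.\ in the first proof of Lemma~\ref{l:conditions-hold-for-square1} and in Proposition~\ref{p:existence-of-classical-generator-in-mf}), so there is no in-paper proof to compare yours against. Judged on its own terms, your outline has a genuine gap at the crucial step, fully faithfulness. You assert that the acyclic objects are right orthogonal to $[\MF(X,W)]$, hence that $\Hom_{\bfMF(X,W)}(E,E')=H_0\Hom_{\MF(X,W)}(E,E').$ That orthogonality is exactly Lemma~\ref{l:affine-MF}, and it holds only for affine $X$: if it held in general, then applying it to $\id_A$ for $A\in\Acycl[\MF(X,W)]$ would force $\Acycl[\MF(X,W)]=0$ and $[\MF(X,W)]\sira\bfMF(X,W)$, which is false for non-affine $X$ --- by Corollary~\ref{c:acycl-vs-locally-contractible} the acyclics are precisely the locally contractible objects, and these need not be globally contractible (finite projective dimension on $X$ does not help here). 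Closing this gap is the actual content of the non-affine theorem; one must compute $\Hom_{\bfMF}$ as hypercohomology of $\sheafHom(E,E')$ (via injective or \v{C}ech resolutions, cf.\ Lemma~\ref{l:Cech-computes-morphisms}) and match it against the corresponding local-to-global computation of stable Ext on $X_0$, or run Orlov's covering/induction argument. As written, your reduction only proves the affine case.

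Two smaller points. The displayed two-periodic sequence over $\mathcal{O}_X$ is not a complex, since $e_1e_0=W\id\neq 0$; on $X$ the relevant resolution is the two-term one $0\ra E_1\xra{e_1}E_0\ra\coker(e_1)\ra 0$ (both $e_0,e_1$ are injective when $W$ is a nonzerodivisor), and only the restriction to $X_0$ is two-periodic and exact. For essential surjectivity your folding construction can be repaired (the correction of $s$ requires higher homotopies $s_2,s_3,\dots$, which terminate because the resolution is finite; flatness of $W$ is not what kills the obstructions), but the standard and shorter route is Eisenbud's: replace $F$ by a sufficiently high syzygy so that $i_*F$ admits a length-one locally free resolution $0\ra P_1\xra{e_1}P_0\ra i_*F\ra 0$ on $X$, then lift $W\id_{P_0}$ through the injective map $e_1$ to obtain $e_0$ with $e_1e_0=W\id$ and hence $e_0e_1=W\id$.
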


The above theorem is useful because it gives us two completely
different descriptions of the same triangulated category.

\subsection{Some embeddings and equivalences}
\label{sec:some-embedd-equiv}

Our next aim is to prove the equivalences and embeddings stated in
the following theorem.

\begin{theorem}
  \label{t:equivalences-curved-categories}
  \rule{1mm}{0mm}
  \begin{enumerate}
  \item
    \label{enum:injQcoh-DQcoh-equiv}
    The functor $[\InjQcoh(X,W)]\ra \DQcoh(X,W)$ is an equivalence.
  \item
    \label{enum:Dcoh-DQcoh-ff}
    The functor $\DCoh(X,W)\ra \DQcoh(X,W)$ is full and faithful.
  \item
    \label{enum:MF-Dcoh-equiv}
    The functor $\bfMF(X,W)\ra \DCoh(X,W)$ is an equivalence.
  \item
    \label{enum:DLocfree-DQcoh-equiv}
    The functor $\DLocfree(X,W)\ra \DQcoh(X,W)$ is an equivalence.
  \item
    \label{enum:DFlatQcoh-DQcoh-equiv}
    The functor $\DFlatQcoh(X,W)\ra \DQcoh(X,W)$ is an equivalence.
  \end{enumerate}
\end{theorem}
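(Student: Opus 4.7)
The plan is to reduce each part to the general criterion on embeddings of Verdier quotients from appendix~\ref{sec:app:embedd-verd-quot}, which for a full dg subcategory $\mathcal{S} \subset \mathcal{T}$ (among our categories of curved dg sheaves) asks, roughly: the induced functor $\op{D}\mathcal{S} \to \op{D}\mathcal{T}$ is full and faithful provided $\Acycl[\mathcal{S}] = [\mathcal{S}] \cap \Acycl[\mathcal{T}]$, and is essentially surjective provided every object of $\mathcal{T}$ admits a morphism to (or from) an object of $\mathcal{S}$ with acyclic cone. In all cases the approximation step is carried out by resolving an object of $\mathcal{T}$ componentwise in $\Qcoh(X)$ or $\Coh(X)$, promoting the resolution to one in $Z_0(\mathcal{T})$ by iterating kernels, and totalizing via Lemma~\ref{l:totalization}\ref{enum:totalization-bounded-exact-complex}.

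For \ref{enum:injQcoh-DQcoh-equiv}, given $E \in \Qcoh(X,W)$, I would use Theorem~\ref{t:injective-in-qcoh-vs-all-OX} to build a finite exact sequence $0 \to E \to I^0 \to I^1 \to \dots \to I^n \to 0$ in $Z_0(\Qcoh(X,W))$ with $I^j \in \InjQcoh(X,W)$, where the termination is controlled by the finite global dimension of $\Qcoh(X)$ available under condition~\ref{enum:srNfKd}. Totalizing yields $E \to \Tot(I^\bullet)$ with acyclic cone. Compatibility of acyclics reduces to showing that a $W$-curved dg sheaf of injectives that is acyclic in $[\Qcoh(X,W)]$ is contractible in $[\InjQcoh(X,W)]$; this is the classical argument that assembles a contracting homotopy degree by degree by lifting against the defining short exact sequences of the acyclic subcategory, using that injectivity is preserved by direct sums (Theorem~\ref{t:injective-in-qcoh-vs-all-OX}\ref{enum:inj-and-inj-qcoh-arbitrary-sums}). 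For \ref{enum:MF-Dcoh-equiv} the same pattern applies with vector bundles in place of injectives: a coherent sheaf on an \ref{enum:srNfKd}-scheme is a quotient of a vector bundle and has finite projective dimension (by Auslander--Buchsbaum applied on an affine open cover, bounded by the Krull dimension), so finite vector-bundle resolutions exist; compatibility of acyclics is immediate since $\Acycl[\MF(X,W)]$ is defined in terms of exactness in $\Qcoh(X,W)$. Parts \ref{enum:DLocfree-DQcoh-equiv} and \ref{enum:DFlatQcoh-DQcoh-equiv} follow identically using locally free (resp.\ flat quasi-coherent) resolutions of quasi-coherent sheaves, whose length is again bounded by the finite Krull dimension.

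Part \ref{enum:Dcoh-DQcoh-ff} requires only full faithfulness and hence only the acyclic-compatibility clause: I must show that a coherent $W$-curved dg sheaf acyclic in $[\Qcoh(X,W)]$ is already acyclic in $[\Coh(X,W)]$. The idea is to take a witness of acyclicity in $[\Qcoh(X,W)]$ — a presentation as a direct summand of an iterated cone of totalizations of short exact sequences of quasi-coherent $W$-curved dg sheaves — and exploit the Noetherian hypothesis to cut down each such exact sequence to one between coherent objects refining the given coherent summand. The main obstacle I anticipate is precisely this last reduction and, more generally, the careful componentwise lifting of resolutions to the curved dg category: at each stage one must verify that the kernel of an epimorphism of $W$-curved dg sheaves inherits the desired property (injective, flat, locally free, coherent) and that the iteration terminates; here finite Krull dimension together with regularity are exactly what force termination and let one apply Lemma~\ref{l:totalization} to finite complexes.
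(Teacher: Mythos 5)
Your essential-surjectivity step is sound and matches the paper's Lemma~\ref{l:resolutions}: finite resolutions by injectives (resp.\ matrix factorizations, locally free or flat sheaves), promoted to $Z_0$ of the curved category via the freely (co)generated objects $G^{\pm}$ and totalized, with termination forced by regularity and finite Krull dimension. The gaps are in the full-faithfulness half. First, your reduction criterion is too weak: the equality $\Acycl[\mathcal{S}]=[\mathcal{S}]\cap\Acycl[\mathcal{T}]$ does not imply that $\op{D}\mathcal{S}\ra\op{D}\mathcal{T}$ is full and faithful. A morphism in $\op{D}\mathcal{T}$ between objects of $\mathcal{S}$ is a roof through an object of $\mathcal{T}$, and to replace it by a roof inside $\mathcal{S}$ one needs one of the factoring conditions of Proposition~\ref{p:verdier-localization-induced-functor-ff}, e.g.\ that every morphism from an object of $[\mathcal{S}]$ to an object of $\Acycl[\mathcal{T}]$ factors through an object of $\Acycl[\mathcal{S}]$; the intersection equality is the special case obtained by applying this to identity morphisms and does not imply it back. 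For part~\ref{enum:Dcoh-DQcoh-ff} your Noetherian cutting-down idea is the right one, but it must be applied to the image of an arbitrary morphism $L\ra A$ with $L$ coherent and $A$ the totalization of a short exact sequence of quasi-coherent objects, producing a coherent acyclic through which the morphism factors (Lemma~\ref{l:factoring-through-coherent}), not merely to identities.

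Second, and more seriously, the claim that for part~\ref{enum:MF-Dcoh-equiv} ``compatibility of acyclics is immediate since $\Acycl[\MF(X,W)]$ is defined in terms of exactness in $\Qcoh(X,W)$'' is false. Only the inclusion $\Acycl[\MF(X,W)]\subset[\MF(X,W)]\cap\Acycl[\Coh(X,W)]$ is formal; the reverse inclusion --- and, what is actually needed, that any morphism from a matrix factorization to the totalization of a short exact sequence of coherent curved dg sheaves factors through an object of $\Acycl[\MF(X,W)]$ --- is the hardest point of the whole theorem. The paper proves it either via Orlov's equivalence with the singularity category (only available when $W$ is flat) or by Positselski's argument: one precomposes with a surjection from a freely generated object $G^+(N)$, with $N$ chosen to surject onto a fibre product so that the third component of the morphism lifts along $V\sra Q$, and then exhibits an explicit null-homotopy (Lemmas~\ref{homotopic-zero} and~\ref{lifting}). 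Nothing in your proposal addresses this, and without it parts~\ref{enum:MF-Dcoh-equiv}, \ref{enum:DLocfree-DQcoh-equiv} and~\ref{enum:DFlatQcoh-DQcoh-equiv} are unproved. (Also, for part~\ref{enum:injQcoh-DQcoh-equiv} the needed input is not a ``contracting homotopy assembled degree by degree'' --- a curved object has no cohomology --- but the orthogonality $\Hom_{[\Qcoh(X,W)]}(\Acycl[\Qcoh(X,W)],[\InjQcoh(X,W)])=0$, which follows because $\Hom_{\Qcoh(X,W)}(\Tot(E),J)$ is the totalization of a short exact sequence of dg modules and hence acyclic.)
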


\begin{proof}
  Consider the commutative diagrams
  of inclusions of
  full triangulated categories
  \begin{equation*}
    % \label{eq:overview-subcats}
    \xymatrix{
      {\Acycl[\MF(X,W)]} \ar@{}[r]|{\subset} \ar@{}[d]|{\cap}
      \ar@{}[rd]|{(\boxast 1)} &
      {\Acycl[\Coh(X,W)]} \ar@{}[r]|{\subset} \ar@{}[d]|{\cap}
      \ar@{}[rd]|{(\boxast 2)} &
      {\Acycl[\Qcoh(X,W)]} \ar@{}[r]|{\supset} \ar@{}[d]|{\cap}
      \ar@{}[rd]|{(\boxast 3)} &
      {\{0\}} \ar@{}[d]|{\cap} \\
      {[\MF(X,W)]} \ar@{}[r]|{\subset} &
      {[\Coh(X,W)]} \ar@{}[r]|{\subset} &
      {[\Qcoh(X,W)]} \ar@{}[r]|{\supset} &
      {[\InjQcoh(X,W)]}
    }
  \end{equation*}
  and
  \begin{equation*}
    % \label{eq:overview-subcats-number-two}
    \xymatrix{
      {\Acycl[\Locfree(X,W)]} \ar@{}[r]|{\subset} \ar@{}[d]|{\cap}
      \ar@{}[rd]|{(\boxast 4)} &
      {\Acycl[\Qcoh(X,W)]} \ar@{}[r]|{\supset} \ar@{}[d]|{\cap}
      \ar@{}[rd]|{(\boxast 5)} &
      {\Acycl[\FlatQcoh(X,W)]}
      \ar@{}[d]|{\cap}\\ 
      {[\Locfree(X,W)]} \ar@{}[r]|{\subset} &
      {[\Qcoh(X,W)]} \ar@{}[r]|{\supset} &
      {[\FlatQcoh(X,W)].}
    }
  \end{equation*}
  We will show that
  the three equivalent conditions
  \ref{enum:verdier-loc-precompose-to-Mor-V},
  \ref{enum:verdier-loc-WC-factors-WVC},
  \ref{enum:verdier-loc-Hom-WD-in-DmodV-equal-those-in-DmodC}
  of Proposition~\ref{p:verdier-localization-induced-functor-ff}
  hold for the squares
  $(\boxast 1)$ and
  $(\boxast 2)$
  (and then also for the rectangle formed out of these two
  squares),
  and for the squares
  $(\boxast 4)$
  and
  $(\boxast 5),$
  and that the three equivalent conditions
  \ref{enum:verdier-loc-postcompose-to-Mor-V},
  \ref{enum:verdier-loc-CW-factors-CVW},
  \ref{enum:verdier-loc-Hom-DW-in-DmodV-equal-those-in-DmodC}
  hold for the square
  $(\boxast 3).$
  This will imply that all five functors in
  Theorem~\ref{t:equivalences-curved-categories}
  are full and faithful.

  The following lemma is essentially contained
  in \cite[Thm.~3.6]{positselski-two-kinds}. It shows that the
  functors
  considered in parts \ref{enum:injQcoh-DQcoh-equiv},
  \ref{enum:MF-Dcoh-equiv}, 
  \ref{enum:DLocfree-DQcoh-equiv} and
  \ref{enum:DFlatQcoh-DQcoh-equiv}
  of
  Theorem~\ref{t:equivalences-curved-categories}
  are essentially surjective.

  \begin{lemma}
    \label{l:resolutions}
    \rule{1mm}{0mm}
    \begin{enumerate}
    \item
      \label{enum:Inj-reso}
      For any $F\in \Qcoh(X,W)$ there exists an exact sequence
      $0\ra F\ra I^0\ra \dots \ra I^n \ra 0$ in $Z_0(\Qcoh(X,W))$
      with all $I^j \in \InjQcoh(X,W).$ 
      In particular, 
      the obvious morphism 
      $F \ra \Tot(I)$ has its cone in $\Acycl[\Qcoh(X,W)]$
      and hence becomes an isomorphism
      in $\DQcoh(X,W).$
    \item
      \label{enum:MF-reso}
      Let $E\in \Coh(X,W).$ Then there exists an exact sequence
      $0\ra 
      P^n\ra \dots \ra P^0\ra E\ra 0$ in $Z_0(\Coh(X,W))$ with all
      $P^i\in \MF(X,W).$
      In particular, the 
      obvious morphism 
      $\Tot(P) \ra E$ has its cone in $\Acycl[\Coh(X,W)]$
      and hence becomes an isomorphism
      in $\DCoh(X,W).$
    \item
      \label{enum:locfree-reso}
      Let $E \in \Qcoh(X,W).$ Then there exists an exact sequence
      $0\ra 
      P^n\ra \dots \ra P^0\ra E\ra 0$ in $Z_0(\Qcoh(X,W))$ with all
      $P^i\in \Locfree(X,W) \subset \FlatQcoh(X,W).$
      In particular, the obvious morphism 
      $\Tot(P) \ra E$ has its cone in $\Acycl[\Qcoh(X,W)]$
      and hence becomes an isomorphism
      in $\DQcoh(X,W).$
    \end{enumerate}
  \end{lemma}

  \begin{proof}
    \ref{enum:Inj-reso}
    Choose injective
    morphisms $g_0 \colon  F_0 \ra J_0$ and $g_1 \colon  F_1 \ra J_1,$ such
    that $J_0$ 
    and $J_1$ are injective quasi-coherent sheaves
    (use Theorem~\ref{t:injective-in-qcoh-vs-all-OX}).
    Consider the object $I' \in \InjQcoh(X,W),$
    where $I'_0=I'_1=J_0\oplus J_1$ and $i'_0=W\oplus \id,$ $i'_1=\id
    \oplus W.$
    We denote $I'$ by $G^-(J)$ for future reference.
    Note that $G^-(J)$ only depends on the graded sheaf $J.$

    Let $h=(h_0, h_1) \colon  F \ra I'$ be the injective morphism in $Z_0(\Qcoh(X,W)$
    given by $h_0=(g_0, g_1 f_0)^\tp,$ $h_1=(g_0f_1,g_1)^\tp.$
    Now define $I^0:=I',$ replace $F$ by $\coker h$ and repeat the
    procedure. 
    Since $X$ is regular of finite Krull dimension
    we eventually arrive at the
    desired 
    finite resolution
    (the global dimension of all local rings $\mathcal{O}_{X,x}$
    is bounded by the Krull dimension of $X,$ and injectivity of
    a quasi-coherent sheaf can
    be tested on the stalks by
    \cite[Prop.~7.17]{hartshorne-residues-duality} 
    and Theorem~\ref{t:injective-in-qcoh-vs-all-OX}).
    The isomorphism
    $F \sira \Tot(I)$ in $\DQcoh(X,W)$ follows from
    Lemma~\ref{l:totalization}.\ref{enum:brutal-truncation-and-totalization}.

    \ref{enum:MF-reso}
    We apply the dual process. Namely, our assumptions on $X$
    allow us to choose vector
    bundles $N_0$ and $N_1$ with surjective morphisms
    $g_0 \colon  N_0 \sra E_0$ and $g_1 \colon  N_1 \sra E_1.$
    Consider
    the object $P'\in \MF(X,W)$
    where $P'_0=P'_1=N_0 \oplus N_1$ and
    $p'_0=\id \oplus W,$ $p'_1= W \oplus \id.$
    We denote $P'$ by $G^+(N)$ for
    future reference. It only depends on the
    graded sheaf $N.$

    Let $h \colon  P' \ra E$ be the surjective morphism in $Z_0(\Coh(X,W))$
    given by
    $h_0=(g_0, e_1g_1),$ $h_1=(e_0g_0, g_1).$
    Now replace $E$ by $\ker h$ and repeat the
    procedure. Since $X$ is regular of finite Krull dimension we
    eventually arrive at the desired 
    finite resolution. The last statement follows from
    Lemma~\ref{l:totalization}.\ref{enum:brutal-truncation-and-totalization}.

    \ref{enum:locfree-reso}:
    Since any quasi-coherent sheaf is the union of its coherent
    subsheaves
    (\cite[Exercise II.5.15(e)]{Hart})
    there are locally free sheaves $N_0$ and $N_1$ with
    epimorphisms $g_i \colon N_i \sra E_i.$
    We then proceed as in the proof of
    \ref{enum:MF-reso}, using
    \cite[Corollary~4.5]{bass-big-projective-free}. 
  \end{proof}

  \begin{remark}
    \label{rem:middle-objects-vanish}
    If $K$ is any quasi-coherent sheaf on $X,$ then
    $(\matfak{K}{\id}{K}{W}) \in \Qcoh(X,W)$
    is obviously zero in
    $[\Qcoh(X,W)]$ and in $\DQcoh(X,W).$ 
    % If $J$ is a graded sheaf, then $G^-(J)$ is obviously zero in
    % $[\Qcoh(X,W)]$ and in $\DQcoh(X,W).$ 
    So if $0 \ra F \ra I^0
    \ra \dots \ra I^n \ra 0$ is the exact sequence constructed in
    the proof of Lemma~\ref{l:resolutions}.\ref{enum:Inj-reso},
    all objects $I^0, \dots, I^{n-1}$ are zero in $\DQcoh(X,W).$
    This implies that $F$ and $[n]I^n$ are isomorphic in
    $\DQcoh(X,W)$ (use
    Lemma~\ref{l:totalization}.\ref{enum:ses-gives-triangle}).
    Similar conclusions hold for the exact sequences constructed
    in the proof of parts \ref{enum:MF-reso} and
    \ref{enum:locfree-reso} of Lemma~\ref{l:resolutions}.
  \end{remark}
  
  \begin{remark}
    \label{rem:morphism-to-reso-by-injectives}
    Let $p \colon  E \ra F$ be a morphism in $Z_0(\Qcoh(X,W)),$ and let
    $0 \ra E \ra A^0 \ra A^1 \ra \dots$ be an exact sequence
    in $Z_0(\Qcoh(X, W)).$
    Then there is a resolution $F \ra I$ as in
    Lemma~\ref{l:resolutions}.\ref{enum:Inj-reso}
    and a morphism $A \ra I$ of resolutions that lifts $p.$

    Namely, in the notation of the proof of 
    Lemma~\ref{l:resolutions}.\ref{enum:Inj-reso},
    find morphisms $q_l \colon  A^0_l \ra J_l$ that restrict to
    $g_l p_l$ on $E_l,$ for $l=0, 1.$ Then 
    $(q_0, q_1 a_0)^\tp \colon  A^0_0 \ra I^0_0=J_0 \oplus J_1$
    and 
    $(q_0 a_1, q_1)^\tp \colon  A^0_1 \ra I^0_1=J_0 \oplus J_1$
    define a morphism $A^0 \ra I^0$ that lifts $p \colon E \ra F.$
    Pass to the cokernels and proceed.
  \end{remark}

  \begin{lemma}
    \label{l:sufficient-for-inj-DQcoh-equiv}
    We have
    \begin{equation*}
      \Hom_{[\Qcoh(X,W)]}(\Acycl[\Qcoh(X,W)], [\InjQcoh(X,W)])=0.
    \end{equation*}
    In particular,
    condition~\ref{enum:verdier-loc-CW-factors-CVW}
    of Proposition~\ref{p:verdier-localization-induced-functor-ff}
    holds for the square
    $(\boxast 3).$
  \end{lemma}

  \begin{proof}
    Let $J \in \InjQcoh(X,W).$ 
    Let
    $0\ra E^1\ra E^2\ra E^3\ra 0$ be a short exact sequence in
    $Z_0(\Qcoh(X,W))$ with totalization $\Tot(E).$ The
    dg module $\Hom_{\Qcoh(X,W)}(\Tot(E), J)$ is
    the totalization of the short exact sequence
    \begin{equation*}
      0\ra \Hom_{\Qcoh(X,W)} (E^3, J) \ra \Hom_{\Qcoh(X,W)} (E^2, J) \ra
      \Hom_{\Qcoh(X,W)} (E^1, J) \ra 0
    \end{equation*}
    of dg modules.
    Hence it is obviously (or by 
    Lemma~\ref{l:double-complex-upper-halfplane} below) acyclic,
    so
    $\Hom_{[\Qcoh(X,W)]}(\Tot(E), J)=0.$ This implies the lemma.

  \end{proof}

  \begin{remark}
    \label{rem:morphisms-to-injectives}
    For any $F\in [\Qcoh(X,W)]$ and $I \in [\InjQcoh (X,W)]$
    the canonical map
    \begin{equation*}
      \Hom_{[\Qcoh(X,W)]}(F,I) \sira \Hom_{\DQcoh(X,W)}(F,I)
    \end{equation*}
    is an isomorphism, since
    condition \ref{enum:verdier-loc-Hom-DW-in-DmodV-equal-those-in-DmodC}
    holds for the square $(\boxast 3).$
  \end{remark}

  \begin{lemma}
    \label{l:factoring-through-coherent}
    Condition~\ref{enum:verdier-loc-WC-factors-WVC} of
    Proposition~\ref{p:verdier-localization-induced-functor-ff}
    holds for the square
    $(\boxast 2).$
    Namely,
    let $L \in [\Coh(X,W)]$  and $A\in
    \Acycl[\Qcoh(X,W)].$ Then any morphism
    $L \ra  A$ in $[\Qcoh(X,W)]$
    factors through some object $A' \in
    \Acycl[\Coh(X,W)].$
  \end{lemma}

  \begin{proof}
    \textbf{Step 1:}
    Let $E = (\matfak{E_1}{e_1}{E_0}{e_0}) \in \Qcoh(X,W)$
    and let
    $K \subset E$ be a graded coherent subsheaf, i.\,e.\
    $K_i \subset E_i$ is a coherent subsheaf for $i=0,1.$ Then there exists
    $F \in \Coh(X,W)$ such that $F \subset E$ in
    $Z_0(\Qcoh(X,W))$ and $K\subset F$ as graded sheaves.
    Indeed, take $F_1=K_1+e_0K_0,$ $F_0=K_0+e_1K_1.$

    \textbf{Step 2:} Given an exact sequence
    \begin{equation*}
      E=(0 \ra E^1 \xra{d^1}
      E^2 \ra
      \dots
      \ra E^{n-1}
      \xra{d^{n-1}} E^n \ra 0)
    \end{equation*}
    in $Z_0(\Qcoh(X,W))$ and graded coherent
    subsheaves $K^i \subset E^i,$ there exists an exact sequence
    \begin{equation*}
      0\ra F^1\ra  \dots \ra  F^n \ra 0
    \end{equation*}
    in $Z_0(\Coh(X,W))$ which is a subsequence of $E$ such that
    $K^i \subset F^i$ for all $i =1, \dots, n.$

    Indeed, first we may assume that $d^i(K^i)\subset K^{i+1}$
    (by replacing $K^{i+1}$ with $K^{i+1} + d^i(K^i)$).
    Using Step~1, we find a subobject $F^n \subset E^n,$ such that
    $F^n\in \Coh(X,W)$ and $K^n \subset F^n.$
    Between $K^{n-1}$ and $(d^{n-1})\inv(F^n)$ there is a
    graded coherent sheaf surjecting onto $F^n$
    (use \cite[Ex.~II.5.15]{Hart}). Step~1 again then shows that
    there is an object $F^{n-1}\in \Coh(X,W)$ such that
    $K^{n-1} \subset F^{n-1}\subset E^{n-1}$ and
    $d^{n-1}(F^{n-1})=F^n.$

    Now proceed by induction with $F^{n-1} \cap \ker d^{n-1} \subset \ker d^{n-1}$
    instead of $F^n \subset E^n$ and note that $d^{n-2}(K^{n-2})
    \subset K^{n-1} \cap \ker d^{n-1}.$

    \textbf{Step 3:}
    Assume that $A=\Tot(E)$ is the
    totalization of an exact sequence $E$ as above and let
    $g \colon L \ra A$ be a morphism in $[\Qcoh(X,W)].$ Represent $g$ by a
    morphism $\hat{g} \colon L \ra A$ in $Z_0(\Qcoh(X,W)]$ and let $K \subset A$ be
    the image of $\hat{g}.$ Let $K^i_l$ be
    the image of $K_{i+l}$ 
    under 
    the obvious 
    projection $\Tot(E)_{i+l} \ra E^i_l$ of sheaves. This defines
    the graded sheaves $K^i.$ Step 2
    applied in this 
    setting yields an exact sequence $F$
    in $Z_0(\Coh(X,W))$ such that $\hat{g}$ factors through $A'=
    \Tot(F) 
    \subset A.$ Hence $g$ factors through $A' \in
    \Acycl[\Coh(X,W)].$ 

    Now use that
    condition
    \ref{enum:verdier-loc-WC-factors-WVC-via-classical-generation}
    in Proposition~\ref{p:verdier-localization-induced-functor-ff}
    implies condition
    \ref{enum:verdier-loc-WC-factors-WVC}
    % instead of
    % totalizations of bounded exact sequences 
    (it would have been
    sufficient to consider totalizations of short exact
    sequences only). 
    This finishes the proof.
  \end{proof}

  \begin{lemma}
    \label{l:conditions-hold-for-square1}
    The three equivalent conditions
    \ref{enum:verdier-loc-precompose-to-Mor-V},
    \ref{enum:verdier-loc-WC-factors-WVC},
    \ref{enum:verdier-loc-Hom-WD-in-DmodV-equal-those-in-DmodC}
    hold for the square
    $(\boxast 1).$
  \end{lemma}

  We will give two proofs of this key fact.
  The first proof from \cite{lin-pomerleano} is
  short but uses Theorem~\ref{t:factorizations=singularity}
  and hence only works in case the morphism $W \colon X\ra \DA^1$ is flat.
  The second proof is
  essentially the one given in
  \cite[Prop.~1.5]{positselski-coh-analogues-matrix-fact-sing-cats}
  (in a slightly different language) and works in general.

  \begin{lemma}
    \label{l:affine-MF}
    Assume that $X$ is affine.
    \begin{enumerate}
    \item
      \label{enum:intersection-MF-Acyclcoh-zero}
      Then
      \begin{equation*}
        \Hom_{[\Qcoh(X,W)]}([\MF(X,W)], \Acycl[\Qcoh(X,W)])=0.
      \end{equation*}
      In particular,
      $[\MF(X,W)]\cap \Acycl[\Qcoh(X,W)]=0,$
      $[\MF(X,W)]\cap \Acycl[\Coh(X,W)]=0$
      and $\Acycl[\MF(X,W)]=0.$
    \item
      \label{enum:affine-MF-equals-boldMF}
      $[\MF(X,W)] \sira \bfMF(X,W)$ canonically.
    \end{enumerate}
  \end{lemma}

  \begin{proof}
    Clearly \ref{enum:intersection-MF-Acyclcoh-zero}
    implies \ref{enum:affine-MF-equals-boldMF}.
    To prove \ref{enum:intersection-MF-Acyclcoh-zero} we argue as in
    the proof 
    of Lemma~\ref{l:sufficient-for-inj-DQcoh-equiv}.
    Namely let $P\in \MF(X,W)$ and let $E$
    be the totalization of a short exact sequence
    \begin{equation*}
      0\ra E^1\ra E^2\ra E^3\ra 0
    \end{equation*}
    in $Z_0(\Qcoh(X,W)).$
    Then the dg module $\Hom_{\Qcoh(X,W)}(P,E)$ is the totalization of
    the short exact (since $X$ is affine we can view both
    $P_i$ as projective $\Gamma(X, \mathcal{O}_X)$-modules)
    sequence
    \begin{equation*}
      0\ra \Hom_{\Qcoh(X,W)} (P,E^1)\ra \Hom_{\Qcoh(X,W)} (P,E^2)\ra
      \Hom_{\Qcoh(X,W)} (P,E^3) \ra 0
    \end{equation*}
    of dg modules and hence is acyclic.
    This implies
    all the assertions in
    \ref{enum:intersection-MF-Acyclcoh-zero}.
  \end{proof}

  \begin{proof}[Proof of Lemma~\ref{l:conditions-hold-for-square1}
    in case $W \colon X \ra \DA^1$ is flat]
    We show that condition
    \ref{enum:verdier-loc-precompose-to-Mor-V} holds for the
    square $(\boxast 1)$: 
    Let $s \colon  E \ra P$ in $[\Coh(X,W)]$ with 
    $P \in [\MF(X,W)]$ and cone in $\Acycl[\Coh(X,W)].$ Then
    there exists $t \colon  P' \ra E$ with $P' 
    \in [\MF(X,W)]$ such
    that the cone of $st$ is in $\Acycl[\MF(X,W)].$

    By Lemma~\ref{l:resolutions}.\ref{enum:MF-reso} we can find
    $t$ and $P'$ as required such that the cone of $t$ is in 
    $\Acycl[\Coh(X,W)].$ Hence the cone of $st$ is in
    $\Acycl[\Coh(X,W)],$ and obviously in $[\MF(X,W)].$ 
    We need to show that
    $[\MF(X,W)]\cap \Acycl[\Coh(X,W)]=\Acycl[\MF(X,W)].$ 
    Namely,
    let $F\in [\MF(X,W)]\cap \Acycl[\Coh(X,W)].$ It suffices to
    show that its image $\cokern F$ in $D_\Sg(X_0)$ under the
    equivalence of Theorem~\ref{t:factorizations=singularity} is
    zero. But this is true locally by Lemma~\ref{l:affine-MF},
    and hence globally.  
  \end{proof}

  \begin{proof}[Proof of Lemma~\ref{l:conditions-hold-for-square1}
    for arbitrary $W \colon X \ra \DA^1$]
    It suffices to prove the following claim
    (use condition~\ref{enum:verdier-loc-WC-factors-WVC-via-classical-generation}
    of Proposition~\ref{p:verdier-localization-induced-functor-ff}):
    Let $E\in [\MF(X,W)]$ and let $L$ be the
    totalization of a short exact sequence
    \begin{equation*}
      0 \ra U \xra{i} V \xra{p} Q \ra 0
    \end{equation*}
    in $Z_0(\Coh(X,W))$
    (with $U$ of odd degree).
    Then any morphism $E \ra L$ in $[\Coh(X,W)]$
    factors through an object
    of $\Acycl[\MF(X,W)].$

    \textbf{Step 1:}
    Let $G \in \Coh(X,W).$ Let $\gamma \colon G \ra L$ be a degree zero
    morphism 
    in $\Coh(X,W).$ Then $\gamma=(a,b,c)$
    where $a \colon G \ra U,$ $b \colon G\ra V$ and
    $c \colon G\ra Q$ are morphisms in $\Coh(X,W)$ of degrees 1, 0, 1,
    respectively. Notice that the
    differential of $\gamma$ is given by the formula
    $d\gamma=d(a,b,c)=(-da,ia+db,pb-dc).$

    \begin{lemma} 
      \label{homotopic-zero}
      In this setting assume that the degree zero morphism
      $\gamma=(a,b,c) \colon  G \ra L$ is closed and that $c$ can
      be lifted to a degree one morphism $t \colon G \ra V$ in $\Coh(X,W),$
      i.\,e.\ $pt=c.$ Then $\gamma$
      is homotopic to zero.
    \end{lemma}

    \begin{proof}
      Let $\Hom=\Hom_{\Coh(X,W)}.$
      We have an exact sequence of dg modules
      \begin{equation*}
        0 \ra \Hom (G,U) \xra{i_*} \Hom (G,V) \xra{p_*} \Hom (G,Q).
      \end{equation*}
      Note that $dc=d(pt)=pdt.$ Then
      $p(b-dt)=pb-dc=0,$ so there exists a degree zero morphism $s \in
      \Hom(G,U)$ such that $b-dt=is.$ Then
      $ids=d(is)=db=-ia,$ hence $-ds=a$ and $d(s,t,0)=(a,b,c)=\gamma.$
    \end{proof}

    \textbf{Step 2:}
    Now assume that $N$ is a graded coherent sheaf.
    Recall the object $G^+(N)\in \Coh(X,W)$ freely generated by $N$ (see
    the proof of Lemma~\ref{l:resolutions} above) and note that there
    is a canonical inclusion $N \subset G^+(N)$ of graded sheaves. For
    any $S\in \Coh(X,W)$
    a degree zero morphism $r \colon G^+(N) \ra S$ is uniquely
    determined by the restrictions $r|_N$ and $(dr)|_N$; conversely,
    given two graded morphisms $N \ra S$ of degrees 0 and 1
    respectively, they arise from such a morphism $r.$
    A similar statement holds for degree one morphisms $G^+(N) \ra S.$

    Let $\nu \colon  N \ra L$ be a degree zero morphism of graded
    sheaves. Similarly as
    above we represent it as a triple $\nu=(a',b',c')$ where $a' \colon N \ra
    U,$ $b' \colon N \ra V$ and $c' \colon N \ra Q$ are morphisms of graded sheaves
    of degrees 1, 0, 1, respectively.

    \begin{lemma} 
      \label{lifting} 
      In this setting assume that the
      degree one morphism $c' \colon N \ra Q$ of graded sheaves
      can be lifted to a degree one morphism $s \colon N \ra V,$
      i.\,e.\ $ps=c'.$ Let $\tildew{\nu} \colon G^+(N)\ra L$ be
      the closed degree zero morphism uniquely determined by
      $\tildew{\nu}|_N=\nu \colon N\ra L$ (and
      $d\tildew{\nu}=0$), and let $\tildew{\nu}=(a, b, c)$ be its
      components.  Then the degree one morphism $c \colon G^+(N)
      \ra Q$ can be lifted 
      to a degree one morphism $t \colon G^+(N) \ra V,$ i.\,e.\
      $pt=c.$
    \end{lemma}

    \begin{proof}
      Extend the degree one morphism $s \colon N\ra V$ to a unique degree one
      morphism $t \colon G^+(N)\ra V$ such that $(dt)|_N=b'.$
      Note that $\tildew{\nu}|_N=\nu$ implies $b|_N=b'$ and
      $c|_N=c',$ and that $d\tildew{\nu}=0$ implies $pb=dc.$
      So $pt|_N=ps=c'=c|_N$ and
      $(d(pt))|_N=p(dt)|_N=pb'=pb|_N=(dc)|_N.$ Hence
      $pt=c.$
    \end{proof}

    \textbf{Step 3:}
    To complete the proof assume that we are
    given a morphism $E\ra L$ in $[\Coh(X,W)],$ which we represent by
    a closed degree zero morphism morphism $\epsilon=(a'',b'',c'') \colon  E \ra L$ where
    $a'', b'', c''$ are as explained above.
    Let $N$ be a graded
    vector bundle mapping surjectively onto the "fiber product"
    $V\times_Q E$ of the morphisms $p \colon V\ra Q$
    and $c'' \colon E\ra Q$ 
    (for $l \in \DZ_2$ we have $(V \times_Q E)_l= V_{l+1}
    \times_{Q_{l+1}} E_l$). This yields a surjective degree zero
    morphism $q \colon N \ra E$ of graded 
    sheaves such that $c''q \colon N \ra Q$
    can be lifted to $V.$

    Let $\nu:= \epsilon q \colon N \ra L;$ its third component is
    $c':=c''q.$ 
    Let $\tildew{\nu}=(a,b,c) \colon G^+(N)\ra L$ be the closed degree zero extension
    of $\nu.$ By Lemma~\ref{lifting} the morphism $c$ can be lifted to a degree
    one morphism $t \colon G^+(N) \ra V,$ i.\,e.\ $pt=c.$

    Similarly $q \colon  N \ra E$ extends uniquely to a (surjective)
    closed degree zero morphism $\tildew{q} \colon  G^+(N) \ra E,$ and
    we have $\epsilon \tildew{q}=\tildew{\nu}.$ Let $\rho \colon  R \ra
    G^+(N)$ be the kernel of $\tildew{q}.$ Then $R \in \MF(X,W)$
    (since the kernel of a surjective morphism of vector bundles
    is a vector bundle).  Let $C:=\Cone(\rho).$ As a graded sheaf
    $C=G^+(N)\oplus [1]R,$ so $C \in \MF(X,W).$ The natural
    closed degree zero morphism $(\tildew{q},0) \colon C \ra E$ has cone
    $\Cone((\tildew{q},0))$ in $\Acycl[\MF(X,W)],$ cf.\ the proof
    of Lemma~\ref{l:totalization}.\ref{enum:ses-gives-triangle}.

    The composition $C \xra{(\tildew{q},0)} E
    \xra{\epsilon} L$ is a closed degree zero morphism and given
    by $(\epsilon \tildew{q}, 0)= (\tildew{\nu},0);$ its third
    component is $(c,0) \colon C=G^+(N) \oplus [1]R \ra Q$ and can be
    factored as $C \xra{(t,0)} V \xra{p} Q.$ Hence
    Lemma~\ref{homotopic-zero} shows that this composition $C
    \xra{(\tildew{q},0)} E \xra{\epsilon} L$ is homotopic to
    zero.  So it is zero in the triangulated category
    $[\Coh(X,W)],$ and the morphism $\epsilon \colon  E \ra L$ factors
    there through $\Cone((\tildew{q},0)) \in \Acycl[\MF(X,W)].$
    This proves our claim.
  \end{proof}

  \begin{lemma}
    \label{l:conditions-hold-for-squares-4-and-5}
    The three equivalent conditions
    \ref{enum:verdier-loc-precompose-to-Mor-V},
    \ref{enum:verdier-loc-WC-factors-WVC},
    \ref{enum:verdier-loc-Hom-WD-in-DmodV-equal-those-in-DmodC}
    hold for the squares
    $(\boxast 4)$ and $(\boxast 5).$
  \end{lemma}

  \begin{proof}
    The proof of Lemma~\ref{l:conditions-hold-for-square1} for
    arbitrary $W \colon X \ra \DA^1$ can easily be modified to show this
    result. Observe that the kernel of a surjective morphism of
    locally free sheaves (resp.\ flat quasi-coherent sheaves) is
    again locally free (resp.\ flat quasi-coherent).
  \end{proof}

  The proof of Theorem~\ref{t:equivalences-curved-categories} is
  complete.
\end{proof}

We deduce some corollaries from the proof of
Theorem~\ref{t:equivalences-curved-categories}.

\begin{corollary}
  \label{c:intersections-with-Acycl}
  We have
  \begin{align*}
    [\InjQcoh(X,W)]\cap \Acycl[\Qcoh(X,W)] & =0,\\
    [\MF(X,W)] \cap \Acycl[\Coh(X,W)] & = \Acycl[\MF(X,W)],\\
    [\Coh(X,W)] \cap \Acycl[\Qcoh(X,W)] & = \Acycl[\Coh(X,W)],\\
    [\Locfree(X,W)] \cap \Acycl[\Qcoh(X,W)] & =
    \Acycl[\Locfree(X,W)],\\ 
    [\FlatQcoh(X,W)] \cap \Acycl[\Qcoh(X,W)] & =
    \Acycl[\FlatQcoh(X,W)].
  \end{align*}
\end{corollary}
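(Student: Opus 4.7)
The plan is to deduce all five equalities directly from the full faithfulness statements in Theorem~\ref{t:equivalences-curved-categories}, using a single general principle: if $\mathcal{W} \subset \mathcal{C}$ are strict full triangulated subcategories with thick subcategories $\Acycl[\mathcal{W}] \subset \Acycl[\mathcal{C}]$ and the induced Verdier quotient functor $\mathcal{W}/\Acycl[\mathcal{W}] \ra \mathcal{C}/\Acycl[\mathcal{C}]$ is full and faithful, then $\mathcal{W} \cap \Acycl[\mathcal{C}] = \Acycl[\mathcal{W}].$

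To see this general principle, the inclusion $\supset$ is immediate. For $\subset,$ take $E \in \mathcal{W} \cap \Acycl[\mathcal{C}].$ Then $E$ becomes zero in $\mathcal{C}/\Acycl[\mathcal{C}],$ hence its image in $\mathcal{W}/\Acycl[\mathcal{W}]$ is zero as well, since the induced functor is full and faithful and thus reflects isomorphisms. Because $\Acycl[\mathcal{W}]$ is thick in $\mathcal{W},$ an object of $\mathcal{W}$ is isomorphic to zero in $\mathcal{W}/\Acycl[\mathcal{W}]$ precisely when it already lies in $\Acycl[\mathcal{W}],$ so $E \in \Acycl[\mathcal{W}].$

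Applying this principle one obtains the last four equalities straight from parts \ref{enum:Dcoh-DQcoh-ff}, \ref{enum:MF-Dcoh-equiv}, \ref{enum:DLocfree-DQcoh-equiv} and \ref{enum:DFlatQcoh-DQcoh-equiv} of Theorem~\ref{t:equivalences-curved-categories}: the fully faithful functors
\begin{equation*}
  \bfMF(X,W) \ra \DCoh(X,W), \qquad \DCoh(X,W) \ra \DQcoh(X,W),
\end{equation*}
\begin{equation*}
  \DLocfree(X,W) \ra \DQcoh(X,W), \qquad \DFlatQcoh(X,W) \ra \DQcoh(X,W)
\end{equation*}
give respectively the equalities for $[\MF(X,W)] \cap \Acycl[\Coh(X,W)],$ $[\Coh(X,W)] \cap \Acycl[\Qcoh(X,W)],$ $[\Locfree(X,W)] \cap \Acycl[\Qcoh(X,W)]$ and $[\FlatQcoh(X,W)] \cap \Acycl[\Qcoh(X,W)].$

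For the first equality the argument is even more direct: by Theorem~\ref{t:equivalences-curved-categories}.\ref{enum:injQcoh-DQcoh-equiv} the functor $[\InjQcoh(X,W)] \ra \DQcoh(X,W)$ is an equivalence, so any object of $[\InjQcoh(X,W)]$ that becomes zero in $\DQcoh(X,W)$ is already zero in $[\InjQcoh(X,W)];$ this yields $[\InjQcoh(X,W)] \cap \Acycl[\Qcoh(X,W)] = 0.$ There is no serious obstacle here, as the whole corollary is a formal consequence of full faithfulness; the only point to be careful about is invoking thickness of the various $\Acycl$ subcategories, which is built into their definitions in Definition~\ref{d:absolute-derived-categories}.
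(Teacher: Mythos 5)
Your proof is correct and follows essentially the same route as the paper: the paper applies condition (ff2)$^\opp$ of Proposition~\ref{p:verdier-localization-induced-functor-ff} to $\id_E$ (which is precisely your general principle unwound, the factorization of $\id_E$ through $\Acycl[\mathcal{W}]$ exhibiting $E$ as a direct summand of an acyclic object), while you instead invoke the full faithfulness already recorded in Theorem~\ref{t:equivalences-curved-categories} together with the standard fact that the kernel of a Verdier quotient by a thick subcategory is that subcategory. Both arguments rest on the same underlying lemmas and there is no circularity, so your packaging is a clean equivalent of the paper's proof.
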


\begin{proof}
  The first equality follows immediately from
  Lemma~\ref{l:sufficient-for-inj-DQcoh-equiv}.
  Let $E \in [\MF(X,W)] \cap \Acycl[\Coh(X,W)].$ 
  We have seen in Lemma~\ref{l:conditions-hold-for-square1}
  that condition
  \ref{enum:verdier-loc-WC-factors-WVC}
  holds for the square $(\boxast 1).$
  Applied to $\id_E,$ this condition shows that $E$ is a direct
  summand of an object of  
  $\Acycl[\MF(X,W)]$ and hence in $\Acycl[\MF(X,W)].$
  This proves the second equality. The remaining equalities 
  are proved similarly using
  Lemmata~\ref{l:factoring-through-coherent} and
  \ref{l:conditions-hold-for-squares-4-and-5}.
\end{proof}

\begin{corollary}
  [{cf.\ proof of \cite[Thm.~3.6]{positselski-two-kinds}}]
  \label{c:left-orthogonal-of-InjQcoh}
  Let
  $\strict([\InjQcoh(X,W)])$
  be the strict closure of $[\InjQcoh(X,W)]$ in $[\Qcoh(X,W)].$
  Then 
  \begin{equation*}
    [\Qcoh(X,W)]= \big\langle
    \strict([\InjQcoh(X,W)]),
    \Acycl[\Qcoh(X,W)]
    \big\rangle
  \end{equation*}
  is a semi-orthogonal decomposition
  (see Def.~\ref{d:semi-orthogonal-decomposition}).
  In particular,
  $\Acycl[\Qcoh(X,W)]$
  is the left orthogonal of $[\InjQcoh(X,W)$ in $[\Qcoh(X,W)].$
\end{corollary}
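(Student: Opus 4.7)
The plan is to verify the two defining properties of a semi-orthogonal decomposition directly from the results already established, and then to deduce the orthogonality statement from the decomposition. Write $\mathcal{T}=[\Qcoh(X,W)]$, $\mathcal{A}=\strict([\InjQcoh(X,W)])$, and $\mathcal{B}=\Acycl[\Qcoh(X,W)]$.

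First I would check the Hom-vanishing $\Hom_\mathcal{T}(\mathcal{B},\mathcal{A})=0$. Lemma~\ref{l:sufficient-for-inj-DQcoh-equiv} gives $\Hom_\mathcal{T}(\mathcal{B},[\InjQcoh(X,W)])=0$; since $\mathcal{A}$ is obtained from $[\InjQcoh(X,W)]$ by closing under isomorphism in $\mathcal{T}$, the vanishing extends to $\mathcal{A}$ automatically.

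Next I would produce, for every $F\in\mathcal{T}$, a triangle
\begin{equation*}
A\lra F\lra J\lra [1]A
\end{equation*}
with $A\in\mathcal{B}$ and $J\in\mathcal{A}$. For this I invoke Lemma~\ref{l:resolutions}\ref{enum:Inj-reso}: there is an exact sequence $0\to F\to I^0\to\cdots\to I^n\to 0$ in $Z_0(\Qcoh(X,W))$ with $I^j\in\InjQcoh(X,W)$, and the associated morphism $F\to\Tot(I)$ has cone $C\in\Acycl[\Qcoh(X,W)]$. The totalization $\Tot(I)$ is, in each $\DZ_2$-component, a finite direct sum of the $I^j$ (with signs), and hence lies in $\InjQcoh(X,W)$ by Theorem~\ref{t:injective-in-qcoh-vs-all-OX}\ref{enum:inj-and-inj-qcoh-arbitrary-sums}. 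Rotating the standard triangle $F\to\Tot(I)\to C\to [1]F$ yields
\begin{equation*}
[-1]C\lra F\lra \Tot(I)\lra C,
\end{equation*}
and $[-1]C\in\mathcal{B}$ since $\mathcal{B}$ is triangulated. This is the required triangle, so together with the Hom-vanishing it proves the semi-orthogonal decomposition (see Definition~\ref{d:semi-orthogonal-decomposition}).

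For the final claim, one inclusion ${}^\perp[\InjQcoh(X,W)]\supset \Acycl[\Qcoh(X,W)]$ is Lemma~\ref{l:sufficient-for-inj-DQcoh-equiv}. For the reverse inclusion, let $F\in{}^\perp[\InjQcoh(X,W)]$; since $\mathcal{A}$ differs from $[\InjQcoh(X,W)]$ only by isomorphic objects, we also have $F\in{}^\perp\mathcal{A}$. Apply the triangle from the previous step: the morphism $F\to J$ must vanish, so $F$ is a direct summand of $[-1][1]A\cong A\in\Acycl[\Qcoh(X,W)]$. Because $\Acycl[\Qcoh(X,W)]$ is thick by construction, we conclude $F\in\Acycl[\Qcoh(X,W)]$. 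No step is really an obstacle; the only point requiring a moment of care is the observation that finite totalizations of injective quasi-coherent $W$-curved sheaves remain injective, which is precisely what Theorem~\ref{t:injective-in-qcoh-vs-all-OX}\ref{enum:inj-and-inj-qcoh-arbitrary-sums} supplies.
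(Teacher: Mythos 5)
Your proof is correct and follows the same route as the paper: the triangle $A\to F\to\Tot(I)\to[1]A$ supplied by Lemma~\ref{l:resolutions}.\ref{enum:Inj-reso} together with the Hom-vanishing of Lemma~\ref{l:sufficient-for-inj-DQcoh-equiv} is exactly the paper's argument for the decomposition. The only (cosmetic) difference is in the last claim, where the paper simply cites Lemma~\ref{l:first-properties-semi-orthog-decomp}.\ref{enum:semi-orth-and-perps} while you reprove that special case by hand via the splitting of the triangle and thickness of $\Acycl[\Qcoh(X,W)]$ — both are fine.
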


\begin{proof}
  Lemma~\ref{l:resolutions}.\ref{enum:Inj-reso}
  yields for each $F \in [\Qcoh(X,W)]$ a triangle
  $A \ra F \ra J \ra [1]A$ with $A \in \Acycl[\Qcoh(X,W)]$
  and 
  $J \in [\InjQcoh(X,W)].$
  Together with Lemma~\ref{l:sufficient-for-inj-DQcoh-equiv}
  this proves the first claim. The second claim follows from
  Lemma~\ref{l:first-properties-semi-orthog-decomp}.\ref{enum:semi-orth-and-perps}.
\end{proof}

\begin{corollary}
  \label{c:direct-sums}
  The categories 
  $[\Qcoh(X,W)],$
  $[\InjQcoh(X,W)],$
  $\Acycl[\Qcoh(X,W)],$ and $\DQcoh(X,W)$ are cocomplete
  (closed under 
  arbitrary direct sums) and therefore Karoubian, and the functor
  $[\Qcoh(X,W)] \ra \DQcoh(X,W)$ preserves direct sums.
\end{corollary}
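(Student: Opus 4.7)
The plan is to bootstrap the corollary from the semi-orthogonal decomposition established in Corollary~\ref{c:left-orthogonal-of-InjQcoh}, reducing everything to: (a) closure of $\Qcoh(X,W)$ and $\InjQcoh(X,W)$ under set-indexed direct sums, and (b) the general principle that left orthogonals in, and Verdier quotients by subcategories closed under direct sums of, cocomplete triangulated categories behave well with respect to coproducts.

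First I would observe that $[\Qcoh(X,W)]$ is cocomplete. Indeed, for a family $(E^\lambda)$ of objects in $\Qcoh(X,W)$, the componentwise direct sum $\bigoplus_\lambda E^\lambda$ again lies in $\Qcoh(X,W)$, and the dg functor $\Hom_{\Qcoh(X,W)}(F,-)$ sends it to a direct sum of dg modules; passing to $H_0$ then shows that this is a categorical coproduct in $[\Qcoh(X,W)]$. The same argument, combined with Theorem~\ref{t:injective-in-qcoh-vs-all-OX}.\ref{enum:inj-and-inj-qcoh-arbitrary-sums} (which says arbitrary direct sums of injective quasi-coherent sheaves remain injective quasi-coherent), shows that $[\InjQcoh(X,W)]$ is cocomplete and that the inclusion $[\InjQcoh(X,W)] \hookrightarrow [\Qcoh(X,W)]$ preserves direct sums.

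Next I would show $\Acycl[\Qcoh(X,W)]$ is closed under direct sums in $[\Qcoh(X,W)]$. By Corollary~\ref{c:left-orthogonal-of-InjQcoh}, this subcategory equals the left orthogonal ${}^\perp[\InjQcoh(X,W)]$. Since $\Hom_{[\Qcoh(X,W)]}(\bigoplus_\lambda A^\lambda, J) = \prod_\lambda \Hom_{[\Qcoh(X,W)]}(A^\lambda, J)$, the left orthogonal of any class of objects in a cocomplete triangulated category is automatically closed under direct sums, which gives the claim. For $\DQcoh(X,W)$, I would invoke the standard fact that if $\mathcal{T}$ is a cocomplete triangulated category and $\mathcal{S} \subset \mathcal{T}$ is a thick subcategory closed under direct sums, then the Verdier quotient $\mathcal{T}/\mathcal{S}$ is cocomplete and the localization functor preserves direct sums (coproducts in the quotient are computed by taking coproducts in $\mathcal{T}$ and applying the localization). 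Alternatively, one can identify $\DQcoh(X,W)$ with $\strict([\InjQcoh(X,W)])$ via Corollary~\ref{c:left-orthogonal-of-InjQcoh} and transport the coproduct structure across this equivalence.

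Finally, the Karoubian property is automatic: every triangulated category with countable coproducts is idempotent complete, by the standard Eilenberg swindle argument (an idempotent $e$ on $X$ yields a splitting via the homotopy colimit of $X \xra{e} X \xra{e} X \xra{e} \dots$, which exists because we have countable direct sums and mapping cones). I do not anticipate a serious obstacle here; the main conceptual point is really just noting that each category in the list sits inside or is a localization of $[\Qcoh(X,W)]$ in a way compatible with direct sums, and then the semi-orthogonal decomposition from Corollary~\ref{c:left-orthogonal-of-InjQcoh} does the rest of the work.
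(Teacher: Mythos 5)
Your proposal follows essentially the same route as the paper: componentwise direct sums give cocompleteness of $[\Qcoh(X,W)]$ and (via Theorem~\ref{t:injective-in-qcoh-vs-all-OX}.\ref{enum:inj-and-inj-qcoh-arbitrary-sums}) of $[\InjQcoh(X,W)]$, the acyclic subcategory is handled as the left orthogonal of $[\InjQcoh(X,W)]$ using Corollary~\ref{c:left-orthogonal-of-InjQcoh}, and the quotient and the Karoubian property are handled by exactly the facts the paper cites from B\"okstedt--Neeman (coproducts in a Verdier quotient by a coproduct-closed subcategory, and the Eilenberg swindle). One small slip: to see that the componentwise direct sum is a coproduct in $[\Qcoh(X,W)]$ you need $\Hom_{\Qcoh(X,W)}(\bigoplus_\lambda E^\lambda, F) \cong \prod_\lambda \Hom_{\Qcoh(X,W)}(E^\lambda, F)$ (maps \emph{out of} the sum), not the statement you wrote about $\Hom(F,-)$ turning the sum into a direct sum, which is a compactness assertion and false in general.
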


\begin{proof}
  It is clear that $[\Qcoh(X,W)]$ is cocomplete.
  Note that
  $[\InjQcoh(X,W)]$ is cocomplete by
  Theorem~\ref{t:injective-in-qcoh-vs-all-OX}.\ref{enum:inj-and-inj-qcoh-arbitrary-sums},
  and that
  $\Acycl[\Qcoh(X,W)]$ is cocomplete as the left orthogonal
  of $[\InjQcoh(X,W)]$ in $[\Qcoh(X,W)],$ see
  Lemma~\ref{c:left-orthogonal-of-InjQcoh}.
  Now use
  \cite[Lemma~1.5 and Prop.~3.2]{neeman-homotopy-limits}.
  Cocompleteness of $\DQcoh(X,W)$ follows also from
  Theorem~\ref{t:equivalences-curved-categories}.\ref{enum:injQcoh-DQcoh-equiv}.
\end{proof}

The following definition should be compared with 
Definition~\ref{d:absolute-derived-categories}.
Note that $[\Sh(X,W)$ and $[\Qcoh(X,W)]$ are cocomplete.

\begin{definition}
  \label{d:coderived-category}
  Denote by $\Acycl^\co[\Sh(X,W)]$ the full triangulated
  subcategory of $\Sh(X,W)$ that contains $\Acycl[\Sh(X,W)]$
  and is closed under arbitrary
  direct sums. 
  Following
  \cite{positselski-two-kinds,positselski-coh-analogues-matrix-fact-sing-cats} again
  we define
  the \define{coderived category
    $\DSh^\co(X,W)$ of $W$-curved dg sheaves}
  as the Verdier quotient
  \begin{equation*}
    \DSh^\co(X,W):=[\Sh(X,W)]/\Acycl^\co[\Sh(X,W)].
  \end{equation*}
\end{definition}

If we define $\DQcoh^\co(X,W)$ similarly, 
Corollary~\ref{c:direct-sums} shows that $\DQcoh(X,W) =
\DQcoh^\co(X,W).$ 

\begin{theorem}
  \label{t:big-curved-categories}
  \rule{1mm}{0mm}
  \begin{enumerate}
  \item
    \label{enum:InjMod-DModco-equiv}
    The functor $[\InjSh(X,W)] \ra \DSh^\co(X,W)$ is an
    equivalence. 
  \item
    \label{enum:DQcoh-DModco-ff}
    The functor $\DQcoh(X,W) \ra \DSh^\co(X,W)$ is full and
    faithful.
  \end{enumerate}
\end{theorem}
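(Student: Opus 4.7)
The plan is to follow the template of Theorem~\ref{t:equivalences-curved-categories}.\ref{enum:injQcoh-DQcoh-equiv} and \ref{enum:Dcoh-DQcoh-ff}, adapted to the coderived setting where one must allow infinite injective resolutions and exploit that $\Acycl^\co[\Sh(X,W)]$ is closed under arbitrary direct sums. Here $\InjSh(X,W)$ denotes the full dg subcategory of $\Sh(X,W)$ whose components lie in $\InjSh(X).$ By Theorem~\ref{t:injective-in-qcoh-vs-all-OX}.\ref{enum:inj-qcoh-equal-inj-OX-that-qcoh} one has $\InjQcoh(X,W)\subset \InjSh(X,W),$ and by Theorem~\ref{t:injective-in-qcoh-vs-all-OX}.\ref{enum:inj-and-inj-qcoh-arbitrary-sums} the class $\InjSh(X,W)$ is closed under arbitrary coproducts.

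For part~\ref{enum:InjMod-DModco-equiv}, the orthogonality $\Hom_{[\Sh(X,W)]}(\Acycl^\co[\Sh(X,W)],[\InjSh(X,W)])=0$ is inherited from the analog for $\Acycl[\Sh(X,W)],$ which is proved verbatim as in Lemma~\ref{l:sufficient-for-inj-DQcoh-equiv} using that each component of $J\in \InjSh(X,W)$ is $\mathcal{O}_X$-injective; the passage to $\Acycl^\co$ is automatic because $\Hom_{\Sh(X,W)}(-,J)$ sends coproducts to products and $H_0$ commutes with products. For essential surjectivity, I adapt the construction of Lemma~\ref{l:resolutions}.\ref{enum:Inj-reso}: embed each component of $F\in \Sh(X,W)$ into an $\mathcal{O}_X$-injective, form the analog of $G^-(J)\in \InjSh(X,W),$ and iterate to obtain a possibly unbounded exact resolution $0\to F\to I^0\to I^1\to\dots$ in $Z_0(\Sh(X,W)).$ Direct-sum totalization yields $\Tot(I)\in \InjSh(X,W),$ and one shows that $F\to \Tot(I)$ is an isomorphism in $\DSh^\co(X,W)$ by presenting its cone as a homotopy colimit inside $[\Sh(X,W)]$ of the totalizations of the bounded exact subcomplexes $0\to F\to I^0\to \dots\to I^{n-1}\to Z^n\to 0$ with $Z^n=\ker(I^n\to I^{n+1})$; each such totalization lies in $\Acycl[\Sh(X,W)]$ by Lemma~\ref{l:totalization}.\ref{enum:totalization-bounded-exact-complex}, and the hocolim is built via a telescope triangle on a countable coproduct, placing the whole cone in $\Acycl^\co[\Sh(X,W)].$ The semi-orthogonal-decomposition packaging of Corollary~\ref{c:left-orthogonal-of-InjQcoh} then delivers the equivalence.

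For part~\ref{enum:DQcoh-DModco-ff}, fully-faithfulness follows from part~\ref{enum:InjMod-DModco-equiv} combined with Theorem~\ref{t:equivalences-curved-categories}.\ref{enum:injQcoh-DQcoh-equiv}. Given $F,G\in \Qcoh(X,W),$ choose a finite injective resolution $G\to I$ in $Z_0(\Qcoh(X,W))$ via Lemma~\ref{l:resolutions}.\ref{enum:Inj-reso}, so that $I\in \InjQcoh(X,W)\subset \InjSh(X,W).$ Then the chain of natural identifications
\begin{equation*}
\Hom_{\DQcoh(X,W)}(F,G) \cong \Hom_{[\Qcoh(X,W)]}(F,I) = \Hom_{[\Sh(X,W)]}(F,I) \cong \Hom_{\DSh^\co(X,W)}(F,G)
\end{equation*}
yields the claim: the outer isomorphisms are Remark~\ref{rem:morphisms-to-injectives} and its $\DSh^\co$-analog supplied by part~\ref{enum:InjMod-DModco-equiv}, while the middle equality is a tautology since the dg Hom-spaces in $\Qcoh(X,W)$ and $\Sh(X,W)$ literally coincide, both being $\bigoplus \Hom_{\mathcal{O}_X}(-,-).$

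The hardest step is the essential surjectivity in part~\ref{enum:InjMod-DModco-equiv}: the finite injective-resolution argument of Lemma~\ref{l:resolutions}.\ref{enum:Inj-reso} no longer applies to arbitrary $\mathcal{O}_X$-modules, so one must handle unbounded resolutions and verify that the cone of $F\to \Tot(I)$ genuinely lies in $\Acycl^\co[\Sh(X,W)]$---the smallest triangulated subcategory of $[\Sh(X,W)]$ containing $\Acycl[\Sh(X,W)]$ and closed under coproducts---via an explicit homotopy-colimit presentation built only from Lemma~\ref{l:totalization} and arbitrary coproducts.
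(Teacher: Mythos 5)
Your proposal is correct and follows essentially the same route as the paper: unbounded injective resolutions via the $G^-$ construction together with closure of $\InjSh(X,W)$ under coproducts give essential surjectivity, the telescope on a countable coproduct of bounded truncations (which the paper isolates as Lemma~\ref{l:totalization-of-bounded-below-exact-complex-in-Acyclco}) places the cone of $F \ra \Tot(I)$ in $\Acycl^\co[\Sh(X,W)],$ orthogonality passes from $\Acycl$ to $\Acycl^\co$ because left orthogonals are closed under direct sums, and part~\ref{enum:DQcoh-DModco-ff} is deduced exactly as in the paper from the full and faithful inclusion $[\InjQcoh(X,W)] \ra [\InjSh(X,W)].$
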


\begin{proof}
  \ref{enum:InjMod-DModco-equiv} implies
  \ref{enum:DQcoh-DModco-ff}:
  Note that we have a full and faithful functor $\InjQcoh(X,W)
  \ra \InjSh(X,W)$ by 
  Theorem~\ref{t:injective-in-qcoh-vs-all-OX}.\ref{enum:inj-qcoh-equal-inj-OX-that-qcoh}. Hence
  $[\InjQcoh(X,W)] \ra [\InjSh(X,W)]$ is full and faithful, and
  we can use
  Theorem~\ref{t:equivalences-curved-categories}.\ref{enum:injQcoh-DQcoh-equiv}.

  \ref{enum:InjMod-DModco-equiv}:
  Adapting the proof of
  Lemma~\ref{l:resolutions}.\ref{enum:Inj-reso} 
  shows: 
  For any $F\in \Sh(X,W)$ there exists an exact sequence
  $0\ra F\ra I^0 \ra I^1 \ra \dots$ in $Z_0(\Sh(X,W))$
  with all $I^j \in \InjSh(X,W).$ 
  It follows from
  Lemma~\ref{l:totalization-of-bounded-below-exact-complex-in-Acyclco}
  below that
  the obvious morphism 
  $F \ra \Tot(I)$ has cone in $\Acycl^\co[\Sh(X,W)]$
  and hence becomes an isomorphism
  in $\DSh^\co(X,W).$
  Theorem~\ref{t:injective-in-qcoh-vs-all-OX}.\ref{enum:inj-and-inj-qcoh-arbitrary-sums} 
  shows that $\Tot(I) \in \InjSh(X,W).$
  This implies that 
  $[\InjSh(X,W)] \ra \DSh^\co(X,W)$ is essentially surjective.

  Adapting the proof of Lemma~\ref{l:sufficient-for-inj-DQcoh-equiv}
  shows that the left orthogonal of
  $[\InjSh(X,W)]$ in $[\Sh(X,W)]$ contains 
  $\Acycl[\Sh(X,W)]$ and hence $\Acycl^\co[\Sh(X,W)]$ since any
  left orthogonal is stable under direct sums.
  Now use
  condition~\ref{enum:verdier-loc-CW-factors-CVW}
  of Proposition~\ref{p:verdier-localization-induced-functor-ff}.
\end{proof}

\begin{lemma}
  \label{l:totalization-of-bounded-below-exact-complex-in-Acyclco}
  If $F$ is a bounded below exact complex in $Z_0(\Sh(X,W)),$
  then $\Tot(F) \in \Acycl^\co[\Sh(X,W)].$
\end{lemma}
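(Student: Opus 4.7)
The plan is to approximate $F$ by bounded exact subcomplexes $F'_n \subset F$, apply Lemma~\ref{l:totalization}.\ref{enum:totalization-bounded-exact-complex} to each, and pass to the colimit using a telescope short exact sequence, whose direct sum structure is exactly what $\Acycl^\co$ is designed to absorb.

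Say $F^i = 0$ for $i < a$. For each $n \geq a$ set $K^{n+1} := \ker\bigl(d^{n+1}\colon F^{n+1} \to F^{n+2}\bigr)$, which lies in $\Sh(X,W)$ because $Z_0(\Sh(X,W))$ is abelian. Let $F'_n$ be the complex
\begin{equation*}
0 \to F^a \xrightarrow{d^a} F^{a+1} \to \cdots \to F^n \xrightarrow{\tilde d^n} K^{n+1} \to 0
\end{equation*}
concentrated in $\DZ$-degrees $[a, n+1]$, where $\tilde d^n$ is $d^n$ corestricted to $K^{n+1}$ (possible since $\im d^n \subseteq \ker d^{n+1} = K^{n+1}$ by exactness of $F$). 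Exactness of $F$ gives exactness of $F'_n$: at position $n$, $\ker \tilde d^n = \ker d^n = \im d^{n-1}$, and at position $n+1$, $\im \tilde d^n = \im d^n = K^{n+1}$. Hence $\Tot(F'_n) \in \Acycl[\Sh(X,W)] \subset \Acycl^\co[\Sh(X,W)]$ by Lemma~\ref{l:totalization}.\ref{enum:totalization-bounded-exact-complex}.

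The inclusion $K^{n+1} \hookrightarrow F^{n+1}$ makes $F'_n$ a subcomplex of $F'_{n+1}$, yielding monomorphisms $\iota_n\colon \Tot(F'_n) \hookrightarrow \Tot(F'_{n+1})$ in $Z_0(\Sh(X,W))$. Since $(F'_m)^i = F^i$ for all $m \geq i$, the complex $F$ is the sequential colimit of the $F'_n$, and this colimit commutes with $\Tot$ because each $\DZ_2$-graded piece of $\Tot(F)$ is a direct sum whose $i$-th summand already appears (intact) in $\Tot(F'_i)$. Thus $\Tot(F) = \colim_n \Tot(F'_n)$ in $Z_0(\Sh(X,W))$, and the standard telescope construction gives a short exact sequence
\begin{equation*}
0 \to \bigoplus_{n \geq a} \Tot(F'_n) \xrightarrow{1 - s} \bigoplus_{n \geq a} \Tot(F'_n) \to \Tot(F) \to 0
\end{equation*}
in $Z_0(\Sh(X,W))$, where $s$ is the shift induced by the inclusions $\iota_n$.

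By Lemma~\ref{l:totalization}.\ref{enum:ses-gives-triangle} this induces a triangle in $\DSh(X,W)$, and hence in $\DSh^\co(X,W)$. Both copies of $\bigoplus_n \Tot(F'_n)$ lie in $\Acycl^\co[\Sh(X,W)]$ by its closure under direct sums, so the triangle forces $\Tot(F) \in \Acycl^\co[\Sh(X,W)]$. The one non-mechanical point is verifying that $F$ is a sequential colimit of the $F'_n$ compatibly with totalization; this is exactly where the bounded-below hypothesis enters (it guarantees that the $i$-th summand of each $\DZ_2$-graded piece of $\Tot(F)$ stabilizes at a finite stage $n = i$, so the identification $\Tot(F) = \colim_n \Tot(F'_n)$ holds in $Z_0(\Sh(X,W))$).
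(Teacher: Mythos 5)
Your proof is correct and follows essentially the same route as the paper: the same kernel-truncated bounded exact subcomplexes (your $F'_n$ is the paper's $F_{\leq n+1}$), the same telescope short exact sequence, and the same appeal to Lemma~\ref{l:totalization}, parts \ref{enum:ses-gives-triangle} and \ref{enum:totalization-bounded-exact-complex}, together with closure of $\Acycl^\co[\Sh(X,W)]$ under direct sums. Your explicit check that totalization commutes with the sequential colimit is a welcome detail the paper leaves implicit.
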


\begin{proof}
  We can assume that $F=(\ldots \ra 0 \ra F^0 \ra F^1 \ra
  \dots).$ Let $F_{\leq n}$ be the subcomplex that coincides with
  $F$ in degrees $<n,$ is zero in degrees $>n,$ and whose degree
  $n$ component is the kernel of $F^n \ra F^{n+1}.$ We have
  monomorphisms $F_{\leq 0} \ra F_{\leq 1} \ra F_{\leq 2} \ra \ldots$
  of bounded exact complexes, and $F=\colim F_{\leq n}.$
  Note that there is a short exact sequence 
  \begin{equation*}
    0 \ra 
    \bigoplus_{n \in \DN} F_{\leq n} \ra
    \bigoplus_{n \in \DN} F_{\leq n} \ra
    F \ra 0
  \end{equation*}
  of complexes in $Z_0(\Sh(X,W)).$ Totalizing yields a short
  exact sequence 
  \begin{equation*}
    0 \ra \bigoplus_{n \in \DN} \Tot(F_{\leq n}) \ra
    \bigoplus_{n \in \DN} \Tot(F_{\leq n}) \ra
    \Tot(F) \ra 0
  \end{equation*}
  in $Z_0(\Sh(X,W)).$
  Part~\ref{enum:ses-gives-triangle}
  of Lemma~\ref{l:totalization} 
  shows that this short exact sequence yields a triangle in
  $\DSh(X,W)$ and a fortiori in 
  $\DSh^\co(X,W),$
  and part~\ref{enum:totalization-bounded-exact-complex} 
  of the same lemma shows that
  $\bigoplus_{n \in \DN} \Tot(F_{\leq n}) \in \Acycl^\co[\Sh(X,W)].$
  Hence $\Tot(F)$ becomes zero in 
  $\DSh^\co(X,W).$ The claim follows.
\end{proof}

\begin{remark}
  \label{rem:results-if-injective-dim-finite}
  We don't know whether $\Sh(X)$ has finite injective
  dimension.
  If this is the case the method 
  used to prove Theorem~\ref{t:equivalences-curved-categories}.\ref{enum:injQcoh-DQcoh-equiv}
  easily implies that
  $[\InjSh(X,W)] \ra \DSh(X,W)$ is an equivalence; 
  moreover
  Theorem~\ref{t:big-curved-categories}.\ref{enum:InjMod-DModco-equiv}
  then 
  shows that $\DSh(X,W) = \DSh^\co(X,W)$ and
  $\Acycl[\Sh(X,W)]=\Acycl^\co[\Sh(X,W)].$
\end{remark}

\subsection{Case of constant \texorpdfstring{$W$}{W}}
\label{sec:case-const-texorpdfs}

We study the case that $W$ is a constant function; 
recall that this means that $W(X)$ consists of a single point
of $\DA^1=\Spec k[T]$ which is then necessarily closed.
First we note that the case of a constant nonzero $W$ is not
interesting.

\begin{lemma}
  \label{l:case-W=constant-nonzero}
  Assume that the function $W$ is constant but $W \neq 0.$ Then
  $[\Sh(X,W)]=0.$
  In particular, all the subcategories
  $[\Qcoh(X,W)], \dots, [\MF(X,W)]$
  and all the quotient categories
  $\DSh(X,W), \DQcoh(X,W), \dots, \bfMF(X,W)$ are zero.
\end{lemma}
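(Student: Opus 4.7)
The plan is to show directly that the identity morphism $\id_E$ is null-homotopic in the dg category $\Sh(X,W)$ for every $W$-curved dg sheaf $E$, from which everything else follows formally. Since $W = c$ is a nonzero constant, the relations $e_0 e_1 = c \id_{E_1}$ and $e_1 e_0 = c \id_{E_0}$ say that $e_0$ and $e_1$ are isomorphisms with $e_1 = c \cdot e_0^{-1}$; morally this is why $E$ should be contractible.

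Concretely, I would take an arbitrary $E = (\matfak{E_1}{e_1}{E_0}{e_0}) \in \Sh(X,W)$ and define a degree one endomorphism $h = (h_0, h_1) \in \Hom_{\Sh(X,W)}(E,E)$ by $h_0 = 0 \colon E_0 \to E_1$ and $h_1 = c^{-1} e_1 \colon E_1 \to E_0$. Using the differential $d(g) = e \comp g - (-1)^{|g|} g \comp e$ from Definition~\ref{d:Wcdg-sheaves}, the degree zero morphism $d(h)$ has components
\begin{equation*}
  (d(h))|_{E_0} = e_1 h_0 + h_1 e_0 = c^{-1} e_1 e_0 = \id_{E_0},
\end{equation*}
\begin{equation*}
  (d(h))|_{E_1} = e_0 h_1 + h_0 e_1 = c^{-1} e_0 e_1 = \id_{E_1},
\end{equation*}
so $d(h) = \id_E$ in $Z_0(\Hom_{\Sh(X,W)}(E,E))$. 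This forces $\id_E = 0$ in $[\Sh(X,W)]$, hence $E \cong 0$ in $[\Sh(X,W)]$.

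Since this applies to every object, the homotopy category $[\Sh(X,W)]$ is zero. The same $h$ lives in each of the full subcategories $\Qcoh(X,W)$, $\Coh(X,W)$, $\MF(X,W)$, etc.\ (note $h_1$ is built from the structure maps of $E$, so it preserves the relevant class), so the corresponding homotopy categories $[\Qcoh(X,W)], \dots, [\MF(X,W)]$ all vanish. Any Verdier quotient of a zero category is zero, which gives the vanishing of $\DSh(X,W), \DQcoh(X,W), \dots, \bfMF(X,W)$.

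There is essentially no obstacle here; the only subtle point is keeping the sign conventions straight in the differential, but the symmetry $e_0 e_1 = c \id$, $e_1 e_0 = c \id$ makes the computation go through with either choice of homotopy ($h = (c^{-1} e_0, 0)$ works equally well). The hypothesis $c \ne 0$ is used precisely to invert $c$ when defining $h$, which is exactly where the case $W = 0$ fails.
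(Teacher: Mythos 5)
Your proof is correct and is essentially the paper's own argument: the paper also contracts every object $E$ via an explicit degree-one homotopy built from $W^{-1}$ and a structure map (it uses $h=(W^{-1}e_0,0)$, the variant you yourself note works equally well), and then deduces the vanishing of the subcategories and quotients formally. The only point the paper treats slightly more carefully is why a constant nonzero $W$ is invertible in $\Gamma(X,\mathcal{O}_X)$ --- in their convention the constant value corresponds to a closed point of $\DA^1$ whose residue field may be a finite extension of $k$, so one argues that $T$ is invertible in that field rather than writing $W=c\in k^\times$ --- but this does not affect your computation.
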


\begin{proof}
  The assumption implies that the morphism $k[T] \ra \Gamma(X,
  \mathcal{O}_X),$ $T \mapsto W,$ factors as $k[T] \ra
  k[T]/\primp \ra \Gamma(X, \mathcal{O}_X)$ where $\primp \subset
  k[T]$ is a maximal ideal $\not=(T).$ In particular $T$ is
  invertible in the field $k[T]/\primp,$ so $W$ is invertible in
  $\Gamma(X, \mathcal{O}_X).$

  Hence for any $E \in \Sh(X,W)$ the degree one morphism
  \begin{equation*}
    h:=(W\inv e_0, 0) \in \End_{\Sh(X,W)}(E)_1=\Hom_{\Sh(X)}(E_0,
    E_1) \oplus \Hom_{\Sh(X)}(E_1, E_0)  
  \end{equation*}
  satisfies $d h =\id_E,$ i.\,e.\ $E$ is isomorphic to zero in
  $[\Sh(X,W)].$
\end{proof}

Hence let us study the case $W=0.$
Given an object $E\in \Qcoh(X,0)$ we may consider its
cohomology
$H(E)$ which is just a graded quasi-coherent sheaf with
components 
$H_0(E)$ and $H_1(E).$ 
Let
\begin{equation*}
  \Ex[\Qcoh(X,0)] := \{E \in [\Qcoh(X,0)] \mid \text{$H_p(E)=0$
    for all $p \in \DZ_2$}\},
\end{equation*}
and define 
$\Ex[\Coh(X,0)], \dots,
\Ex[\FlatQcoh(X,0)]$
accordingly.
These categories are thick subcategories of
$[\Qcoh(X,0)], \dots, [\FlatQcoh(X,0],$ respectively, and we can
form the corresponding Verdier quotients. The next proposition shows
that this yields alternative definitions of the categories
$\DQcoh(X,0), \dots, \DFlatQcoh(X,0).$

Note that any morphism $f \colon E \ra F$ in
$Z_0(\Qcoh(X,0))$ induces a 
morphism $H(f) \colon  H(E) \ra H(F)$ on cohomology objects;
it is called a 
quasi-isomorphism if $H(f)$ is an isomorphism. 
It is easy to see that $H_0 \colon [\Qcoh(X,0)] \ra \Qcoh(X)$ is a
cohomological functor.

\begin{remark}
  \label{rem:ExMod-equals-AcyclMod-if-injective-dim-finite}
  These definitions clearly also make sense for $[\Sh(X,0)].$
  If we knew that $\Sh(X)$ has finite injective dimension, the
  obvious modification of the proof of the following
  proposition would show that 
  $\Ex[\Sh(X,0)] = \Acycl[\Sh(X,0)].$
\end{remark}

\begin{proposition}
  \label{p:case-W-equals-zero-acycl-equals-exact}
  Let $\mathcal{M}$ be $\Qcoh(X,0),$ $\Coh(X,0),$ 
  $\MF(X,0),$ 
  $\InjQcoh(X,0),$ 
  $\Locfree(X,0),$
  or
  $\FlatQcoh(X,0).$
  Then
  \begin{equation*}
    \Ex[\mathcal{M}] = \Acycl[\mathcal{M}]
  \end{equation*}
  and in particular $\op{D}\mathcal{M} =
  [\mathcal{M}]/\Ex[\mathcal{M}]$
  (where $\op{D}\MF(X,0):= \bfMF(X,0)$).
  A morphism $f$ in
  $Z_0(\mathcal{M})$ becomes an isomorphism in
  $\op{D}\mathcal{M}$ if and 
  only if $H(f)$ is a quasi-isomorphism. 
\end{proposition}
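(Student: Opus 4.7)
The plan is to prove both inclusions $\Acycl[\mathcal{M}] \subseteq \Ex[\mathcal{M}]$ and $\Ex[\mathcal{M}] \subseteq \Acycl[\mathcal{M}]$; the quasi-isomorphism characterization will then follow from a standard long-exact-sequence argument. Since $W = 0$, an object of $Z_0(\mathcal{M})$ is literally a $\DZ_2$-periodic chain complex in $\Qcoh(X)$, so $H_0$ descends to a cohomological functor on $[\mathcal{M}]$ and the common kernel $\Ex[\mathcal{M}]$ of $H_0$ on all shifts is thick. Any short exact sequence in $Z_0(\mathcal{M})$ totalizes to an exact $\DZ_2$-periodic chain complex (by the snake lemma), so $\Acycl[\mathcal{M}] \subseteq \Ex[\mathcal{M}]$.

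The reverse inclusion is handled first for $\mathcal{M} = \Qcoh(X,0)$. The key observation is that when $W = 0$, the auxiliary object $G^-(J)$ from the proof of Lemma~\ref{l:resolutions}\ref{enum:Inj-reso} is automatically contractible: with $e_0 = 0 \oplus \id$ and $e_1 = \id \oplus 0$ on $J_0 \oplus J_1$, the pair $h_0 = \id \oplus 0$ and $h_1 = 0 \oplus \id$ is a contracting homotopy. Hence, for $E$ with $H(E) = 0$, the resolution $0 \to E \to I^0 \to \cdots \to I^n \to 0$ supplied by Lemma~\ref{l:resolutions}\ref{enum:Inj-reso} consists of objects $I^j = G^-(J^j)$, all of which lie in $\Acycl[\Qcoh(X,0)]$. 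Lemma~\ref{l:totalization}\ref{enum:totalization-of-finite-complex-components-null-homotopic} then places $\Tot(I^\bullet)$ in $\Acycl[\Qcoh(X,0)]$, while Lemma~\ref{l:resolutions}\ref{enum:Inj-reso} identifies the cone of the canonical map $E \to \Tot(I^\bullet)$ as lying in $\Acycl[\Qcoh(X,0)]$; rotating the resulting triangle and using that $\Acycl[\Qcoh(X,0)]$ is triangulated forces $E$ itself into $\Acycl[\Qcoh(X,0)]$.

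The remaining cases are reduced to the $\Qcoh(X,0)$ case via Corollary~\ref{c:intersections-with-Acycl}: given $E \in \mathcal{M}$ with $H(E) = 0$, viewing $E$ inside $\Qcoh(X,0)$ places it in $\Acycl[\Qcoh(X,0)]$ by the previous step, and the intersection identities from that corollary then give $E \in \Acycl[\mathcal{M}]$ directly for $\Coh, \Locfree, \FlatQcoh$, in two steps through $\Coh(X,0)$ for $\MF$, and trivially for $\InjQcoh$ where the intersection is already zero. Finally, for $f \colon E \to F$ in $Z_0(\mathcal{M})$, the standard triangle $E \to F \to \Cone(f) \to [1]E$ in $[\mathcal{M}]$ together with the cohomological nature of $H_0$ yields a long exact sequence on cohomology, so $H(f)$ is an isomorphism iff $\Cone(f) \in \Ex[\mathcal{M}] = \Acycl[\mathcal{M}]$, iff $f$ becomes invertible in $\op{D}\mathcal{M}$.

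The main step is the direct verification that $G^-(J)$ is contractible when $W = 0$; once this is observed, the rest of the argument is an assembly of results already established earlier in this section (Lemmata~\ref{l:totalization} and~\ref{l:resolutions} together with Corollary~\ref{c:intersections-with-Acycl}), with no new technical difficulty.
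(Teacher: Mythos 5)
Your key observation --- that $G^-(J)$ is contractible --- is correct; in fact it holds for arbitrary $W$, not only for $W=0$, since $G^-(J)$ is a direct sum of objects of the form $(\matfak{K}{\id}{K}{W})$, which is exactly the content of Remark~\ref{rem:middle-objects-vanish}. But the next claim, that the resolution $0 \ra E \ra I^0 \ra \dots \ra I^n \ra 0$ of Lemma~\ref{l:resolutions}.\ref{enum:Inj-reso} consists entirely of objects of the form $G^-(J^j)$, is false, and this is where the argument breaks. In that construction only $I^0, \dots, I^{n-1}$ arise as $G^-(J)$; the final term $I^n$ is the last cokernel, which enters the resolution precisely because its components have become injective, and it has no reason to be contractible. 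Remark~\ref{rem:middle-objects-vanish} records exactly this: only $I^0,\dots,I^{n-1}$ vanish in $\DQcoh(X,W)$, whence $E \cong [n]I^n$ there. Note that your second paragraph never actually uses the hypothesis $H(E)=0$ (nor $W=0$): if all the $I^j$ were contractible, the same reasoning would place every object of $[\Qcoh(X,0)]$ in $\Acycl[\Qcoh(X,0)]$ and hence force $\DQcoh(X,0)=0$, which is absurd.

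What is missing is precisely the hard content of the proposition: one must show that $I^n$, an object with injective quasi-coherent components and vanishing cohomology (one has $H(I^n)\cong H(E)=0$ up to shift, by the long exact sequences), is contractible. This is not formal. For a $2$-periodic complex, exactness together with injectivity of the components does not yield a contraction unless the kernels $U=\Kern e_0$ and $V=\Kern e_1$ are themselves injective --- indeed, over a non-regular base, acyclic $2$-periodic complexes of injectives need not be contractible, which is the raison d'\^etre of the coderived category. The paper's proof supplies exactly this step: using $H(E)=0$ to get the short exact sequences $U \hra E_0 \sra V$ and $V \hra E_1 \sra U$, it takes finite injective resolutions of $U$ and $V$ in $\Qcoh(X)$ (finiteness uses regularity and finite Krull dimension), splices them into a finite exact complex $E \ra A^0 \ra \dots \ra A^n$ in $Z_0(\Qcoh(X,0))$ whose terms are direct sums of the contractible objects $(\matfak{J^p}{0}{J^p}{1})$ and $(\matfak{I^p}{1}{I^p}{0})$, and only then applies Lemma~\ref{l:totalization} as you do. Your reduction of the remaining cases via Corollary~\ref{c:intersections-with-Acycl} and your closing quasi-isomorphism argument are fine and agree with the paper.
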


\begin{proof}
  We first prove that $\Ex[\Qcoh(X,0)] = \Acycl[\Qcoh(X,0)].$ A
  diagram chase
  (or Lemma~\ref{l:double-complex-upper-halfplane} below)     
  shows that the totalization of any short exact sequence (or any
  bounded exact complex) has
  vanishing cohomology. 
  By applying the
  cohomological functor $H_0,$
  any triangle in $[\Qcoh(X,0)]$ gives rise to a
  (6-periodic) long exact cohomology sequence, and
  obviously any 
  direct summand of an object with vanishing cohomology has
  vanishing cohomology.
  This implies that
  $\Ex[\Qcoh(X,0)] \supset \Acycl[\Qcoh(X,0)].$

  Conversely let $E \in \Ex[\Qcoh(X,0)].$ Let $U := \Kern e_0$ and $V:=
  \Kern e_1.$ Let
  \begin{align*}
    (U \ra I) & = (U \ra I^0 \xra{d^0_I} I^1  \xra{d^1_I} \dots
    \xra{d^{n-1}} I^n \ra 0),\\
    (V \ra J) & = (V \ra J^0 \xra{d^0_J} J^1  \xra{d^1_J} \dots
    \xra{d^{n-1}} J^n \ra 0),
  \end{align*}
  be finite injective resolutions in $\Qcoh(X).$
  Note that we have a short exact sequence $U \hra E_0 \sra V.$
  The injective resolutions of $U$ and $V$ combine to an injective
  resolution of $E_0:$ there is a morphism $r \colon  [-1]J \ra
  I$ of complexes in $\Qcoh(X)$ such that 
  its cone $\Cone(r)$ (which equals $I \oplus J$ if we forget the
  differential) fits into
  the following commutative diagram
  \begin{equation*}
    \xymatrix{
      {I} \ar[r]^-{
        \big(\begin{smallmatrix}
          1\\ 0
        \end{smallmatrix}
        \big)
      } &
      {\Cone(r)} \ar[r]^-{
        \big(\begin{smallmatrix}
          0 & 1
        \end{smallmatrix}
        \big)
      } &
      {J}\\
      {U} \ar[u] \iar[r] &
      {E_0} \ar[u] \sar[r] &
      {V} \ar[u]
    }
  \end{equation*}
  whose columns are injective resolutions.
  Similarly there is a morphism $s \colon  [-1]I \ra J$ and a commutative
  diagram
  \begin{equation*}
    \xymatrix{
      {J} \ar[r]^-{
        \big(\begin{smallmatrix}
          1\\ 0
        \end{smallmatrix}
        \big)
      } &
      {\Cone(s)} \ar[r]^-{
        \big(\begin{smallmatrix}
          0 & 1
        \end{smallmatrix}
        \big)
      } &
      {I}\\
      {V} \ar[u] \iar[r] &
      {E_1} \ar[u] \sar[r] &
      {U.} \ar[u]
    }
  \end{equation*}
  Let $A = (\matfak{A_1}{a_1}{A_0}{a_0})$ be the complex in
  $Z_0(\Qcoh(X,0))$ with $A_0=\Cone(r),$ $A_1=\Cone(s)$ and
  $a_0=\big(
  \begin{smallmatrix}
    0 & 1\\
    0 & 0
  \end{smallmatrix}
  \big)$
  and
  $a_1=\big(
  \begin{smallmatrix}
    0 & 1\\
    0 & 0
  \end{smallmatrix}
  \big).$
  Note that we obtain the bounded exact complex
  \begin{equation*}
    B:=(E \ra A) = (\dots \ra 0 \ra E \ra A^0 \ra A^1 \ra \dots \ra A^n
    \ra 0 \ra \dots)
  \end{equation*}
  in $Z_0(\Qcoh(X,0)).$
  From
  Lemma~\ref{l:totalization}.\ref{enum:brutal-truncation-and-totalization}
  we obtain a
  triangle
  \begin{equation*}
    E \ra \Tot(A) \ra \Tot(B) \ra [1]E
  \end{equation*}
  in $[\Qcoh(X,0)].$
  Note that $A^p$ is the direct sum of the two objects
  $\matfak{J^p}{0}{J^p}{1}$ and 
  $\matfak{I^p}{1}{I^p}{0}.$ Hence
  Lemma~\ref{l:totalization}.\ref{enum:totalization-of-finite-complex-components-null-homotopic}
  implies that $\Tot(A) = 0$ in $[\Qcoh(X,0)].$
  Hence
  $\Tot(B) \sira [1]E$ in $[\Qcoh(X,0)],$ so 
  $E \in \Acycl[\Qcoh(X,0)]$
  by Lemma~\ref{l:totalization}.\ref{enum:totalization-bounded-exact-complex}.
  This proves $\Ex[\Qcoh(X,0)] = \Acycl[\Qcoh(X,0)].$

  Now let $\mathcal{M}$ be as in the proposition.
  Then
  $\Ex[\mathcal{M}]\supset \Acycl[\mathcal{M}]$ is proved as
  above, and 
  Corollary~\ref{c:intersections-with-Acycl}
  yields
  \begin{equation*}
    \Ex[\mathcal{M}] \subset [\mathcal{M}] \cap \Ex[\Qcoh(X,0)] 
    =
    [\mathcal{M}] \cap \Acycl[\Qcoh(X,0)]
    = \Acycl[\mathcal{M}].
  \end{equation*}

  The last statement is clear: $f$ becomes an isomorphism if and
  only if its cone is in $\Ex[\mathcal{M}];$ now use the
  six-periodic long exact sequence obtained from the
  cohomological functor $H_0.$
\end{proof}

\begin{remark}
  \label{rem:ExQcohX0-totalizations}
  In fact we have proved that each object of $\Ex[\Qcoh(X,0)] =
  \Acycl[\Qcoh(X,0)]$ is isomorphic to the totalization of a
  bounded exact complex in $Z_0(\Qcoh(X,0).$
\end{remark}

\subsection{Derived functors}
\label{sec:derived-functors}

We recall first some general results about derived functors and
then apply them to direct and inverse image functors, and to Hom
and tensor functors.

\subsubsection{Reminder on derived functors}
\label{sec:remind-deriv-funct}

We recall results and terminology from the elegant exposition of
derived functors in \cite{murfet-triang-cat-I} and refer the
reader to this note for more details. 
Let $\mathcal{D}$ be a triangulated category 
$\mathcal{D}$ with a strict full triangulated subcategory
$\mathcal{C},$ and let $F \colon  \mathcal{D} \ra
\mathcal{T}$ be a triangulated functor to some other triangulated
category $\mathcal{T}.$ The question is whether $F$ 
has a right derived functor 
$\bR F \colon  \mathcal{D}/\mathcal{C}
\ra \mathcal{T}$
with respect to $\mathcal{C}.$ 
More precisely, a right derived functor of $F$ with respect to
$\mathcal{C}$ is a pair $(\bR F,
\zeta)$ of a triangulated functor $\bR F \colon  \mathcal{D}
/\mathcal{C} \ra \mathcal{T}$ and a suitable natural
transformation $\zeta$ satisfying some universal property.

\begin{definition}
  \label{d:right-acyclic-objects}
  An object $A \in \mathcal{D}$ is \define{right $F$-acyclic (with
    respect to $\mathcal{C}$)} if the following condition holds:
  given any morphism $s \colon 
  A \ra D$ with cone in $\mathcal{C},$ there
  is a morphism $t \colon D
  \ra D'$ with cone in $\mathcal{C}$ such that $F(ts)$ is an
  isomorphism.
\end{definition}

Note that $F(A)=0$ if $A$ is right $F$-acyclic and in
$\mathcal{C}$ (apply the defining property to $A \ra 0$).

\begin{theorem}
  [{\cite[Thm.~116]{murfet-triang-cat-I}}]
  \label{t:right-derived-functor-explicit}
  In the above setting we additionally assume that $\mathcal{C}
  \subset \mathcal{D}$ is a thick subcategory.
  Suppose that for every $D \in \mathcal{D}$ there exists a
  morphism $\eta_D \colon  D \ra A_D$ with cone in $\mathcal{C}$ and
  $A_D$ 
  right $F$-acyclic with respect to $\mathcal{C}.$ 
  Then $F$ admits a 
  right derived functor $(\bR F,\zeta)$ with respect to
  $\mathcal{C}$ with the following
  properties:
  \begin{enumerate}
  \item 
    \label{enum:right-derived-functor-explicit-on-objects}
    For any $D \in \mathcal{D}$ we have $\bR F(D)=F(A_D)$ and
    $\zeta_D=F(\eta_D).$
  \item 
    \label{enum:right-derived-functor-explicit-characterize-right-F-acyclics}
    An object $D \in \mathcal{D}$ is right $F$-acyclic with
    respect to $\mathcal{C}$ if and
    only if $\zeta_D$ is an isomorphism in $\mathcal{T}.$
  \end{enumerate}
\end{theorem}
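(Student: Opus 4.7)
The plan is to reduce the entire theorem to a single key lemma: \emph{if $s\colon A \to B$ is a morphism in $\mathcal{D}$ between right $F$-acyclic objects with $\Cone(s) \in \mathcal{C}$, then $F(s)$ is an isomorphism in $\mathcal{T}$}. Granted this lemma, one defines $\bR F(D) := F(A_D)$ and $\zeta_D := F(\eta_D)$ on objects and extends to morphisms via the Verdier calculus of fractions, using the acyclic replacements $\eta_D\colon D \to A_D$ as ``fibrant replacements.'' The universal property and the characterization in part~\ref{enum:right-derived-functor-explicit-characterize-right-F-acyclics} then follow from the lemma by formal arguments. So the real content of the theorem is the key lemma, and that is where I expect the main obstacle.

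I would prove the key lemma by a two-out-of-three argument. Applying right $F$-acyclicity of $A$ to $s\colon A \to B$ produces $t\colon B \to B'$ with $\Cone(t) \in \mathcal{C}$ and $F(ts)$ an isomorphism. Applying right $F$-acyclicity of $B$ to $t$ produces $u\colon B' \to B''$ with $\Cone(u) \in \mathcal{C}$ and $F(ut)$ an isomorphism. The identity $F(u)F(ts) = F(uts) = F(ut)F(s)$ together with the invertibility of $F(ts)$ and $F(ut)$ forces $F(t)$ to possess both a left and a right inverse in $\mathcal{T}$, hence to be itself an isomorphism; consequently $F(s) = F(t)^{-1} F(ts)$ is an isomorphism as required.

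With the key lemma, defining $\bR F$ on morphisms proceeds as follows. Any morphism $f\colon D \to D'$ in $\mathcal{D}/\mathcal{C}$ yields, after pre- and post-composition with $\eta_D^{-1}$ and $\eta_{D'}$ (which become isomorphisms in $\mathcal{D}/\mathcal{C}$ since their cones lie in $\mathcal{C}$), a morphism $A_D \to A_{D'}$ in $\mathcal{D}/\mathcal{C}$. Representing the latter by a roof $A_D \to Z \xleftarrow{v} A_{D'}$ with $\Cone(v) \in \mathcal{C}$, one shows that $F(v)$ is an isomorphism (using the key lemma applied to $v$, after replacing $Z$ by an acyclic if necessary via the hypothesis on $\mathcal{D}$), and that the resulting morphism $F(v)^{-1} \circ F(\text{numerator})$ is independent of the choice of roof, again by the key lemma. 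This defines $\bR F$ as a triangulated functor and supplies the natural transformation $\zeta$; the universal property is verified by observing that any triangulated functor through which $F$ factors after inverting $\mathcal{C}$-cone morphisms must agree with $\bR F$ on the $A_D$, hence everywhere.

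For part~\ref{enum:right-derived-functor-explicit-characterize-right-F-acyclics}, the ``if'' direction is immediate from the key lemma: when $D$ is right $F$-acyclic, both $D$ and $A_D$ are acyclic and $\Cone(\eta_D) \in \mathcal{C}$, so $\zeta_D = F(\eta_D)$ is an isomorphism. For the converse, assume $\zeta_D$ is invertible and let $s\colon D \to D''$ have cone in $\mathcal{C}$; take $t := \eta_{D''}\colon D'' \to A_{D''}$. Then in $\mathcal{D}/\mathcal{C}$ the composite $\eta_{D''} s \eta_D^{-1}\colon A_D \to A_{D''}$ is an isomorphism (being a composition of three), and a lift $\bar s\colon A_D \to A_{D''}$ of it in $\mathcal{D}$ has cone in $\mathcal{C}$; the key lemma applied to $\bar s$ gives $F(\bar s)$ invertible, and unwinding yields $F(ts) = F(\eta_{D''} s) = F(\bar s)\zeta_D$ invertible as well. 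Thus $D$ is right $F$-acyclic, which finishes the proof once the bookkeeping of fractions has been carried out carefully.
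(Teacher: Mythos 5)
The paper does not reprove this statement; it cites Murfet and, in Remark~\ref{rem:morphisms-under-RF}, sketches the underlying construction: the full subcategory $\mathcal{A}$ of right $F$-acyclic objects is triangulated, $F$ kills $\mathcal{A}\cap\mathcal{C}$, and $\mathcal{A}/(\mathcal{A}\cap\mathcal{C})\ra\mathcal{D}/\mathcal{C}$ is an equivalence, so $\bR F$ is the induced functor composed with a quasi-inverse. Your route packages the same content differently: instead of establishing that $\mathcal{A}$ is triangulated, you isolate the key lemma that $F$ inverts any $\mathcal{C}$-cone morphism between acyclics, and your two-out-of-six argument for it (left inverse from $F(ut)$, right inverse from $F(ts)$, hence $F(t)$ and then $F(s)$ invertible) is correct and pleasantly self-contained. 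The trade-off is that the subcategory formulation makes functoriality, compatibility with triangles, and the universal property essentially formal, whereas your direct calculus-of-fractions construction pushes all of that into roof bookkeeping; conversely, your key lemma gives part~\ref{enum:right-derived-functor-explicit-characterize-right-F-acyclics} more directly.

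One step does not work as written: in the converse direction of part~\ref{enum:right-derived-functor-explicit-characterize-right-F-acyclics} you take ``a lift $\bar s\colon A_D\ra A_{D''}$ in $\mathcal{D}$'' of the fraction $\eta_{D''}s\,\eta_D^{-1}$. A morphism of $\mathcal{D}/\mathcal{C}$ is a roof, and in general it is not represented by any single morphism of $\mathcal{D}$, so $\bar s$ need not exist. The fix is to invoke the Ore condition for the multiplicative system $\Mor_{\mathcal{C}}$: complete $\eta_D\colon D\ra A_D$ and $\eta_{D''}s\colon D\ra A_{D''}$ to a commutative square $u\,(\eta_{D''}s)=w\,\eta_D$ with $u\colon A_{D''}\ra Z$ in $\Mor_{\mathcal{C}}$; then $w$ is in $\Mor_{\mathcal{C}}$ by the octahedron, and after composing with $\eta_Z$ both $\eta_Z u$ and $\eta_Z w$ are $\mathcal{C}$-cone morphisms between acyclics, so $F$ inverts them by your key lemma; since $F(\eta_Z u)F(\eta_{D''}s)=F(\eta_Z w)\zeta_D$ is invertible, so is $F(\eta_{D''}s)$, and $t:=\eta_{D''}$ witnesses acyclicity of $D$. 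The same caveat applies to your phrase ``one shows that $F(v)$ is an isomorphism'' in the construction of $\bR F$ on morphisms: it is $F(\eta_Z v)$, not $F(v)$, that becomes invertible after replacing $Z$ by $A_Z$, which is what your parenthetical intends and is all that is needed.
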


We will apply this theorem several times. When we then write $\bR
F$ later on we implicitly have used some fixed morphisms $\eta_D \colon 
D \ra A_D$ as in the theorem, or we say explicitly which morphism 
$\eta_D$ we use for a particular object $D.$
Usually we assume that $\eta_D=\id_D$ whenever $D$ is right
$F$-acyclic. 

\begin{remark}
  \label{rem:morphisms-under-RF}
  We explain how the functor $\bR F$ from
  Theorem~\ref{t:right-derived-functor-explicit} is defined on
  morphisms. 
  Let $\mathcal{A} \subset \mathcal{D}$ be the full subcategory
  of all right $F$-acyclic objects, and assume that the
  assumptions of Theorem~\ref{t:right-derived-functor-explicit}
  hold. Then in fact $\mathcal{A}$ is a triangulated subcategory,
  and $F$ vanishes on $\mathcal{A} \cap \mathcal{C}.$
  We obtain an induced triangulated functor
  $\ol{F} \colon \mathcal{A}/\mathcal{A}\cap 
  \mathcal{C} \ra \mathcal{T}.$  
  Moreover, the natural functor $\mathcal{A}/\mathcal{A}\cap
  \mathcal{C} \ra \mathcal{D}/\mathcal{C}$ is an equivalence,
  with a quasi-inverse induced by $D \mapsto A_D.$
  Then $\bR F$ is just the composition of this
  quasi-inverse with $\ol{F}.$ This determines $\bR F$ on
  morphisms. 
\end{remark}

Similar results hold for left derived functors.

\subsubsection{Direct and inverse image}
\label{sec:direct-inverse-image}

Let $Y$ be another scheme satisfying 
condition~\ref{enum:srNfKd},
and let $\pi  \colon Y\ra X$ be a
morphism. 
We denote the pullback function $\pi^*(W)$ on $Y$
again by $W.$ 

The usual direct image functor $\pi_* \colon \Qcoh(Y) \ra \Qcoh(X)$
induces
the dg functor $\pi_* \colon  \Qcoh(Y,W) \ra \Qcoh(X,W)$ and on homotopy
categories the triangulated functor
$\pi_* \colon [\Qcoh(Y,W)]\ra [\Qcoh(X,W)].$
Similarly, the usual inverse image functor $\pi^* \colon \Qcoh(X) \ra
\Qcoh(Y)$ induces a dg functor $\pi^* \colon \Qcoh(X,W) \ra \Qcoh(Y,W)$
and a triangulated functor $\pi^* \colon [\Qcoh(X,W)] \ra [\Qcoh(Y,W)].$
The adjunction $(\pi^*, \pi_*)$ in the usual setting induces an
adjunction of dg functors,
\begin{equation*}
  % \label{eq:inverse-direct-image-adjunction}
  \Hom_{\Qcoh(Y,W)}(\pi^*(E), F) \sira
  \Hom_{\Qcoh(X,W)}(E, \pi_*(F)),
\end{equation*}
and then an adjunction on triangulated
functors. 
We also denote the compositions
\begin{equation*}
  [\Qcoh(Y,W)] \xra{\pi_*} [\Qcoh(X,W)] \ra \DQcoh(X,W)
\end{equation*}
and
\begin{equation*}
  [\Qcoh(X,W)] \xra{\pi^*} [\Qcoh(Y,W)] \ra \DQcoh(Y,W)
\end{equation*}
by $\pi_*$ and $\pi^*,$ respectively.

\begin{theorem}
  \label{t:derived-inverse-and-direct-image}
  \rule{1mm}{0mm}
  \begin{enumerate}
  \item
    \label{enum:direct-image-derived}
    The functor $\pi_* \colon  [\Qcoh(Y,W)] \ra \DQcoh(X,W)$ 
    has a right derived functor 
    $\bR \pi_* \colon  \DQcoh(Y,W) \ra \DQcoh(X,W)$ 
    with respect to $\Acycl[\Qcoh(Y,W)].$ 
  \item 
    \label{enum:inverse-image-derived}
    The
    functor
    $\pi^* \colon  [\Qcoh(X,W)] \ra \DQcoh(Y,W)$
    has a left derived functor $\bL \pi^* \colon  \DQcoh(X,W) \ra
    \DQcoh(Y,W)$ with respect to $\Acycl[\Qcoh(X,W)].$
    This left derived functor
    maps $\DCoh(X,W)$ to $\DCoh(Y,W)$ and
    $\bfMF(X,W)$ to $\bfMF(Y,W).$ We can assume
    that $\bL \pi^*=\pi^* \colon  \bfMF(X,W) \ra \bfMF(Y,W).$
  \item 
    \label{enum:adjunction-derived-images}
    There is an adjunction $(\bL \pi^*, \bR \pi_*)$ of
    triangulated functors.
  \end{enumerate}
\end{theorem}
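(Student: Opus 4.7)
The plan is to build $\bR\pi_*$ and $\bL\pi^*$ by applying Theorem~\ref{t:right-derived-functor-explicit} and its dual to the triangulated functors $\pi_* \colon [\Qcoh(Y,W)] \to \DQcoh(X,W)$ and $\pi^* \colon [\Qcoh(X,W)] \to \DQcoh(Y,W)$, using the resolutions provided by Lemma~\ref{l:resolutions} together with the acyclicity of injective (respectively locally free) curved dg sheaves. The adjunction of part~(c) is then deduced from the dg-level $(\pi^*, \pi_*)$-adjunction by chasing through $\Hom$ computations.

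\textbf{Part~(a).} For every $D \in [\Qcoh(Y,W)]$, Lemma~\ref{l:resolutions}.\ref{enum:Inj-reso} provides a morphism $D \to \Tot(I)$ with cone in $\Acycl[\Qcoh(Y,W)]$, where $\Tot(I) \in [\InjQcoh(Y,W)]$ (using that finite direct sums of injective quasi-coherent sheaves are injective by Theorem~\ref{t:injective-in-qcoh-vs-all-OX}.\ref{enum:inj-and-inj-qcoh-arbitrary-sums}). By Theorem~\ref{t:right-derived-functor-explicit} it then suffices to show that every $I \in [\InjQcoh(Y,W)]$ is right $\pi_*$-acyclic with respect to $\Acycl[\Qcoh(Y,W)]$. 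Given $s \colon I \to D$ with $\Cone(s) \in \Acycl[\Qcoh(Y,W)]$, I would pick an injective resolution $t \colon D \to I'$ via the same lemma; the composition $ts \colon I \to I'$ is a morphism between objects of $[\InjQcoh(Y,W)]$ with cone in $\Acycl[\Qcoh(Y,W)]$. By the equivalence $[\InjQcoh(Y,W)] \xrightarrow{\sim} \DQcoh(Y,W)$ of Theorem~\ref{t:equivalences-curved-categories}.\ref{enum:injQcoh-DQcoh-equiv} together with Corollary~\ref{c:intersections-with-Acycl}, $ts$ is already an isomorphism in $[\InjQcoh(Y,W)]$, hence in $[\Qcoh(Y,W)]$; applying $\pi_*$ then gives an isomorphism in $\DQcoh(X,W)$.

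\textbf{Part~(b).} The dual argument, using the locally free resolutions of Lemma~\ref{l:resolutions}.\ref{enum:locfree-reso} and the equivalence $\DLocfree(X,W) \xrightarrow{\sim} \DQcoh(X,W)$ of Theorem~\ref{t:equivalences-curved-categories}.\ref{enum:DLocfree-DQcoh-equiv}, yields $\bL\pi^*$: the role of injectives is played by objects of $[\Locfree(X,W)]$, and morphisms between such with acyclic cone are isomorphisms already in $[\Locfree(X,W)]$, so $\pi^*$ maps them to isomorphisms. For the preservation of subcategories, the usual inverse image sends vector bundles to vector bundles, so induces a dg functor $\pi^* \colon \MF(X,W) \to \MF(Y,W)$; since $\MF(X,W) \subset \Locfree(X,W)$, any matrix factorization is already left $\pi^*$-acyclic, so we may take $\bL\pi^* = \pi^*$ on $\bfMF(X,W)$. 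Preservation of $\DCoh$ then follows from the equivalence $\bfMF \xrightarrow{\sim} \DCoh$ of Theorem~\ref{t:equivalences-curved-categories}.\ref{enum:MF-Dcoh-equiv}.

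\textbf{Part~(c).} Choosing a locally free resolution $P \to E$ and an injective resolution $F \to I$, so that $\bL\pi^*E \cong \pi^*P$ and $\bR\pi_*F \cong \pi_*I$, I would compute
\begin{align*}
\Hom_{\DQcoh(Y,W)}(\bL\pi^*E, F)
&\cong \Hom_{\DQcoh(Y,W)}(\pi^*P, I)\\
&\cong \Hom_{[\Qcoh(Y,W)]}(\pi^*P, I)\\
&\cong \Hom_{[\Qcoh(X,W)]}(P, \pi_*I),
\end{align*}
using Remark~\ref{rem:morphisms-to-injectives} at the second step (since $I$ is injective) and the homotopy-level $(\pi^*, \pi_*)$-adjunction at the third. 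The right-hand side $\Hom_{\DQcoh(X,W)}(E, \bR\pi_*F)$ equals $\Hom_{\DQcoh(X,W)}(P, \pi_*I)$, so the remaining and, I expect, main obstacle is to identify the localization map $\Hom_{[\Qcoh(X,W)]}(P, \pi_*I) \to \Hom_{\DQcoh(X,W)}(P, \pi_*I)$ with an isomorphism. By the semi-orthogonal decomposition of Corollary~\ref{c:left-orthogonal-of-InjQcoh} this reduces to the vanishing $\Hom_{[\Qcoh(X,W)]}(P, A) = 0$ for $A \in \Acycl[\Qcoh(X,W)]$, which should follow by reducing, through the short exact sequences classically generating $\Acycl[\Qcoh(X,W)]$, to a direct computation on the free generators $G^+(N)$ built in the proof of Lemma~\ref{l:resolutions}. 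Naturality in $E,F$ and the triangle identities then come from naturality of the dg-level adjunction together with Remark~\ref{rem:morphisms-under-RF}.
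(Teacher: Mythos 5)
Your part~(a) is correct and is essentially the paper's argument (the paper splits $s$ using Lemma~\ref{l:sufficient-for-inj-DQcoh-equiv}, you compose with an injective resolution of the target; both work). The first genuine gap is in part~(b): you assert that a morphism between objects of $[\Locfree(X,W)]$ whose cone lies in $\Acycl[\Qcoh(X,W)]$ is already an isomorphism in $[\Locfree(X,W)]$, by duality with the injective case. This is false. The injective argument works because $[\InjQcoh(Y,W)]\cap\Acycl[\Qcoh(Y,W)]=0$ (Corollary~\ref{c:intersections-with-Acycl}), whereas the corresponding intersection on the other side is $\Acycl[\Locfree(X,W)]$, which is nonzero for non-affine $X$ (by Corollary~\ref{c:acycl-vs-locally-contractible} it contains, e.g., every locally contractible but non-contractible matrix factorization). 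Such a morphism therefore only becomes invertible in $\DLocfree(X,W)$, and to conclude that $\pi^*$ sends it to an isomorphism in $\DQcoh(Y,W)$ you still must show that the underived $\pi^*$ maps $\Acycl[\FlatQcoh(X,W)]$ into $\Acycl[\Qcoh(Y,W)]$. That is the essential point of the paper's proof: one reduces to the totalization of a short exact sequence in $Z_0(\FlatQcoh(X,W))$ and uses that $\pi^*$ of such a sequence is again short exact because its terms are flat. This flatness step is exactly what is missing from your proposal.

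The second gap is in part~(c). You reduce the adjunction to the vanishing $\Hom_{[\Qcoh(X,W)]}(P,A)=0$ for $P$ locally free and arbitrary $A\in\Acycl[\Qcoh(X,W)]$. That vanishing is precisely Lemma~\ref{l:affine-MF}.\ref{enum:intersection-MF-Acyclcoh-zero}, which the paper proves only for affine $X$: one needs the components of $P$ to be projective over the global sections so that $\Hom_{\Qcoh(X,W)}(P,-)$ carries short exact sequences to short exact sequences, and globally one only gets left exactness. In fact, if this vanishing held in general, then $\Acycl[\MF(X,W)]$ would always be zero and $[\MF(X,W)]\ra\bfMF(X,W)$ would always be an equivalence, which fails for non-affine $X$. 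So your route to identifying $\Hom_{[\Qcoh(X,W)]}(P,\pi_*I)\sira\Hom_{\DQcoh(X,W)}(P,\pi_*I)$ rests on a false statement. The paper avoids this computation altogether by invoking the general existence theorem for derived adjunctions \cite[Thm.~122]{murfet-triang-cat-I}, whose hypotheses are supplied by parts~(a) and~(b); either cite that result or construct the unit and counit directly from the homotopy-level adjunction and verify the triangle identities.
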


\begin{proof}
  \ref{enum:direct-image-derived}:
  Lemma~\ref{l:resolutions}.\ref{enum:Inj-reso}
  provides for each $E \in [\Qcoh(Y,W)]$ a morphism
  $\eta_E \colon  E \ra I_E$ with $I_E \in [\InjQcoh(Y,W)]$ and cone in
  $\Acycl[\Qcoh(Y,W)].$
  Hence to apply Theorem~\ref{t:right-derived-functor-explicit}
  we need to show that any object $I \in [\InjQcoh(Y,W)]$
  is right $\pi_*$-acyclic with respect to 
  $\Acycl[\Qcoh(Y,W)].$ Let $s \colon  I \ra F$ be a morphism in
  $[\Qcoh(Y,W)]$ with cone in  
  $\Acycl[\Qcoh(Y,W)].$ 
  Apply $\Hom_{[\Qcoh(Y,W)]}(-,I)$ and use
  Lemma~\ref{l:sufficient-for-inj-DQcoh-equiv}. This shows that
  there is a (unique) morphism $g \colon  F \ra I$ in $[\Qcoh(Y,W)]$ with
  $gs=\id_I.$ The octahedral axiom implies that $g$ has cone in
  $\Acycl[\Qcoh(Y,W)],$ and $gs=\id_I$ certainly implies that
  $\pi_*(gs)$ is an isomorphism
  in $[\Qcoh(X,W)]$ and $\DQcoh(X,W).$

  \ref{enum:inverse-image-derived}:
  Lemma~\ref{l:resolutions}.\ref{enum:locfree-reso}
  yields for each $E \in [\Qcoh(X,W)]$ a morphism
  $\epsilon_E \colon  P_E \ra E$ with $P_E \in [\Locfree(X,W)] \subset
  [\FlatQcoh(X,W)]$ and cone in
  $\Acycl[\Qcoh(X,W)].$ 
  We want to use the left version of
  Theorem~\ref{t:right-derived-functor-explicit}.
  We need to show that any object $P \in [\FlatQcoh(X,W)]$ is left
  $\pi^*$-acyclic with respect to $\Acycl[\Qcoh(X,W)].$ 
  Let $s \colon F \ra P$ be a morphism in
  $[\Qcoh(Y,W)]$ with cone in  
  $\Acycl[\Qcoh(Y,W)].$ Consider the morphism $t:=\epsilon_F
  \colon P_F \ra F.$ We need to show that $\pi^*(s t)$
  is an isomorphism in $\DQcoh(Y,W).$ 
  The cone of $s t$ is in $\Acycl[\Qcoh(X,W)],$ and we
  can assume that it is in 
  $[\FlatQcoh(X,W)].$
  Hence it is enough to show
  that $\pi^*(Q) = 0$ in $\DQcoh(Y,W)$ for any 
  $Q \in \Acycl[\FlatQcoh(X,W)]= 
  [\FlatQcoh(X,W)] \cap \Acycl[\Qcoh(X,W)]$
  (see Corollary~\ref{c:intersections-with-Acycl}).
  Certainly we can reduce to the case that $Q=\Tot(G),$ where $G$
  is a short exact sequence in $Z_0(\FlatQcoh(X,W)).$
  But then $\pi^*(G)$ is a short exact sequence in 
  $Z_0(\Qcoh(Y,W)),$ and hence $\pi^*(Q)=\Tot(\pi^*(G))$ is zero
  in $\DQcoh(Y,W).$

  Lemma~\ref{l:resolutions}.\ref{enum:MF-reso}
  shows that we can take $P_E \in [\MF(X,W)]$
  for $E \in [\Coh(X,W)].$ For $E \in [\MF(X,W)]$ we take
  $P_E=E.$

  \ref{enum:adjunction-derived-images}:
  Apply
  \cite[Thm.~122]{murfet-triang-cat-I}, whose assumptions are
  satisfied by the proof of
  {\cite[Thm.~116]{murfet-triang-cat-I}}.
\end{proof}

\begin{remark}
  \label{rem:direct-inverse-image-coproducts}
  Both $\bL \pi^*$ and $\bR \pi_*$ preserve
  direct sums, cf.\ Corollary~\ref{c:direct-sums}.
  This is clear for $\bL \pi^*$ from the adjunction
  $(\bL \pi^*, \bR \pi_*).$
  For $\bR \pi_*$ this follows from 
  the above proof: 
  use Corollary~\ref{c:direct-sums}
  and the fact that $\pi_*$ 
  preserves direct sums since Noetherian schemes
  are quasi-compact.
\end{remark}

\begin{lemma}
  \label{l:pi-proper-Rpi-lower-star-and-MF-and-adjunction}
  Assume that the map $\pi$ is proper. Then the functor
  $\bR \pi_*$ maps $\bfMF(Y,W)$ to (the essential image of)
  $\bfMF(X,W),$ 
  and 
  $(\pi^*,\bR \pi_*)$ is an adjoint pair of functors between the
  categories $\bfMF(X,W)$ and $\bfMF(Y,W).$
\end{lemma}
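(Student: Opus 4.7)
I would split the proof into two parts. The main technical step is to show that $\bR\pi_*$ sends $\bfMF(Y,W)$ into $\bfMF(X,W)$, where the latter is viewed as a full subcategory of $\DQcoh(X,W)$ via the fully faithful embeddings $\bfMF \cong \DCoh \hookrightarrow \DQcoh$ of Theorem~\ref{t:equivalences-curved-categories}.\ref{enum:MF-Dcoh-equiv} and \ref{enum:Dcoh-DQcoh-ff}. Once that is established, the adjunction $(\pi^*, \bR\pi_*)$ on matrix factorizations follows formally from the $\DQcoh$-adjunction of Theorem~\ref{t:derived-inverse-and-direct-image}.\ref{enum:adjunction-derived-images}, because $\pi^* = \bL\pi^*$ already restricts to $\bfMF$ by Theorem~\ref{t:derived-inverse-and-direct-image}.\ref{enum:inverse-image-derived}, and morphism spaces are preserved under fully faithful embeddings.

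For the main step, given $E \in \MF(Y,W)$, I would compute $\bR\pi_*(E)$ explicitly via Čech theory. Pick a finite affine open cover $\mathcal{U} = \{U_1, \ldots, U_m\}$ of $Y$; since $Y$ is separated, all intersections $U_I := \bigcap_{i \in I} U_i$ are affine. Form the augmented Čech complex
\begin{equation*}
0 \to E \to \check{C}^0 \to \check{C}^1 \to \cdots \to \check{C}^{m-1} \to 0, \qquad \check{C}^p := \bigoplus_{|I|=p+1} (j_I)_*(E|_{U_I}),
\end{equation*}
in $Z_0(\Qcoh(Y,W))$, where $j_I \colon U_I \hookrightarrow Y$ are the inclusions. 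Exactness of this complex (checkable stalkwise) combined with Lemma~\ref{l:totalization}.\ref{enum:brutal-truncation-and-totalization} and \ref{enum:totalization-bounded-exact-complex} yields $E \cong \Tot(\check{C}^\bullet)$ in $\DQcoh(Y,W)$, up to a $\DZ_2$-shift. Since $X$ is separated, each composition $\pi \circ j_I \colon U_I \to X$ is an affine morphism, so the $\pi_*$ of each $\check{C}^p$ is exact and $\check{C}^p$ is right $\pi_*$-acyclic in the sense of Definition~\ref{d:right-acyclic-objects}. Consequently $\bR\pi_*(E) \cong \Tot(\pi_*\check{C}^\bullet)$ in $\DQcoh(X,W)$.

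The main obstacle is to verify that this totalization, whose terms $\pi_*\check{C}^p$ are merely quasi-coherent, is isomorphic in $\DQcoh(X,W)$ to an object coming from $\DCoh(X,W) \cong \bfMF(X,W)$. For this I would invoke the classical Grothendieck finiteness theorem: for proper $\pi$ between Noetherian schemes, the ordinary derived pushforward $R\pi_*$ sends $D^b(\Coh(Y))$ into $D^b(\Coh(X))$. Applied to the vector-bundle components $E_0, E_1$ of $E$, and using regularity and finite Krull dimension of $X$ to represent $R\pi_*(E_i)$ by a bounded complex of vector bundles on $X$, I would then lift the defining relations $e_0 e_1 = W\id$ and $e_1 e_0 = W\id$ from the derived level back to strict identities, producing an object in $\MF(X,W)$ whose image in $\DQcoh(X,W)$ is isomorphic to $\bR\pi_*(E)$. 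This lifting step---transferring homotopy-level data into a genuine matrix factorization---is the heart of the argument and is where properness of $\pi$ is essentially used.
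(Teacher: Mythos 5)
Your reduction of $\bR\pi_*(E)$ to the totalization $\Tot(\pi_*\check{C}^\bullet)$ is sound (it is the \v{C}ech analogue of what the paper does with the finite injective resolution $E \ra I$ of Lemma~\ref{l:resolutions}.\ref{enum:Inj-reso}, and Lemma~\ref{l:componentwise-acyclics} justifies the acyclicity of the \v{C}ech terms), and the formal deduction of the adjunction from Theorem~\ref{t:derived-inverse-and-direct-image} is exactly right. But the final step is a genuine gap. You propose to apply Grothendieck's finiteness theorem to the components $E_0$, $E_1$ separately, represent $R\pi_*(E_i)$ by bounded complexes of vector bundles, and then ``lift the defining relations $e_0e_1 = W\id$ and $e_1e_0 = W\id$ from the derived level back to strict identities.'' You correctly identify this as the heart of the argument, but you give no argument for it, and none is readily available: strictifying a homotopy-coherent matrix-factorization structure on a pair of perfect complexes is not a formal manipulation, and attempting it head-on is the wrong strategy.

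The point you are missing is that no such strictification is needed, because one can stay inside the abelian category $Z_0(\Qcoh(X,W))$ throughout. The complex $\pi_*\check{C}^\bullet$ (or $\pi_*(I)$ in the paper's version) is a bounded complex in $Z_0(\Qcoh(X,W))$, so its cohomology objects $H^i$ are themselves honest objects of $\Qcoh(X,W)$ -- curved dg sheaves, not just pairs of sheaves -- and their components are $R^i\pi_*(E_0)$ and $R^i\pi_*(E_1)$, which are coherent by \cite[Thm.~3.2.1]{EGAIII-i} since $\pi$ is proper. Thus each $H^i$ lies in $\Coh(X,W)$, hence in $\DCoh(X,W)$. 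The paper then invokes Lemma~\ref{l:useful-lemma}.\ref{enum:cohomologies-in-MF-then-Tot}, an induction on truncations of the complex, to conclude that the totalization is isomorphic in $\DQcoh(X,W)$ to an object of $\DCoh(X,W)$, and finally Theorem~\ref{t:equivalences-curved-categories}.\ref{enum:MF-Dcoh-equiv} to land in (the essential image of) $\bfMF(X,W)$. If you replace your lifting step by this truncation argument -- which applies verbatim to your \v{C}ech complex -- your proof goes through.
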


\begin{proof}
  Let $E \in \bfMF(Y,W).$ Choose a finite
  resolution $E\ra I$ as in 
  Lemma~\ref{l:resolutions}.\ref{enum:Inj-reso}.
  Then
  $\bR \pi_*(E)$ is isomorphic to 
  $\pi_*(\Tot(I))=\Tot(\pi_*(I))$ and the
  cohomologies of the complex
  $\pi_*(I)$ all lie in $\Coh(X,W),$
  by \cite[Thm.~3.2.1]{EGAIII-i}.
  Hence
  $\bR \pi_*(E)$ is isomorphic to an object of $\DCoh(X,W)$
  by
  Lemma~\ref{l:useful-lemma}.\ref{enum:cohomologies-in-MF-then-Tot}
  below,
  and also to an object of $\bfMF(X,W)$
  by Theorem~\ref{t:equivalences-curved-categories}.\ref{enum:MF-Dcoh-equiv}.
  This proves the first claim.
  The second claim is a direct consequence of
  Theorem~\ref{t:derived-inverse-and-direct-image}.
\end{proof}

The proof of Theorem~\ref{t:derived-inverse-and-direct-image}
shows that all objects of $[\InjQcoh(Y,W)]$ are right
$\pi_*$-acyclic 
and that all objects of $[\FlatQcoh(X,W)]$ are left
$\pi^*$-acyclic.
Here is an improvement.

\begin{lemma}
  \label{l:componentwise-acyclics}
  Let $E \in [\Qcoh(Y,W)]$ and assume that its components $E_0,$
  $E_1$ are right $\pi_*$-acyclic quasi-coherent sheaves
  in the sense that
  $R^i\pi_*(E_p)=0$ for all $p \in \DZ_2$ and $i \in
  \DZ\setminus\{0\}.$ 
  Then $E$ is right $\pi_*$-acyclic, so in particular
  $\pi_*(E) \sira \bR\pi_*(E)$ canonically.

  Similarly, if the components of $F \in [\Qcoh(X,W)]$ are
  left $\pi^*$-acyclic quasi-coherent sheaves, then $F$ is left
  $\pi^*$-acyclic, and $\bL\pi^*(F) \sira \pi^*(F)$ canonically.
\end{lemma}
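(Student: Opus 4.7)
The plan is to apply Theorem~\ref{t:right-derived-functor-explicit}.\ref{enum:right-derived-functor-explicit-characterize-right-F-acyclics}: it suffices to show that the canonical comparison map $\zeta_E \colon \pi_*(E) \to \bR\pi_*(E)$ is an isomorphism in $\DQcoh(X,W)$. By the construction of $\bR \pi_*$ in the proof of Theorem~\ref{t:derived-inverse-and-direct-image}.\ref{enum:direct-image-derived}, we may take $\bR\pi_*(E) = \pi_*(\Tot(I))$, where $E \to I = (I^0 \to I^1 \to \dots \to I^n)$ is a finite injective resolution as in Lemma~\ref{l:resolutions}.\ref{enum:Inj-reso}, with each $I^j \in \InjQcoh(Y,W)$; then $\zeta_E$ is induced by $\pi_*(\eta_E)$.

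First I would work one $\DZ_2$-component at a time. For each $p \in \DZ_2$, the sequence
\begin{equation*}
0 \to E_p \to I^0_p \to \dots \to I^n_p \to 0
\end{equation*}
is exact in $\Qcoh(Y)$. By Theorem~\ref{t:injective-in-qcoh-vs-all-OX}.\ref{enum:inj-qcoh-equal-inj-OX-that-qcoh}, each $I^j_p$ is an injective $\mathcal{O}_Y$-module, hence right $\pi_*$-acyclic in the ordinary sense; by hypothesis $E_p$ is also right $\pi_*$-acyclic. A standard dimension-shifting argument (split the resolution into short exact sequences $0 \to Z^{j-1} \to I^{j-1}_p \to Z^j \to 0$, compute $R^i\pi_*(Z^j)$ inductively via the long exact sequence, and use that $R^i\pi_*$ vanishes on $\pi_*$-acyclics for $i \geq 1$) shows that applying $\pi_*$ preserves exactness, so
\begin{equation*}
0 \to \pi_*(E_p) \to \pi_*(I^0_p) \to \dots \to \pi_*(I^n_p) \to 0
\end{equation*}
remains exact in $\Qcoh(X)$. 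Assembling the two components, the complex $0 \to \pi_*(E) \to \pi_*(I^0) \to \dots \to \pi_*(I^n) \to 0$ is exact in $Z_0(\Qcoh(X,W))$.

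Next I would apply Lemma~\ref{l:totalization}.\ref{enum:brutal-truncation-and-totalization} to this exact bounded complex (with the left block being the single term $\pi_*(E)$ and the right block the $\pi_*(I^j)$) to deduce that the canonical map $\pi_*(E) \to \Tot(\pi_*(I)) = \pi_*(\Tot(I))$ is an isomorphism in $\DQcoh(X,W)$; the identification $\pi_*(\Tot(I)) = \Tot(\pi_*(I))$ is immediate because $\pi_*$ is additive and commutes with the finite direct sums defining totalization. This map is precisely $\zeta_E$, so $E$ is right $\pi_*$-acyclic by Theorem~\ref{t:right-derived-functor-explicit}, and then $\pi_*(E) \xrightarrow{\sim} \bR\pi_*(E)$ canonically.

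The left $\pi^*$-acyclicity assertion is proved dually using the resolution $0 \to P^n \to \dots \to P^0 \to F \to 0$ by locally free sheaves from Lemma~\ref{l:resolutions}.\ref{enum:locfree-reso}: locally free sheaves are left $\pi^*$-acyclic in the ordinary sense, and combined with the acyclicity of the components of $F$ the same dimension-shifting argument (now for $\Tor$) shows that $\pi^*$ preserves the exactness of this resolution componentwise; Lemma~\ref{l:totalization}.\ref{enum:brutal-truncation-and-totalization} then yields $\bL \pi^*(F) = \pi^*(\Tot(P)) \xrightarrow{\sim} \pi^*(F)$. The only delicate step is the dimension-shifting in the $\DZ_2$-graded setting, but this reduces cleanly to the classical statement on $\Qcoh(Y)$ and $\Qcoh(X)$ because acyclicity and $\pi_*$ (resp.\ $\pi^*$) are both defined componentwise.
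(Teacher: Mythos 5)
Your proof is correct and follows essentially the same route as the paper: resolve $E$ by the finite complex of injective curved dg sheaves from Lemma~\ref{l:resolutions}.\ref{enum:Inj-reso}, use componentwise $\pi_*$-acyclicity (of $E_p$ and of the injective components) to see that $\pi_*$ preserves exactness of this resolution, identify $\pi_*(E)\to\Tot(\pi_*(I))=\pi_*(\Tot(I))=\bR\pi_*(E)$ with $\zeta_E$ via Lemma~\ref{l:totalization}.\ref{enum:brutal-truncation-and-totalization}, and invoke Theorem~\ref{t:right-derived-functor-explicit}.\ref{enum:right-derived-functor-explicit-characterize-right-F-acyclics}. The paper leaves the dimension-shifting step implicit, which you spell out; otherwise the arguments coincide, including the dual case.
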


\begin{proof}
  Lemma~\ref{l:resolutions}.\ref{enum:Inj-reso}
  provides a finite resolution $E \ra I$ in
  $Z_0(\Qcoh(Y,W))$ 
  with components $I^l \in \InjQcoh(Y,W).$
  Since all involved quasi-coherent 
  sheaves are $\pi_*$-acyclic, 
  $\pi_*(E) \ra \pi_*(I)$ is still a resolution in
  $Z_0(\Qcoh(X,W)).$ 
  Hence the obvious morphism $\pi_*(E) \ra \Tot(\pi_*(I))$
  becomes an isomorphism in $\DQcoh(X,W).$
  On the other hand, if we use $E \ra \Tot(I)$ for
  computing $\bR 
  \pi_*(E),$ we have
  $\bR \pi_*(E) = \pi_*(\Tot(I)) =\Tot(\pi_*(I))$
  in $\DQcoh(X,W).$
  Now
  Theorem~\ref{t:right-derived-functor-explicit}.\ref{enum:right-derived-functor-explicit-characterize-right-F-acyclics}
  shows our first claim.
  The second claim is proved similarly using
  Lemma~\ref{l:resolutions}.\ref{enum:locfree-reso}.
\end{proof}

\begin{remark}
  \label{rem:derived-direct-image-for-affine-morphism}
  If $\pi$ is an affine morphism, 
  all objects of $[\Qcoh(Y,W)]$ are right $\pi_*$-acyclic
  by Lemma~\ref{l:componentwise-acyclics}, so 
  $\pi_* \colon [\Qcoh(Y,W)] \ra \DQcoh(X,W)$ maps $\Acycl[\Qcoh(Y,W)]$
  to zero. The induced functor
  $\pi_* \colon  \DQcoh(Y,W) \ra \DQcoh(X,W)$
  is canonically isomorphic to $\bR \pi_*.$

  If $\pi$ is proper and affine
  (for example a closed embedding),
  then all objects of $[\Coh(Y,W)]$ are right acyclic 
  for
  $\pi_* \colon [\Coh(Y,W)] \ra \DCoh(X,W)$
  with respect to $\Acycl[\Coh(Y,W)],$
  and hence
  $\pi_* = \bR \pi_* \colon  \DCoh(Y,W) \ra \DCoh(X,W)$
  canonically.

  Similarly, if $\pi$ is flat (for example an open embedding), we
  have $\pi^*=\bL\pi^*$ 
  canonically. 
\end{remark}

\begin{lemma}
  \label{l:useful-lemma}
  Let $F=(0 \ra F^m \xra{d^m} F^{m+1} \ra \ldots \ra F^{n-1}
  \xra{d^{n-1}} F^n \ra 0)$
  be a complex in $Z_0(\Qcoh(X,W)).$
  \begin{enumerate}
  \item
    \label{enum:cohomologies-in-MF-then-Tot}
    Consider the
    cohomologies $H^i(F)$ and the totalization $\Tot(F)$
    as objects of $\DQcoh(X,W).$
    Assume that each $H^i(F)$ is
    isomorphic
    to an object of $\bfMF(X,W)$ (resp.\ $\DCoh(X,W)$). 
    Then the same is true for $\Tot(F).$ 
  \item
    \label{enum:general-splitting-of-complexes-mf}
    Assume that
    \begin{equation*}
      \Hom_{\DQcoh(X,W)}(H^p(F), [v]H^q(F))=0
    \end{equation*}
    for all
    $p>q$ and $v \in \DZ_2$ (enough: with $v \equiv q+1-p \mod 2$).
    Then $\Tot(F) \cong \bigoplus_{i=m}^n [i]H^i(F)$ in $\DQcoh(X,W).$
  \end{enumerate}
\end{lemma}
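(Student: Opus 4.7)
My plan is to realise $\Tot(F)$ as an iterated extension of the shifted cohomology objects $[i]H^i(F)$ via a Postnikov-style tower built from the canonical truncations $\tau_{\leq i}F$. Let $m$ be the smallest index with $H^m(F)\neq 0$, set $T_i := \Tot(\tau_{\leq i}F) \in \DQcoh(X,W)$, and apply Lemma~\ref{l:totalization}.\ref{enum:ses-gives-triangle} to the short exact sequences
\begin{equation*}
  0 \to \tau_{\leq i}F \to \tau_{\leq i+1}F \to C_i \to 0
\end{equation*}
of complexes in $Z_0(\Qcoh(X,W))$, where $C_i$ is the two-term complex $[\im d^i \hookrightarrow \ker d^{i+1}]$ placed in degrees $[i,i+1]$. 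This produces triangles $T_i \to T_{i+1} \to \Tot(C_i) \to [1]T_i$ in $\DQcoh(X,W)$ with $T_n \cong \Tot(F)$. The key subsidiary fact I would first prove is that any bounded complex $C$ in $Z_0(\Qcoh(X,W))$ whose cohomology is concentrated in a single degree $k$ satisfies $\Tot(C) \cong [k]H^k(C)$ in $\DQcoh(X,W)$; this follows by exhibiting an exact subcomplex $K \subset C$ whose quotient is $H^k(C)$ placed in degree $k$ and then invoking parts \ref{enum:ses-gives-triangle} and \ref{enum:totalization-bounded-exact-complex} of Lemma~\ref{l:totalization}. Applied to $\tau_{\leq m}F$ and to each $C_i$, this yields $T_m \cong [m]H^m(F)$ and $\Tot(C_i) \cong [i+1]H^{i+1}(F)$, so the triangles above take the explicit form $T_i \to T_{i+1} \to [i+1]H^{i+1}(F) \to [1]T_i$.

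Part \ref{enum:cohomologies-in-MF-then-Tot} then follows by induction on $i$ from $m$ to $n$: the base case $T_m \cong [m]H^m(F)$ lies, by hypothesis and closure under shifts, in the essential image of $\DCoh(X,W)$ (resp.\ $\bfMF(X,W)$) inside $\DQcoh(X,W)$; this image is a strict full triangulated subcategory by parts \ref{enum:Dcoh-DQcoh-ff} and \ref{enum:MF-Dcoh-equiv} of Theorem~\ref{t:equivalences-curved-categories}; and each of the triangles above forces $T_{i+1}$ into the subcategory once $T_i$ is known to lie there. The conclusion for $\Tot(F) = T_n$ follows.

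For part \ref{enum:general-splitting-of-complexes-mf} I would induct on $i$ to show $T_i \cong \bigoplus_{j=m}^{i} [j]H^j(F)$. The step reduces to showing that the connecting morphism $[i+1]H^{i+1}(F) \to [1]T_i$ vanishes; using the inductive description of $T_i$, its components lie in
\begin{equation*}
  \Hom_{\DQcoh(X,W)}\bigl([i+1]H^{i+1}(F),\,[j+1]H^j(F)\bigr) = \Hom_{\DQcoh(X,W)}\bigl(H^{i+1}(F),\,[j-i]H^j(F)\bigr)
\end{equation*}
for $m \leq j \leq i$, and each such group vanishes by the hypothesis with $p := i+1 > j =: q$ and $v := j-i \equiv q+1-p \pmod{2}$. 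The only substantive step in the whole argument is the subsidiary fact about complexes with cohomology in a single degree; everything else is routine bookkeeping with the Postnikov triangles.
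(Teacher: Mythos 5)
Your proof is correct and is essentially the paper's own argument read bottom-up: the paper peels off the top cohomology via the triangle $\Tot(\tau_{\leq n-1}F) \to \Tot(F) \to [n]H^n(F) \to [1]\Tot(\tau_{\leq n-1}F)$ and recurses, which is precisely your Postnikov tower (your $C_{n-1}$ is the paper's $\tau_{>n-1}(F)$), and part~\ref{enum:general-splitting-of-complexes-mf} is handled by the same vanishing of the connecting morphism. The only imprecision is your subsidiary claim that a bounded complex with cohomology concentrated in degree $k$ always admits an exact subcomplex with quotient exactly $[k]H^k$ — this can fail when there are nonzero terms above degree $k$ (one needs a two-step argument in general) — but it holds for the one- and two-term complexes $\tau_{\leq m}F$ and $C_i=(\im d^i \hookrightarrow \ker d^{i+1})$ to which you actually apply it, so the proof stands.
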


\begin{proof}
  If $m=n$ all statements are trivial, so assume $m < n.$
  Consider the (vertical) short exact sequence of complexes
  \begin{equation*}
    \xymatrix@=4ex{
      {\tau_{\leq n-1}(F) \colon } \iar[d] &
      {\dots} \ar[r] &
      {F^{n-2}} \ar[r] \gar[d] &
      {\Kern d^{n-1}} \ar[r] \ar[d] &
      {0} \ar[r] \ar[d] &
      {0} \ar[r] \ar[d] &
      {\dots}\\
      {F \colon } \sar[d] &
      {\dots} \ar[r] &
      {F^{n-2}} \ar[r]^{d^{n-2}} \ar[d] &
      {F^{n-1}} \ar[r]^{d^{n-1}} \ar[d] &
      {F^{n}} \ar[r] \gar[d] &
      {0} \ar[r] \ar[d] &
      {\dots}\\
      {\tau_{>n-1}(F) \colon } &
      {\dots} \ar[r] &
      {0} \ar[r] &
      {\Bild d^{n-1}} \ar[r] &
      {F^{n}} \ar[r] &
      {0} \ar[r] &
      {\dots}
    }
  \end{equation*}
  It induces an exact sequence of their totalizations which becomes a
  triangle
  in $\DQcoh(X,W),$
  by Lemma~\ref{l:totalization}.\ref{enum:ses-gives-triangle}.
  The short exact sequence
  $\Bild d^{n-1} \hra F^{n} \sra{H^n(F)}$ gives rise to an
  isomorphism $\Tot(\tau_{> n-1}(F)) \sira [n]H^n(F)$
  in $\DQcoh(X,W)$
  by Lemma~\ref{l:totalization}.\ref{enum:brutal-truncation-and-totalization}.
  Hence we obtain the triangle
  \begin{equation}
    \label{eq:tot-of-trunc-triangle}
    \Tot(\tau_{\leq n-1}(F)) \ra
    \Tot(F) \ra
    [n]H^n(F)
    \ra
    [1]\Tot(\tau_{\leq n-1}(F))
  \end{equation}
  in $\DQcoh(X,W).$

  \ref{enum:cohomologies-in-MF-then-Tot}:
  By induction and our assumptions
  the first and third object of the triangle
  \eqref{eq:tot-of-trunc-triangle}
  are isomorphic to objects of $\bfMF(X,W)$ 
  (resp.\ $\DCoh(X,W)$).
  The same is then true for
  $\Tot(F).$

  \ref{enum:general-splitting-of-complexes-mf}:
  By induction the triangle \eqref{eq:tot-of-trunc-triangle}
  is isomorphic to the triangle
  \begin{equation*}
    \bigoplus_{i=m}^{n-1} [i]H^i(F)
    \ra
    \Tot(F) \ra
    [n]H^n(F)
    \ra
    [1]\bigoplus_{i=m}^{n-1} [i]H^i(F).
  \end{equation*}
  By assumption the third morphism in this triangle vanishes, and
  hence $\Tot(F)$ is the direct sum of the first and the third object.
\end{proof}

\subsubsection{Sheaf Hom and tensor product}
\label{sec:sheaf-hom-tensor-product}

Let $W$ and $V$ be arbitrary morphisms
$X \ra \DA^1.$
For $P \in \Qcoh(X, W)$ and
$Q \in \Qcoh(X,V)$ 
consider the following diagram\footnote{
  If $A$ and $B$ are complexes with differentials $d_A=a$ and
  $d_B=b,$ 
  the differential $d$ in the
  Hom-complex $\Hom(A,B)$ is given by 
  $d(f)= b \comp f -(-1)^{|f|} f \comp a$ 
  for homogeneous $f$ of degree $|f|.$
  This explains the signs.
}
\begin{equation}
  \label{eq:def-sheafHom-QcohXWV}
  \xymatrix{
    {
      {
        \sheafHom_{\mathcal{O}_X}(P_1, Q_0)
        \oplus
        \sheafHom_{\mathcal{O}_X}(P_0, Q_1) 
      }
    }
    \ar@<0.4ex>[rrr]^-{
      \left[
        \begin{smallmatrix}
          -p_0^* 
          &
          q_{1*} 
          \\
          q_{0*}
          &
          -p_1^* 
        \end{smallmatrix}
      \right]
    } &&&
    {
      \sheafHom_{\mathcal{O}_X}(P_0, Q_0) \oplus \sheafHom_{\mathcal{O}_X}(P_1, Q_1).}
    \ar@<0.4ex>[lll]^-{
      \left[
        \begin{smallmatrix}
          -p_1^* &
          q_{1*}
          \\
          q_{0*} &
          -p_0^* 
        \end{smallmatrix}
      \right]
    }
  }
\end{equation}
in the category $\Sh(X).$ 
It is easy to check that both compositions are multiplication by
$V-W:$ note for example that $p_0^*p_1^*=-(p_1p_0)^*$ by the
usual sign convention, since $p_0$ and $p_1$ both have degree
one. Hence this 
diagram defines an object
$\sheafHom(P,Q)$
of $\Sh(X,V-W).$ 

\begin{remark}
  \label{rem:sheafHom-case-V-equals-W}
  In case $W=V$ note that $\sheafHom(P,Q)$ is in $\Sh(X,0),$
  i.\,e.\ it is a dg sheaf, and that
  \begin{equation}
    \label{eq:Qcoh-dg-Homs-as-global-sections}
    \Hom_{\Qcoh(X,W)}(P,Q) = \Gamma(X, \sheafHom(P,Q))
  \end{equation}
  as a dg abelian group.
\end{remark}

In fact $(P,Q) \mapsto \sheafHom(P,Q)$ is a dg
bifunctor
\begin{equation*}
  \sheafHom(-,-) \colon 
  \Qcoh(X,W)^\opp \times \Qcoh(X,V) \ra \Sh(X, V-W).
\end{equation*}
It induces a bifunctor
$\sheafHom(-,-) \colon 
[\Qcoh(X,W)]^\opp \times [\Qcoh(X,V)] \ra [\Sh(X, V-W)]$ of
triangulated categories. 
For fixed $P \in [\Qcoh(X,W)]$ the obvious composition
$\sheafHom(P,-) \colon  [\Qcoh(X,V)] \ra \DSh(X, V-W)$ has
a right derived functor 
with respect to $\Acycl[\Qcoh(X,V)]:$
we construct it by choosing morphisms $Q \ra I_Q$
with $I_Q \in [\InjQcoh(X,V)]$ and cone in $\Acycl[\Qcoh(X,V),$
for every $Q \in [\Qcoh(X,V)],$ and then proceed as in the
construction of $\bR \pi_*$ in the proof
of Theorem~\ref{t:derived-inverse-and-direct-image}.

For fixed $I \in [\InjQcoh(X,V)],$ the functor
$\sheafHom(-,I) \colon  [\Qcoh(X,W)]^\opp \ra \DSh(X, V-W)$
maps $\Acycl[\Qcoh(X,W)]$ to zero since
$\sheafHom(-,I)$ maps short exact sequences in $Z_0(\Qcoh(X,W))$
to short exact sequences in $Z_0(\Sh(X,V-W))$
(use
Theorem~\ref{t:injective-in-qcoh-vs-all-OX}.\ref{enum:inj-qcoh-equal-inj-OX-that-qcoh}).
We define 
\begin{align*}
  \bR\sheafHom(-,-) \colon 
  \DQcoh(X,W)^\opp \times \DQcoh(X,V) & \ra \DSh(X, V-W),\\
  (P,Q) & \mapsto \sheafHom(P,I_Q),
\end{align*}
and leave it to the reader to check that this defines a bifunctor
of triangulated categories. Note that for $(P,Q) \in  
[\Qcoh(X,W)]^\opp \times [\Qcoh(X,V)]$ there is a natural
morphism
\begin{equation*}
  \sheafHom(P,Q) \ra \bR\sheafHom(P,Q) =\sheafHom(P,I_Q)
\end{equation*}
in $\DSh(X, V-W)$
induced by $Q \ra I_Q.$ It is an isomorphism if $P$ is in
$[\MF(X,W)]$ (or in $[\Locfree(X,W)]$),
or of course if $Q$ is
in $[\InjQcoh(X,V)].$ This also shows that if $P$ is in $\Coh(X,W)$
and we choose 
$F_P \ra P$ in $[\Coh(X,W)]$ 
with $F_P \in [\MF(X,W)]$
and cone in $\Acycl[\Qcoh(X,W)],$
then the morphisms
\begin{equation*}
  \sheafHom(F_P,Q) \ra \sheafHom(F_P, I_Q) \la \sheafHom(P,I_Q)
\end{equation*}
become isomorphisms in $\DSh(X, V-W).$ This gives another way of
computing $\bR\sheafHom(P,Q)$ for $P \in \Coh(X,W).$

Note also that $\bR\sheafHom(P,Q)$ is in $\DQcoh(X,V-W)$ if 
$P \in \DCoh(X,W),$ and in (the essential image of )
$\DCoh(X,V-W)$ if 
$P \in \DCoh(X,W)$ and $Q \in \DCoh(X,V).$ 

We can also directly obtain a bifunctor
\begin{equation}
  \label{eq:dg-bifunctor-sheafhom-MF}
  \sheafHom(-,-) \colon 
  \bfMF(X,W)^\opp \times \bfMF(X,V) \ra \bfMF(X, V-W)
\end{equation}
of triangulated categories. It is isomorphic to the restriction
of $\bR\sheafHom(-,-)$ to $\bfMF(X,W)^\opp \times \bfMF(X,V).$
Slightly more general this works also for
$\sheafHom(-,-) \colon 
\bfMF(X,W)^\opp \times \DCoh(X,V) \ra \DCoh(X, V-W).$

For $P \in \Qcoh(X, W)$ and
$Q \in \Qcoh(X,V)$ note that
% \begin{equation}
%   \label{eq:def-tensor-product}
%   \xymatrix{
%     {(P_0 \otimes_{\mathcal{O}_X} Q_1) \oplus (P_1 \otimes_{\mathcal{O}_X} Q_0)}
%     \ar@<0.4ex>[rrr]^-{
%       \left[
%         \begin{smallmatrix}
%           \id \otimes q_1 &
%           p_1 \otimes \id \\
%           p_0 \otimes \id & 
%           \id \otimes q_0
%         \end{smallmatrix}
%       \right]
%     } &&&
%     {(P_0 \otimes_{\mathcal{O}_X} Q_0) \oplus (P_1 \otimes_{\mathcal{O}_X} Q_1)}
%     \ar@<0.4ex>[lll]^-{
%       \left[
%         \begin{smallmatrix}
%           \id \otimes q_0 &
%           p_1 \otimes \id \\
%           p_0 \otimes \id & 
%           \id \otimes q_1
%         \end{smallmatrix}
%       \right]
%     }
%   }
% \end{equation}
\begin{equation*}
  % \label{eq:def-tensor-product}
  \xymatrix{
    {(P_1 \otimes_{\mathcal{O}_X} Q_0)
      \oplus
      (P_0 \otimes_{\mathcal{O}_X} Q_1)}
    \ar@<0.4ex>[rrr]^-{
      \left[
        \begin{smallmatrix}
          \id \otimes q_0 & 
          p_0 \otimes \id \\
          p_1 \otimes \id &
          \id \otimes q_1 
        \end{smallmatrix}
      \right]
    } &&&
    {(P_1 \otimes_{\mathcal{O}_X} Q_1) \oplus (P_0 \otimes_{\mathcal{O}_X} Q_0)}
    \ar@<0.4ex>[lll]^-{
      \left[
        \begin{smallmatrix}
          \id \otimes q_1 & 
          p_0 \otimes \id \\
          p_1 \otimes \id & 
          \id \otimes q_0
      \end{smallmatrix}
      \right]
    }
  }
\end{equation*}
defines an object $P \otimes Q$ of $\Qcoh(X, V+W).$
We obtain a dg bifunctor
\begin{equation*}
  (- \otimes -) \colon  \Qcoh(X,W) \times \Qcoh(X,V) \ra \Qcoh(X,
  V+W)
\end{equation*}
and a bifunctor of triangulated categories on homotopy
categories.
For fixed $P \in [\Qcoh(X,W)]$ the obvious composition
$(P \otimes -) \colon  [\Qcoh(X,V)] \ra \DSh(X, V+W)$ has
a left derived functor 
with respect to $\Acycl[\Qcoh(X,V)]:$
for each $Q \in [\Qcoh(X,V)]$ we fix a morphism
$F_Q \ra Q$ with $F_Q \in [\Locfree(X,V)] \subset
[\FlatQcoh(X,V)]$ and cone in
$\Acycl[\Qcoh(X,V)]$ 
and proceed then as in the
construction of $\bL \pi^*$ in the proof
of Theorem~\ref{t:derived-inverse-and-direct-image}.
It is then easy to see that
\begin{align*}
  (- \otimes^\bL -) \colon  \DQcoh(X,W) \times \DQcoh(X,V) & \ra \DQcoh(X,
  V+W),\\
  (P,Q) \mapsto P \otimes F_Q,
\end{align*}
defines a bifunctor of triangulated categories. 
Again we have for $(P,Q) \in  
[\Qcoh(X,W)] \times [\Qcoh(X,V)]$ a natural
morphism
\begin{equation*}
  P \otimes^\bL Q = P \otimes F_Q \ra P \otimes Q 
\end{equation*}
in $\DSh(X, V+W)$
induced by $F_Q \ra Q$ which is an isomorphism if $P$ or $Q$ has
flat components.

Note that there is an obvious isomorphism
\begin{equation}
  \label{eq:adjunction-tensor-sheafHom}
  \Hom_{\Qcoh(X, W+V)}(P \otimes Q, R) \sira
  \Hom_{\Qcoh(X, W)}(P, \sheafHom(Q, R))
\end{equation}
of dg modules which is natural in 
$P \in \Qcoh(X,W),$ 
$Q \in \Coh(X,V),$
and
$R \in \Qcoh(X,W+V).$

\begin{lemma}
  \label{l:adjunction-Hom-tensor}
  We have
  \begin{equation*}
    \Hom_{\DQcoh(X, W+V)}(P \otimes^\bL Q, R) \cong
    \Hom_{\DQcoh(X, W)}(P, \bR\sheafHom(Q, R))
  \end{equation*}
  naturally in $P \in \DQcoh(X,W),$ 
  $Q \in \DCoh(X,V),$
  and
  $R \in \DQcoh(X,W+V).$
\end{lemma}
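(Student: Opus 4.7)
The plan is to reduce the derived adjunction to the underived dg adjunction \eqref{eq:adjunction-tensor-sheafHom} by passing to suitable resolutions. First I would fix, for each $Q \in \DCoh(X,V)$, a resolution $F_Q \to Q$ with $F_Q \in [\MF(X,V)]$ and cone in $\Acycl[\Coh(X,V)]$ (Lemma~\ref{l:resolutions}(b)), so that $P \otimes^{\bL} Q \cong P \otimes F_Q$ in $\DQcoh(X,W+V)$ for every $P$. Dually, for each $R \in \DQcoh(X,W+V)$ I would fix an injective resolution $R \to I_R$ with $I_R \in [\InjQcoh(X,W+V)]$ (Lemma~\ref{l:resolutions}(a)). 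Then Remark~\ref{rem:morphisms-to-injectives} yields
\[
\Hom_{\DQcoh(X,W+V)}(P \otimes F_Q, R) \cong \Hom_{[\Qcoh(X,W+V)]}(P \otimes F_Q, I_R).
\]

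Next, since $F_Q \in [\MF(X,V)]$ has coherent components, the dg isomorphism \eqref{eq:adjunction-tensor-sheafHom} applies with $F_Q$ in place of $Q$; taking $H_0$ of both sides gives
\[
\Hom_{[\Qcoh(X,W+V)]}(P \otimes F_Q, I_R) \cong \Hom_{[\Qcoh(X,W)]}(P, \sheafHom(F_Q, I_R)).
\]
By the discussion preceding the lemma, $\sheafHom(F_Q, I_R)$ represents $\bR\sheafHom(Q, R)$ in $\DQcoh(X,W)$, because one may compute $\bR\sheafHom$ either by injectively resolving the second argument or by matrix-factorization-resolving the first; concretely the natural morphisms $\sheafHom(F_Q, R) \to \sheafHom(F_Q, I_R) \leftarrow \sheafHom(Q, I_R)$ are both isomorphisms in $\DQcoh(X,W)$.

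The crucial remaining step is to pass from $[\Qcoh(X,W)]$-homs back to $\DQcoh(X,W)$-homs on the right, which reduces to verifying that $\sheafHom(F_Q, I_R) \in [\InjQcoh(X,W)]$. By the formula \eqref{eq:def-sheafHom-QcohXWV}, its components are finite direct sums of sheaves of the form $\sheafHom_{\mathcal{O}_X}(L, I) \cong L^{\vee} \otimes_{\mathcal{O}_X} I$ where $L$ is a vector bundle and $I$ is injective quasi-coherent; such a sheaf is injective quasi-coherent because the functor $(-) \otimes_{\mathcal{O}_X} L^{\vee}$ admits the exact left adjoint $(-) \otimes_{\mathcal{O}_X} L$ and therefore preserves injective objects. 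A second application of Remark~\ref{rem:morphisms-to-injectives} then gives
\[
\Hom_{[\Qcoh(X,W)]}(P, \sheafHom(F_Q, I_R)) = \Hom_{\DQcoh(X,W)}(P, \bR\sheafHom(Q, R)),
\]
and chaining the isomorphisms produces the claimed identification. Naturality in $P$, $Q$, $R$ follows from the naturality of \eqref{eq:adjunction-tensor-sheafHom} and the functoriality (up to homotopy) of the chosen resolutions. The main obstacle is the injectivity check for $\sheafHom(F_Q, I_R)$; the rest is a formal transcription of the dg adjunction through fixed resolutions.
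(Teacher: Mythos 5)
Your proof is correct and follows essentially the same route as the paper: the paper simply says ``we can assume $Q \in \bfMF(X,V)$ and $R \in \InjQcoh(X,W+V)$'' and then takes $H_0$ of \eqref{eq:adjunction-tensor-sheafHom}, which is exactly your reduction via the resolutions $F_Q \to Q$ and $R \to I_R$ written out explicitly. The only cosmetic difference is in the injectivity of $\sheafHom(F_Q,I_R)$: you argue via the exact left adjoint $(-)\otimes L$, while the paper cites \cite[Prop.~7.17]{hartshorne-residues-duality}; both are fine.
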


\begin{proof}
  First note that we can assume that $Q \in \bfMF(X,V)$ 
  and that moreover $R \in \InjQcoh(X,W+V).$
  Then $P \otimes^\bL Q \sira P \otimes Q$ and 
  $\sheafHom(Q, R) \sira \bR\sheafHom(Q, R).$ Note also that
  $\sheafHom(Q, R) \in \InjQcoh(X,W)$
  by \cite[Prop.~7.17]{hartshorne-residues-duality}.
  Now take $H_0$ of the above isomorphism
  \eqref{eq:adjunction-tensor-sheafHom}
  of dg modules and use
  Remark~\ref{rem:morphisms-to-injectives}.
\end{proof}

\subsubsection{External tensor product}
\label{sec:extern-tens-prod}

Let $Y$ be a scheme such that $Y$ and $X \times Y$ satisfy 
condition~\ref{enum:srNfKd}, and let $V \colon  Y \ra \DA^1$ be a
morphism.
Let $p \colon X \times Y \ra X$ and $q \colon X \times Y \ra Y$ be the
projections. We define $W*V:= p^*(W)+q^*(V),$ so 
$(W*V)(x,y)=W(x)+V(y).$
We define the dg bifunctor $\boxtimes$ by
\begin{equation*}
  (- \boxtimes -) := (p^*(-) \otimes q^*(-)) \colon  \Qcoh(X,W) \times
  \Qcoh(Y,V) \ra \Qcoh(X \times Y, W*V).
\end{equation*}
This functor immediately induces the
bifunctor 
\begin{equation*}
  (- \boxtimes -) \colon  \DQcoh(X,W) \times
  \DQcoh(Y,V) \ra \DQcoh(X \times Y ,W*V) 
\end{equation*}
of triangulated categories. This functor 
coincides with the composition $(- \otimes^\bL -) \comp (\bL p^*
\times \bL q^*)$ (since $p$ and $q$ are flat we have $\bL p^* \sira
p^*$ and $\bL q^* \sira q^*,$ and
moreover $(- \otimes^\bL -) \sira (- \otimes -)$ on objects of
the form $(p^*(P), q^*(Q)),$ cf.\ the proof of
Lemma~\ref{l:componentwise-acyclics}). 
Note also that $P \boxtimes Q$ is in $\bfMF(X \times Y, W*V)$
(resp.\ $\DCoh(X \times Y, V*W)$) if 
$P \in \bfMF(X, W)$ and $Q \in \bfMF(Y,V)$ 
(resp.\ $P \in \DCoh(X, W)$ and $Q \in \DCoh(Y,V).$ 

\subsubsection{Duality}
\label{sec:duality}

We introduce a duality $D_X$ on the category of matrix
factorizations.
Let $\mathcal{D}_X:=(\matfak{0}{}{\mathcal{O}_X}{}) \in \MF(X,0);$ 
note that
$\mathcal{O}_X$ sits in even degree.
Then
\begin{equation*}
  D_X := (-)^\cek:= \sheafHom(-,\mathcal{D}_X) \colon  \MF(X,W) \ra
  \MF(X, -W)^\opp 
\end{equation*}
is a equivalence of dg categories and induces an equivalence
\begin{equation}
  \label{eq:duality-derived-level}
  D_X := (-)^\cek:= \sheafHom(-,\mathcal{D}_X) \colon  \bfMF(X,W) \ra
  \bfMF(X, -W)^\opp 
\end{equation}
of triangulated categories.
This is just the functor 
\eqref{eq:dg-bifunctor-sheafhom-MF}
with $\mathcal{D}_X$ as its fixed second argument.
We refer to $D_X$ as the duality since $D_X^2=\id$
naturally.
Explicitly, $D_X$ maps
$P=\big(\matfak{P_1}{p_1}{P_0}{p_0}\big)$
to
\begin{equation}
  \label{eq:duality-explicitly}
  D_X(P)=P^\cek= \Big(
  \xymatrix{
    {P^\cek_1=\sheafHom(P_1,\mathcal{O}_X)}
    \ar@<0.4ex>[rr]^-{p^\cek_1=-p^*_0}
    &&
    {P^\cek_0=\sheafHom(P_0, \mathcal{O}_X).}
    \ar@<0.4ex>[ll]^-{p^\cek_0=-p^*_1}
  }
  \Big).
\end{equation}
Occasionally we view the duality as the functor
$D_X =\bR \sheafHom(-, \mathcal{D}_X) \colon  \DCoh(X,W) \ra
\bfMF(X,-W)^\opp.$
The next lemma says that the inverse image functor and duality
commute.

\begin{lemma}
  \label{l:pi-us-and-duality-MF-setting}
  Let $\pi \colon  Y \ra X$ be a morphism of 
  schemes satisfying 
  condition~\ref{enum:srNfKd}, and let $W\colon X \ra \DA^1$ be a morphism.
  Then there is an isomorphism 
  $\pi^* \comp D_X
  \sira D_Y \comp \pi^*$ of functors 
  $\bfMF(X,W) \sira \bfMF(Y,-W)^\opp.$
\end{lemma}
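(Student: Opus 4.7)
The plan is to construct a natural isomorphism at the level of dg categories $\MF(X,W) \to \MF(Y,-W)^{\opp}$, which will then descend to the derived categories $\bfMF$.

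First I would recall the classical fact: for any vector bundle $E$ on $X$, there is a canonical isomorphism
\begin{equation*}
  \alpha_E \colon \pi^*\sheafHom_{\mathcal{O}_X}(E, \mathcal{O}_X) \xrightarrow{\sim} \sheafHom_{\mathcal{O}_Y}(\pi^*E, \mathcal{O}_Y)
\end{equation*}
of $\mathcal{O}_Y$-modules, natural in $E$ and compatible with duals of morphisms (i.e.\ $\alpha_E \circ \pi^*(f^*) = (\pi^*f)^* \circ \alpha_{E'}$ for any morphism $f \colon E \to E'$ of vector bundles on $X$). This holds because $\pi^*\mathcal{O}_X = \mathcal{O}_Y$ and both sides commute with finite direct sums, so the claim reduces to the trivial case $E = \mathcal{O}_X$ and then extends by naturality.

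Next, for $P = \big(\matfak{P_1}{p_1}{P_0}{p_0}\big) \in \MF(X,W)$, by the formula \eqref{eq:duality-explicitly} together with the fact that $\pi^*$ is applied componentwise (and $\bL\pi^* = \pi^*$ on $\bfMF(X,W)$ by Theorem~\ref{t:derived-inverse-and-direct-image}), the underlying graded sheaves of $\pi^*(D_X P)$ and $D_Y(\pi^* P)$ are
\begin{equation*}
  \pi^*\sheafHom(P_1,\mathcal{O}_X) \oplus \pi^*\sheafHom(P_0,\mathcal{O}_X)
  \quad\text{and}\quad
  \sheafHom(\pi^*P_1,\mathcal{O}_Y) \oplus \sheafHom(\pi^*P_0,\mathcal{O}_Y),
\end{equation*}
respectively. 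The componentwise maps $\alpha_{P_0}, \alpha_{P_1}$ define a graded isomorphism $\beta_P$ between them. Compatibility with the differentials comes down to checking that under $\alpha_{P_i}$ the morphism $\pi^*(-p_i^*)$ corresponds to $-(\pi^*p_i)^*$, which is exactly the compatibility of $\alpha$ with duals of morphisms; the minus signs on both sides match automatically.

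Thus $\beta_P$ is a closed degree zero isomorphism in $\MF(Y,-W)$, and naturality in $P$ follows from naturality of $\alpha$. Passing to the Verdier quotient gives the required isomorphism $\pi^* \circ D_X \xrightarrow{\sim} D_Y \circ \pi^*$ of functors $\bfMF(X,W) \to \bfMF(Y,-W)^{\opp}$. The only potential obstacle is sign-bookkeeping in the dual morphisms (where the convention $p_0^* p_1^* = -(p_1 p_0)^*$ is in force), but since both sides of the isomorphism are built using the same convention, the signs cancel and no genuine difficulty arises.
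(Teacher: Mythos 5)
Your proposal is correct and follows essentially the same route as the paper: both construct the canonical base-change morphism $\pi^*\sheafHom_{\mathcal{O}_X}(\mathcal{F},\mathcal{O}_X)\ra\sheafHom_{\mathcal{O}_Y}(\pi^*\mathcal{F},\mathcal{O}_Y)$, observe that it is an isomorphism for vector bundles, and apply it componentwise to matrix factorizations. The paper obtains the natural map via the $(\pi^*,\pi_*)$ adjunction rather than by reduction to $\mathcal{O}_X$, but this is only a difference in bookkeeping, not in substance.
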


\begin{proof}
  For $\mathcal{F} \in \Qcoh(X)$ 
  consider the morphism
  \begin{equation*}
    \sheafHom_{\mathcal{O}_{X}}(\mathcal{F}, \mathcal{O}_X)
    \ra
    \sheafHom_{\mathcal{O}_{X}}(\mathcal{F},
    \pi_*\mathcal{O}_Y)
    = \pi_*\sheafHom_{\mathcal{O}_Y}
    (\pi^*\mathcal{F}, \mathcal{O}_{Y}).
  \end{equation*}
  The arrow is induced by
  $\mathcal{O}_X \ra \pi_*\mathcal{O}_{Y},$ and
  the equality is the usual adjunction.
  It corresponds under the adjunction to a morphism
  $    \pi^*\sheafHom_{\mathcal{O}_{X}}(\mathcal{F}, \mathcal{O}_X)
  \ra \sheafHom_{\mathcal{O}_{Y}}(\pi^*\mathcal{F},
  \mathcal{O}_{Y}).$
  This morphism is an isomorphism if $\mathcal{F}$ is a vector
  bundle. 
  From this we obviously obtain the isomorphism we want.
\end{proof}

\subsection{Enhancements}
\label{sec:enhancements}

In this section we define several
enhancements of $\bfMF(X,W)$
and show that they are equivalent
(see e.\,g.\ \cite{lunts-orlov-enhancement} for the
definitions). Similarly we define two equivalent enhancements of
$\DQcoh(X,W).$

\subsubsection{Enhancements using injective quasi-coherent sheaves}
\label{sec:enhanc-inject}

Recall that the obvious
functor $[\InjQcoh(X, W)] \ra \DQcoh(X,W)$ is an equivalence
(Theorem~\ref{t:equivalences-curved-categories}.\ref{enum:injQcoh-DQcoh-equiv}), in other words $\InjQcoh(X,W)$ is an
enhancement of the triangulated category $\DQcoh(X,W).$
This enhancement induces an
enhancement for the full subcategory $\bfMF(X,W)
\sira \DCoh(X,W) \subset \DQcoh(X,W)$
(cf.\ Theorem~\ref{t:equivalences-curved-categories}). Namely, let
$\InjQcoh_{\bfMF}(X,W) \subset \InjQcoh(X,W)$ be the full dg
subcategory consisting of objects which are 
isomorphic in $\DQcoh(X,W)$ to an object of $\bfMF(X,W).$ Then
\begin{equation*}
  % \label{eq:inj-enhancement-equiv}
  [\InjQcoh_{\bfMF}(X,W)] \simeq \bfMF(X,W),
\end{equation*}
so $\InjQcoh_{\bfMF}(X,W)$ is an enhancement
of $\bfMF(X,W).$

\subsubsection{Enhancements by dg quotients}
\label{sec:enhanc-dg-quot}

There is a different enhancement of $\bfMF(X,W).$ Namely, let
$\AcyclMF(X,W)\subset \MF(X,W)$ be the
full dg
subcategory consisting of objects that belong to $\Acycl[\MF(X,W)].$
Consider the Drinfeld dg quotient category
$\MF(X,W)/\AcyclMF(X,W)$
(which is pretriangulated, cf.\
\cite[Lemma~1.5]{lunts-orlov-enhancement}). 
Then by \cite[Thm.~1.6.2, Thm.~3.4]{drinfeld-dg-quotients}
there is an
equivalence
\begin{equation}
  \label{eq:dg-quot-enhancement-equiv}
  \bfMF(X,W) = [\MF(X,W)]/\Acycl[\MF(X,W)] \sira [\MF(X,W)/\AcyclMF(X,W)],
\end{equation}
hence $\MF(X,W)/\AcyclMF(X,W)$ is an enhancement of $\bfMF(X,W).$
Similarly, by defining
$\AcyclCoh(X,W) \subset \Coh(X,W)$ to consist of those objects that
belong to $\Acycl[\Coh(X,W)],$  
we see that
$\Coh(X,W)/\AcyclCoh(X,W)$ is an enhancement of 
$\DCoh(X,W) \sila \bfMF(X,W).$

The same approach works for the category $\DQcoh(X,W)$: Let $\AcyclQcoh(X,W)
\subset \Qcoh(X,W)$ be the full dg subcategory consisting of objects that
belong to $\Acycl[\Qcoh(X,W)].$
Then
\begin{equation*}
  \DQcoh(X,W) = [\Qcoh(X,W)]/\Acycl[\Qcoh(X,W)] 
  \sira 
  [\Qcoh(X,W)/\AcyclQcoh(X,W)],
\end{equation*}
i.\,e.\ the dg quotient
$\Qcoh(X,W)/\AcyclQcoh(X,W)$ is an enhancement of $\DQcoh(X,W).$

The two enhancements of
$\DQcoh(X,W)$ using injectives resp.\ dg quotients 
are equivalent, and similarly for
the three enhancements of $\bfMF(X,W).$
Namely we have the following lemma.

\begin{lemma}
  \label{l:enhancements-qequi}
  \rule{1mm}{0mm}
  \begin{enumerate}
  \item
    \label{enum:Inj-qequi-Qcoh-mod-AcyclQcoh}
    The dg categories $\InjQcoh(X,W)$ and
    $\Qcoh(X,W)/\AcyclQcoh(X,W)$ are quasi-equivalent.
  \item
    \label{enum:inj-qequi-MF-mod-AcyclMF}
    The dg categories 
    $\MF(X,W)/\AcyclMF(X,W)$ and
    $\Coh(X,W)/\AcyclCoh(X,W)$ and
    $\InjQcoh_{\bfMF}(X,W)$ 
    are quasi-equivalent.
  \end{enumerate}
\end{lemma}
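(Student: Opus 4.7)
The plan in each case is to construct a natural dg functor between the two (or three) dg categories, then verify separately that (i) it induces an equivalence on homotopy categories and (ii) it is a quasi-isomorphism on Hom complexes. Recall that a dg functor is a quasi-equivalence precisely when both conditions hold.

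For (a), I would consider the composition
\[
\Phi \colon \InjQcoh(X,W) \hookrightarrow \Qcoh(X,W) \longrightarrow \Qcoh(X,W)/\AcyclQcoh(X,W)
\]
of the obvious inclusion with the Drinfeld dg quotient functor. On homotopy categories, $\Phi$ induces the canonical functor $[\InjQcoh(X,W)] \to \DQcoh(X,W)$, which is an equivalence by Theorem~\ref{t:equivalences-curved-categories}.\ref{enum:injQcoh-DQcoh-equiv} (using the equivalence \eqref{eq:dg-quot-enhancement-equiv} to identify $[\Qcoh(X,W)/\AcyclQcoh(X,W)]$ with $\DQcoh(X,W)$). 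For the Hom-complex statement, I would show that for $I,J \in \InjQcoh(X,W)$ the canonical map
\[
\Hom_{\Qcoh(X,W)}(I,J) \longrightarrow \Hom_{\Qcoh(X,W)/\AcyclQcoh(X,W)}(I,J)
\]
is a quasi-isomorphism. Taking $H^n$ of the right side recovers $\Hom_{\DQcoh(X,W)}([n]I,J)$ by \cite[Thm.~1.6.2, Thm.~3.4]{drinfeld-dg-quotients}, while $H^n$ of the left side equals $\Hom_{[\Qcoh(X,W)]}([n]I,J)$; these are identified via Remark~\ref{rem:morphisms-to-injectives}.

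For (b), two of the three quasi-equivalences follow from the analogous inclusion-then-quotient construction: the dg functor
\[
\MF(X,W)/\AcyclMF(X,W) \longrightarrow \Coh(X,W)/\AcyclCoh(X,W)
\]
induced by $\MF(X,W) \hookrightarrow \Coh(X,W)$ induces on homotopy categories the equivalence $\bfMF(X,W) \simeq \DCoh(X,W)$ from Theorem~\ref{t:equivalences-curved-categories}.\ref{enum:MF-Dcoh-equiv}, and the Hom-complex quasi-isomorphism follows from Drinfeld's formula combined with Corollary~\ref{c:intersections-with-Acycl} (which ensures acyclics in the smaller subcategory agree with the intersection of the bigger acyclics). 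To bring in $\InjQcoh_{\bfMF}(X,W)$, I would restrict $\Phi$ to obtain a dg functor $\InjQcoh_{\bfMF}(X,W) \to \Qcoh(X,W)/\AcyclQcoh(X,W)$, whose essential image on homotopy categories is precisely $\bfMF(X,W) \subset \DQcoh(X,W)$, and combine this with the fully faithful dg functor $\Coh(X,W)/\AcyclCoh(X,W) \to \Qcoh(X,W)/\AcyclQcoh(X,W)$ induced by $\Coh(X,W) \hookrightarrow \Qcoh(X,W)$ (note that this lands in acyclics when applied to $\AcyclCoh(X,W)$).

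The cleanest formal packaging, which avoids choosing quasi-inverses, is to exhibit each of the three enhancements in (b) as a full dg subcategory of $\Qcoh(X,W)/\AcyclQcoh(X,W)$ via a dg functor that is fully faithful up to quasi-isomorphism on Hom complexes and has essential image (on homotopy categories) equal to $\bfMF(X,W) \subset \DQcoh(X,W)$. The main technical point will be verifying quasi-full-faithfulness for the dg-quotient enhancements $\MF(X,W)/\AcyclMF(X,W)$ and $\Coh(X,W)/\AcyclCoh(X,W)$ when mapped into $\Qcoh(X,W)/\AcyclQcoh(X,W)$; here the cohomology-level statement reduces via Drinfeld's description to the identity
\[
\Hom_{\bfMF(X,W)}([n]E,F) = \Hom_{\DCoh(X,W)}([n]E,F) = \Hom_{\DQcoh(X,W)}([n]E,F),
\]
which is precisely Theorem~\ref{t:equivalences-curved-categories}.\ref{enum:Dcoh-DQcoh-ff} and \ref{enum:MF-Dcoh-equiv}. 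Once this is established, all three dg categories in (b) are quasi-equivalent to a common full subcategory of $\Qcoh(X,W)/\AcyclQcoh(X,W)$, completing the proof.
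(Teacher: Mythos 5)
Your proposal is correct and follows essentially the same route as the paper: for (a) the paper likewise restricts the Drinfeld quotient functor to $\InjQcoh(X,W),$ and for (b) it maps all three enhancements into $\Qcoh(X,W)/\AcyclQcoh(X,W)$ and identifies each with the common full dg subcategory whose objects lie in the essential image $\bfMF(X,W)$ --- exactly your ``cleanest formal packaging.'' The supporting facts you invoke (Drinfeld's description of the quotient Hom complexes, Theorem~\ref{t:equivalences-curved-categories}, Remark~\ref{rem:morphisms-to-injectives}) are the same ones the paper relies on.
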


\begin{proof}
  \ref{enum:Inj-qequi-Qcoh-mod-AcyclQcoh}
  The Drinfeld dg quotient comes with the canonical
  quotient dg functor $\Qcoh(X,W) \ra \Qcoh(X,W)/\AcyclQcoh(X,W).$
  Restriction to the dg subcategory $\InjQcoh(X,W)$ yields the
  desired quasi-equivalence $\alpha  \colon  \InjQcoh(X,W) \ra \Qcoh(X,W)/\AcyclQcoh(X,W).$

  \ref{enum:inj-qequi-MF-mod-AcyclMF} 
  Consider the dg functor
  $\alpha \colon  \InjQcoh_{\bfMF}(X,W)\ra \Qcoh(X,W)/\AcyclQcoh(X,W)$
  obtained by restriction and the
  canonical dg functor $\beta \colon  \MF(X,W)/\AcyclMF(X,W) \ra
  \Qcoh(X,W)/\AcyclQcoh(X,W).$ The induced homotopy functors
  $[\alpha]$ and $[\beta]$ are full and faithful and have the
  same essential image in $[\Qcoh(X,W)/\AcyclQcoh(X,W)].$ Let
  $\mathcal{A} \subset \Qcoh(X,W)/\AcyclQcoh(X,W)$ be the full dg
  subcategory consisting of objects that belong to this essential
  image. 
  Then the dg functors
  \begin{equation*}
    \InjQcoh_{\bfMF}(X,W)\xra{\alpha}\mathcal{A}\xla{\beta} \MF(X,W)/\AcyclMF(X,W)
  \end{equation*}
  are the
  desired quasi-equivalences. Similarly we prove that 
  $\Coh(X,W)/\AcyclCoh(X,W)$ and
  $\InjQcoh_{\bfMF}(X,W)$ are
  quasi-equivalent. 
\end{proof}

\subsubsection{Morphism oriented \v{C}ech enhancement}
\label{sec:morphism-cech-enhancement}

After some
preparations we will define an enhancement for
$\bfMF(X,W)$ whose morphism spaces are defined using \v{C}ech
complexes. 

Let $\mathcal{U}=(U_i)_{i \in I}$ be an open covering of $X$ and let
$\mathcal{F}$ be a dg
sheaf on $X,$ i.\,e.\ an object of
$\Sh(X,0).$ 
We define a $\DZ_2 \times \DZ$-graded abelian group
$C^*(\mathcal{U}, \mathcal{F}_*)$ as follows: Its component of
degree $(p,q) \in \DZ_2 \times \DZ$ is
\begin{equation*}
  C^q(\mathcal{U}, \mathcal{F}_p)
  = \prod_{(i_0, \dots , i_q) \in I^{q+1}}
  \mathcal{F}_p(U_{i_0} \cap \dots \cap U_{i_q}).
\end{equation*}
We turn $C^*(\mathcal{U}, \mathcal{F}_*)$ into a double complex
as follows:
its first differential (in the $p$-direction) is induced by that of
$\mathcal{F}$ and its second differential is the usual \v{C}ech
differential.
The \v{C}ech complex
$C(\mathcal{U}, \mathcal{F})$ is the total complex of
$C^*(\mathcal{U}, \mathcal{F}_*)$: Its $m$-th component for $m \in
\DZ_2$ is given by
\begin{equation*}
  % \label{d:definition-Cech-complex}
  C(\mathcal{U}, \mathcal{F})_m
  = \bigoplus_{p \in \DZ_2,\, q \in \DZ,\, p+q=m} C^q(\mathcal{U}, \mathcal{F}_p)
\end{equation*}

There is an obvious map
\begin{equation}
  \label{eq:global-to-Cech-complex}
  \Gamma(X, \mathcal{F}) \ra C(\mathcal{U}, \mathcal{F})
\end{equation}
of dg abelian groups.

A different perspective on $C(\mathcal{U}, \mathcal{F})$
is as follows. Taking the
\v{C}ech complex defines a functor from $\Sh(X)$ to the category
of complexes of vector spaces over $k,$ and hence maps 
$\mathcal{F} \in \Sh(X,0)$ to a complex $C^*(\mathcal{U},
\mathcal{F})$ in $Z_0(\Sh(\Spec k, 0)).$ Its totalization is
$C(\mathcal{U}, \mathcal{F}).$

\begin{lemma}
  \label{l:global-to-Cech-qiso-for-flabby}
  The morphism \eqref{eq:global-to-Cech-complex} is a
  quasi-isomorphism if $\mathcal{F}$ is componentwise flabby
  (i.\,e.\ $\mathcal{F}_0$ and $\mathcal{F}_1$ are flabby).
\end{lemma}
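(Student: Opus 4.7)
The strategy will be to identify the mapping cone of \eqref{eq:global-to-Cech-complex} with the totalization of a half-plane double complex whose rows are exact by flabbiness, and then to conclude acyclicity by a standard double-complex argument. First I would form the augmented \v{C}ech double complex $\widetilde{C}^{p,q}$ with $p\in\DZ_2$ and $q\in\DZ_{\geq -1}$, setting $\widetilde{C}^{p,-1}:=\Gamma(X,\mathcal{F}_p)$ and $\widetilde{C}^{p,q}:=C^q(\mathcal{U},\mathcal{F}_p)$ for $q\geq 0$, with the $\mathcal{F}$-differential in the $p$-direction and the \v{C}ech differential (extended at $q=-1$ by the canonical restriction map $\Gamma(X,\mathcal{F}_p)\ra C^0(\mathcal{U},\mathcal{F}_p)$) in the $q$-direction. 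Up to a degree shift, $\Tot(\widetilde{C})$ will be the mapping cone of \eqref{eq:global-to-Cech-complex} in the category of $\DZ_2$-graded complexes of $k$-vector spaces, so it suffices to show that $\Tot(\widetilde{C})$ is acyclic.

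The next step is to observe that for each fixed $p\in\DZ_2$ the row $\widetilde{C}^{p,\bullet}$ is exact. This is the classical theorem that the augmented \v{C}ech complex of a flabby sheaf on an arbitrary open cover is exact in every position, combining the sheaf-axiom injectivity of $\Gamma(X,\mathcal{F}_p)\ra C^0(\mathcal{U},\mathcal{F}_p)$ at $q=-1$ with the vanishing of higher \v{C}ech cohomology of a flabby sheaf at $q\geq 0$; crucially this holds with no finiteness hypothesis on $\mathcal{U}$. Then I would invoke the forthcoming Lemma~\ref{l:double-complex-upper-halfplane} on half-plane double complexes, or equivalently give a direct filtration argument: every cycle $z\in\Tot(\widetilde{C})$ has only finitely many non-zero components in the $q$-direction because the totalization uses direct sums, so finite induction on the maximal such $q$ together with row-exactness at step $q=N$ lets one replace $z$ by a homologous cycle supported in strictly smaller $q$-degree. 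The induction will terminate at $q=-1$, where exactness of the augmentation (i.e.\ injectivity of the canonical restriction map) forces the residual cycle to vanish, whence $z$ is a boundary.

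The only genuine obstacle will be the sign bookkeeping needed to identify $\Tot(\widetilde{C})$ with the cone of \eqref{eq:global-to-Cech-complex} in the $\DZ_2$-graded setting, especially to ensure the signs produced by interleaving the $\mathcal{F}$-differential with the \v{C}ech differential match those of the cone construction; once this is checked the argument reduces mechanically to classical flabby-\v{C}ech vanishing plus a standard filtration on a half-plane double complex.
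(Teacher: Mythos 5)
Your proposal is correct and follows essentially the same route as the paper: the paper deduces the claim from Godement's theorem on the vanishing of \v{C}ech cohomology of flabby sheaves together with part~\ref{enum:double-complex-upper-halfplane-column-qisos} of Lemma~\ref{l:double-complex-upper-halfplane}, which is exactly your combination of augmented-\v{C}ech exactness for flabby sheaves with a filtration argument on a half-plane double complex. Passing to the cone (your augmented double complex with exact columns) rather than applying the lemma directly to the morphism of double complexes is an immaterial reformulation.
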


\begin{proof}
  This follows from \cite[Thm.~5.2.3]{godement} and
  part~\ref{enum:double-complex-upper-halfplane-column-qisos}
  of the following
  Lemma~\ref{l:double-complex-upper-halfplane}.
\end{proof}

\begin{lemma}
  \label{l:double-complex-upper-halfplane}
  Let $f \colon  A \ra B$ be a morphism of
  $\DZ_2 \times \DZ$-graded double complexes
  $A=(A^{p,q})_{p \in \DZ_2,\, q \in \DZ},$
  $B=(B^{p,q})_{p \in \DZ_2,\, q \in \DZ}$
  of abelian groups.
  We assume that $A^{p,q}=0$ and $B^{p,q}=0$ for all $q<M,$ for
  some fixed $M \in \DZ.$
  Assume that one of the following two conditions is true:
  \begin{enumerate}
  \item
    \label{enum:double-complex-upper-halfplane-column-qisos}
    $f$ induces isomorphisms $H(A^{p,*}) \ra H(B^{p,*})$ for all
    $p \in \DZ_2.$
  \item
    \label{enum:double-complex-upper-halfplane-row-qisos}
    $f$ induces isomorphisms $H(A^{*,q}) \ra H(B^{*,q})$ for all
    $q \in \DZ,$ and $A$ and $B$ are 
    bounded in the
    $q$-direction, i.\,e.\ there is $N \in \DZ$ such that
    $A^{p,q}=0$ and $B^{p,q}=0$ for all $q>N$ and $p \in \DZ_2.$
  \end{enumerate}
  Then $f$ induces a quasi-isomorphism $\Tot(f) \colon  \Tot(A) \ra
  \Tot(B)$ of the total complexes
  associated to $A$ and $B.$
\end{lemma}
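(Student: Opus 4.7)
The plan is to reduce, via an appropriately chosen mapping cone, to showing acyclicity of the totalization of an auxiliary double complex. In case~(a) I would cone $f$ in the $q$-direction, forming the double complex $C$ with $C^{p,q}=A^{p,q+1}\oplus B^{p,q}$ and differentials arranged so that each column $C^{p,*}$ is the ordinary $\DZ$-graded mapping cone $\Cone(f^{p,*})$; hypothesis~(a) then makes every column acyclic. In case~(b) I would instead cone in the $p$-direction, $C^{p,q}=A^{p+1,q}\oplus B^{p,q}$, so that each row $C^{*,q}$ is the $\DZ_2$-graded cone $\Cone(f^{*,q})$, which is acyclic by hypothesis~(b). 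A direct reindexing identifies $\Tot(C)$ with $\Cone(\Tot(f))$ in both cases, so it suffices to show $\Tot(C)$ is acyclic. In both cases $C$ inherits the lower bound in $q$ from $A$ and $B$ (up to a shift by one); in case~(b) it also inherits the upper bound.

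Case~(b) I would handle by induction on the width of the $q$-support of $C$. For the base case of a single column at $q=q_0$, the complex $\Tot(C)$ equals $C^{*,q_0}$ up to a $\DZ_2$-shift, whose cohomology vanishes by row-acyclicity. For the inductive step, the short exact sequence of double complexes $0\to C^{*,\geq b}\to C\to C^{*,\leq b-1}\to 0$, obtained by splitting off the top nonzero column, stays short exact after totalizing, and the resulting six-periodic long exact cohomology sequence forces $H(\Tot(C))$ to vanish from vanishing of its two outer terms.

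Case~(a) is the main obstacle, because without an upper bound on $q$ no finite induction on width is available. I would argue at the cocycle level, exploiting the lower bound on $q$. A cocycle $\alpha\in\Tot(C)_m$ has finite support, so $\alpha=\sum_q\alpha_q$ with $\alpha_q\in C^{m-q,q}$; let $n$ be the largest $q$ with $\alpha_q\neq 0$. The bidegree-$(m-n,n+1)$ piece of $d\alpha=0$ forces $d_h\alpha_n=0$. If $n$ is strictly above the lower bound for $C$, column acyclicity produces $\beta_n\in C^{m-n,n-1}$ with $d_h\beta_n=\alpha_n$, and then $\alpha-d\beta_n$ is cohomologous to $\alpha$ and has strictly smaller top $q$-degree. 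This reduction terminates in finitely many steps because $q$ is bounded below; the resulting cohomologous cocycle is concentrated in the smallest $q$-degree, is horizontally closed, and therefore represents a class in the bottom cohomology of a column, which equals the kernel of $d_h$ there and so vanishes by acyclicity. Hence $\alpha$ is a coboundary and $H(\Tot(C))=0$.
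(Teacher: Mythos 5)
Your proof is correct, but it takes a genuinely different route from the one in the paper. The paper views $A$ and $B$ as $\DZ\times\DZ$-graded double complexes that are $2$-periodic in the $p$-direction, settles case~\ref{enum:double-complex-upper-halfplane-row-qisos} by directly quoting the classical statement for double complexes bounded in the $q$-direction (\cite[Thm.~1.9.3]{KS}), and in case~\ref{enum:double-complex-upper-halfplane-column-qisos} applies that same statement to the smart truncations $F_nA\ra F_nB$ (replacing the degree-$n$ entries of the columns by the kernels of the differentials) and then passes to the exhaustion $n\ra\infty$, which is harmless because the totalization is a direct sum. You instead give a self-contained argument: after the standard reduction to showing that the mapping-cone double complex $C$ has acyclic totalization, you handle case~\ref{enum:double-complex-upper-halfplane-row-qisos} by induction on the number of nonzero columns via stupid truncation and the six-periodic long exact sequence, and case~\ref{enum:double-complex-upper-halfplane-column-qisos} by the descending staircase that strips the top $q$-component off a cocycle using acyclicity of the columns of $C$; termination is guaranteed by the lower bound $q\geq M$ together with the finite support of elements of the direct-sum totalization, and the surviving bottom component dies because the bottom cohomology group of an acyclic column is a kernel. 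Both arguments are complete; yours makes explicit exactly where the direct-sum convention and the lower bound in $q$ are used, at the cost of redoing by hand what the reference supplies, while the paper's is shorter but leaves the colimit step implicit. One cosmetic remark: you write $d_h$ for the differential that moves along a column (i.e.\ in the $q$-direction) and later say ``horizontally closed'' for the same map; the bidegree bookkeeping makes the intended meaning unambiguous, but the naming is inverted relative to the usual convention and is worth fixing.
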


\begin{proof}
  In this proof we view $A$ and $B$ in the obvious way as $\DZ \times
  \DZ$-graded double complexes that are 2-periodic in the
  $p$-direction.

  Assume that \ref{enum:double-complex-upper-halfplane-column-qisos}
  holds. Let $F_n A$ be the double subcomplex of $A$ defined by
  \begin{equation*}
    (F_n A)^{p,q} =
    \begin{cases}
      A^{p,q} & \text{if $q < n,$}\\
      \Kern (A^{p,n} \ra A^{p,n+1}) & \text{if $q = n,$}\\
      0 & \text{if $q > n,$}
    \end{cases}
  \end{equation*}
  and similarly for $B.$
  Then $f$ induces maps $F_nf \colon  F_n A \ra F_n B$ for all $n \in \DN,$
  and these maps induce quasi-isomorphisms on total complexes by
  \cite[Thm.~1.9.3]{KS}. This obviously implies the claim.

  If \ref{enum:double-complex-upper-halfplane-row-qisos} is satisfied
  we can immediately apply \cite[Thm.~1.9.3]{KS}.
\end{proof}

Let us apply the above now to sheaf Hom object 
$\sheafHom(E,I)$ defined in
section~\ref{sec:sheaf-hom-tensor-product}.

\begin{lemma}
  \label{l:Cech-computes-morphisms}
  Let $E \in \Qcoh(X,W)$ and $I \in \InjQcoh(X,W).$ Then
  \begin{equation*}
    \Hom_{\Qcoh(X,W)}(E, I) = \Gamma(X, \sheafHom(E, I)) \ra
    C(\mathcal{U}, \sheafHom(E,I))
  \end{equation*}
  (cf.\ 
  \eqref{eq:Qcoh-dg-Homs-as-global-sections}
  and
  \eqref{eq:global-to-Cech-complex})
  is a quasi-isomorphism.
  In particular,
  \begin{equation*}
    \Hom_{\DQcoh(X,W)}(E, [p]I) \cong H_p(C(\mathcal{U},
    \sheafHom(E,I))) 
  \end{equation*}
  canonically, for $p \in \DZ_2.$
\end{lemma}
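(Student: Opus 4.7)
The first identification $\Hom_{\Qcoh(X,W)}(E,I)=\Gamma(X,\sheafHom(E,I))$ is just Remark~\ref{rem:sheafHom-case-V-equals-W}. So the content is that the canonical map \eqref{eq:global-to-Cech-complex} applied to the dg sheaf $\sheafHom(E,I) \in \Sh(X,0)$ is a quasi-isomorphism. My plan is to verify the hypothesis of Lemma~\ref{l:global-to-Cech-qiso-for-flabby}, namely that $\sheafHom(E,I)$ is componentwise flabby, and then to deduce the second assertion by passing to cohomology and invoking Remark~\ref{rem:morphisms-to-injectives}.

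First I would inspect the components. By the definition of $\sheafHom(-,-)$ in \eqref{eq:def-sheafHom-QcohXWV}, each of the two components of $\sheafHom(E,I)\in\Sh(X,0)$ is a finite direct sum of sheaves of the form $\sheafHom_{\mathcal{O}_X}(E_a, I_b)$ with $a,b\in\DZ_2$. A direct sum of flabby sheaves is flabby, so it suffices to show each such sheaf is flabby. By Theorem~\ref{t:injective-in-qcoh-vs-all-OX}.\ref{enum:inj-qcoh-equal-inj-OX-that-qcoh}, $I_b$ is injective as an $\mathcal{O}_X$-module. A standard fact (cf.\ \cite[Prop.~II.7.17]{hartshorne-residues-duality} combined with flabbiness of injective $\mathcal{O}_X$-modules, or directly because $\sheafHom_{\mathcal{O}_X}(E_a,I_b)|_U=\sheafHom_{\mathcal{O}_U}(E_a|_U,I_b|_U)$ and $I_b|_U$ is again injective by Theorem~\ref{t:injective-in-qcoh-vs-all-OX}.\ref{enum:inj-qcoh-restrict-to-inj-qcoh}, so every section extends by injectivity) shows that $\sheafHom_{\mathcal{O}_X}(E_a,I_b)$ is flabby. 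Hence Lemma~\ref{l:global-to-Cech-qiso-for-flabby} applies and the first assertion follows.

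For the second assertion, apply $H_p$ to the quasi-isomorphism of dg abelian groups just established. On the left this gives
\begin{equation*}
H_p\big(\Hom_{\Qcoh(X,W)}(E,I)\big)=\Hom_{[\Qcoh(X,W)]}(E,[p]I).
\end{equation*}
Since $I\in[\InjQcoh(X,W)]$, Remark~\ref{rem:morphisms-to-injectives} (which is the application of condition~\ref{enum:verdier-loc-Hom-DW-in-DmodV-equal-those-in-DmodC} to the square $(\boxast 3)$) identifies this group canonically with $\Hom_{\DQcoh(X,W)}(E,[p]I)$, while on the right we get $H_p(C(\mathcal{U},\sheafHom(E,I)))$ by definition.

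The only nontrivial input is the flabbiness of $\sheafHom_{\mathcal{O}_X}(E_a,I_b)$ when $I_b$ is an injective $\mathcal{O}_X$-module; I expect this to be the main point, though it is standard. Everything else is bookkeeping: unwinding the definition of $\sheafHom$ in the curved setting, invoking Lemma~\ref{l:global-to-Cech-qiso-for-flabby}, and using the injectivity-based identification of $[\Qcoh(X,W)]$-morphisms with $\DQcoh(X,W)$-morphisms when the target is in $[\InjQcoh(X,W)]$.
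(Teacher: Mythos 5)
Your argument is correct and is essentially the paper's own proof: componentwise flabbiness of $\sheafHom(E,I)$ via Theorem~\ref{t:injective-in-qcoh-vs-all-OX}.\ref{enum:inj-qcoh-equal-inj-OX-that-qcoh}, then Lemma~\ref{l:global-to-Cech-qiso-for-flabby} for the quasi-isomorphism, then Remark~\ref{rem:morphisms-to-injectives} for the second claim. One cosmetic point: the extension of a section of $\sheafHom_{\mathcal{O}_X}(E_a,I_b)$ from $U$ to $X$ comes from injectivity of $I_b$ on $X$ applied to the monomorphism $j_!(E_a|_U)\hra E_a$ (not from injectivity of $I_b|_U$), but this is exactly the standard fact you invoke.
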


\begin{proof}
  Since any injective object of $\Qcoh(X)$ is also an injective
  object of $\Sh(X),$ by
  Theorem~\ref{t:injective-in-qcoh-vs-all-OX}.\ref{enum:inj-qcoh-equal-inj-OX-that-qcoh},
  $\sheafHom(E, I)$ is componentwise flabby.
  Thus
  Lemma~\ref{l:global-to-Cech-qiso-for-flabby} shows that the
  first map is a quasi-isomorphism,
  and then
  Remark~\ref{rem:morphisms-to-injectives} proves the
  second claim.
\end{proof}

\begin{lemma}
  \label{l:qiso-induces-Cech-qiso}
  Let $\mathcal{F} \ra \mathcal{G}$ be a quasi-isomorphism in
  $Z_0(\Qcoh(X,0)).$
  If $\mathcal{U}=(U_i)_{i \in I}$ is an affine open covering of
  $X,$ then $C(\mathcal{U}, \mathcal{F}) \ra C(\mathcal{U},
  \mathcal{G})$ is a quasi-isomorphism.
\end{lemma}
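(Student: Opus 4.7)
The plan is to reduce the statement to showing that $C(\mathcal{U}, \mathcal{H})$ is acyclic whenever $\mathcal{H} \in \Acycl[\Qcoh(X,0)]$, and then to invoke the classical generation of this subcategory by totalizations of short exact sequences. First I would set $\mathcal{H} := \Cone(\mathcal{F} \to \mathcal{G})$ in $Z_0(\Qcoh(X,0))$; since $\mathcal{F} \to \mathcal{G}$ is a quasi-isomorphism, $\mathcal{H}$ has vanishing cohomology, so $\mathcal{H} \in \Acycl[\Qcoh(X,0)]$ by Proposition~\ref{p:case-W-equals-zero-acycl-equals-exact}. Because $C(\mathcal{U}, -)$ is additive and commutes with cones (defined as totalizations of length-two complexes), one has $\Cone(C(\mathcal{U}, \mathcal{F}) \to C(\mathcal{U}, \mathcal{G})) = C(\mathcal{U}, \mathcal{H})$, and it suffices to prove this is acyclic.

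The key preliminary fact I would establish is exactness of $C(\mathcal{U}, -) \colon Z_0(\Qcoh(X,0)) \to Z_0(\Sh(\Spec k, 0))$. Since $X$ is separated, every finite intersection $U_{i_0} \cap \cdots \cap U_{i_q}$ is affine, so $\Gamma(U_{i_0} \cap \cdots \cap U_{i_q}, -)$ is exact on $\Qcoh(X)$; combined with the exactness of arbitrary products of short exact sequences of abelian groups, this shows each $C^q(\mathcal{U}, -)$ is exact, hence so is $C(\mathcal{U}, -)$.

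Now $C(\mathcal{U}, -)$ is a dg functor, so it induces a triangulated functor on homotopy categories, and the preimage of the thick subcategory of acyclic dg abelian groups is thick in $[\Qcoh(X,0)]$. By Definition~\ref{d:absolute-derived-categories}, $\Acycl[\Qcoh(X,0)]$ is classically generated by totalizations $\Tot(E^\bullet)$ of short exact sequences $E^\bullet = (0 \to E^1 \to E^2 \to E^3 \to 0)$ in $Z_0(\Qcoh(X,0))$, so the remaining task is to show each $C(\mathcal{U}, \Tot(E^\bullet))$ is acyclic. A direct bookkeeping of bigraded pieces yields a natural identification $C(\mathcal{U}, \Tot(E^\bullet)) \cong \Tot(C(\mathcal{U}, E^\bullet))$, where $C(\mathcal{U}, E^\bullet) = (0 \to C(\mathcal{U}, E^1) \to C(\mathcal{U}, E^2) \to C(\mathcal{U}, E^3) \to 0)$ is a bounded complex of dg abelian groups, exact by the previous paragraph. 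Viewing the result as a double complex with $p \in \DZ_2$ (dg direction) and $q \in \{1,2,3\}$ (complex direction), the columns (fixed $p$, varying $q$) are exact and the complex is bounded, so Lemma~\ref{l:double-complex-upper-halfplane}.\ref{enum:double-complex-upper-halfplane-column-qisos} applied with $B = 0$ yields acyclicity of the totalization.

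The obstacle I expect to be most delicate is that one cannot apply Lemma~\ref{l:double-complex-upper-halfplane}.\ref{enum:double-complex-upper-halfplane-row-qisos} directly to the natural double complex $C^q(\mathcal{U}, \mathcal{F}_p) \to C^q(\mathcal{U}, \mathcal{G}_p)$, because the \v{C}ech direction $q$ is unbounded above whenever $\mathcal{U}$ is infinite. The reduction to generators of $\Acycl[\Qcoh(X,0)]$ circumvents this: it replaces an unbounded \v{C}ech double complex by a bounded complex of \v{C}ech complexes, for which the relevant totalization is doubly bounded and the lemma applies cleanly.
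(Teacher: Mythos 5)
Your proof is correct, but it takes a genuinely different route from the paper's. The paper argues directly on the map $C(\mathcal{U},\mathcal{F})\to C(\mathcal{U},\mathcal{G})$: it first replaces $\mathcal{U}$ by a finite affine subcovering $\mathcal{U}'$ (comparing both \v{C}ech complexes columnwise to sheaf cohomology and using part~\ref{enum:double-complex-upper-halfplane-column-qisos} of Lemma~\ref{l:double-complex-upper-halfplane}), then passes to the alternating subcomplex $C_{\alt}(\mathcal{U}',-)$, which is bounded in the \v{C}ech direction, and finally applies part~\ref{enum:double-complex-upper-halfplane-row-qisos}, using that $\mathcal{F}(U')\to\mathcal{G}(U')$ is a quasi-isomorphism for affine $U'$ because $\Gamma(U',-)$ is exact on $\Qcoh(X)$. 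You instead pass to the cone, invoke Proposition~\ref{p:case-W-equals-zero-acycl-equals-exact} to place it in $\Acycl[\Qcoh(X,0)]$, and then use classical generation by totalizations of short exact sequences together with the exactness of $C(\mathcal{U},-)$ and part~\ref{enum:double-complex-upper-halfplane-column-qisos}. Your reduction buys a cleaner argument that never touches finite subcoverings or alternating subcomplexes and uses only one half of the double-complex lemma; the cost is that it leans on the earlier (nontrivial, but logically prior, so there is no circularity) identification $\Ex[\Qcoh(X,0)]=\Acycl[\Qcoh(X,0)]$, and on the fact that $C(\mathcal{U},-)$ descends to a triangulated functor on homotopy categories, which requires a routine but not entirely free verification of sign conventions making it a dg functor (or at least an additive functor preserving homotopies and cones). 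Two small remarks: your closing observation understates the obstruction to applying part~\ref{enum:double-complex-upper-halfplane-row-qisos} directly --- with the convention $\prod_{(i_0,\dots,i_q)\in I^{q+1}}$ the \v{C}ech direction is unbounded even for finite $\mathcal{U}$ because repeated indices are allowed, which is exactly why the paper passes to the alternating subcomplex; and your exactness claim for $C^q(\mathcal{U},-)$ correctly needs both the separatedness of $X$ (so that finite intersections of affines are affine) and the exactness of arbitrary products in abelian groups, both of which you noted.
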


\begin{proof}
  Since $X$ is quasi-compact there is a finite subset $I'
  \subset I$ such that $\mathcal{U}':=(U_i)_{i \in I'}$ is a
  covering of $X.$
  If $\mathcal{A}$ is any quasi-coherent sheaf on $X,$ the \v{C}ech
  cohomologies $H(\mathcal{U}, \mathcal{A})$
  and $H(\mathcal{U}', \mathcal{A})$ are canonically isomorphic
  to $H(X, \mathcal{A}),$ since our coverings are by affine open
  subsets.
  This together with
  part~\ref{enum:double-complex-upper-halfplane-column-qisos}
  of Lemma~\ref{l:double-complex-upper-halfplane} shows that
  $C(\mathcal{U}', \mathcal{F}) \ra C(\mathcal{U}, \mathcal{F})$ is an
  isomorphism.

  The usual \v{C}ech complex of a sheaf contains the alternating
  subcomplex and its inclusion is a homotopy equivalence. Similarly,
  the \v{C}ech complex $C(\mathcal{U}', \mathcal{F})$ has a homotopy
  equivalent subcomplex $C_{\alt}(\mathcal{U}', \mathcal{F}).$

  These arguments show that it is sufficient to show that
  $C_{\alt}(\mathcal{U}', \mathcal{F}) \ra
  C_{\alt}(\mathcal{U}', \mathcal{G})$ is a quasi-isomorphism.
  This follows from part~\ref{enum:double-complex-upper-halfplane-row-qisos}
  of Lemma~\ref{l:double-complex-upper-halfplane}: any finite
  intersection $U'$ of elements of $\mathcal{U}'$ is affine, and hence
  $\mathcal{F}(U') \ra \mathcal{G}(U')$ is a quasi-isomorphism by
  assumption.
\end{proof}

\begin{corollary}
  \label{c:P-to-closed-degree-zero-iso-in-DQcoh-qiso-in-Cech}
  Let $E \ra F$ be a morphism in $Z_0(\Qcoh(X,W))$ that becomes an
  isomorphism in $\DQcoh(X,W),$ let $P \in \MF(X,W),$
  and let $\mathcal{U}$ be an affine open covering of $X.$ 
  Then
  \begin{equation*}
    C(\mathcal{U}, \sheafHom(P,E)) \ra C(\mathcal{U},
    \sheafHom(P,F)) 
  \end{equation*}
  is a quasi-isomorphism.
\end{corollary}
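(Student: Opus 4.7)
The plan is to reduce the statement, using the definition of $\Acycl[\Qcoh(X,W)]$, to the case of the totalization of a short exact sequence in $Z_0(\Qcoh(X,W))$, and then to exploit the local-freeness of the components of $P$ together with Lemma~\ref{l:qiso-induces-Cech-qiso}.

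First, I would let $G := \Cone(E \to F)$, computed in the pretriangulated dg category $\Qcoh(X,W)$. By hypothesis $G$ lies in $\Acycl[\Qcoh(X,W)]$. Since $\sheafHom(P, -)$ and $C(\mathcal{U}, -)$ are both additive dg functors---the first by definition, the second by the construction recalled above---they preserve cones of closed degree-zero morphisms. So it suffices to prove the sharper claim: for every $G \in \Acycl[\Qcoh(X,W)]$, the dg $k$-module $C(\mathcal{U}, \sheafHom(P, G))$ is acyclic.

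The class of such $G$ is a thick subcategory of $[\Qcoh(X,W)]$, because $C(\mathcal{U}, \sheafHom(P, -))$ is triangulated and acyclicity is closed under triangles and direct summands. By Definition~\ref{d:absolute-derived-categories} it therefore suffices to verify the claim when $G = \Tot(G^\bullet)$ for a short exact sequence $G^\bullet = (0 \to G^1 \to G^2 \to G^3 \to 0)$ in $Z_0(\Qcoh(X,W))$. For such $G$, since the components $P_0, P_1$ are vector bundles, $\sheafHom_{\mathcal{O}_X}(P_i, -)$ is an exact functor preserving quasi-coherence, so applying $\sheafHom(P, -)$ componentwise to $G^\bullet$ yields a short exact sequence in $Z_0(\Qcoh(X,0))$. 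A direct bigrading check (equivalently, the fact that $\sheafHom(P, -)$ is a dg functor hence commutes with iterated cones) gives $\sheafHom(P, \Tot(G^\bullet)) = \Tot(\sheafHom(P, G^\bullet))$. The totalization of any short exact sequence in $Z_0(\Qcoh(X,0))$ has vanishing cohomology (this is the key input; it appears implicitly in the proof of Proposition~\ref{p:case-W-equals-zero-acycl-equals-exact} and also follows directly from Lemma~\ref{l:double-complex-upper-halfplane}), so the zero map $0 \to \sheafHom(P, G)$ is a quasi-isomorphism in $Z_0(\Qcoh(X,0))$. Since $\mathcal{U}$ is affine open, Lemma~\ref{l:qiso-induces-Cech-qiso} now yields that $C(\mathcal{U}, \sheafHom(P, G))$ is quasi-isomorphic to $C(\mathcal{U}, 0) = 0$, as desired.

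The only point that demands some care is checking that $\sheafHom(P, -)$ intertwines the totalization construction on both sides; this is straightforward from the bigraded description \eqref{eq:def-sheafHom-QcohXWV} of $\sheafHom$, but it crucially relies on $P_0, P_1$ being vector bundles, so that no exactness or finiteness obstacle appears when $\sheafHom(P_i, -)$ is applied to each term of the short exact sequence. Everything else in the argument is a formal consequence of the definition of $\Acycl$ and of the Lemmata already established in this subsection.
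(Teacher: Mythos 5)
Your proof is correct and takes essentially the same route as the paper's: both reduce to the statement that $\sheafHom(P,E) \ra \sheafHom(P,F)$ is a quasi-isomorphism in $Z_0(\Qcoh(X,0))$ and then invoke Lemma~\ref{l:qiso-induces-Cech-qiso}. Where the paper cites the discussion in section~\ref{sec:sheaf-hom-tensor-product} together with Proposition~\ref{p:case-W-equals-zero-acycl-equals-exact} for that quasi-isomorphism, you rederive it by hand from the exactness of $\sheafHom_{\mathcal{O}_X}(P_i,-)$ and the thick-subcategory reduction to totalizations of short exact sequences, which uses only the easy inclusion $\Acycl\subset\Ex$ and is a perfectly valid substitute.
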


\begin{proof}
  The morphism 
  $\sheafHom(P,E) \ra \sheafHom(P,F)$
  in $Z_0(\Qcoh(X,0))$ 
  becomes an isomorphism in $\DQcoh(X,0),$ 
  cf.\ section~\ref{sec:sheaf-hom-tensor-product}.
  Hence it is a quasi-isomorphism by
  Proposition~\ref{p:case-W-equals-zero-acycl-equals-exact}. 
  Now use Lemma~\ref{l:qiso-induces-Cech-qiso}.
\end{proof}

We fix an affine open covering $\mathcal{U}=(U_i)_{i \in I}$ of
$X$ for defining the morphism oriented \v{C}ech enhancement.
We define a dg category $\MF_{\Cechmor}(X,W)$ as follows
(it depends on the affine open covering $\mathcal{U}=(U_i)_{i \in I}$
but we suppress this in the notation). The objects of $\MF_{\Cechmor}(X,W)$
coincide with the objects of $\MF(X,W),$ and the morphisms are given
by
\begin{equation*}
  \Hom_{\MF_{\Cechmor}(X,W)}(P,Q) := C(\mathcal{U}, \sheafHom(P,Q)).
\end{equation*}
The composition in this category is defined using the cup-product
for \v{C}ech complexes as defined
in \cite{stacks-project},
chapter 18, section 19 "\v{C}ech 
cohomology of complexes"
(adapted to our differential $\DZ_2$-graded situation in the
obvious way).

We can repeat this construction
starting with any dg subcategory $\mathcal{C} \subset \Qcoh(X,W)$ to obtain
the corresponding dg category $\mathcal{C}_{\Cechmor}.$
We always have an obvious dg
functor $\mathcal{C} \ra \mathcal{C}_{\Cechmor}$
obtained from
\eqref{eq:Qcoh-dg-Homs-as-global-sections}
and
\eqref{eq:global-to-Cech-complex}
and the induced functor
$[\mathcal{C}] \ra [\mathcal{C}_{\Cechmor}]$
on homotopy categories.

\begin{proposition}
  \label{p:Cech-enhancement}
  The dg categories $\InjQcoh_{\bfMF}(X,W)$ and $\MF_{\Cechmor}(X,W)$ are
  quasi-equivalent, i.\,e.\ connected by a zig-zag of
  quasi-equivalences (explicitly constructed in the proof).

  Moreover, 
  $\MF_{\Cechmor}(X,W)$ is a pretriangulated dg category, and
  the functor $[\MF(X,W)] \ra [\MF_{\Cechmor}(X,W)]$
  factors
  through
  the Verdier localization
  $[\MF(X,W)] \ra \bfMF(X,W)$ to an equivalence
  \begin{equation*}
    % \label{eq:Cech-enhancement-equivalence}
    \bfMF(X,W) \sira [\MF_{\Cechmor}(X,W)]
  \end{equation*}
  of triangulated categories. This shows that $\MF_{\Cechmor}(X,W)$ is a
  dg enhancement of 
  $\bfMF(X,W)$ naturally.
  We call it the \define{morphism oriented \v{C}ech enhancement}
  of $\bfMF(X,W).$ 
\end{proposition}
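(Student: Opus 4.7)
The plan is to reduce all three claims to a single key Hom-cohomology computation. For $P, Q \in \MF(X,W)$ I will show a canonical natural isomorphism
\begin{equation*}
  H_*\bigl(C(\mathcal{U}, \sheafHom(P, Q))\bigr) \cong \Hom_{\bfMF(X,W)}(P, [*]Q).
\end{equation*}
To do this, choose a closed degree-zero morphism $\iota_Q \colon Q \to I_Q$ with $I_Q \in \InjQcoh_{\bfMF}(X,W)$ an isomorphism in $\DQcoh(X,W),$ as provided by Lemma~\ref{l:resolutions}.\ref{enum:Inj-reso} together with Remark~\ref{rem:middle-objects-vanish}. Corollary~\ref{c:P-to-closed-degree-zero-iso-in-DQcoh-qiso-in-Cech} makes $C(\mathcal{U}, \sheafHom(P, Q)) \to C(\mathcal{U}, \sheafHom(P, I_Q))$ a quasi-isomorphism, and Lemma~\ref{l:Cech-computes-morphisms} identifies the target's cohomology with $\Hom_{\DQcoh(X,W)}(P, [*]I_Q) = \Hom_{\DQcoh(X,W)}(P, [*]Q) = \Hom_{\bfMF(X,W)}(P, [*]Q)$ via Theorem~\ref{t:equivalences-curved-categories}.

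The equivalence and pretriangulation then fall out quickly. The dg functor $\MF(X,W) \to \MF_{\Cechmor}(X,W)$ induced by \eqref{eq:global-to-Cech-complex} is the identity on objects, and by naturality of the key computation it induces on Hom-cohomology the canonical Verdier projection $\Hom_{[\MF(X,W)]}(P, Q) \to \Hom_{\bfMF(X,W)}(P, Q)$. Hence $\Acycl[\MF(X,W)]$ maps to zero and the triangulated functor descends to the desired equivalence $\bfMF(X,W) \sira [\MF_{\Cechmor}(X,W)]$. For pretriangulation, shifts transfer literally because Cech complexes commute with the $\DZ_2$-shift; for cones, any $f \in Z_0(\Hom_{\MF_{\Cechmor}(X,W)}(P, Q))$ is homotopic to some $g \in Z_0(\Hom_{\MF(X,W)}(P, Q))$ by the key computation, and a choice of homotopy $f - g = dh$ yields a canonical isomorphism of dg module cones $\Cone(f_*) \cong \Cone(g_*)$ with $\Cone_{\MF(X,W)}(g) \in \MF(X,W) \subset \MF_{\Cechmor}(X,W)$ representing $g_*$.

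For the quasi-equivalence with $\InjQcoh_{\bfMF}(X,W)$, I will route through the Drinfeld dg quotient $\MF(X,W)/\AcyclMF(X,W)$, already known by Lemma~\ref{l:enhancements-qequi} to be quasi-equivalent to $\InjQcoh_{\bfMF}(X,W)$. Since the Cech embedding sends acyclic matrix factorizations to objects homotopically equivalent to zero in $\MF_{\Cechmor}(X,W)$ (as established in the previous paragraph), the universal property of the Drinfeld quotient produces a canonical dg functor $\MF(X,W)/\AcyclMF(X,W) \to \MF_{\Cechmor}(X,W)$, identity on objects. It is a quasi-equivalence: essential surjectivity on homotopy categories is trivial, and fully-faithfulness follows because the Hom-cohomology of both source and target computes $\Hom_{\bfMF(X,W)}(P, [*]Q)$ naturally—the source via \eqref{eq:dg-quot-enhancement-equiv} and Drinfeld's construction, the target via the key computation.

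The hard part will be verifying that this canonical dg functor $\MF(X,W)/\AcyclMF(X,W) \to \MF_{\Cechmor}(X,W)$ induces exactly the expected isomorphism on Hom-cohomology rather than merely some abstract isomorphism. This reduces to naturality of Drinfeld's explicit construction of the quotient and of the Cech quasi-isomorphisms used above, and I expect it to follow by inspection once both identifications are traced back to the canonical comparison morphisms out of $[\MF(X,W)]$. A fallback if this direct route proves awkward is to build a zig-zag through the full dg subcategory of $\Qcoh_{\Cechmor}(X,W)$ on $\MF(X,W) \cup \InjQcoh_{\bfMF}(X,W)$, at the cost of verifying that certain mixed Cech-Hom complexes still compute the expected $\Ext$-groups in $\DQcoh(X,W)$.
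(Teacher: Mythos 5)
Your key computation---the zig-zag identifying $H_*(C(\mathcal{U},\sheafHom(P,Q)))$ with $\Hom_{\bfMF(X,W)}(P,[*]Q)$ via $Q\to I_Q$, Corollary~\ref{c:P-to-closed-degree-zero-iso-in-DQcoh-qiso-in-Cech} and Lemma~\ref{l:Cech-computes-morphisms}---is correct and is also the engine of the paper's proof. But the way you deploy it contains one outright error and one essential verification that you defer and that does not ``follow by inspection.''

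The error: you assert that every $f\in Z_0(\Hom_{\MF_{\Cechmor}(X,W)}(P,Q))$ is homotopic to some $g$ coming from $Z_0(\Hom_{\MF(X,W)}(P,Q))$. This would require the canonical map $\Hom_{[\MF(X,W)]}(P,Q)\ra H_0(C(\mathcal{U},\sheafHom(P,Q)))\cong\Hom_{\bfMF(X,W)}(P,Q)$, i.\,e.\ the Verdier projection, to be surjective, which it is not: for $X=\DP^1_k,$ $W=0,$ $P=(\matfak{0}{}{\mathcal{O}}{})$ and $Q=(\matfak{\mathcal{O}(-2)}{}{0}{})$ one has $\Hom_{[\MF(X,0)]}(P,Q)=0$ while $\Hom_{\bfMF(X,0)}(P,Q)\cong H^1(\DP^1,\mathcal{O}(-2))\neq 0$; this higher \v{C}ech cohomology is precisely what $\MF_{\Cechmor}(X,W)$ is built to capture. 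So your cone argument for pretriangulatedness collapses; pretriangulatedness must instead be deduced, as in the paper, from a quasi-equivalence with a dg category already known to be pretriangulated.

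The deferred verification: both the fullness of the descended functor $\bfMF(X,W)\ra[\MF_{\Cechmor}(X,W)]$ and the claim that your Drinfeld-quotient comparison functor induces quasi-isomorphisms on Hom complexes require the identification of $H_0(C(\mathcal{U},\sheafHom(P,Q)))$ with $\Hom_{\bfMF(X,W)}(P,Q)$ to be compatible with composition (\v{C}ech cup product on one side, composition of roofs on the other). Abstractly isomorphic cohomology of source and target does not make a given chain map a quasi-isomorphism, and because the Verdier projection is not surjective you cannot pin the comparison down on the image of $[\MF(X,W)]$ alone---faithfulness survives this, fullness does not. This multiplicativity is the technical heart of the statement; it can be checked (associativity of the cup product plus the fact that cup product with a global closed section of \v{C}ech degree zero is composition), but it is exactly the work the paper avoids by introducing the gluing category $\mathcal{B}$ of triples $(P,I,\delta),$ whose two projections are honest dg functors, so that compatibility with composition is automatic and one only has to verify quasi-isomorphy of explicit two-step filtered Hom complexes. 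Your fallback (the full dg subcategory of $\Qcoh(X,W)_{\Cechmor}$ on $\MF(X,W)\cup\InjQcoh_{\bfMF}(X,W)$) runs into mixed complexes $C(\mathcal{U},\sheafHom(I,P))$ with injective source and vector bundle target, which none of the lemmas control; note that $\mathcal{B}$ is designed so that only Hom complexes of the shapes $(P,Q),$ $(P,J)$ and $(I,J)$ ever occur.
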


In particular this shows that
the enhancements $\InjQcoh_{\bfMF}(X,W)$ and
$\MF_{\Cechmor}(X,W)$ of $\bfMF(X, W)$ are equivalent.

\begin{proof}
  We construct the zig-zag of quasi-equivalences first.
  To ease the notation we abbreviate 
  $\mathcal{C}:=\InjQcoh_{\bfMF}(X,W).$
  We use the auxiliary dg category
  $\mathcal{C}_{\Cechmor}$
  with the
  dg functor $\gamma \colon  \mathcal{C} \ra \mathcal{C}_{\Cechmor}$
  as explained above. 
  Lemma~\ref{l:Cech-computes-morphisms} shows that $\gamma$ induces
  quasi-isomorphisms on morphism spaces. It is bijective on objects
  and hence a quasi-equivalence.

  It remains to prove that the dg categories $\MF_{\Cechmor}(X,W)$
  and $\mathcal{C}_{\Cechmor}$ are quasi-equivalent. For this
  we define a new dg category $\mathcal{B}$ and two dg functors
  \begin{equation*}
    \MF_{\Cechmor}(X,W) \xla{p} \mathcal{B} \xra{q}
    \mathcal{C}_{\Cechmor} 
  \end{equation*}
  which are quasi-equivalences.

  By definition the objects of $\mathcal{B}$ are triples
  $(P,I,\delta),$ where $P \in \MF(X,W),$ $I\in \mathcal{C}$ and
  $\delta \colon  P\ra I$ is a morphism in $Z_0(\Qcoh(X,W))$ which
  becomes an isomorphism in $\DQcoh(X,W).$ Given objects $(P, I,
  \delta)$ and $(Q, J, \epsilon),$ the dg module
  $\Hom_{\mathcal{B}} ((P, I, \delta), (Q, J, \epsilon))$ can be
  conveniently written in matrix form
  \begin{equation*}
    \begin{bmatrix}
      (I,J) & [-1](P,J)\\
      0 & (P,Q)
    \end{bmatrix}
  \end{equation*}
  where $(-,-)= \Hom_{\Qcoh(X,W)_{\Cechmor}}(-,-).$
  The differential is
  defined by
  \begin{equation*}
    d \colon 
    \begin{bmatrix}
      r & m\\
      0 & l
    \end{bmatrix}
    \mapsto
    \begin{bmatrix}
      dr & \epsilon l -r\delta +d_{[-1](P,J)}m\\
      0 & dl
    \end{bmatrix}
    =
    \begin{bmatrix}
      dr & \epsilon l -r\delta -d_{(P,J)}m\\
      0 & dl
    \end{bmatrix},
  \end{equation*}
  and composition by
  \begin{equation*}
    \begin{bmatrix}
      \rho & \mu\\
      0 & \lambda
    \end{bmatrix}
    \comp
    \begin{bmatrix}
      r & m\\
      0 & l
    \end{bmatrix}
    =
    \begin{bmatrix}
      \rho r & \rho.m+\mu.l\\
      0 & \lambda l
    \end{bmatrix}
    =
    \begin{bmatrix}
      \rho r & (-1)^{|\rho|}\rho m+\mu l\\
      0 & \lambda l
    \end{bmatrix}
  \end{equation*}
  where $m$ is considered as an element of $[-1](P,J)$ in the middle term
  and as an element of $(P,J)$ in the right term, and similarly for
  $\mu.$

  The obvious projections
  $\MF_{\Cechmor}(X,W) \xla{p} \mathcal{B} \xra{q} \mathcal{C}_{\Cechmor}$ are dg functors.
  These functors are
  surjective on objects
  (use Theorem~\ref{t:equivalences-curved-categories} and
  Remark~\ref{rem:morphisms-to-injectives}). Hence in order to show
  that they are quasi-equivalences we need to see that they induces
  quasi-isomorphisms on morphism spaces.

  Let us prove this for $p$ first. The map $\delta \colon  P \ra I$ yields
  a closed degree zero morphism $\delta^* \colon  (I,J) \ra (P,J)$ in the dg
  category of dg modules. The shift of its cone $\Cone(\delta^*)$
  is the kernel of the map
  $p \colon \Hom_\mathcal{B}((P, I, \delta), (Q, J, \epsilon)) \ra
  \Hom_{\MF_{\Cechmor}}(P,Q).$
  Hence it is sufficient to
  show that $\Cone(\delta^*)$
  is acyclic. Equivalently we show that $\delta^*$ is a
  quasi-isomorphism. But this is true by
  Lemma~\ref{l:Cech-computes-morphisms} and
  Remark~\ref{rem:morphisms-to-injectives} and our assumption that
  $\delta$ is an isomorphism in $\DQcoh(X,W).$

  Similarly, when considering $q,$ we have to show that
  $\epsilon_*  \colon (P,Q) \ra (P,J)$ is a quasi-isomorphism. But this
  follows from
  Corollary~\ref{c:P-to-closed-degree-zero-iso-in-DQcoh-qiso-in-Cech}.
  This shows that $p$ and $q$ are quasi-equivalences, and finishes the
  proof of the first statement.

  Our zig-zag of quasi-equivalences yields the equivalences
  \begin{equation*}
    [\mathcal{C}] \xra{[\gamma]} [\mathcal{C}_{\Cechmor}] \xla{[q]}
    [\mathcal{B}] \xra{[p]} [\MF_{\Cechmor}(X,W)]
  \end{equation*}
  on the level of homotopy categories.  This shows that
  $\MF_{\Cechmor}(X,W)$ is pretriangulated.  Moreover, if we fix for
  any $P \in \MF(X, W)$ an object $(P, I_P, \delta_P)$ of
  $\mathcal{B},$ this implies that $P \mapsto I_P$ is an
  equivalence $[\MF_{\Cechmor}(X,W)] \ra [\mathcal{C}].$

  On the other hand $\bfMF(X,W) \ra [\mathcal{C}],$ $P \mapsto
  I_P,$ is also an equivalence. These two equivalences and the
  obvious functors fit into the commutative
  diagram
  \begin{equation*}
    \xymatrix{
      {[\MF(X,W)]} \ar[r] \ar[d] & {[\MF_{\Cechmor}(X,W)]}
      \ar[d]^-\sim \\ 
      {\bfMF(X,W)} \ar[r]^-\sim & {[\mathcal{C}]}
    }
  \end{equation*}
  (commutativity is obvious for objects; for morphisms
  go through the above equivalences)
  which shows that the upper horizontal functor vanishes on
  $\Acycl[\MF(X,W)].$ We obtain an induced functor
  $\bfMF(X,W) \ra [\MF_{\Cechmor}(X,W)]$ of triangulated categories
  which is then obviously an equivalence. 
\end{proof}

\begin{corollary}
  \label{c:MF-Cech-indep-up-to-quasi-equi-on-covering}
  The category $\MF_{\Cechmor}(X,W)$ does not depend
  (up to quasi-equivalence) on the choice of the affine open
  covering $\mathcal{U}=(U_i)_{i \in I}$ of $X.$
\end{corollary}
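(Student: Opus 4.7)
The plan is to obtain the independence statement as an immediate formal consequence of the preceding Proposition~\ref{p:Cech-enhancement}. Indeed, that proposition shows that for any chosen affine open covering $\mathcal{U}$ of $X,$ the dg category $\MF_{\Cechmor}(X,W)$ is quasi-equivalent to the dg category $\InjQcoh_{\bfMF}(X,W),$ which is defined without any reference to a covering. Since quasi-equivalence is an equivalence relation on dg categories (being connected by a zig-zag of quasi-equivalences), applying this fact once for $\mathcal{U}$ and once for a second affine open covering $\mathcal{U}'$ and then concatenating the two resulting zig-zags through their common middle term $\InjQcoh_{\bfMF}(X,W)$ yields the desired quasi-equivalence.

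More concretely, I would write: let $\mathcal{U}$ and $\mathcal{U}'$ be two affine open coverings of $X,$ and let $\MF_{\Cechmor,\mathcal{U}}(X,W)$ and $\MF_{\Cechmor,\mathcal{U}'}(X,W)$ denote the corresponding dg categories. The zig-zag of quasi-equivalences constructed in the proof of Proposition~\ref{p:Cech-enhancement}, namely
\begin{equation*}
\InjQcoh_{\bfMF}(X,W) \xra{\gamma} \mathcal{C}_{\Cechmor} \xla{q} \mathcal{B} \xra{p} \MF_{\Cechmor}(X,W),
\end{equation*}
is obtained by constructing the auxiliary dg categories $\mathcal{C}_{\Cechmor}$ and $\mathcal{B}$ from the chosen covering, while the leftmost term is independent of the covering. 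Using this zig-zag once for $\mathcal{U}$ and once for $\mathcal{U}'$ and joining them at $\InjQcoh_{\bfMF}(X,W)$ produces a zig-zag of quasi-equivalences connecting $\MF_{\Cechmor,\mathcal{U}}(X,W)$ and $\MF_{\Cechmor,\mathcal{U}'}(X,W).$

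There is essentially no obstacle here; the content has already been established in Proposition~\ref{p:Cech-enhancement}, and the corollary is merely a formal consequence. One small remark worth including is that Proposition~\ref{p:Cech-enhancement} implicitly depends on the covering through the constructions of $\mathcal{C}_{\Cechmor}$ and $\mathcal{B},$ but the statement that its conclusion gives a quasi-equivalence with the covering-independent enhancement $\InjQcoh_{\bfMF}(X,W)$ is what makes the independence corollary essentially immediate. No refinement arguments or explicit comparison maps between different \v{C}ech complexes are needed.
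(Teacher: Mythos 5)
Your proof is correct and is exactly the argument the paper intends: the corollary is stated without separate proof precisely because Proposition~\ref{p:Cech-enhancement} exhibits, for any affine open covering, a zig-zag of quasi-equivalences between $\MF_{\Cechmor}(X,W)$ and the covering-independent enhancement $\InjQcoh_{\bfMF}(X,W),$ and concatenating two such zig-zags gives the independence. No further comment is needed.
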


\subsubsection{Object oriented \v{C}ech enhancement}
\label{sec:object-cech-enhancement}

In \cite{valery-olaf-matfak-motmeas-in-prep}
we will introduce another equivalent enhancement
$\MF_\Cechobj(X,W)$ of $\bfMF(X,W)$ 
whose objects are defined using \v{C}ech resolutions.

\subsubsection{Enhancement for affine \texorpdfstring{$X$}{X}}
\label{sec:enhanc-affine-X}

If $X$ is affine 
Lemma~\ref{l:affine-MF}.\ref{enum:affine-MF-equals-boldMF}
says that $\MF(X,W)$ is an enhancement of $\bfMF(X,W).$
In fact this enhancement is a special case of the object oriented
\v{C}ech enhancement (for the trivial affine open covering $\{X\}$
of $X$). It is equivalent to the   
enhancement $\InjQcoh_{\bfMF}(X,W)$ (use the method
of proof of Proposition~\ref{p:Cech-enhancement}).

\subsection{Compact generators}
\label{sec:compact-generators}

Recall that the category $\DQcoh(X,W)$ is cocomplete
(Corollary~\ref{c:direct-sums}). 

\begin{proposition}
  \label{p:compact-objects-and-generators-DQcoh}
  \rule{1mm}{0mm}
  \begin{enumerate}
  \item
    \label{enum:bfMFs-compact}
    The objects of $\bfMF(X,W)$ are compact in
    $\DQcoh(X,W).$
  \item
    \label{enum:DQcoh-gen-by-bfMF}
    The triangulated category $\DQcoh(X,W)$ is generated by the
    objects of $\bfMF(X,W).$
  \item
    \label{enum:Karoubi-closure-of-bfMF-in-DQcoh-equals-compacts}
    The subcategory $\DQcoh(X,W)^c$ of compact objects in
    $\DQcoh(X,W)$ is a Karoubi envelope
    of $\bfMF(X,W).$
    We denote this Karoubi envelope by $\ol{\bfMF(X,W)}.$
  \end{enumerate}
\end{proposition}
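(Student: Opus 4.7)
The plan is to show that $\DQcoh(X,W)$ is compactly generated with $\bfMF(X,W)$ as a set of compact generators, and then to invoke Neeman's theorem that the compact objects of a compactly generated triangulated category form the Karoubi closure of any set of compact generators. Morphisms in $\DQcoh(X,W)$ will be computed via the injective enhancement $[\InjQcoh(X,W)] \sira \DQcoh(X,W)$ of Theorem~\ref{t:equivalences-curved-categories}\ref{enum:injQcoh-DQcoh-equiv} together with Remark~\ref{rem:morphisms-to-injectives}.

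For \ref{enum:bfMFs-compact}, fix $P \in \MF(X,W)$ and a family $(F_j)_{j \in J}$ in $\DQcoh(X,W)$. Represent each $F_j$ by some $I_j \in \InjQcoh(X,W)$ using Lemma~\ref{l:resolutions}\ref{enum:Inj-reso}; by Theorem~\ref{t:injective-in-qcoh-vs-all-OX}\ref{enum:inj-and-inj-qcoh-arbitrary-sums} the sum $\bigoplus_j I_j$ is again in $\InjQcoh(X,W)$, and by Corollary~\ref{c:direct-sums} it represents $\bigoplus_j F_j$ in $\DQcoh(X,W)$. Since $P$ has vector bundle components and $X$ is quasi-compact, $\Hom_{\mathcal{O}_X}(P_i,-) = \Gamma(X, P_i^\cek \otimes -)$ commutes with arbitrary coproducts of quasi-coherent sheaves, and since $H_0$ preserves coproducts as well, we obtain
\[
  \Hom_{\DQcoh(X,W)}(P, \bigoplus_j F_j) \cong H_0\Hom_{\Qcoh(X,W)}(P, \bigoplus_j I_j) \cong \bigoplus_j \Hom_{\DQcoh(X,W)}(P, F_j).
\]

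For \ref{enum:DQcoh-gen-by-bfMF}, in view of (a) it suffices to show that the right orthogonal $\bfMF(X,W)^\perp$ in $\DQcoh(X,W)$ vanishes. Let $F \in \bfMF(X,W)^\perp$ and represent it by $I \in \InjQcoh(X,W)$. Lemma~\ref{l:resolutions}\ref{enum:MF-reso} shows that every $E \in \Coh(X,W)$ is built from matrix factorizations by finitely many cones in $\DCoh(X,W)$, whence $\Hom_{\DQcoh(X,W)}(E, [n]I) = 0$ for all $E \in \Coh(X,W)$ and $n \in \DZ_2$. To extend to arbitrary $G \in \Qcoh(X,W)$, write $G$ as the filtered colimit of its coherent subobjects: take the filtered family of graded coherent subsheaves of $G$ (which is filtered and covers $G$ because $X$ is Noetherian) and enlarge each to a coherent subobject of $G$ via Step~1 of the proof of Lemma~\ref{l:factoring-through-coherent}. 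Applying $\Hom_{\Qcoh(X,W)}(-, I)$ yields an inverse system of dg abelian groups with surjective transition maps (by injectivity of the components of $I$), and a transfinite Milnor-style argument using the vanishing of $\lim^1$ for surjective towers gives $\Hom_{\DQcoh(X,W)}(G, I) = 0$. Taking $G = I$ forces $\id_I = 0$, hence $I \cong 0$. Part \ref{enum:Karoubi-closure-of-bfMF-in-DQcoh-equals-compacts} is then immediate from Neeman's theorem applied to the essentially small set of compact generators $\bfMF(X,W)$: $\DQcoh(X,W)^c$ is the thick closure of $\bfMF(X,W)$ in $\DQcoh(X,W)$, which, since $\bfMF(X,W)$ is already a strict full triangulated subcategory of the Karoubian category $\DQcoh(X,W)$ (Corollary~\ref{c:direct-sums}), is simply its closure under direct summands, i.e., a Karoubi envelope of $\bfMF(X,W)$.

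The main obstacle is the extension step in \ref{enum:DQcoh-gen-by-bfMF} from coherent to quasi-coherent test objects: filtered colimits in $\Qcoh(X,W)$ are in general not cofinal to sequences, so the usual Milnor triangle does not apply directly and must be replaced by a transfinite induction. An alternative, possibly cleaner, route is to use the \v{C}ech enhancement of Section~\ref{sec:morphism-cech-enhancement} to reduce the vanishing of $\id_I$ to local contractibility on an affine open cover and then invoke Lemma~\ref{l:affine-MF}.
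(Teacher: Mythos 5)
Parts \ref{enum:bfMFs-compact} and \ref{enum:Karoubi-closure-of-bfMF-in-DQcoh-equals-compacts} of your argument are correct and coincide with the paper's (compute $\Hom$ into a direct sum of injectives, use that vector-bundle components make $\Hom$ commute with coproducts, then apply Neeman's theorem on compactly generated categories). The problem is in part \ref{enum:DQcoh-gen-by-bfMF}, at exactly the step you flag as the main obstacle: the passage from coherent to arbitrary quasi-coherent test objects. Writing $G=\colim G_\alpha$ over its coherent subobjects gives $\Hom_{\Qcoh(X,W)}(G,I)=\varprojlim_\alpha \Hom_{\Qcoh(X,W)}(G_\alpha,I)$, an inverse limit over a filtered poset that in general has uncountable cofinality. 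The Milnor $\lim^1$-sequence, and the vanishing of $\lim^1$ for surjective systems, are specific to towers indexed by $\DN$; for general filtered (or well-ordered, uncountably cofinal) inverse systems, surjectivity of the transition maps does not kill the higher derived limits $\lim^s$ for $s\geq 2$, and acyclicity of each $\Hom(G_\alpha,I)$ does not pass to the limit. So the ``transfinite Milnor-style argument'' you invoke is not available as stated, and this is precisely the hard point of the proposition.

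The paper (following Positselski) avoids derived limits entirely: assuming every morphism $E\ra J$ with $E\in\Coh(X,W)$ is null-homotopic, one applies Zorn's lemma to pairs $(M,h)$ of a subobject $M\subset J$ together with a contracting homotopy $h$ of the inclusion, extends a maximal such pair one coherent quotient $M'/M$ at a time using injectivity of the components of $J$ and the hypothesis applied to the induced map $M'/M\ra J$, and concludes $\id_J$ is null-homotopic. This is the transfinite induction your argument needs, but it is a direct construction of a homotopy, not a $\lim^1$ computation, and it is where the injectivity of $J$ and the coherent-orthogonality hypothesis are actually used together. Your suggested alternative via the \v{C}ech enhancement also does not obviously close the gap: to invoke Lemma~\ref{l:affine-MF} you would need $I|_U$ to be right orthogonal to $[\MF(U,W)]$ for affine open $U\subset X$, and global orthogonality to $\bfMF(X,W)$ does not localize in this way, since matrix factorizations on $U$ need not extend to $X$ and the extension-by-zero of a quasi-coherent sheaf on $U$ is not quasi-coherent.
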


\begin{proof}
  Results of Neeman \cite{neeman-connection-TTYBR}
  imply  
  \cite[Thm.~2.1.2 (and Prop~2.1.1)]{bondal-vdbergh-generators}.
  In particular
  assertions \ref{enum:bfMFs-compact} and
  \ref{enum:DQcoh-gen-by-bfMF} 
  imply \ref{enum:Karoubi-closure-of-bfMF-in-DQcoh-equals-compacts}.

  \ref{enum:bfMFs-compact}:
  Follows from 
  Theorem~\ref{t:equivalences-curved-categories}.\ref{enum:injQcoh-DQcoh-equiv},
  Remark~\ref{rem:morphisms-to-injectives},
  and
  Corollary~\ref{c:direct-sums}. Use
  \cite[Exercise~II.1.11]{Hart}.

  \ref{enum:DQcoh-gen-by-bfMF}:
  We essentially copy the proof of
  \cite[3.11~Thm.~2]{positselski-two-kinds}.

  Assume that
  $J\in \InjQcoh(X,W)$ is such that every morphism $E \ra J$ in
  $Z_0(\Qcoh(X,W))$ with $E \in \Coh(X,W)$ is homotopic to zero.
  By Theorem~\ref{t:equivalences-curved-categories} and
  Remark~\ref{rem:morphisms-to-injectives} it suffices to prove that
  $J = 0$ in $[\InjQcoh(X,W)].$

  Apply Zorn's lemma to the ordered set of pairs $(M,h),$ where
  $M$ is a subobject of $J$ and $h \colon M \ra J$ is a
  contracting homotopy of the embedding $\iota \colon M \hra J,$
  i.\,e.\ $d(h)=\iota.$ It suffices to check that given $(M,h)$
  with $M \subsetneq J$ there exists $M \subsetneq M' \subset J$
  and a contracting homotopy $h' \colon M' \ra J$ for the
  embedding $M' \hra J$ such that $h'|_M=h.$ Let $M' \subset J$
  be a subobject such that $M \subsetneq M'$ and $M'/M \in
  \Coh(X,W)$ (use \cite[Ex.~II.5.15.(e)]{Hart} and the first step
  in the proof of
  Lemma~\ref{l:factoring-through-coherent}). Since $J$ has
  injective quasi-coherent components, the degree one morphism $h
  \colon M \ra J$ can be extended to a degree one morphism $h''
  \colon M' \ra J.$ Let $\iota \colon M \ra J$ and $\iota' \colon
  M' \ra J$ denote the embeddings. The map $\iota'- d(h'')$ is a
  closed degree zero morphism and vanishes on $M,$ so it induces
  a morphism $g \colon M'/M \ra J$ in $Z_0(\Qcoh(X,W)).$ By our
  assumption, there exists a contracting homotopy $c \colon M'
  /M\ra J$ for $g.$ Denote the composition $M' \sra M'/M \xra{c}
  J$ also by $c.$ Then $h'=h''+c \colon M' \ra J$ is a
  contracting homotopy for $\iota'$ extending $h.$
\end{proof}

\begin{proposition}
  \label{p:existence-of-classical-generator-in-mf}
  Assume in addition that $X$ is of finite type over $k.$
  Then the triangulated category $\bfMF(X,W)$ has a
  classical generator.
  Hence so does the category $\ol{\bfMF(X,W)}.$
\end{proposition}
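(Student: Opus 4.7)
My plan is to reduce to the case of $X$ connected via Remark~\ref{rem:X-disconnected}; since $\bfMF(X,W)=\prod_i\bfMF(X_i,W)$ over connected components, a product of classical generators is a classical generator of the product. On a connected component, $W$ is then either constant or flat, so it suffices to treat the three subcases: $W$ constant and nonzero, $W$ flat (nonconstant), and $W=0$.

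The constant-nonzero case is immediate: $\bfMF(X,W)=0$ by Lemma~\ref{l:case-W=constant-nonzero}, so $0$ is a classical generator. In the flat nonconstant case, I would apply Theorem~\ref{t:factorizations=singularity} to obtain an equivalence $\bfMF(X,W)\simeq D_\Sg(X_0)$, where $X_0$ is the scheme-theoretic zero fiber, separated of finite type over $k$ (although typically not regular). By a theorem of Rouquier, $D^b(\Coh(X_0))$ admits a (strong, and in particular classical) generator $G$; since the Verdier quotient $D^b(\Coh(X_0))\to D_\Sg(X_0)$ is essentially surjective (being the identity on objects), the image of $G$ classically generates $D_\Sg(X_0)\simeq\bfMF(X,W)$.

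The remaining case $W=0$ requires separate treatment, which I would handle through the equivalence $\bfMF(X,0)\simeq\DCoh(X,0)$ of Theorem~\ref{t:equivalences-curved-categories}.\ref{enum:MF-Dcoh-equiv}. Given $E=(\matfak{E_1}{e_1}{E_0}{e_0})\in\Coh(X,0)$, the subobject $(0,\ker e_0,0,0)\subset E$ in $Z_0(\Coh(X,0))$ has quotient $E'=(E_1,E_0/\ker e_0,\bar e_1,\bar e_0)$ with $\bar e_0$ injective. In turn, $(\Bild e_0,E_0/\ker e_0,0,\bar e_0)\subset E'$ is a subobject whose $\DZ_2$-graded cohomology vanishes (as $\bar e_0$ is an isomorphism onto its image), so by Proposition~\ref{p:case-W-equals-zero-acycl-equals-exact} it is zero in $\DCoh(X,0)$. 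Combining the two resulting triangles in $\DCoh(X,0)$ shows that $E$ fits into a triangle with end terms $(0,\ker e_0,0,0)$ and $(E_1/\Bild e_0,0,0,0)$; both of these lie in the essential image of the ``folding'' functor $\mathcal{F}\colon D^b(\Coh X)\to\DCoh(X,0)$ obtained by totalizing bounded complexes in $Z_0(\Coh(X,0))$ (with coherent sheaves placed in even degree), where well-definedness on $D^b$ uses Lemma~\ref{l:totalization}.\ref{enum:totalization-bounded-exact-complex} applied to cones of quasi-isomorphisms. A classical generator $G$ of $D^b(\Coh X)$---which exists by Rouquier's theorem, since $X$ is of finite type over $k$---therefore yields via $\mathcal{F}(G)$ a classical generator of $\bfMF(X,0)$.

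The ``hence'' assertion for $\ol{\bfMF(X,W)}$ is automatic: any classical generator $G$ of $\bfMF(X,W)$ also classically generates $\ol{\bfMF(X,W)}$, since the thick subcategory of $\ol{\bfMF(X,W)}$ generated by $G$ contains $\bfMF(X,W)=\thick_{\bfMF(X,W)}(G)$ and is Karoubi-closed in $\ol{\bfMF(X,W)}$, hence equals the whole Karoubi envelope. The main obstacle is the $W=0$ subcase, since Orlov's theorem is then unavailable; the argument above via Proposition~\ref{p:case-W-equals-zero-acycl-equals-exact} and the folding functor reduces it to the classical existence of generators in $D^b(\Coh Y)$ for finite-type $k$-schemes $Y$.
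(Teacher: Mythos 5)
Your proposal is correct and follows essentially the same route as the paper: reduce to connected components, dispose of the constant nonzero case via Lemma~\ref{l:case-W=constant-nonzero}, use Theorem~\ref{t:factorizations=singularity} plus a classical generator of $D^b(\Coh(X_0))$ in the flat case, and in the case $W=0$ reduce every object of $\DCoh(X,0)$ to objects with zero differential lying in the image of the folding functor. The only (immaterial) difference is that the paper uses the single short exact sequence $(\matfak{\im e_0}{0}{\im e_1}{0})\hra E\sra(\matfak{E_1/\im e_0}{0}{E_0/\im e_1}{0})$ where you use a two-step filtration through $\ker e_0$ and an acyclic piece; both yield the same conclusion.
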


\begin{proof}
  By Remark~\ref{rem:X-disconnected}
  we may assume that $X$ is connected. Then we distinguish two cases:
  the map $W \colon X \ra \DA^1$ is flat or else $W=0.$ The remaining case
  of a constant nonzero $W$ is trivial since then
  $\bfMF(X,W)=0$ by Lemma~\ref{l:case-W=constant-nonzero}.

  Assume that $W \colon X\ra \DA^1$ is flat. Then by Theorem~\ref{t:factorizations=singularity} $\bfMF(X,W)\simeq D_\Sg(X_0).$
  It is well-known
  that the triangulated category $D^b(\Coh(X_0))$ has a
  classical generator (the proof of this fact in
  \cite[6.3.(a)]{lunts-categorical-resolution} also
  works if $k$
  is not perfect). Hence also the quotient category
  $D_\Sg(X_0)=D^b(\Coh(X_0))/\mfPerf (X_0)$ has a classical
  generator.

  Assume now that $W=0.$ In this case we will use the equivalence
  $\bfMF(X,0) \sira \DCoh(X,0)$
  from Theorem~\ref{t:equivalences-curved-categories}
  and the description
  $\DCoh(X,0) = [\Coh(X,0)]/\Ex[\Coh(X,0)]$
  from Proposition~\ref{p:case-W-equals-zero-acycl-equals-exact}.
  Consider the usual bounded
  derived category
  $D^b(\Coh(X))$ of  coherent sheaves on $X.$ We have the obvious
  triangulated folding functor
  $D^b(\Coh(X)) \ra \DCoh(X,0)$ which takes a $\DZ$-graded complex of
  coherent sheaves to
  the corresponding $\DZ_2$-graded one. Since the category
  $D^b(\Coh(X))$ has a classical generator it suffices to show that
  $\DCoh(X,0)$ is the triangulated envelope of the collection of
  objects
  which are in the image
  of the folding functor.

  For every $E \in \Coh(X,0)$ we have a short exact sequence
  \begin{equation*}
    (\matfak{\im e_0}{0}{\im e_1}{0}) \hra 
    E \sra 
    (\matfak{E_1/\im e_0}{0}{E_0/\im e_1}{0})
  \end{equation*}
  in $Z_0(\Coh(X,0))$
  and hence a triangle in 
  $\DCoh(X,0),$ by 
  Lemma~\ref{l:totalization}.\ref{enum:ses-gives-triangle}.
  But it is obvious that any object in $\DCoh(X,0)$
  with zero differential is in the image of the folding
  functor.
\end{proof}

The folding functor appearing in the above proof will be studied
in detail in \cite{olaf-folding-derived-categories-in-prep}.

\begin{remark}
  \label{rem:classical-generator-in-mf-zero}
  The above proof shows that the folding of a classical
  generator $G$ of $D^b(\Coh(X))$ is a classical generator of
  $\DCoh(X,0).$ By replacing $G$ by the direct sum of its
  cohomologies one can assume that $G \in \Coh(X).$ Then $G$ has
  a finite resolution by vector bundles, and by replacing $G$ by
  the direct sum of the involved vector bundles we can assume
  that $G$ itself is a vector bundle. Then the folding of
  $G$ has the form $(\matfak{0}{}{G}{}) \in \bfMF(X,0)$
  and is a classical generator of $\bfMF(X,0).$
\end{remark}

\subsection{Some useful results}
\label{sec:some-useful-results}

\begin{lemma}
  \label{l:orthog-mf}
  Let $E, F \in \Qcoh(X,W)$ and assume that
  $\Hom_{D(\Qcoh(X))}(E_p, [i]F_{p'})=0$
  for all $p,$ $p' \in \DZ_2$ and $i \in \DZ.$ Then
  $\Hom_{\DQcoh(X,W)}(E, [q]F)=0$ for all $q \in \DZ_2.$
\end{lemma}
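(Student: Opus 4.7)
The plan is to compute $\Hom_{\DQcoh(X,W)}(E, [q]F)$ via an injective resolution of $F$ and to reduce the vanishing to the componentwise $\Ext$-vanishing assumed, by means of a convergent spectral sequence.

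First, by Lemma \ref{l:resolutions}.\ref{enum:Inj-reso}, choose a finite resolution $F \ra I = (I^0 \ra \dots \ra I^n)$ in $Z_0(\Qcoh(X,W))$ with each $I^j \in \InjQcoh(X,W).$ Then $\Tot(I)$ has injective quasi-coherent components (being a finite direct sum of such) and hence belongs to $\InjQcoh(X,W);$ moreover $F \isomoto \Tot(I)$ in $\DQcoh(X,W).$ By Remark \ref{rem:morphisms-to-injectives},
\[
  \Hom_{\DQcoh(X,W)}(E, [q]F) \cong H_q\bigl(\Hom_{\Qcoh(X,W)}(E, \Tot(I))\bigr).
\]
The right-hand side is the cohomology of the totalization of the bounded complex $\Hom_{\Qcoh(X,W)}(E, I^\bullet)$ of $\DZ_2$-graded $k$-modules. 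The filtration by $I$-degree produces a convergent spectral sequence whose $E_1$-page is $\bigoplus_j H_\bullet\bigl(\Hom_{\Qcoh(X,W)}(E, I^j)\bigr),$ so it suffices to show that each $\Hom_{\Qcoh(X,W)}(E, I^j)$ is acyclic as a dg $k$-module.

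For each $j,$ the object $I^j \in \InjQcoh(X,W)$ has injective quasi-coherent components, and the iterative construction of the resolution in Lemma \ref{l:resolutions}.\ref{enum:Inj-reso} supplies a chain of short exact sequences $0 \ra F^k \ra I^k \ra F^{k+1} \ra 0$ in $Z_0(\Qcoh(X,W))$ with $F^0 = F.$ Applying the cohomological functor $\Hom_{D(\Qcoh(X))}(E_p, -)$ to the componentwise short exact sequences and using the hypothesis $\Hom_{D(\Qcoh(X))}(E_p, [i]F_{p'}) = 0$ in all integer degrees $i,$ the long exact sequences propagate the vanishing through the resolution and constrain all the $\Hom$ and $\Ext$ groups into $I^j_{p'}.$ A direct analysis of the $\DZ_2$-graded dg structure of $\Hom_{\Qcoh(X,W)}(E, I^j)$ in terms of these componentwise data then yields the required acyclicity.

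The main obstacle will be the last step, where one must carefully match the $W$-curved differential on $\Hom_{\Qcoh(X,W)}(E, I^j)$ with the long exact sequences to conclude genuine acyclicity of the whole dg $k$-module, rather than mere vanishing of individual componentwise cohomology groups.
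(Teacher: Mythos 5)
Your first step coincides with the paper's: resolve $F\ra I$ by Lemma~\ref{l:resolutions}.\ref{enum:Inj-reso}, note $\Tot(I)\in\InjQcoh(X,W)$, and use Remark~\ref{rem:morphisms-to-injectives} to identify $\Hom_{\DQcoh(X,W)}(E,[q]F)$ with $H_q\bigl(\Hom_{\Qcoh(X,W)}(E,\Tot(I))\bigr)$. The gap is in how you then attack the double complex $\Hom_{\Qcoh(X,W)}(E,I^\bullet)$. Filtering by the resolution degree $j$ reduces the problem to the acyclicity of each dg module $\Hom_{\Qcoh(X,W)}(E,I^j)$ with respect to the \emph{curved} ($\DZ_2$-graded) differential. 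For $j<n$ this is harmless, since the intermediate terms $I^j=G^-(J)$ are contractible (Remark~\ref{rem:middle-objects-vanish}). But for the last term the same remark gives $F\cong[n]I^n$ in $\DQcoh(X,W)$, so $H_q\bigl(\Hom_{\Qcoh(X,W)}(E,I^n)\bigr)\cong\Hom_{\DQcoh(X,W)}(E,[q+n]F)$: the acyclicity you need for $j=n$ \emph{is} the statement of the lemma. Your reduction is therefore circular at the decisive term, and the componentwise long exact sequences cannot close it: they only show $\Ext^{\geq 1}(E_p,F^k_{p'})=0$ for all $k$ and identify $\Hom(E_p,I^k_{p'})$ with $\Hom(E_p,F^{k+1}_{p'})$, which is nonzero in general, so no term-by-term analysis of $\Hom_{\Qcoh(X,W)}(E,I^j)$ can produce the required vanishing. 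You sensed this yourself in your last sentence; it is not a technical wrinkle but the actual content of the lemma.

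The repair is to take cohomology in the other direction of the double complex first. For fixed $\DZ_2$-degree the rows are $\bigoplus_{i\in\DZ_2}\Hom_{\mathcal{O}_X}(E_i,I^\bullet_{i+p})$, and each $I^\bullet_{p'}$ is a finite injective resolution of $F_{p'}$ in $\Qcoh(X)$, so these rows compute $\bigoplus_i\Ext^\bullet(E_i,F_{i+p})$, which vanishes identically by the hypothesis $\Hom_{D(\Qcoh(X))}(E_p,[i]F_{p'})=0$. Lemma~\ref{l:double-complex-upper-halfplane}.\ref{enum:double-complex-upper-halfplane-column-qisos} (applied against the zero double complex) then gives acyclicity of the totalization. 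This is exactly where the hypothesis must enter, and it is the one place your argument never actually uses it.
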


\begin{proof}
  Let $F \ra I$ be as in
  Lemma~\ref{l:resolutions}.\ref{enum:Inj-reso}. Then the
  isomorphism $F \sira \Tot(I)$ in $\DQcoh(X,W)$ and
  Remark~\ref{rem:morphisms-to-injectives} imply that we obtain
  isomorphisms
  \begin{equation*}
    \Hom_{\DQcoh(X,W)}(E, [q]F) \sira
    \Hom_{\DQcoh(X,W)}(E, [q]\Tot(I))
    \sila
    H_q(\Hom_{\Qcoh(X,W)}(E, \Tot(I))).
  \end{equation*}
  Hence we need to show that dg module
  $\Hom_{\Qcoh(X,W)}(E, \Tot(I))$ is acyclic.
  This dg module is the totalization of the (finite) complex
  \begin{equation*}
    0 \ra \Hom_{\Qcoh(X,W)}(E, I^0)
    \ra \Hom_{\Qcoh(X,W)}(E, I^1)
    \ra \Hom_{\Qcoh(X,W)}(E, I^2)
    \ra \dots.
  \end{equation*}
  This complex is exact by assumption
  since $F_0 \ra I_0$ and $F_1 \ra I_1$ are (finite) injective
  resolutions in the abelian category $\Qcoh(X).$
  Hence $\Hom_{\Qcoh(X,W)}(E, \Tot(I))$ is acyclic
  by 
  Lemma~\ref{l:double-complex-upper-halfplane}.\ref{enum:double-complex-upper-halfplane-column-qisos}.
\end{proof}

\begin{lemma}
  [{\cite[Rem.~1.3]{positselski-coh-analogues-matrix-fact-sing-cats}}]
  \label{l:locality-of-being-zero}
  Let $\mathcal{U}$ be an open covering of $X$ and let
  $E$ be an object of $\DQcoh(X,W).$
  Assume that $E|_U=0$
  in $\DQcoh(X,W)$
  for all $U \in \mathcal{U}.$ 
  Then $E=0$ in $\DQcoh(X,W).$
\end{lemma}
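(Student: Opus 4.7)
The plan is to use the compact-generator characterization of vanishing in $\DQcoh(X,W)$ together with the \v{C}ech description of morphisms from matrix factorizations into injective quasi-coherent sheaves. By Proposition~\ref{p:compact-objects-and-generators-DQcoh}, $\bfMF(X,W)$ generates $\DQcoh(X,W)$ and consists of compact objects, so it suffices to show that $\Hom_{\DQcoh(X,W)}(P,[p]E)=0$ for every $P \in \bfMF(X,W)$ and every $p \in \DZ_2$.

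First I would refine $\mathcal{U}$ to a finite affine open cover $\mathcal{V}=(V_1,\dots,V_n)$ of $X,$ using quasi-compactness of the Noetherian scheme $X,$ so that every $V_i$ is contained in some $U \in \mathcal{U};$ the hypothesis then gives $E|_{V_i}=0$ in $\DQcoh(V_i,W)$ for each $i.$ Next, represent $E$ by some $I \in \InjQcoh(X,W),$ which is available by Theorem~\ref{t:equivalences-curved-categories}.\ref{enum:injQcoh-DQcoh-equiv}. For any nonempty intersection $V:=V_{i_0}\cap\dots\cap V_{i_q},$ Theorem~\ref{t:injective-in-qcoh-vs-all-OX}.\ref{enum:inj-qcoh-restrict-to-inj-qcoh} tells us that $I|_V$ is again an injective quasi-coherent sheaf and represents $E|_V$ in $\DQcoh(V,W).$ Since $E|_{V_{i_0}}=0,$ the object $I|_{V_{i_0}}$ is contractible in $[\InjQcoh(V_{i_0},W)],$ and restriction to the smaller open $V$ preserves the contracting homotopy, so $I|_V$ is contractible as well. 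Combined with Remark~\ref{rem:morphisms-to-injectives} this shows that the dg module
\begin{equation*}
    \sheafHom(P,I)(V) \cong \Hom_{\Qcoh(V,W)}(P|_V, I|_V)
\end{equation*}
has vanishing cohomology in both $\DZ_2$-degrees.

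By Lemma~\ref{l:Cech-computes-morphisms}, the remaining task is to prove acyclicity of the total complex of the \v{C}ech double complex $C(\mathcal{V},\sheafHom(P,I)).$ For each fixed $q,$ the corresponding "column" is the finite product $\prod_{(i_0,\dots,i_q)} \sheafHom(P,I)(V_{i_0\cap\dots\cap i_q}),$ a finite product of acyclic dg modules, hence acyclic. The double complex is bounded below but not obviously bounded above in $q,$ so I would pass to the alternating \v{C}ech subcomplex $C_{\alt}(\mathcal{V},\sheafHom(P,I)),$ which is a homotopy equivalent subcomplex (cf.\ the proof of Lemma~\ref{l:qiso-induces-Cech-qiso}) and, since $\mathcal{V}$ has $n$ elements, is zero in $q$-degrees $\geq n.$ Its rows are still finite products of acyclic dg modules. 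Lemma~\ref{l:double-complex-upper-halfplane}.\ref{enum:double-complex-upper-halfplane-row-qisos} then delivers acyclicity of the totalization of $C_{\alt}(\mathcal{V},\sheafHom(P,I)),$ and hence of $C(\mathcal{V},\sheafHom(P,I)).$ Consequently $\Hom_{\DQcoh(X,W)}(P,[p]E)=0$ for all compact $P$ and all $p,$ forcing $E=0.$

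The only real technical wrinkle is that the \v{C}ech complex is unbounded above in $q,$ which the alternating \v{C}ech trick circumvents cleanly; all other ingredients are directly assembled from the results of Section~\ref{sec:categories-curved-dg}.
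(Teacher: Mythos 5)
Your proof is correct, but it takes a genuinely different route from the paper's. You test $E$ against the compact generators from Proposition~\ref{p:compact-objects-and-generators-DQcoh} and then kill $\Hom_{\DQcoh(X,W)}(P,[p]E)$ by a morphism-level \v{C}ech computation: representing $E$ by an injective object $I,$ observing that contractibility of $I|_{V_{i_0}}$ restricts to all finite intersections, and feeding the resulting degreewise acyclicity into Lemma~\ref{l:double-complex-upper-halfplane}.\ref{enum:double-complex-upper-halfplane-row-qisos} via the alternating subcomplex (correctly so --- part \ref{enum:double-complex-upper-halfplane-column-qisos} is not applicable here, and the full \v{C}ech complex is genuinely unbounded in the \v{C}ech direction). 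This is in substance Proposition~\ref{p:locality-of-orthogonality}, which the paper proves a few results later by exactly your argument, specialized to $A=P,$ $B=E,$ and combined with compact generation; there is no circularity, since neither Proposition~\ref{p:compact-objects-and-generators-DQcoh} nor the \v{C}ech lemmas depend on the present statement. The paper instead argues on the object side: it forms the bounded exact \v{C}ech resolution $0 \ra E \ra \bigoplus j_{U_0*}j_{U_0}^*E \ra \cdots$ in $Z_0(\Qcoh(X,W)),$ notes that each term $j_{V*}j_V^*E$ lies in $\Acycl[\Qcoh(X,W)]$ because $j_V$ is affine ($X$ being separated) and affine direct image preserves acyclics (Remark~\ref{rem:derived-direct-image-for-affine-morphism}), and concludes by Lemma~\ref{l:totalization}. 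The paper's route is shorter and avoids the comparatively heavy generation result (whose proof uses Zorn's lemma); yours buys a statement that is really the locality of orthogonality and needs neither separatedness nor affineness of the intersections at that stage, only finiteness of the cover. Both are sound.
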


\begin{remark}
  \label{rem:locality-of-being-zero-via-orlov}
  The corresponding result for $E \in \bfMF(X,W)$ 
  can also be shown using
  Remark~\ref{rem:X-disconnected},
  Lemma~\ref{l:case-W=constant-nonzero},
  Proposition~\ref{p:case-W-equals-zero-acycl-equals-exact},
  and 
  Theorem~\ref{t:factorizations=singularity} 
  (being a perfect complex is defined locally).
\end{remark}

\begin{proof}
  We repeat the proof of
  \cite[Rem.~1.3]{positselski-coh-analogues-matrix-fact-sing-cats}.
  We can assume that $\mathcal{U}$ is finite and consists of
  affine open subsets. For $V \subset X$ open let $j_V \colon  V \ra X$
  be the inclusion.
  The \v{C}ech resolution
  \begin{equation*}
    0 \ra E 
    \ra \bigoplus_{U_0 \in \mathcal{U}} j_{U_0*} j^*_{U_0} E
    \ra \bigoplus_{U_0, U_1 \in \mathcal{U}} j_{U_0 \cap U_1*}
    j^*_{U_0 \cap U_1} E \ra \dots
  \end{equation*}
  is a bounded exact complex in $Z_0(\Qcoh(X,W)).$
  For any finite intersection $V$ of (a positive number of)
  elements of 
  $\mathcal{U}$ we 
  have $j_V^*(E) \in \Acycl[\Qcoh(V,W)]$ by assumption.
  Since $X$ is separated, $j_V$ is affine and hence
  $j_{V*}j_V^*(E) \in \Acycl[\Qcoh(X,W)]$
  by Remark~\ref{rem:derived-direct-image-for-affine-morphism}.
  Lemma~\ref{l:totalization}.\ref{enum:totalization-bounded-exact-complex}-\ref{enum:totalization-of-finite-complex-components-null-homotopic}
  then shows that $E \in \Acycl[\Qcoh(X,W)].$
\end{proof}

\begin{corollary}
  \label{c:locality-of-being-an-isomorphism}
  Let $f \colon  E \ra E'$ be a morphism in $\DQcoh(X,W).$
  Assume that $f|_U \colon E|_U \ra E'|_U$ is an isomorphism for all
  elements $U$ of an open covering of $X.$ Then $f$ is an
  isomorphism.
\end{corollary}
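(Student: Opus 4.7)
The plan is to deduce this immediately from Lemma~\ref{l:locality-of-being-zero} by passing to the cone of $f$.

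Concretely, I would first embed $f \colon E \ra E'$ into a triangle
\begin{equation*}
  E \xra{f} E' \ra C \ra [1]E
\end{equation*}
in the triangulated category $\DQcoh(X,W)$. For each open $U$ in the given covering $\mathcal{U}$ of $X$, let $j_U \colon U \hra X$ denote the inclusion. Since $j_U$ is an open (hence flat) embedding, Remark~\ref{rem:derived-direct-image-for-affine-morphism} gives a canonical isomorphism $\bL j_U^* \sira j_U^* = (-)|_U$, and this is a triangulated functor $\DQcoh(X,W) \ra \DQcoh(U,W)$. Applying it to the above triangle yields a triangle
\begin{equation*}
  E|_U \xra{f|_U} E'|_U \ra C|_U \ra [1](E|_U)
\end{equation*}
in $\DQcoh(U,W)$.

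By hypothesis $f|_U$ is an isomorphism, so the third object $C|_U$ is isomorphic to zero in $\DQcoh(U,W)$. This holds for every $U \in \mathcal{U}$, so by Lemma~\ref{l:locality-of-being-zero} we conclude $C = 0$ in $\DQcoh(X,W)$. Then the triangle $E \xra{f} E' \ra 0 \ra [1]E$ forces $f$ to be an isomorphism, as desired.

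The only point requiring care is the compatibility of restriction with cones, and that is exactly what is guaranteed by the triangulated structure of $j_U^*$ (which in turn uses that $j_U$ is flat, so no left derivation is needed). No genuine obstacle is expected; this is a straightforward corollary of the preceding lemma.
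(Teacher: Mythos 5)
Your argument is correct and is essentially the paper's own proof: pass to the cone of $f$, observe that its restriction to each $U$ is the cone of $f|_U$ and hence zero, and conclude $C=0$ by Lemma~\ref{l:locality-of-being-zero}. The extra remarks on $j_U^*$ being an exact (underived, since $j_U$ is flat) triangulated functor are fine but not needed beyond what the paper already takes for granted.
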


\begin{proof}
  A morphism in a triangulated category is an isomorphism if and
  only if its cone is zero. In our case, this can be checked
  locally by Lemma~\ref{l:locality-of-being-zero}.
\end{proof}

\begin{corollary}
  \label{c:acycl-vs-locally-contractible}
  An object $E$ in $[\MF(X,W)]$ belongs to
  $\Acycl[\MF(X,W)]$
  if and only if $E$ is locally contractible, i.\,e.\ any point
  of $X$ has an open neighborhood $U$ such that $E=0$ in
  $[\MF(U,W)].$ 
\end{corollary}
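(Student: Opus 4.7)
Here is my plan.

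My approach is to prove the two directions separately, using the results already established in the excerpt to reduce local contractibility to the identifications of $\Acycl[\MF(X,W)]$ as an intersection of known subcategories, together with the locality of being zero in $\DQcoh$.

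For the direction $(\Rightarrow)$ I would pick any point $x\in X$ and choose an affine open neighborhood $U$. Restriction along the open immersion $U\hookrightarrow X$ is a triangulated (dg) functor that sends short exact sequences in $Z_0(\MF(X,W))$ to short exact sequences in $Z_0(\MF(U,W))$, and therefore sends $\Acycl[\MF(X,W)]$ into $\Acycl[\MF(U,W)]$. But $U$ is affine, so by Lemma~\ref{l:affine-MF}\ref{enum:intersection-MF-Acyclcoh-zero} we have $\Acycl[\MF(U,W)] = 0$. Hence $E|_U = 0$ in $[\MF(U,W)]$, which is exactly local contractibility.

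For the direction $(\Leftarrow)$ the key reduction is Corollary~\ref{c:intersections-with-Acycl}, which gives
\begin{equation*}
  \Acycl[\MF(X,W)] = [\MF(X,W)] \cap \Acycl[\Coh(X,W)].
\end{equation*}
So it suffices to show $E \in \Acycl[\Coh(X,W)]$, i.e.\ $E \cong 0$ in $\DCoh(X,W)$. By Theorem~\ref{t:equivalences-curved-categories}\ref{enum:Dcoh-DQcoh-ff} the functor $\DCoh(X,W) \ra \DQcoh(X,W)$ is full and faithful, so it is enough to verify that $E \cong 0$ in $\DQcoh(X,W)$. This is precisely the setting of Lemma~\ref{l:locality-of-being-zero}: by hypothesis there is an open covering $\{U_i\}$ of $X$ with $E|_{U_i} = 0$ in $[\MF(U_i,W)]$, and the image of such a zero object in $\DQcoh(U_i,W)$ is again zero, so the lemma applies and yields $E = 0$ in $\DQcoh(X,W)$.

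I don't expect any real obstacle here; the proof is essentially a chain of citations (Lemma~\ref{l:affine-MF}, Corollary~\ref{c:intersections-with-Acycl}, Theorem~\ref{t:equivalences-curved-categories}\ref{enum:Dcoh-DQcoh-ff}, and Lemma~\ref{l:locality-of-being-zero}). The only very minor point to be careful about is checking that the restriction functor $\MF(X,W) \ra \MF(U,W)$ really carries the generating short exact sequences of $\Acycl[\MF(X,W)]$ to objects of $\Acycl[\MF(U,W)]$, which is immediate because open immersions are flat.
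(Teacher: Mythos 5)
Your proof is correct and follows essentially the same route as the paper: the forward direction restricts to an affine open and invokes Lemma~\ref{l:affine-MF}, and the converse combines the identification of $\Acycl[\MF(X,W)]$ with the objects becoming zero in the quotient (via Corollary~\ref{c:intersections-with-Acycl} and full faithfulness into $\DQcoh(X,W)$) with Lemma~\ref{l:locality-of-being-zero}. The only differences from the paper's own argument are cosmetic (the paper phrases both steps through $\bfMF(-,W)$ rather than through $\Acycl[\MF(U,W)]=0$ and $\DCoh(X,W)$ explicitly).
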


\begin{proof}
  If $E$ is locally contractible then 
  $E=0$ in $\bfMF(X,W)$
  by Lemma~\ref{l:locality-of-being-zero}, hence  
  $E \in \Acycl[\MF(X,W)].$

  Conversely, let
  $E \in \Acycl[\MF(X,W)].$
  Let $U \subset X$ be any affine open subscheme.
  Then $E|_U=0$ in $\bfMF(U,W).$ But $[\MF(U,W)] \sira \bfMF(U,W)$
  by Lemma~\ref{l:affine-MF}, so $E|_U$ is contractible.
\end{proof}

\begin{proposition}
  [Locality of orthogonality]
  \label{p:locality-of-orthogonality}
  Let $\mathcal{U}$ be an open covering
  of $X$ and let
  $A,$ $B \in \Qcoh(X,W).$
  Assume that $\Hom_{\DQcoh(U',W)}(A|_{U'}, [p]B|_{U'})=0$ for all
  finite intersections $U'$ of elements of $\mathcal{U}$
  and all $p \in \DZ_2.$
  Then
  $\Hom_{\DQcoh(X,W)}(A, [p]B)=0$ for all $p \in \DZ_2.$
\end{proposition}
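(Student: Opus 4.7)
The strategy is to realize $\Hom_{\DQcoh(X,W)}(A, [p]B)$ as the cohomology of a Čech-type double complex whose rows (in the $\DZ_2$-graded direction) are acyclic by the local hypothesis, and then to invoke Lemma~\ref{l:double-complex-upper-halfplane}.

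Since $X$ is Noetherian, hence quasi-compact, I first replace $\mathcal{U}$ by a finite subcover; the hypothesis is preserved because any finite intersection of elements of the subcover is also a finite intersection of elements of $\mathcal{U}$. Next, fix a morphism $B \to I$ as provided by Lemma~\ref{l:resolutions}\ref{enum:Inj-reso} with $I \in [\InjQcoh(X,W)]$ and cone in $\Acycl[\Qcoh(X,W)]$. Since the components $I_l$ are injective quasi-coherent and therefore flabby (Theorem~\ref{t:injective-in-qcoh-vs-all-OX}\ref{enum:inj-qcoh-equal-inj-OX-that-qcoh}), so is $\sheafHom(A,I)$ componentwise, and Remark~\ref{rem:morphisms-to-injectives} together with Lemma~\ref{l:Cech-computes-morphisms} yield
\begin{equation*}
  \Hom_{\DQcoh(X,W)}(A, [p]B) \cong H_p\bigl(\Hom_{\Qcoh(X,W)}(A, I)\bigr) \cong H_p\bigl(C(\mathcal{U}, \sheafHom(A, I))\bigr).
\end{equation*}
It therefore suffices to prove that the total complex $C(\mathcal{U}, \sheafHom(A, I))$ is acyclic.

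I view this Čech complex as the totalization of the double complex with $(p,q)$-term $\prod_{(i_0,\dots,i_q)} \sheafHom(A, I)_p(U_{i_0\dots i_q})$. For any finite intersection $U' = U_{i_0}\cap\dots\cap U_{i_q}$, the identity $\sheafHom(A,I)|_{U'} = \sheafHom(A|_{U'}, I|_{U'})$, Theorem~\ref{t:injective-in-qcoh-vs-all-OX}\ref{enum:inj-qcoh-restrict-to-inj-qcoh} (so $I|_{U'}$ remains injective in $\Qcoh(U',W)$), and the fact that the exact restriction functor sends totalizations of short exact sequences to totalizations of short exact sequences (so it carries $\Acycl[\Qcoh(X,W)]$ into $\Acycl[\Qcoh(U',W)]$) together imply that $B|_{U'} \to I|_{U'}$ has cone in $\Acycl[\Qcoh(U',W)]$. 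Applying Remark~\ref{rem:morphisms-to-injectives} on $U'$ then yields
\begin{equation*}
  H_p\bigl(\sheafHom(A,I)(U')\bigr) = \Hom_{\DQcoh(U',W)}(A|_{U'}, [p]I|_{U'}) = \Hom_{\DQcoh(U',W)}(A|_{U'}, [p]B|_{U'}) = 0,
\end{equation*}
by hypothesis. Products of acyclic $\DZ_2$-graded complexes are acyclic, so for each fixed $q$ the complex in the $p$-direction is acyclic.

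To conclude, I want to apply Lemma~\ref{l:double-complex-upper-halfplane}\ref{enum:double-complex-upper-halfplane-row-qisos}, which demands boundedness in the $q$-direction. I therefore pass to the alternating Čech subcomplex $C_{\alt}(\mathcal{U}, \sheafHom(A,I))$; this is homotopy equivalent to $C(\mathcal{U}, \sheafHom(A,I))$ (cf.\ the proof of Lemma~\ref{l:qiso-induces-Cech-qiso}) and, because $\mathcal{U}$ is finite, bounded in the $q$-direction. Its fixed-$q$ rows are finite direct sums of the acyclic complexes identified above, so the cited lemma (applied to the zero morphism into this subcomplex) gives acyclicity of the totalization, and hence also of $C(\mathcal{U}, \sheafHom(A,I))$. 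The main delicate point is the bookkeeping around the double complex: justifying that the local injective resolution $I|_{U'}$ still computes the local derived Hom, and handling the fact that the full Čech complex is not bounded in $q$ by passing to its alternating subcomplex.
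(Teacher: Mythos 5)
Your proof is correct and follows essentially the same route as the paper's: resolve $B$ by an injective object, identify $\Hom_{\DQcoh(X,W)}(A,[p]B)$ with the cohomology of the \v{C}ech complex $C(\mathcal{U},\sheafHom(A,I))$ via Lemma~\ref{l:Cech-computes-morphisms}, reduce to a finite cover and the alternating (in the paper: ordered) subcomplex for boundedness, and conclude with Lemma~\ref{l:double-complex-upper-halfplane} using the local vanishing hypothesis on each $\sheafHom(A,I)(U')$. Your extra care in checking that $I|_{U'}$ still computes the local derived Hom is a point the paper leaves implicit, but the argument is the same.
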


\begin{proof}
  Lemma~\ref{l:resolutions}.\ref{enum:Inj-reso}
  allows us to assume that $B \in \InjQcoh(X,W).$
  Then Lemma~\ref{l:Cech-computes-morphisms} shows that it is
  enough to prove that $C(\mathcal{U}, \sheafHom(A,B))$ is
  acyclic.
  Since $X$ is quasi-compact we can assume that $\mathcal{U}$ is
  finite. We order the elements of $\mathcal{U},$ say
  $\mathcal{U}=\{U_1, \dots, U_n\}.$ 

  As in the proof of
  Lemma~\ref{l:qiso-induces-Cech-qiso}
  it is enough to show that
  $C_\alt(\mathcal{U}, \sheafHom(A,B))$ is
  acyclic. Instead of the alternating \v{C}ech complex we can
  work with the isomorphic ordered \v{C}ech complex
  $C_\ord(\mathcal{U}, \sheafHom(A,B))$ (defined in the obvious
  way).

  In order to apply 
  Lemma~\ref{l:double-complex-upper-halfplane}.\ref{enum:double-complex-upper-halfplane-row-qisos}
  it is enough to show the following: for all $q \in \DN$ and 
  $1 \leq i_0 < i_1 < \dots < i_q \leq n$ the dg module
  $\sheafHom(A,B)(U')$ is acyclic, where $U':= U_{i_0} \cap \dots
  \cap U_{i_q}.$
  But
  \begin{equation*}
    \sheafHom(A,B)(U') =
    \Gamma(U'; \sheafHom(A|_{U'},B|_{U'}))
    = \Hom_{\Qcoh(U',W)}(A|_{U'},B|_{U'})
  \end{equation*}
  by \eqref{eq:Qcoh-dg-Homs-as-global-sections},
  and the latter is acyclic by
  Theorem~\ref{t:injective-in-qcoh-vs-all-OX}.\ref{enum:inj-qcoh-restrict-to-inj-qcoh},
  Remark~\ref{rem:morphisms-to-injectives},
  and our assumptions. 
\end{proof}

\begin{proposition}
  \label{p:support-mf}
  Let $X$ and $W \colon X \ra \DA^1$ be as before.
  Let $Z$ be a closed subscheme of $X$
  defined by a sheaf of ideals $\mathcal{I} \subset
  \mathcal{O}_X,$ and let $U=X-Z$ be its open complement. Let $M\in
  \bfMF(X,W)$ be such that $M|_U=0$ in
  $\bfMF(U,W).$ Then, for every $n \gg 0,$ the canonical morphism
  $p \colon  M\ra M/\mathcal{I}^nM$ has a left inverse $l$ in
  $\DCoh(X,W),$ i.\,e.\ the composition $l \comp p  \colon M \ra M$ is
  the identity of $M.$ In particular, $M$ is isomorphic to a
  direct summand of $M/\mathcal{I}^nM$
  in $\DCoh(X,W).$
\end{proposition}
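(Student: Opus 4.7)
The plan is to reduce the statement to the vanishing of the inclusion $\iota_n \colon \mathcal{I}^n M \hookrightarrow M$ in $\DCoh(X,W) \simeq \bfMF(X,W)$ for some $n$. The short exact sequence
\begin{equation*}
  0 \ra \mathcal{I}^n M \xra{\iota_n} M \xra{p} M/\mathcal{I}^n M \ra 0
\end{equation*}
in $Z_0(\Coh(X,W))$ yields, by Lemma~\ref{l:totalization}.\ref{enum:ses-gives-triangle}, a triangle in $\DCoh(X,W)$. The map $p$ admits a left inverse precisely when this triangle splits, which is equivalent to $\iota_n = 0$. Thus it suffices to show $\iota_n = 0$ for $n$ sufficiently large; the summand claim then follows since the split triangle displays $M/\mathcal{I}^n M \cong M \oplus [1]\mathcal{I}^n M$.

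Next, I would transport the question to the internal sheaf Hom. Since $M$ has vector bundle components, $\mathcal{I}^n M = \mathcal{I}^n \otimes_{\mathcal{O}_X} M = \mathcal{I}^n \otimes^{\bL} M$ and $\bR\sheafHom(M,M) = \sheafHom(M,M) =: \mathcal{H} \in \bfMF(X,0)$. The tensor-Hom adjunction (Lemma~\ref{l:adjunction-Hom-tensor}) then gives a natural identification
\begin{equation*}
  \Hom_{\DCoh(X,W)}(\mathcal{I}^n M, M) \cong \Hom_{\DCoh(X,0)}(\mathcal{I}^n, \mathcal{H})
\end{equation*}
under which $\iota_n$ corresponds to the composition $\mathcal{I}^n \hookrightarrow \mathcal{O}_X \xra{\iota} \mathcal{H}$, where $\iota \in \Hom_{\DCoh(X,0)}(\mathcal{O}_X, \mathcal{H})$ represents $\id_M$. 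One therefore has to show that $\iota$ factors through the canonical surjection $\mathcal{O}_X \twoheadrightarrow \mathcal{O}_X/\mathcal{I}^n$ in $\DCoh(X,0)$ for $n \gg 0$.

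To exploit the hypothesis, note that $M|_U = 0$ in $\bfMF(U,W)$ forces $\mathcal{H}|_U = \sheafHom(M|_U, M|_U) \simeq 0$ in $\bfMF(U,0) \simeq \DCoh(U,0)$, so by Proposition~\ref{p:case-W-equals-zero-acycl-equals-exact} the cohomology sheaves $H_0(\mathcal{H})$ and $H_1(\mathcal{H})$ (which are coherent, being subquotients of vector bundles) are supported on $Z$, and by Noetherianness each is annihilated by some power of $\mathcal{I}$. The cleanest route from here is via local cohomology: one establishes an equivalence $\mathcal{H} \simeq \bR\Gamma_Z(\mathcal{H})$ in $\DQcoh(X,0)$ (using that $\mathcal{H}|_U \simeq 0$), identifies $\bR\Gamma_Z(\mathcal{H}) \simeq \colim_n \bR\sheafHom(\mathcal{O}_X/\mathcal{I}^n, \mathcal{H})$, and combines this with compactness of $\mathcal{O}_X$ in $\DQcoh(X,0)$ (Proposition~\ref{p:compact-objects-and-generators-DQcoh}.\ref{enum:bfMFs-compact}) to obtain
\begin{equation*}
  \Hom_{\DCoh(X,0)}(\mathcal{O}_X, \mathcal{H}) \cong \colim_n \Hom_{\DCoh(X,0)}(\mathcal{O}_X/\mathcal{I}^n, \mathcal{H}).
\end{equation*}
The specific element $\iota$ therefore comes from some finite stage, yielding the desired factorization.

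The main obstacle will be carrying out this last step in the $\DZ_2$-graded curved dg setting, where no usual truncation is available and cohomological annihilation does not immediately imply derived annihilation. Either one develops the derived local cohomology functor $\bR\Gamma_Z$ on $\DQcoh(X,0)$ with the requisite properties, or one argues by hand: exploit the two-step cohomological structure of $\mathcal{H}$ via an induction that iteratively eliminates the obstruction classes, using that Ext groups between coherent sheaves supported on $Z$ are themselves coherent and $\mathcal{I}$-power-torsion, so passing from annihilation of $H_i(\mathcal{H})$ to annihilation of $\iota|_{\mathcal{I}^n}$ only costs a further bounded power of $\mathcal{I}$.
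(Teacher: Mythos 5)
Your formal reductions are fine: $p$ has a left inverse iff $\iota_n \colon \mathcal{I}^nM \hra M$ vanishes in $\DCoh(X,W),$ and via the adjunction of Lemma~\ref{l:adjunction-Hom-tensor} this is equivalent to the vanishing of $\mathcal{I}^n \hra \mathcal{O}_X \xra{\iota} \mathcal{H}$ in $\DCoh(X,0).$ The gap is the step you flag yourself and then do not close: deducing, from the fact that the cohomology sheaves of $\mathcal{H}$ are coherent and $\mathcal{I}$-power-torsion, that $\iota$ kills $\mathcal{I}^n$ for $n\gg 0.$ Both of your proposed routes break down in the $\DZ_2$-periodic setting. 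The identification $\mathcal{H}\simeq\colim_n\bR\sheafHom(\mathcal{O}_X/\mathcal{I}^n,\mathcal{H})$ is precisely the assertion that an object of $\DQcoh(X,0)$ with $Z$-supported cohomology is derived $\mathcal{I}$-torsion; in the $\DZ$-graded world one proves this by truncating and inducting on cohomology sheaves, which is exactly what is unavailable here. Your ``by hand'' fallback relies on ``the two-step cohomological structure of $\mathcal{H},$'' i.e.\ on a Postnikov-type decomposition of $\mathcal{H}$ into its cohomology sheaves; no such decomposition exists in general for objects of $\Coh(X,0)$ (the natural two-step filtration appearing in the proof of Proposition~\ref{p:existence-of-classical-generator-in-mf} has graded pieces built from $\im e_i$ and $E_i/\im e_{i+1},$ which need \emph{not} be supported on $Z$ even when the cohomology is). So the essential content of the proposition is still unproved at the end of your argument.

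The paper closes exactly this gap using the structure theory of injective quasi-coherent sheaves on a Noetherian scheme, applied directly to $M$ (no detour through $\sheafHom$ is needed). Choose $M \ra I$ with $I \in \InjQcoh(X,W)$ becoming an isomorphism in $\DQcoh(X,W);$ decompose the components of $I$ into indecomposable injectives $J(x)=i_{x*}(I(x))$ and let $I_Z$ be the sum of those summands with $x\in Z.$ Since a nonzero map $J(x)\ra J(y)$ forces $y\in\ol{\{x\}},$ the graded subsheaf $I_Z$ is a subobject; one has $I/I_Z\cong\epsilon_*\epsilon^*I$ for $\epsilon\colon U\ra X,$ and the hypothesis $M|_U=0$ makes $\epsilon^*I$ contractible, so $M\ra I_Z$ is an isomorphism in $\DQcoh(X,W).$ Every local section of $I_Z$ is supported on $Z,$ so the image of the coherent object $M$ in $I_Z$ is killed by $\mathcal{I}^{n_0}$ for some $n_0,$ and $M\ra I_Z$ factors through $M/\mathcal{I}^{n_0}M$ on the nose; inverting gives $l.$ If you want to salvage your approach, you would have to prove the derived-torsion statement for $\mathcal{H},$ and the natural proof is this same torsion-injective-resolution argument --- at which point you may as well apply it to $M$ directly.
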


\begin{proof}
  Let $M \ra I$ be a morphism in $Z_0(\Qcoh(X,W))$ with 
  $I \in \InjQcoh(X,W)$ that becomes an isomorphism
  in $\DQcoh(X,W)$
  (Lemma~\ref{l:resolutions}.\ref{enum:Inj-reso}).

  We recall some results from
  \cite[II.\S7, cf.\ proof of
  Thm.~7.18]{hartshorne-residues-duality} 
  (see also
  Theorem~\ref{t:injective-in-qcoh-vs-all-OX}).
  Any injective quasi-coherent sheaf on $X$ is isomorphic to a
  direct sum of indecomposable injective quasi-coherent sheaves.
  Every indecomposable injective quasi-coherent sheaf 
  is isomorphic to some
  $J(x):=i_{x*}(I(x)),$ where $x \in X$ is a point, $i_x \colon \Spec
  \mathcal{O}_{X,x} \ra X$ is 
  the natural inclusion and $I(x)$ is the injective hull of the
  $\mathcal{O}_{X,x}$-module $k(x).$

  If a nonzero morphism $J(x)\ra J(y)$
  exists, then $y\in \ol{\{x\}}:$ use that $J(x)$ considered as a
  sheaf of abelian groups is the skyscraper sheaf at $x$ with
  stalk $I(x);$ this follows from
  \cite[Prop.~7.5]{hartshorne-residues-duality}.

  In particular, the components of $I$ are direct sums of
  indecomposable quasi-coherent sheaves.  Denote by $I_Z \subset
  I$ the graded subsheaf consisting of all summands $J(z),$ for
  $z \in Z.$ Then $I_Z$ is in fact a subobject, i.\,e. $I_Z\in
  \InjQcoh(X,W).$ Let $\epsilon  \colon U \ra X$ denote the
  inclusion. 
  The object $\epsilon^*I$ is in $[\InjQcoh(U,W)]$ by
  Theorem~\ref{t:injective-in-qcoh-vs-all-OX}.\ref{enum:inj-qcoh-restrict-to-inj-qcoh},
  and becomes zero in $\DQcoh(U,W)$ by assumption.
  By 
  Theorem~\ref{t:equivalences-curved-categories}.\ref{enum:injQcoh-DQcoh-equiv}
  $\epsilon^*I=0$ in $[\InjQcoh(U,W)],$ i.\,e.\ $\epsilon^*I$ 
  is contractible. Hence the object $\epsilon_*\epsilon^*I \in
  [\InjQcoh(X,W)]$ is also contractible. 
  It is easy to check (use that $\epsilon_*$ preserves coproducts)
  that the 
  sequence
  \begin{equation*}
    0\ra I_Z\ra I\ra \epsilon_*\epsilon^*I\ra 0
  \end{equation*}
  in $Z_0(\InjQcoh(X,W))$ is short exact.
  Hence $I_Z \ra I$ is an isomorphism in $[\InjQcoh(X,W)].$
  Let $I \ra I_Z$ in $Z_0(\InjQcoh(X,W))$ represent an inverse.
  Thus the composition $\alpha \colon M\ra I\ra I_Z$
  becomes an isomorphism in $\DQcoh(X,W).$ 
  Since the components of $M$ are
  coherent sheaves and every local section of $I_Z$ has support in
  $Z,$ 
  by \cite[Prop.~7.5]{hartshorne-residues-duality},
  it follows that for some $n_0 \gg 0$ the morphism $\alpha$
  factors as
  \begin{equation*}
    M\ra M/\mathcal{I}^{n_0}M \xra{\beta} I_Z
  \end{equation*}
  in $Z_0(\Qcoh(X,W)).$ But then, 
  in $\DQcoh(X,W),$
  the composition
  $\alpha^{-1}\circ \beta  \colon M/\mathcal{I}^{n_0}M\ra M$ is the
  splitting of the projection $M\ra M/\mathcal{I}^{n_0}.$
  Similarly one obtains a splitting of the projection $M\ra
  M/\mathcal{I}^n M$ for any $n>n_0.$
  For the last statement fit $p \colon  M \ra M/\mathcal{I}^nM$ into a
  triangle in $\DCoh(X,W)$ and note that its third morphism is
  zero. 
\end{proof}

\section{Semi-orthogonal decompositions for matrix factorizations
  arising from projective space bundles and blowing-ups}
\label{sec:semi-orth-decomp}

There are well-known semi-orthogonal decomposition theorems for
bounded derived categories of coherent sheaves on
projective space bundles and blowing-ups. 
We recall them and then
state and prove the corresponding results for categories of
matrix factorizations. 
For the definitions of an admissible subcategory and of a
semi-orthogonal decomposition we refer to Appendix~\ref{sec:app:remind-admiss-subc}.

\subsection{Projective space bundles}
\label{sec:proj-space-bundl}

% We first describe the setting of a projective space bundle.
Let $Y$ be a scheme satisfying condition~\ref{enum:srNfKd},
and let
$\mathcal{N}$ be a locally free coherent sheaf on $Y$ of rank
$r.$ Let
$E:=\DP(\mathcal{N})$ be the associated projective space
bundle. It comes with a projection morphism $p \colon  E
\ra Y$ and an invertible sheaf $\mathcal{O}(1)=\mathcal{O}_E(1).$
% Note that $\mathcal{O}_Y \sira 
% % \bR p_* \bL p^*\mathcal{O}_Y=
% \bR p_*\mathcal{O}_E,$ hence the functor $p^*=\bL
% p^*:D^b(\Coh(Y)) \ra D^b(\Coh(E))$ is
% full and faithful (use that
% $D^b(\Coh(Y))= \mfPerf(Y)$).
Recall the following semi-orthogonal decomposition
theorem from\footnote{
  The assumption there is that $Y$
  is a smooth projective variety over a field.
}
\cite{orlov-monoidal,bondal-orlov-semi-orthogonal-only-arXiv},
\cite[Cor.~8.36]{huy-fourmukai}.

\begin{theorem}
  \label{t:basic-semi-orthog-1}
  Assume that $r\geq 1.$ Let $l \in \DZ.$
  \begin{enumerate}[label=$(\text{Coh\arabic*})_E$]
  \item
    \label{enum:a:basic-semi-orthog-1}
    The functor $\mathcal{O} (l)\otimes
    p^*(-) \colon D^b(\Coh(Y))\ra D^b(\Coh(E))$ is full and faithful.
  \end{enumerate}
  We denote the essential image of this
  functor by $\mathcal{O} (l) \otimes p^*D^b(\Coh(Y)).$  
  \begin{enumerate}[resume*]
  \item 
    \label{enum:b:basic-semi-orthog-1}
    The subcategory $\mathcal{O} (l)\otimes
    p^*D^b(\Coh(Y))\subset D^b(\Coh(E))$ is admissible.
  \item
    \label{enum:c:basic-semi-orthog-1}
    The category $D^b(\Coh(E))$ has the 
    semi-orthogonal
    decomposition
    \begin{equation*}
      D^b(\Coh(E))=
      \Big\langle \mathcal{O} (-r+1) \otimes
      p^*D^b(\Coh(Y)), \dots, 
      \mathcal{O} (-1)\otimes p^*D^b(\Coh(Y)),p^*D^b(\Coh(Y))
      \Big\rangle.
    \end{equation*}
  \end{enumerate}
\end{theorem}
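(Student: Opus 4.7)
The plan is to adapt the classical Beilinson--Bondal--Orlov argument, reducing everything to cohomology computations on projective space fibres and to a relative Beilinson/Koszul resolution of the diagonal on $E\times_Y E$.

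I would start with (a). Since tensoring by $\mathcal{O}(l)$ is an autoequivalence of $D^b(\Coh(E))$, it suffices to treat $l=0$, i.\,e.\ to show that $p^*$ is full and faithful. By the adjunction $(p^*,\bR p_*)$ and the projection formula,
\[
\bR\Hom_E(p^*F, p^*G) \cong \bR\Hom_Y(F,\ G \otimes^{\bL} \bR p_* \mathcal{O}_E),
\]
so the statement reduces to $\bR p_*\mathcal{O}_E \cong \mathcal{O}_Y$. This is local on $Y$, hence follows from the standard $H^0(\DP^{r-1},\mathcal{O})=k$ together with vanishing of higher cohomology. The same adjunction plus projection formula reduces the semi-orthogonality required in (c), namely $\bR\Hom_E(\mathcal{O}(m) \otimes p^*F,\ \mathcal{O}(n)\otimes p^*G)=0$ for $-r+1\leq n<m\leq 0$ and all $F,G\in D^b(\Coh(Y))$, to the vanishing $\bR p_* \mathcal{O}(n-m)=0$ for $n-m\in\{-(r-1),\dots,-1\}$. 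This is the classical $H^*(\DP^{r-1},\mathcal{O}(j))=0$ in this range.

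For the generation part of (c), I would construct the relative Beilinson resolution of the diagonal $\Delta_E\subset E\times_Y E$. The tautological surjection $p^*\mathcal{N}\twoheadrightarrow \mathcal{O}_E(1)$, pulled back to $E\times_Y E$ along the two projections $p_1,p_2$ and suitably compared, produces a regular section of $p_1^*\mathcal{O}(1)\otimes p_2^*T_{E/Y}(-1)$ whose zero scheme is $\Delta_E$. The corresponding Koszul complex gives a finite locally free resolution of $\mathcal{O}_{\Delta_E}$ by terms of the form $p_1^*\mathcal{O}(-k)\otimes p_2^*(\Omega^k_{E/Y}(k))$ for $k=0,\dots,r-1$. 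The Fourier--Mukai transform $\mathcal{F}\mapsto \bR p_{2*}(p_1^*\mathcal{F}\otimes^{\bL}-)$ applied to $\mathcal{O}_{\Delta_E}$ is the identity, whereas applied to the Koszul complex it exhibits any $\mathcal{F}\in D^b(\Coh(E))$ as an iterated extension of objects $\mathcal{O}(-k)\otimes p^*G_k$ with $G_k\in D^b(\Coh(Y))$. This yields the desired generation.

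For admissibility (b), once the semi-orthogonal decomposition is in place, the results of Appendix~\ref{sec:app:remind-admiss-subc} reduce the problem to the existence of both a left and a right adjoint to the embedding $\mathcal{O}(l)\otimes p^*(-)$. The right adjoint is $\bR p_*(\mathcal{O}(-l)\otimes -)$, and a left adjoint is furnished by relative Grothendieck--Serre duality for the smooth projective morphism $p$, in the form $\bR p_*(\mathcal{O}(-l)\otimes\omega_{E/Y}[r-1]\otimes -)$; both functors preserve bounded coherent cohomologies since $p$ is proper smooth of relative dimension $r-1$. The main obstacle I anticipate is bookkeeping rather than conceptual: Orlov originally worked over a smooth projective base, whereas here $Y$ is only assumed to satisfy~\ref{enum:srNfKd}. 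However, all inputs (the Koszul resolution, the projection formula, the cohomology vanishings, and Grothendieck duality for a smooth projective morphism) are local on $Y$ and compatible with flat base change, so they go through in our generality once one is careful that the projective bundle formation, $p$ itself, and the functors $\bR p_*$, $p^*$ behave coherently on $D^b(\Coh(-))$ over such a base.
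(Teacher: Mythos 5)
Your argument is correct, and it is the standard Beilinson--Orlov proof from the sources the paper cites for this statement (\cite{orlov-monoidal,bondal-orlov-semi-orthogonal-only-arXiv}, \cite[Cor.~8.36]{huy-fourmukai}); the paper itself only recalls Theorem~\ref{t:basic-semi-orthog-1} without proof. Your three steps --- reducing full faithfulness and semi-orthogonality to $\bR p_*\mathcal{O}_E\cong\mathcal{O}_Y$ and to the vanishing of $\bR p_*\mathcal{O}(j)$ for $-r+1\leq j\leq -1$, generation via the relative Koszul resolution of $\mathcal{O}_{\Delta_E}$ with terms $\mathcal{O}(-k)\boxtimes\Omega^k_{E/Y}(k)$, and admissibility via the adjoints $\bR p_*(\mathcal{O}(-l)\otimes -)$ and its Serre-dual twist --- are exactly the strategy the paper deploys for the matrix-factorization analogue in Theorem~\ref{t:semi-orthog-1-mf}, including the same resolution \eqref{eq:sequence-resolution-diagonal-on-E}.
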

 
Now let $W \colon Y \ra \DA^1$ be a morphism. We denote the composition
$E \xra{p} Y \xra{W} \DA^1$ also by $W.$
% We want to prove the analog
% Theorem~\ref{t:basic-semi-orthog-1} for the corresponding
% categories of matrix factorizations.
% Note that $Y$ and $E$ are smooth
% quasi-projective schemes and hence satisfy
% condition \ref{enum:srNfKd}.
We have $\bL p^*=p^* \colon \bfMF(Y,W) \ra \bfMF(E,W)$
(see Theorem~\ref{t:derived-inverse-and-direct-image}.\ref{enum:inverse-image-derived}),  
and tensoring
with the line bundle $\mathcal{O}(l)$ induces 
autoequivalences of the category $\bfMF(E,W).$ 
The analog of Theorem~\ref{t:basic-semi-orthog-1} for matrix
factorizations is the following theorem.

\begin{theorem}
  \label{t:semi-orthog-1-mf}
  Assume that $r\geq 1.$ Let $l \in \DZ.$
  \begin{enumerate}[label=$(\text{MF\arabic*})_E$]
  \item
    \label{enum:a:semi-orthog-1-mf}
    The functor $\mathcal{O} (l)\otimes p^*(-) \colon 
    \bfMF(Y,W) \ra \bfMF(E,W)$ is
    full and faithful.
  \end{enumerate}
  We denote 
  the essential image of this
  functor by 
  $\mathcal{O}(l) \otimes p^*\bfMF(Y,W).$
  \begin{enumerate}[resume*]
  \item
    \label{enum:b:semi-orthog-1-mf}
    The
    subcategory $\mathcal{O} (l)\otimes p^*
    \bfMF(Y,W)\subset \bfMF(E,W)$ is admissible.
  \item
    \label{enum:c:semi-orthog-1-mf}
    The category $\bfMF(E,W)$ has the semi-orthogonal
    decomposition\footnote{
      This is also true for $r=0$ since then $E=\emptyset.$
    }
    \begin{equation*}
      \bfMF(E,W)=
      \Big\langle 
      \mathcal{O} (-r+1)\otimes p^*\bfMF(Y,W), \dots,
      \mathcal{O} (-1)\otimes p^*\bfMF(Y,W), 
      p^*\bfMF(Y,W)
      \Big\rangle.
    \end{equation*}
  \end{enumerate}
\end{theorem}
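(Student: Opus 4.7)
The plan is to follow the structure of the classical proof of Theorem~\ref{t:basic-semi-orthog-1}, using the derived-functor formalism of Section~\ref{sec:derived-functors}. Two computational ingredients drive everything: a matrix-factorization version of the projection formula, and the well-known relative cohomology $\bR p_* \mathcal{O}_E(l)$ for $l \in \{-r+1, \ldots, 0\}$.

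First I would establish the projection formula $\bR p_*(p^* F \otimes^{\bL} M) \simeq F \otimes^{\bL} \bR p_* M$ inside $\DQcoh(Y, W)$ for $F \in \bfMF(Y, W)$ and $M \in \bfMF(E, 0)$. Using Lemma~\ref{l:resolutions}.\ref{enum:locfree-reso} to replace $F$ by a matrix factorization with locally free components, Lemma~\ref{l:componentwise-acyclics} reduces the formula to the classical projection formula applied componentwise. Combined with the classical facts $\bR p_* \mathcal{O}_E = \mathcal{O}_Y$ and $\bR p_* \mathcal{O}_E(l) = 0$ for $l \in \{-r+1, \ldots, -1\}$ (which carry over to the curved setting via Lemma~\ref{l:componentwise-acyclics}), I would deduce part~(a) for $l = 0$ by the standard calculation
\[
\Hom(p^* F, p^* G) \cong \Hom(F, \bR p_* p^* G) \cong \Hom(F, G \otimes^{\bL} \bR p_* \mathcal{O}_E) \cong \Hom(F, G),
\]
using the adjunction $(\bL p^*, \bR p_*)$ of Theorem~\ref{t:derived-inverse-and-direct-image}. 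The general case of part~(a) follows since $\mathcal{O}(l) \otimes (-)$ is an autoequivalence of $\bfMF(E, W)$. Semi-orthogonality in part~(c) reduces analogously to the vanishing $\bR p_* \mathcal{O}_E(l_1 - l_2) = 0$ for $-r+1 \leq l_1 < l_2 \leq 0$.

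The main obstacle is generation. Let $\mathcal{T} \subset \bfMF(E, W)$ denote the triangulated subcategory generated by the essential images $\mathcal{O}(l) \otimes p^* \bfMF(Y, W)$ for $l = -r+1, \ldots, 0$; I need $\mathcal{T} = \bfMF(E, W)$. The plan is to exploit the relative Beilinson resolution of the diagonal on $E \times_Y E$,
\[
0 \to \mathcal{O}_E(-r+1) \boxtimes_Y \Omega^{r-1}_{E/Y}(r-1) \to \cdots \to \mathcal{O}_E(-1) \boxtimes_Y \Omega^{1}_{E/Y}(1) \to \mathcal{O}_{E \times_Y E} \to \mathcal{O}_{\Delta} \to 0,
\]
which exists by classical projective-bundle theory. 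For any $M \in \bfMF(E, W)$ one has $M \simeq q_{2*}(q_1^* M \otimes^{\bL} \mathcal{O}_\Delta)$ by the projection formula applied to the closed embedding $\Delta \hookrightarrow E \times_Y E$. Substituting the resolution in place of $\mathcal{O}_\Delta$ and applying Lemma~\ref{l:useful-lemma} converts the resulting bounded complex of pushforwards into an iterated cone whose terms are each of the form $\mathcal{O}_E(-i) \otimes p^* F_i$ for appropriate $F_i \in \bfMF(Y, W)$; this exhibits $M$ as an iterated extension of objects in the required subcategories.

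Admissibility in part~(b) then follows formally from the semi-orthogonal decomposition together with the general results on semi-orthogonal decompositions recalled in Appendix~\ref{sec:app:remind-admiss-subc}. The technically delicate point is to ensure compatibility of the Beilinson resolution with the curved-dg framework: since all sheaves involved are locally free and the potential $W$ is pulled back from $Y$, the tensor and external-tensor operations behave classically, and the curvature bookkeeping of Section~\ref{sec:extern-tens-prod} confirms that each $q_{2*}(q_1^* M \otimes (\mathcal{O}(-i) \boxtimes_Y \Omega^i_{E/Y}(i)))$ indeed lies in $\mathcal{O}_E(-i) \otimes p^* \bfMF(Y, W)$ as claimed.
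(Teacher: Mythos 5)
Your treatment of \ref{enum:a:semi-orthog-1-mf} and of \ref{enum:c:semi-orthog-1-mf} is essentially the paper's argument: part (a) comes down to showing the unit $F \ra \bR p_* p^* F$ is an isomorphism using $\mathcal{O}_Y \sira \bR p_*\mathcal{O}_E$ (the paper does this componentwise on an injective resolution rather than via the projection formula, but the input is the same), semi-orthogonality reduces to the classical vanishing of $\bR p_*\mathcal{O}_E(m)$ for $-r+1\leq m\leq -1$ (the paper routes this through Lemma~\ref{l:orthog-mf}), and completeness uses exactly the relative Beilinson resolution of the diagonal, the curved projection formula and flat base change, and Lemma~\ref{l:totalization}/\ref{l:useful-lemma} to turn the resolution into an iterated cone.

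The one genuine gap is part \ref{enum:b:semi-orthog-1-mf}. Admissibility of the components does \emph{not} ``follow formally'' from the semi-orthogonal decomposition via Appendix~\ref{sec:app:remind-admiss-subc}: from a single decomposition $\mathcal{T}=\langle \mathcal{S}_1,\dots,\mathcal{S}_n\rangle$, Lemma~\ref{l:first-properties-semi-orthog-decomp} only yields that $\mathcal{S}_n$ is \emph{right} admissible, that $\mathcal{S}_1$ is \emph{left} admissible, and one-sided admissibility of the various triangulated envelopes; it gives nothing two-sided, and nothing at all directly for the middle components. (This is precisely why the paper distinguishes ``semi-orthogonal decomposition'' from ``semi-orthogonal decomposition into admissible subcategories.'') The paper instead proves (b) before (c) by exhibiting both adjoints of $p^*$ explicitly: the right adjoint is $\bR p_*$ (Lemma~\ref{l:pi-proper-Rpi-lower-star-and-MF-and-adjunction}) and the left adjoint is $D_Y \comp \bR p_* \comp D_E$, which requires the compatibility of $\pi^*$ with the duality (Lemma~\ref{l:pi-us-and-duality-MF-setting}); admissibility then follows from Remark~\ref{rem:adjoints-and-semi-orthogonal-decomposition}. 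Your route can be repaired without the duality, but only by adding the observation that twisting the decomposition in (c) by the autoequivalence $\mathcal{O}(m)\otimes(-)$ produces, for every $m$, another semi-orthogonal decomposition, so that a fixed $\mathcal{O}(l)\otimes p^*\bfMF(Y,W)$ occurs as the last component for $m=l$ (hence is right admissible) and as the first component for $m=l+r-1$ (hence is left admissible). As written, that step is missing, and without it the deduction of (b) fails.
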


\begin{proof}[Proof of \ref{enum:a:semi-orthog-1-mf}]
  Note that $\mathcal{O}_Y \sira 
  \bR p_*\mathcal{O}_E.$
  % The functor $p^*=\bL
  % p^* \colon D^b(\Coh(Y)) \ra D^b(\Coh(E))$ is
  % full and faithful (since $\mathcal{O}_Y \sira 
  % \bR p_*\mathcal{O}_E$ and $D^b(\Coh(Y))= \mfPerf(Y)$).
  If $V$ is a vector bundle on $Y,$ this implies that the
  adjunction morphism $V \ra \bR p_* p^* V$ is an
  isomorphism. This means that if $p^*V \ra J$ is a (finite)
  resolution by injective quasi-coherent sheaves, then the
  obvious morphism $V \ra p_*(J)$ is a resolution of $V.$ Now let
  $F \in \bfMF(Y,W)$ and let $p^*F \ra I$ be an exact sequence as
  in Lemma~\ref{l:resolutions}.\ref{enum:Inj-reso}. Then the
  obvious morphism $F \ra p_*(I)$ is an exact sequence in
  $Z_0(\Qcoh(Y,W)),$ and
  Lemma~\ref{l:totalization}.\ref{enum:brutal-truncation-and-totalization}
  implies that the adjunction morphism $F \ra \bR p_* p^*F$ is an
  isomorphism. Hence  $p^* \colon  \bfMF(Y,W)
  \ra \bfMF(E,W)$ is full and faithful, and this clearly implies
  \ref{enum:a:semi-orthog-1-mf}.
\end{proof}

\begin{proof}[Proof of \ref{enum:b:semi-orthog-1-mf}] 
  It is certainly enough to show that
  $p^*\bfMF(Y,W) \subset \bfMF(E,W)$ 
  is admissible.
  By 
  Remark~\ref{rem:adjoints-and-semi-orthogonal-decomposition}
  and its dual version we need to prove that 
  the full and faithful functor $p^* \colon  \bfMF(Y,W) \ra
  \bfMF(E,W)$ has 
  a right and a left adjoint.
  Lemma~\ref{l:pi-proper-Rpi-lower-star-and-MF-and-adjunction}
  provides a right adjoint
  $\bR p_* \colon  \bfMF(E, W) \ra \bfMF(Y, W).$
  On the other hand, we see from
  Lemma~\ref{l:pi-us-and-duality-MF-setting} 
  that $D_Y \comp \bR p_* \comp D_E$ is left adjoint
  to $p^*.$
\end{proof}

It remains to prove
\ref{enum:c:semi-orthog-1-mf}. More precisely we need to prove
that the specified sequences of admissible subcategories are
semi-orthogonal and complete (see
Definition~\ref{d:semi-orthogonal-decomposition}). 

\begin{proof}[Proof of semi-orthogonality in 
  \ref{enum:c:semi-orthog-1-mf}]
  Lemma~\ref{l:orthog-mf} shows that this
  is a direct consequence of
  Theorem~\ref{t:basic-semi-orthog-1}.\ref{enum:c:basic-semi-orthog-1}
  (and this statement is not difficult to prove using the
  local-to-global Ext spectral sequence).
\end{proof}

We now prepare for the proof of completeness in
\ref{enum:c:semi-orthog-1-mf}.
% the curved situation 
% The proof we give is a straightforward generalization 
% of a proof of completeness in the uncurved case
% \ref{enum:c:basic-semi-orthog-1}.

Recall that the projection $p \colon E \ra Y$ is a $\DP^{r-1}$-bundle.
Let $\Omega_{E/Y}$ be the 
sheaf of relative differentials of $E$ over $Y$
(= the relative cotangent bundle on $E$)
and let $\Omega_{E/Y}^t=\wedge^t \Omega_{E/Y}$ (and $\Omega^0_{E/Y}=\mathcal{O}_E$).
Consider the pullback diagram
\begin{equation*}
  \xymatrix{
    {E\times_Y E} \ar[r]^-{q_2} \ar[d]^-{q_1} & 
    {E} \ar[d]^-{p_1}\\
    {E} \ar[r]^-{p_2} & 
    {Y}
  }
\end{equation*}
where $p_1=p_2=p.$ 
We define $F \boxtimes G := q_1^*F \otimes q_2^*G$ 
for $F,$ $G \in \Coh(E).$ 
Denote by $\Delta_E \subset
E\times_YE$ the diagonal subscheme.

In this situation
we have an exact sequence 
\begin{multline}
  \label{eq:sequence-resolution-diagonal-on-E}
  0 \ra 
  \mathcal{O}_E(-r+1)\boxtimes \Omega^{r-1}_{E/Y}(r-1) \ra 
  \dots \ra
  \mathcal{O}_E(-t)\boxtimes \Omega^t_{E/Y}(t) \ra
  \\
  \dots \ra
  \mathcal{O}_E(-1) \boxtimes \Omega_{E/Y}(1) \ra 
  \mathcal{O}_{E \times_Y E} \ra 
  \mathcal{O}_{\Delta_E} \ra 
  0
\end{multline}
in $\Coh(E\times_Y E)$
(cf.\ \cite[Remark 8.35]{huy-fourmukai}).
We denote this locally free resolution of
$\mathcal{O}_{\Delta_E}$ as $\mathcal{F} \ra
\mathcal{O}_{\Delta_E},$ i.\,e.\
$\mathcal{F}^{-t}=\mathcal{O}_E(-t) \boxtimes \Omega_{E/Y}^t(t)$
for $t \geq 0.$

\begin{proof}[Proof of completeness in 
  \ref{enum:c:semi-orthog-1-mf}]
  We essentially adapt the proof of
  \cite[Cor.~8.29]{huy-fourmukai}. 
  % Let $\mathcal{C}$ be the
  % the triangulated envelope of 
  % \begin{equation*}
  %   \mathcal{O} (-r+1)\otimes p^*\DCoh(Y,W), \dots,
  %   \mathcal{O} (-1)\otimes p^*\DCoh(Y,W), 
  %   p^*\DCoh(Y,W)
  % \end{equation*}
  % in $\DCoh(E,W).$
  % It is enough to show that $\mathcal{C}=\DCoh(E,W)$
  Let $t \geq 0.$
  For any $T \in \Coh(E)$ we have
  \begin{multline}
    \label{eq:proj-base-change}
    \bR q_{1*}\Big(
    \mathcal{F}^{-t}
    % \big(\mathcal{O}_E(-t) \boxtimes
    % \Omega^t_{E/Y}(t)\big) 
    \otimes q_{2}^*(T)\Big)
    =
    \bR q_{1*}\Big(q_1^*\big(\mathcal{O}_E(-t)\big) \otimes
    q_2^*\big(\Omega^t_{E/Y}(t) \otimes T\big)\Big)\\
    =
    \mathcal{O}_E(-t) \otimes 
    \bR q_{1*} q_2^*\big(\Omega^t_{E/Y}(t)\otimes T\big)
    = \mathcal{O}_E(-t) \otimes 
    p^*_2\bR p_{1*}\big(\Omega^t_{E/Y}(t) \otimes T\big)
  \end{multline}
  in $D^b(\Qcoh(E))$
  (or $D^b(\Coh(E)),$ cf.\
  proof of
  Lemma~\ref{l:pi-proper-Rpi-lower-star-and-MF-and-adjunction}).
  Indeed the second
  equality is the projection formula and the third one is
  flat base change.

  In the following we
  use the exact functor $\Coh(E \times_Y E) \ra Z_0(\Coh(E
  \times_Y E,0)),$ $S \mapsto (\matfak{0}{}{S}{}),$ in order to
  view coherent sheaves as matrix factorizations. For example, 
  \eqref{eq:sequence-resolution-diagonal-on-E}
  can be viewed as a resolution of
  $(\matfak{0}{}{\mathcal{O}_{\Delta_E}}{})$ in $Z_0(\Coh(E \times_Y
  E,0)).$ 
   
  We claim that
  for $T \in
  \DCoh(E,W)$ 
  equation~\eqref{eq:proj-base-change}
  is also true in $\DQcoh(E,W)$ (or $\DCoh(E,W)$).
  Just
  observe that 
  projection formula and 
  flat base change also hold for matrix factorizations.
  This is easy to prove for the projection formula. For flat base
  change note that there is a natural morphism and use the
  following: if $I$ is in $[\InjQcoh(E,W)],$ then the usual flat
  base 
  change shows that $q_2^*(I)$ is right $q_{1*}$-acyclic, by   
  Lemma~\ref{l:componentwise-acyclics}.

  We break the exact
  sequence~\eqref{eq:sequence-resolution-diagonal-on-E} up into
  short 
  exact sequences
  $\mathcal{F}^{-r+1} \hra \mathcal{F}^{-r+2} \sra
  \mathcal{K}^{-r+3},$ \dots, $\mathcal{K}^{-t} \hra
  \mathcal{F}^{-t} \sra \mathcal{K}^{-t+1},$ \dots, $\mathcal{K}^0
  \hra \mathcal{F}^0 \sra \mathcal{O}_{\Delta_E}.$
  These short exact sequences give rise to triangles in
  $\DCoh(E \times_Y E, 0).$
  
  Let $T \in \DCoh(E,W).$ Form the derived tensor product of
  $q_2^*(T)$ with these triangles and apply $\bR q_{1*}.$
  Using induction and \eqref{eq:proj-base-change} 
  % (where
  % $\mathcal{F}^{-t}$ may be considered as the matrix
  % factorization $(\matfak{0}{}{\mathcal{F}^{-t}}{})$)
  we see that
  \begin{equation*}
    \bR q_{1*}\big(
    \mathcal{O}_{\Delta_E}
    \otimes^{\bL} q_{2}^*(T)\big)
    \in   
    \tria\big(
    \mathcal{O}_E(-r+1) \otimes p^*_2 \DCoh(Y,W),
    \dots,
    % \mathcal{O}_E(-1) \otimes p^*_2 \DCoh(Y,W),
    p^*_2 \DCoh(Y,W)\big).
  \end{equation*}
  The object on the left is the image of $T$ under the
  Fourier-Mukai 
  transform with kernel
  $\mathcal{O}_{\Delta_E}$ 
  (in the setting of matrix
  factorizations).
  Hence it is isomorphic
  to $T:$  
  for $\delta \colon  E \ra E \times_Y E$ the (affine) diagonal inclusion
  we have 
  \begin{equation*}
    \mathcal{O}_{\Delta_E}
    \otimes^{\bL} q_{2}^*(T)
    =
    \delta_*(\mathcal{O}_E)
    \otimes^{\bL} q_{2}^*(T)
    =
    \delta_*(\mathcal{O}_{E}
    \otimes^{\bL} \delta^*(q_{2}^*(T)))
    =\delta_*(T)
  \end{equation*}
  by the projection formula, and hence 
  $\bR q_{1*}\big(
  \mathcal{O}_{\Delta_E}
  \otimes^{\bL} q_{2}^*(T)\big)
  =\bR q_{1*}(\delta_*(T))=T.$
  Now  completeness in 
  \ref{enum:c:semi-orthog-1-mf}
  is immediate from
  Theorem~\ref{t:equivalences-curved-categories}.\ref{enum:MF-Dcoh-equiv}.
\end{proof}
This finishes the proof of Theorem~\ref{t:semi-orthog-1-mf}

The following result lifts the semi-orthogonal decomposition
from 
Theorem~\ref{t:semi-orthog-1-mf} to the dg level.
We will need it in 
\cite{valery-olaf-matfak-motmeas-in-prep}. 
We use the enhancement by dg quotients explained in 
section~\ref{sec:enhanc-dg-quot}. 

\begin{corollary}
  \label{c:semi-orthog-1-mf-dg-level}
  There are full dg subcategories $\mathcal{Y}_{l}$ of
  $\MF(E,W)/\AcyclMF(E,W)$ (for $l \in \DZ$) which are 
  quasi-equivalent to $\MF(Y,W)/\AcyclMF(Y,W)$ such that 
  the semi-orthogonal decomposition into admissible
  subcategories from 
  Theorem~\ref{t:semi-orthog-1-mf}.\ref{enum:c:semi-orthog-1-mf}
  is given by
  \begin{equation*}
    [\MF(E,W)/\AcyclMF(E,W)]= \langle [\mathcal{Y}_{-r+1}], \dots,
      [\mathcal{Y}_{-1}], [\mathcal{Y}_0] \rangle
  \end{equation*}
  if we identify $\bfMF(E,W)$ with the left-hand side along 
  \eqref{eq:dg-quot-enhancement-equiv}.
\end{corollary}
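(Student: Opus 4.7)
The plan is to lift the dg functor $\Psi_l := \mathcal{O}_E(l) \otimes p^*(-) \colon \MF(Y,W) \to \MF(E,W)$ to the level of Drinfeld dg quotients and to define $\mathcal{Y}_l$ as the full dg subcategory of $\MF(E,W)/\AcyclMF(E,W)$ whose objects lie in the image of this lift.

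First I would verify that $\Psi_l$ preserves acyclicity. Since $p$ is flat and $\mathcal{O}_E(l)$ is locally free, the functor $\Psi_l$ is exact on the underlying abelian categories $Z_0(\MF(Y,W)) \to Z_0(\MF(E,W))$, so it sends short exact sequences to short exact sequences and commutes with totalizations. Hence the induced triangulated functor $[\Psi_l]$ maps the thick subcategory $\Acycl[\MF(Y,W)]$ into $\Acycl[\MF(E,W)]$, and consequently $\Psi_l$ sends the full dg subcategory $\AcyclMF(Y,W)$ into $\AcyclMF(E,W)$. By the universal property of the Drinfeld dg quotient \cite{drinfeld-dg-quotients}, the composition $\MF(Y,W) \xra{\Psi_l} \MF(E,W) \to \MF(E,W)/\AcyclMF(E,W)$ descends to a dg functor
\begin{equation*}
\bar{\Psi}_l \colon \MF(Y,W)/\AcyclMF(Y,W) \to \MF(E,W)/\AcyclMF(E,W).
\end{equation*}

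Next I define $\mathcal{Y}_l$ to be the full dg subcategory of $\MF(E,W)/\AcyclMF(E,W)$ on the objects $\{\Psi_l(F) : F \in \MF(Y,W)\}$ (recall that objects of the Drinfeld quotient coincide with those of the ambient dg category). Then $\bar{\Psi}_l$ restricts to a dg functor $\MF(Y,W)/\AcyclMF(Y,W) \to \mathcal{Y}_l$ that is essentially surjective on homotopy categories by construction. Under the equivalences \eqref{eq:dg-quot-enhancement-equiv} the induced triangulated functor $[\bar{\Psi}_l]$ corresponds to $\mathcal{O}_E(l) \otimes p^*(-) \colon \bfMF(Y,W) \to \bfMF(E,W)$, which is fully faithful by Theorem~\ref{t:semi-orthog-1-mf}.\ref{enum:a:semi-orthog-1-mf}. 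Since both source and target dg categories are pretriangulated, full-faithfulness of the induced homotopy functor upgrades to quasi-full-faithfulness of $\bar{\Psi}_l$ (as $H_n$ of a Hom complex computes the shifted morphism groups in the homotopy category). Therefore $\MF(Y,W)/\AcyclMF(Y,W) \to \mathcal{Y}_l$ is a quasi-equivalence.

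For the semi-orthogonal decomposition, under the identification \eqref{eq:dg-quot-enhancement-equiv} the triangulated subcategory $[\mathcal{Y}_l] \subset [\MF(E,W)/\AcyclMF(E,W)]$ is equivalent to $\mathcal{O}_E(l) \otimes p^* \bfMF(Y,W) \subset \bfMF(E,W)$ (indeed the former sits as a strict full subcategory of the latter whose inclusion is essentially surjective up to isomorphism). Consequently the desired decomposition is exactly Theorem~\ref{t:semi-orthog-1-mf}.\ref{enum:c:semi-orthog-1-mf} transported along \eqref{eq:dg-quot-enhancement-equiv}. The main substantive point in the argument is the preservation of acyclicity by $\Psi_l$; once this is in hand everything else follows formally from the universal property of the Drinfeld quotient and from Theorem~\ref{t:semi-orthog-1-mf}.
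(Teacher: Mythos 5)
Your proposal is correct and follows essentially the same route as the paper: descend $\mathcal{O}(l)\otimes p^*(-)$ to the Drinfeld dg quotients, define $\mathcal{Y}_l$ as a full dg subcategory on the image of this functor, and transport Theorem~\ref{t:semi-orthog-1-mf} along \eqref{eq:dg-quot-enhancement-equiv}. The only cosmetic difference is that the paper takes $\mathcal{Y}_l$ to consist of all objects belonging to the essential image $\mathcal{O}(l)\otimes p^*\bfMF(Y,W)$ rather than the strict image, so that $[\mathcal{Y}_l]$ is literally the strict admissible subcategory appearing in the semi-orthogonal decomposition rather than merely equivalent to it.
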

 
\begin{proof}
  The functor $\mathcal{O}(l) \otimes p^*(-) \colon  \MF(Y,W) \ra
  \MF(E,W)$ maps 
  $\AcyclMF(Y,W)$ to $\AcyclMF(E,W)$ and hence induces a dg
  functor
  \begin{equation*}
     \mathcal{O}(l) \otimes p^*(-) \colon  \MF(Y,W)/\AcyclMF(Y,W) \ra
  \MF(E,W)/\AcyclMF(E,W).
  \end{equation*}
  On homotopy categories this is the full and faithful functor
  $\mathcal{O}(l) \otimes p^*(-) \colon  \bfMF(Y,W) \ra \bfMF(E,W)$ 
  from \ref{enum:a:semi-orthog-1-mf};
  here and in the following we identify along \eqref{eq:dg-quot-enhancement-equiv}.
  Define $\mathcal{Y}_l$ to be the full dg subcategory of 
  $\MF(E,W)/\AcyclMF(E,W)$ consisting of objects that 
  belong to $\mathcal{O}(l) \otimes p^*\bfMF(Y,W).$
  All claims follow now from
  Theorem~\ref{t:semi-orthog-1-mf}.
  % \ref{enum:c:semi-orthog-1-mf}. 
\end{proof}

\subsection{Blowing-ups}
\label{sec:blowing-ups}

Now we describe the setting of a blowing-up.
Let $X$ be a scheme satisfying condition~\ref{enum:srNfKd}
and let
$i \colon Y \hra X$ be the embedding of a regular
equi-codimensional closed 
subscheme.
Let $\pi \colon \tildew{X} \ra X$ be the
blowing-up of 
$X$ along $Y,$ cf.\ \cite[8.1]{Liu} and 
\cite[13]{goertz-wedhorn-AGI}, and 
denote by $j \colon E \hra \tildew{X}$ the inclusion of
the exceptional divisor. 
We have the following pullback diagram
\begin{equation*}
  \xymatrix{
    {E} \ar[r]^-{j} \ar[d]^-{p} &
    {\tildew{X}} \ar[d]^-{\pi} \\
    {Y} \ar[r]^-i &
    {X.}
  }
\end{equation*}
By the usual construction of the blowing-up, $\tildew{X}$ is
endowed with the line bundle
$\mathcal{O}(1)=\mathcal{O}_{\tildew{X}}(1).$ 
% which is very ample relative to the projective morphism $\pi.$ 
This line bundle is
the ideal sheaf corresponding to the closed subscheme $E \subset
\tildew{X},$ i.\,e.\ we have a short exact sequence
\begin{equation}
  \label{eq:ses-ideal-sheaf-E}
  \mathcal{O}_{\tildew{X}}(1) \hra \mathcal{O}_{\tildew{X}} \sra
  \mathcal{O}_E.
\end{equation}
We often denote the restriction $\mathcal{O}_{E}(1)$
of $\mathcal{O}(1)=\mathcal{O}_{\tildew{X}}(1)$ to $E$ by
$\mathcal{O}(1)$ as well.

Let $\mathcal{J} \subset \mathcal{O}_X$ be the ideal sheaf of $Y
\subset X.$ Note that $i$ is a regular immersion of a fixed codimension by \cite[6.3.1]{Liu}; 
we denote this codimension by $r.$
In particular $\mathcal{J}/\mathcal{J}^2$ is locally free of rank
$r$ on $Y.$ Moreover,
the projection $p \colon E \ra Y$ 
is a $\DP^{r-1}$-bundle (as in
subsection~\ref{sec:proj-space-bundl}), more precisely it is
isomorphic to 
$\DP(\mathcal{J}/\mathcal{J}^2) \ra Y$
(use \cite[Thm.~8.1.19]{Liu}, cf.\ \cite[Thm.~II.8.24]{Hart}).

% Note that
% $\mathcal{O}_X \sira 
% \bR\pi_* \bL\pi^*\mathcal{O}_{X} 
% =\bR\pi_*\mathcal{O}_{\tildew{X}}$
% (which follows for example from
% \cite[Thm.~8, Rem.~9]{salas-direct-proof-formal-functions}).
% Hence 
% the functor
% $\bL \pi^* \colon D^b(\Coh(X))\ra D^b(\Coh(\tildew{X}))$ 
% is full and faithful.
% % (use that $D^b(\Coh(X))=\mfPerf(X)$).

Recall the following semi-orthogonal decomposition
theorem from\footnote{
  The assumption there is that $Y$
  is a smooth projective variety over a field.
} 
\cite{orlov-monoidal,bondal-orlov-semi-orthogonal-only-arXiv}\footnote{
  The proof of Theorem~\ref{t:basic-semi-orthog-2}
  in 
  \cite{orlov-monoidal,bondal-orlov-semi-orthogonal-only-arXiv}
  is incomplete. We thank A.~Kuznetsov for explaining to us how
  to fill in the gap. We use his suggestion to prove our
  Theorem~\ref{t:semi-orthog-2-mf} below. 
},
\cite[Prop.~11.18]{huy-fourmukai}.

\begin{theorem}
  \label{t:basic-semi-orthog-2}
  Assume that $r\geq 2.$ 
  \begin{enumerate}[label=$(\text{Coh\arabic*})_{\tildew{X}}$,
    start=0]
  \item The functor $\bL \pi^* \colon D^b(\Coh(X))\ra
    D^b(\Coh(\tildew{X}))$ is full and faithful.
  \end{enumerate}
  Let $l \in \DZ$
  and consider the functor
  \begin{equation*}
    t_l(-):=j_*(\mathcal{O} (l)\otimes p^*(-)) \colon  
    D^b(\Coh(Y)) \ra D^b(\Coh(\tildew{X})).
  \end{equation*}
  \begin{enumerate}[label=$(\text{Coh\arabic*})_{\tildew{X}}$]
  \item
    \label{enum:a:basic-semi-orthog-2}
    The functor $t_l$ is full and faithful.
  \end{enumerate}
  Denote by 
  $D^b(\Coh(Y))_l$
  the essential image of $t_l,$
  and by 
  $\bL \pi^*D^b(\Coh(X))$ the essential image of
  $\bL \pi^* \colon D^b(\Coh(X))\ra D^b(\Coh(\tildew{X})).$ 
  \begin{enumerate}[resume*]
  \item
    \label{enum:b:basic-semi-orthog-2}
    The subcategories $D^b(\Coh(Y))_l$
    and
    $\bL\pi^*D^b(\Coh(X))$ are admissible in
    $D^b(\Coh(\tildew{X})).$ 
  \item
    \label{enum:c:basic-semi-orthog-2}
    The category $D^b(\Coh(\tildew{X}))$ has the 
    semi-orthogonal decomposition 
    \begin{equation*}
      D^b(\Coh(\tildew{X}))=
      \Big\langle D^b(\Coh(Y))_{-r+1}, \dots, 
      D^b(\Coh(Y))_{-1},
      \bL\pi^*D^b(\Coh(X))\Big\rangle.
    \end{equation*}
  \end{enumerate}
\end{theorem}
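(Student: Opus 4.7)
The plan is to follow the classical four-step proof of Bondal--Orlov / Orlov: establish the two fully faithful embeddings, verify admissibility, check semi-orthogonality, and finally prove completeness of the decomposition.

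For $(\text{Coh0})_{\tildew{X}}$, the key input is the classical identity $\bR\pi_* \mathcal{O}_{\tildew{X}} \cong \mathcal{O}_X$ for the blow-up of a regular scheme along a regular center. Combined with the projection formula this yields $\bR\pi_*\bL\pi^* F \cong F$ for any $F \in D^b(\Coh(X))$, so the unit of the adjunction $(\bL\pi^*, \bR\pi_*)$ is an isomorphism and $\bL\pi^*$ is fully faithful. For $(\text{Coh1})_{\tildew{X}}$, the length-one locally free resolution \eqref{eq:ses-ideal-sheaf-E} of $j_*\mathcal{O}_E$, together with the fact that the conormal bundle of $E$ in $\tildew{X}$ is $\mathcal{O}_E(1)$, gives the standard formula $\bL j^* j_* \mathcal{F} \cong \mathcal{F} \oplus \mathcal{F}(1)[1]$ for $\mathcal{F} \in D^b(\Coh(E))$. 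Adjunction, the projection formula, $\bR p_*\mathcal{O}_E \cong \mathcal{O}_Y$, and the vanishing $\bR p_*\mathcal{O}_E(-1) = 0$ on the $\DP^{r-1}$-bundle $p$ (valid for $r \geq 2$) then combine to give $\Hom(t_l F, t_l G) \cong \Hom(F,G)$.

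For admissibility $(\text{Coh2})_{\tildew{X}}$, each of $\bL\pi^*$ and $t_l$ admits a right adjoint ($\bR\pi_*$ respectively $A \mapsto \bR p_*(j^! A \otimes \mathcal{O}_E(-l))$) and, since all schemes involved are regular of finite Krull dimension, a left adjoint constructed via Grothendieck--Serre duality by twisting with relative dualizing complexes. For the semi-orthogonality in $(\text{Coh3})_{\tildew{X}}$, adjunction together with $\pi \comp j = i \comp p$ reduces $\Hom(\bL\pi^* F, t_l G)$ to $\Hom(F, \bR i_*(G \otimes^\bL \bR p_*\mathcal{O}_E(l)))$, which vanishes for $l \in \{-r+1,\dots,-1\}$. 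Similarly, for $-r+1 \leq m < l \leq -1$, the formula for $\bL j^* j_*$ and subsequent push-forward by $\bR p_*$ reduce $\Hom(t_l F, t_m G)$ to the vanishings $\bR p_*\mathcal{O}_E(k) = 0$ for $k \in \{m-l,\, m-l-1\} \subset \{-r+1,\dots,-1\}$.

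The main obstacle is completeness. As the footnote observes, the published arguments of Bondal--Orlov / Orlov contain a gap, and I would follow Kuznetsov's suggestion: show that the right orthogonal $\mathcal{A}^\perp$ of the proposed decomposition $\mathcal{A}$ vanishes inside $D^b(\Coh(\tildew{X}))$. Given $T \in \mathcal{A}^\perp$, the hypothesis $\Hom(\bL\pi^* F, T) = 0$ for all $F$ together with the adjunction forces $\bR\pi_* T = 0$, and the hypothesis $\Hom(t_l G, T) = 0$ for all $G$ and all $l \in \{-r+1, \dots, -1\}$ forces $\bR p_*(j^! T \otimes \mathcal{O}_E(-l)) = 0$ for these $l$. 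Applying Theorem~\ref{t:basic-semi-orthog-1} to $j^! T \in D^b(\Coh(E))$, these vanishings place $j^! T$ in the remaining summand $p^* D^b(\Coh(Y))$ of its projective-bundle semi-orthogonal decomposition, so $j^! T \cong p^* U$ for some $U$. The identity $\pi \comp j = i \comp p$ and the projection formula give $\bR\pi_*(j_* j^! T) \cong i_* U$, and feeding this into the localization triangle associated to the Cartier divisor $E \subset \tildew{X}$ (relating $T$, $\bR\pi_* T$, and the $j_*j^!T$-contribution), together with $\bR\pi_* T = 0$, forces $U = 0$, hence $j^! T = 0$. Since $\pi$ is an isomorphism over $X \setminus Y$, $\bR\pi_* T = 0$ also forces $T|_{\tildew{X} \setminus E} = 0$, so $T = 0$. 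The subtle point is verifying that the Grothendieck--Serre duality and recollement arguments go through in the full generality of regular Noetherian schemes of finite Krull dimension rather than only the smooth projective setting of the classical references.
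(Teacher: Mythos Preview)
The paper does not give its own proof of this theorem: it is cited from \cite{orlov-monoidal,bondal-orlov-semi-orthogonal-only-arXiv} with the remark that the published proofs are incomplete and that Kuznetsov's suggested fix is carried out in the proof of the matrix-factorization analog, Theorem~\ref{t:semi-orthog-2-mf}. So the relevant comparison is between your argument and the strategy the paper uses there.

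Your treatment of full faithfulness, admissibility, and semi-orthogonality is standard and matches what the paper does (modulo the MF packaging). The problem is your completeness step. You invoke a ``localization triangle associated to the Cartier divisor $E$'' relating $T$, $j_*j^!T$, and a third term, and then apply $\bR\pi_*$. In $D^b(\Coh)$ there is no such triangle: the cone of the counit $j_*j^!T \to T$ is \emph{not} $Ru_*u^*T$ (that identification is a feature of recollement in the constructible or $D$-module setting, not of coherent sheaves). The triangle that does exist for a Cartier divisor is $T(1)\to T\to j_*\bL j^*T$, which involves $j^*$, not $j^!$; translating via $j^!\cong \bL j^*(-)\otimes\mathcal{O}_E(-1)[-1]$ and pushing forward by $\bR\pi_*$ brings in $\bR p_*\mathcal{O}_E(1)$, which is a nonzero rank-$r$ bundle, so the argument does not close. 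Even granting $j^!T=0$, the further implication ``$j^!T=0$ and $T|_{\tildew{X}\setminus E}=0\Rightarrow T=0$'' is not immediate and you have not justified it.

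The paper's (Kuznetsov's) route is quite different and avoids $j^!$ entirely. One works with the \emph{left} orthogonal and with $j^*$: first, a support argument (if $B$ vanishes off $E$ then $B$ is a summand of $B/\mathcal{I}^nB$ for $n\gg 0$; see Proposition~\ref{p:support-mf}) combined with the projective-bundle decomposition applied to $j^*B$ shows that the left orthogonal of the larger category $\tria\big(\pi^*D^b(\Coh X),\, D^b(\Coh Y)_0,\dots,D^b(\Coh Y)_{r-1}\big)$ vanishes (Proposition~\ref{p:left-orthog-of-pi-0-bis-r-1}). Then a local Koszul computation (Proposition~\ref{p:cohomology-of-inverse-image}, Corollary~\ref{c:kuznetsov-method}) identifies the cohomology sheaves of $\bL\pi^*(i_*\mathcal{O}_Y)$ with $j_*\Omega^s_{E/Y}(s)$ and shows that the extra twist $D^b(\Coh Y)_{r-1}$ already lies in the span of the remaining pieces. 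Duality (Remark~\ref{rem:duality-and-some-subcategories}, Proposition~\ref{p:semi-orthog-Xtilde-reformulated}) then converts this into the stated form of the decomposition. This is the argument you would need to supply in place of your localization-triangle step.
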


Now assume that we are given a morphism $W \colon X\ra \DA^1.$ 
It induces morphisms from $Y,$ $\tildew{X}$ and $E$ to $\DA^1$ 
which we again denote by $W.$
Note that $X,$ $Y,$ $\tildew{X},$ and $E$ satisfy
condition \ref{enum:srNfKd}.
Consider the commutative diagram
\begin{equation*}
  \xymatrix{
    {\bfMF(E,W)} &
    {\bfMF(\tildew{X},W)} \ar[l]_{j^*}\\
    {\bfMF(Y,W)}  \ar[u]^{p^*} &
    {\bfMF(X,W).} \ar[l]_{i^*} \ar[u]^{\pi^*}
  }
\end{equation*}
Here $\bL\pi^*=\pi^*,$
and similarly for the other functors in this diagram.
We also have the functor $j_*=\bR j_* \colon  \bfMF(E,W)
\ra \DCoh(\tildew{X},W),$
see
Remark~\ref{rem:derived-direct-image-for-affine-morphism}.
Note that tensoring with the line bundles $\mathcal{O}(l)$ induces
autoequivalences of the categories $\bfMF(E,W)$ and
$\bfMF(\tildew{X},W).$

Our goal now is to prove the
following analog of Theorem~\ref{t:basic-semi-orthog-2}.

\begin{theorem}
  \label{t:semi-orthog-2-mf}
  Let $r \geq 2.$
  \begin{enumerate}[label=$(\text{MF\arabic*})_{\tildew{X}}$,
    start=0,series=MFXtilde]
  \item
    \label{enum:a:semi-orthog-2-mf}
    The functor
    $\pi^*  \colon \bfMF(X,W)\ra \bfMF(\tildew{X},W)$ is full and
    faithful.
  \end{enumerate}
  % Assume that $r\geq 2.$ 
  For any integer $l$ consider the functor
  \begin{equation*}
    s_l(-) := j_*(\mathcal{O} (l)\otimes p^*(-)) \colon 
    \bfMF(Y,W) \ra \DCoh(\tildew{X}, W)  
  \end{equation*}
  and recall that the latter category is equivalent to
  $\bfMF(\tildew{X},W).$ 
  \begin{enumerate}[resume*=MFXtilde]
  \item
    \label{enum:b:semi-orthog-2-mf}
    The functor $s_l$ is full and faithful.
  \end{enumerate}
  Denote by
  $\pi^*\bfMF(X,W)$ the essential image of 
  $\pi^*  \colon \bfMF(X,W)\ra \bfMF(\tildew{X},W),$
  and by $\bfMF(Y,W)_l$
  the intersection of 
  the essential image of $s_l$
  with
  $\bfMF(\tildew{X},W).$
  \begin{enumerate}[resume*=MFXtilde]
  \item
    \label{enum:c:semi-orthog-2-mf}
    The subcategory $\pi^*\bfMF(X,W) \subset
    \bfMF(\tildew{X},W)$ is admissible, and so are the
    subcategories $\bfMF(Y,W)_l \subset
    \bfMF(\tildew{X},W),$ for any $l \in \DZ.$
  \item
    \label{enum:d:semi-orthog-2-mf}
    The category $\bfMF(\tildew{X},W)$ has the    
    semi-orthogonal decomposition\footnote{
      This is also trivially true for $r=0$ and $r=1.$
    }
    \begin{equation*}
      \bfMF(\tildew{X},W)=
      \Big\langle \bfMF(Y,W)_{-r+1}, \dots,\bfMF(Y,W)_{-1}, \pi^*
      \bfMF(X,W)\Big\rangle.
    \end{equation*}
  \end{enumerate}
\end{theorem}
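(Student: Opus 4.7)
The plan is to mirror the classical Bondal--Orlov proof of Theorem~\ref{t:basic-semi-orthog-2} for bounded derived categories, using the projective bundle case Theorem~\ref{t:semi-orthog-1-mf} as the local model over $p\colon E\to Y$ and the properties of $\pi$ established through the adjunction calculus developed in section~\ref{sec:derived-functors}. Parts (a)--(c) reduce to standard manipulations once the right adjunctions are in place, whereas completeness in (d) is the main technical obstacle.

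For (a), the starting point is the classical vanishing $\bR\pi_*\mathcal{O}_{\tildew{X}}=\mathcal{O}_X$ for the blow-up of a regularly embedded subscheme, which can be proved from \eqref{eq:ses-ideal-sheaf-E} by iteration on powers of $\mathcal{O}_{\tildew{X}}(1)$, together with the projective-bundle vanishing $\bR^{>0}p_*\mathcal{O}_E(n)=0$ and the identification $p_*\mathcal{O}_E(n)=\Sym^n(\mathcal{J}/\mathcal{J}^2)^\vee$ for $n\ge 0$. Combined with the projection formula this gives $\bR\pi_*\pi^*F\cong F$ for every $F\in\bfMF(X,W)$, whence $\pi^*$ is fully faithful by Theorem~\ref{t:derived-inverse-and-direct-image}. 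Admissibility of $\pi^*\bfMF(X,W)$ in (c) is then automatic from Remark~\ref{rem:adjoints-and-semi-orthogonal-decomposition}: the right adjoint $\bR\pi_*$ is provided by Lemma~\ref{l:pi-proper-Rpi-lower-star-and-MF-and-adjunction}, and a left adjoint is $D_X\circ\bR\pi_*\circ D_{\tildew{X}}$ by Lemma~\ref{l:pi-us-and-duality-MF-setting}.

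For (b) and the admissibility part of (c) concerning $s_l$, I factor $s_l=j_*\circ(\mathcal{O}_{\tildew{X}}(l)\otimes-)\circ p^*$; only the last functor requires attention. The conormal bundle of $j$ is $\mathcal{O}_E(1)$, so Grothendieck duality suggests a right adjoint $j^!(-)=\bL j^*(-)\otimes\mathcal{O}_E(-1)[1]$ preserving $\DCoh$, and the Koszul resolution coming from \eqref{eq:ses-ideal-sheaf-E} yields a natural triangle
\begin{equation*}
  H\;\longrightarrow\;j^!j_*H\;\longrightarrow\;H\otimes\mathcal{O}_E(-1)[1]\;\longrightarrow
\end{equation*}
in $\DCoh(E,W)$. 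For $H=\mathcal{O}_E(l)\otimes p^*F$ with $-r+1\le l\le -1$, the adjunction $\Hom_{\tildew{X}}(j_*H',j_*H)=\Hom_E(H',j^!j_*H)$ reduces the full-faithfulness claim to the vanishing $\bR p_*\mathcal{O}_E(-1)=0$ extracted from the projective-bundle pushforward formula. The right adjoint of $s_l$ is then $\bR p_*\circ(\mathcal{O}_{\tildew{X}}(-l)\otimes-)\circ j^!$, and the left adjoint is obtained by conjugating with the duality $D$ as for $\pi^*$, so admissibility follows.

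Semi-orthogonality in (d) is a direct adjunction exercise: using $\bL j^*\pi^*=p^*i^*$, one gets $\Hom(\pi^*G,s_lF)=\Hom(p^*i^*G,\mathcal{O}_E(l)\otimes p^*F)=\Hom(i^*G,F\otimes\bR p_*\mathcal{O}_E(l))=0$ for $-r+1\le l\le -1$, and $\Hom(s_mF',s_lF)$ with $l<m$ in the same range vanishes via the triangle for $j^!j_*$ combined with the semi-orthogonality of Theorem~\ref{t:semi-orthog-1-mf} on $E$. Completeness is the hard part. My strategy is to form, for $T\in\bfMF(\tildew{X},W)$, the triangle $\pi^*\bR\pi_*T\to T\to C$ coming from the counit of $(\pi^*,\bR\pi_*)$, and to show that $C$ lies in $\langle\bfMF(Y,W)_{-r+1},\dots,\bfMF(Y,W)_{-1}\rangle$. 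Full-faithfulness of $\pi^*$ gives $\bR\pi_*C=0$, which combined with $\pi$ being an isomorphism outside $E$ shows $C|_{\tildew{X}\setminus E}=0$, so Proposition~\ref{p:support-mf} exhibits $C$ as a direct summand of $C/\mathcal{O}_{\tildew{X}}(n)C$ for some $n\gg 0$. Filtering this quotient by powers of $\mathcal{O}_{\tildew{X}}(1)$ and applying the projection formula produces graded pieces $j_*(\mathcal{O}_E(k)\otimes\bL j^*C)$ which, after decomposing $\bL j^*C$ along Theorem~\ref{t:semi-orthog-1-mf}, are iterated extensions of objects in the subcategories $\bfMF(Y,W)_{a+k}$. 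Because the indices $a+k$ need not lie in $\{-r+1,\ldots,-1\}$, the essential bookkeeping step is to re-express them through a Beilinson-type relation, in the spirit of \eqref{eq:sequence-resolution-diagonal-on-E}; I expect the cleanest route is to build a resolution of the relative diagonal in $\tildew{X}\times_X\tildew{X}$ adapted to matrix factorizations and then run the Fourier--Mukai argument of the proof of Theorem~\ref{t:semi-orthog-1-mf}.\ref{enum:c:semi-orthog-1-mf}, along the lines of Kuznetsov's completion of the Bondal--Orlov proof alluded to in the paper. This final step is the principal obstacle.
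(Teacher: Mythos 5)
Your outline of \ref{enum:a:semi-orthog-2-mf}, of semi-orthogonality, and of most of \ref{enum:b:semi-orthog-2-mf}--\ref{enum:c:semi-orthog-2-mf} matches the paper's strategy, but the proof has a genuine gap exactly where you flag ``the principal obstacle'': completeness in \ref{enum:d:semi-orthog-2-mf}. Your proposed route --- resolving the relative diagonal of $\tildew{X}\times_X\tildew{X}$ and running a Fourier--Mukai argument --- is precisely the step of the original Bondal--Orlov argument that the paper's footnote to Theorem~\ref{t:basic-semi-orthog-2} describes as incomplete; no such resolution with the required shape is constructed, and your own bookkeeping shows why the naive filtration of $C/\mathcal{I}^nC$ produces twists $a+k$ escaping the range $\{-r+1,\dots,-1\}$. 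The paper avoids this entirely: it first dualizes (Proposition~\ref{p:semi-orthog-Xtilde-reformulated}, using Remark~\ref{rem:duality-and-some-subcategories}) to reduce to the decomposition with twists $0,\dots,r-2$; then shows via Proposition~\ref{p:support-mf} and the projective-bundle theorem that the left orthogonal of $\tria(\pi^*\bfMF(X,W),\bfMF(Y,W)_0,\dots,\bfMF(Y,W)_{r-1})$ vanishes (Proposition~\ref{p:left-orthog-of-pi-0-bis-r-1}); and finally, the key ``Kuznetsov step'', computes locally the cohomology sheaves $H^{-s}(\bL\pi^*i_*(-))\cong j_*(\Omega^s_{E/Y}(s)\otimes p^*(-))$ via an explicit comparison of Koszul complexes (Proposition~\ref{p:cohomology-of-inverse-image}), deducing that $\bfMF(Y,W)_{-1}$ already lies in $\tria(\pi^*\bfMF(X,W),\bfMF(Y,W)_0,\dots,\bfMF(Y,W)_{r-2})$ (Corollary~\ref{c:kuznetsov-method}), which allows one superfluous twist to be dropped. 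Without a substitute for this step your argument does not close.

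A secondary gap: you invoke ``Grothendieck duality'' to produce the right adjoint $j^!(-)=\bL j^*(-)\otimes\mathcal{O}_E(-1)[1]$ of $j_*$ in the matrix-factorization setting. This is not off-the-shelf here; the paper proves the equivalent statement $j_*\comp D_E\cong[1](1)D_{\tildew{X}}\comp j_*$ (Proposition~\ref{p:j-ls-and-duality-MF-setting}) by a lengthy explicit local computation with two-term resolutions, and this proposition is what yields both the right adjoint of $j_*$ needed for admissibility of $\bfMF(Y,W)_l$ and the dualization used in the completeness argument. Note also that the paper's actual proof of \ref{enum:b:semi-orthog-2-mf} works with the left adjoint $\bL j^*$ and the triangle $[1]M(1)\to\bL j^*j_*M\to M$, reducing by Proposition~\ref{p:locality-of-orthogonality} to the affine case with free components; your $j^!$-based variant is equivalent in spirit but rests on the unproved duality statement.
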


\begin{proof}[Proof of \ref{enum:a:semi-orthog-2-mf}]
  We can proceed as in the proof of 
  \ref{enum:a:semi-orthog-1-mf} since
  $\mathcal{O}_X \sira 
  \bR\pi_* \bL\pi^*\mathcal{O}_{X} 
  =\bR\pi_*\mathcal{O}_{\tildew{X}}$
  (this follows for example from
  \cite[Thm.~8, Rem.~9]{salas-direct-proof-formal-functions}).
\end{proof}

\begin{proof}[Proof of \ref{enum:b:semi-orthog-2-mf}]
  Fix $\ul{M},$ $\ul{N} \in \bfMF(Y,W)$ and $l \in \DZ.$ Put
  \begin{equation*}
    M :=\mathcal{O} (l)\otimes p^*\ul{M}, \quad N :=
    \mathcal{O} (l)\otimes p^*\ul{N}.
  \end{equation*}
  We already know \ref{enum:a:semi-orthog-1-mf}. Hence it
  suffices to show that the morphism
  \begin{equation}
    \label{eq:j-ls-provides-isom}
    j_* \colon \Hom_{\bfMF(E,W)} (M ,N ) \ra \Hom_{\DCoh(\tildew{X},W)} (j_*M, j_*N)
  \end{equation}
  is an isomorphism.

  Using the short exact sequence
  \eqref{eq:ses-ideal-sheaf-E}
  and the method used in the proof of
  Lemma~\ref{l:resolutions}.\ref{enum:MF-reso}
  we find an exact sequence $0\ra Q^{-1}\ra Q^0\ra j_*M \ra 0$
  in $Z_0(\Coh(\tildew{X},W))$ with
  $Q^0,$ $Q^{-1} \in \bfMF(\tildew{X},W).$ Let
  $Q=(\dots \ra 0 \ra Q^{-1} \ra Q^0 \ra 0 \ra \dots),$
  and let $r \colon  \Tot(Q) \ra j_*M$ be the obvious morphism.
  Then by the definition of $\bL j^*$ we can assume that 
  $\bL j^*j_*M = j^*(\Tot(Q))= \Tot(j^*(Q)).$
  Consider the composition 
  \begin{equation*}
    \theta \colon  \bL j^* j_*(M)=j^*(\Tot(Q)) \xra{j^*(r)}
    j^*j_* M \ra M  
  \end{equation*}
  where the last morphism is the obvious one.
  It is enough to show that the morphism
  \begin{equation}
    \label{eq:theta-us-provides-isom}
    \theta^* \colon \Hom_{\bfMF(E,W)} (M, N) \ra 
    \Hom_{\bfMF(E,W)} (j^*(\Tot(Q)), N)
  \end{equation}
  is an isomorphism:
  if we compose
  the morphism \eqref{eq:theta-us-provides-isom}
  with the isomorphism given by the adjunction in
  Theorem~\ref{t:derived-inverse-and-direct-image}, we 
  obtain the morphism \eqref{eq:j-ls-provides-isom}.

  Fit $\theta$ into a triangle
  \begin{equation*}
    C \ra \Tot(j^*(Q)) \xra{\theta} M \ra [1]C
  \end{equation*}
  in $\bfMF(E,W).$ Applying the cohomological functor
  $\Hom_{\bfMF(E,W)}  (?, N)$ to this triangle shows that we need to
  prove that
  \begin{equation*}
    \Hom_{\bfMF(E,W)}  ([v]C, N) =0 \quad \text{for all $v \in \DZ_2.$}
  \end{equation*}
  By Proposition~\ref{p:locality-of-orthogonality} it is
  sufficient to prove this under the additional assumption that
  $X$ (and hence $Y$) are affine. Moreover we can and will assume
  that $\ul{M}$ and $\ul{N}$ are free $\mathcal{O}_Y$-modules of
  finite rank; then $M$ and $N$ are finite direct sums of copies
  of the line bundle $\mathcal{O}_E(l).$

  It is easy to see that $M=H^0(j^*(Q))$ and that
  $M':=H^{-1}(j^*(Q))$ coincides with $M(1)$ as a graded vector
  bundle on $E$ (use the short exact sequence
  \eqref{eq:ses-ideal-sheaf-E}).  We claim that $C \cong [1]M'$
  in this case, i.\,e.\ the morphism $\theta \colon  \Tot(j^*(Q)) \ra
  M$ fits into a triangle $[1]M' \ra \Tot(j^*(Q)) \xra{\theta} M \ra
  [2]M'.$

  Let
  $A:=j^*(Q).$ Let $\tau_{\leq -1}(A)$ be the kernel of the
  obvious surjective morphism $A \ra H^0(A)=M$ of complexes in
  $Z_0(\MF(E,W)),$ where $M$ is concentrated in degree 0. We
  obtain a short exact sequence $\tau_{\leq -1}(A) \ra A \ra
  H^0(A)=M$ of complexes in $Z_0(\MF(E,W)).$ Taking totalizations
  we obtain a short exact sequence in $Z_0(\MF(E,W)$ which becomes a
  triangle in $\bfMF(E,W)$ (by
  Lemma~\ref{l:totalization}.\ref{enum:ses-gives-triangle}).  On
  the other hand note that there is an obvious quasi-isomorphism
  $M'=H^{-1}(A) \ra \tau_{\leq -1}(A)$ of complexes in
  $Z_0(\MF(E,W)),$ where $M'$ is put in degree -1. It gives rise
  to a morphism $[1]M' \ra \Tot(\tau_{\leq -1}(A))$ in
  $Z_0(\MF(E,W))$ and to an isomorphism in $\bfMF(E,W).$ Altogether
  we obtain the triangle $[1]M' \ra \Tot(j^*(Q)) \xra{\theta} M
  \ra [2]M'$ 
  we claimed to exist, in particular $C\cong [1]M'$ in
  $\bfMF(E,W).$

  Hence we are reduced to proving that
  \begin{equation*}
    \Hom_{\bfMF(E,W)}([v]M', N)=0 \quad \text{for all $v \in \DZ_2.$}
  \end{equation*}
  Since $M'$ and $M(1)$ coincide (at least) as graded vector
  bundles this follows 
  from Lemma~\ref{l:orthog-mf} and our assumptions on $\ul{M}$ 
  and $\ul{N}$
  since
  \begin{equation*}
    \Hom_{D(\Qcoh(E))}([v]\mathcal{O}_E(l+1), \mathcal{O}_E(l))
    = H^{-v}(E, \mathcal{O}_E(-1))=0 
  \end{equation*}
  for all $v \in \DZ.$ Here we use that $p \colon E \ra Y$ is a
  $\DP^{r-1}$-bundle and that $r-1 \geq 1.$
  This finishes the proof of \ref{enum:b:semi-orthog-2-mf}.
\end{proof}

Proposition~\ref{p:j-ls-and-duality-MF-setting} below
is essential for the proof of
(the second part of) \ref{enum:c:semi-orthog-2-mf}.
It says how $j_*$ commutes with the duality
\eqref{eq:duality-derived-level}.
Its proof will use the following trivial result.

\begin{lemma}
  \label{l:rearrange-modified-resolution}
  Let $A$ be a ring. Let $p \colon  P \sra M$ be a surjection of
  $A$-modules
  and let $q \colon  Q \ra M$ be any
  morphism of $A$-modules with $Q$ projective.
  Consider the morphism $(p,q) \colon  P \oplus Q \ra M.$
  Then there is a morphism $l \colon  Q \ra P$ such that the diagram
  \begin{equation*}
    \xymatrix{
      {P \oplus Q} \ar[r]^-{(p,0)} 
      \ar[d]^-{\sim}_-{
        \Big[
        \begin{smallmatrix}
          1 & -l \\
          0 & 1
        \end{smallmatrix}
        \Big]
      }
      &
      {M} \gar[d] \\
      {P \oplus Q} \ar[r]^-{(p,q)} &
      {M} \\
    }
  \end{equation*}
  commutes.
\end{lemma}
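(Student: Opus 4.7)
The plan is to unwind the commutativity condition of the diagram and reduce it to the defining lifting property of projective modules. First I would compute the composition along the left-and-bottom path of the square: regarding morphisms into $P \oplus Q$ as column vectors and morphisms out as row vectors, the composition $(p,q) \circ \left[\begin{smallmatrix} 1 & -l \\ 0 & 1 \end{smallmatrix}\right]$ equals $(p, -p \circ l + q)$. For this to coincide with the top arrow $(p,0)$, one needs precisely $p \circ l = q$.

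Next I would invoke the universal property of the projective module $Q$: since $p \colon P \sra M$ is surjective and $q \colon Q \ra M$ is arbitrary, projectivity of $Q$ yields a lift $l \colon Q \ra P$ with $p \circ l = q$. This is the only nontrivial input, and there is no real obstacle.

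Finally I would note that the matrix $\left[\begin{smallmatrix} 1 & -l \\ 0 & 1 \end{smallmatrix}\right]$ is automatically an isomorphism, with inverse $\left[\begin{smallmatrix} 1 & l \\ 0 & 1 \end{smallmatrix}\right]$, so the left vertical arrow is indeed as claimed. This completes the verification.
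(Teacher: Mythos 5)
Your proof is correct and is essentially identical to the one in the paper: both reduce the commutativity of the square to the equation $p\circ l=q$ and obtain $l$ from the projectivity of $Q$ and the surjectivity of $p$. Your additional remarks (the explicit matrix computation and the inverse of the unipotent matrix) are routine verifications the paper leaves implicit.
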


\begin{proof}
  Since $p$ is surjective and $Q$ is projective there is $l \colon  Q
  \ra P$ such that $pl =q.$
\end{proof}

\begin{proposition}
  \label{p:j-ls-and-duality-MF-setting}
  There is an isomorphism 
  $\tau \colon  j_* \comp D_E \sira [1](1) D_{\tildew{X}} \comp j_*$
  of functors 
  $\bfMF(E, W) \ra \DCoh(\tildew{X}, -W)^\opp.$
\end{proposition}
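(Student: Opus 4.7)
The proposition is a special case of Grothendieck--Verdier duality for the Cartier divisor $j \colon E \hra \tildew X,$ transported to matrix factorizations. In the classical sheaf theory, the conormal sheaf $\mathcal{I}/\mathcal{I}^2$ equals $\mathcal{O}_E(1)$ (in the paper's sign convention, where $\mathcal{I}=\mathcal{O}_{\tildew X}(1)$), so the normal bundle is $\mathcal{O}_E(-1),$ and the relative dualizing complex is $\omega_{E/\tildew X}\cong\mathcal{O}_E(-1)[-1].$ Consequently $j^!\mathcal{D}_{\tildew X}\cong\mathcal{D}_E(-1)[-1],$ and the Grothendieck--Verdier adjunction combined with the projection formula yields
\begin{equation*}
    D_{\tildew X}(j_*F)\;\cong\;j_*\bR\sheafHom_E\bigl(F,j^!\mathcal{D}_{\tildew X}\bigr)\;\cong\;j_*(D_E F)(-1)[-1],
\end{equation*}
which rearranges to $j_*D_E F\cong[1](1)D_{\tildew X}(j_*F).$ The plan is to realise this chain of canonical isomorphisms explicitly inside the MF framework.

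First, by Corollary~\ref{c:locality-of-being-an-isomorphism}, the statement that a specific natural morphism $\tau$ is invertible can be checked locally on $\tildew X,$ so one may reduce to $\tildew X=\Spec R$ affine with $E=V(x)$ for a regular element $x\in R$ (in particular $\mathcal{O}(1)$ is trivialised as $xR\subset R$). For $F\in\MF(E,W)$ I then lift the free components $F_i$ to free $R$-modules $\tilde F_i$ and the differentials $f_i$ to maps $\tilde f_i;$ the failure $\tilde f_{i+1}\tilde f_i-W\id$ is of the form $xg_i$ for some $g_i\colon\tilde F_i\ra\tilde F_i.$ Out of the data $(\tilde F_i,\tilde f_i,g_i)$ I assemble an explicit matrix factorization $\tilde F\in\MF(\tildew X,W)$ together with a closed degree-zero surjection $\tilde F\ra j_*F$ in $Z_0(\Coh(\tildew X,W)).$ A direct local computation, using the short exact sequence~\eqref{eq:ses-ideal-sheaf-E}, identifies the kernel of this surjection with an appropriately twisted and shifted copy of $\tilde F$ in $\DCoh(\tildew X,W),$ giving a distinguished triangle that realises $j_*F$ as a two-term ``MF resolution'' on $\tildew X.$ Applying the duality $D_{\tildew X}$ (formula~\eqref{eq:duality-explicitly}) to this triangle and matching it with the analogous triangle obtained from $D_EF$ in place of $F$ produces the desired $\tau$ and proves its invertibility.

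The main obstacle is twofold. First, the construction of $\tilde F$ must be canonical in $F$: any two choices of lifts $\tilde f_i$ differ by a map factoring through $xR,$ and Lemma~\ref{l:rearrange-modified-resolution} shows that any such difference is absorbed by a unipotent automorphism of $\tilde F$ inducing the identity on $j_*F$ in $\DCoh(\tildew X,W);$ this is the precise role of the preceding lemma. Second, one has to match the dualised MF lift $D_{\tildew X}(\tilde F)$ with the MF lift of $D_EF$ (up to the expected twist by $\mathcal{O}_{\tildew X}(-1)$ and shift by $[1]$), which is a sign-sensitive matrix computation relying on the conventions of~\eqref{eq:duality-explicitly}; here again the rearrangement lemma is needed to normalise the comparison. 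Once these two points are settled, naturality of $\tau$ in $F$ follows from the naturality of the construction of $\tilde F,$ and the isomorphism property is immediate from the local triangle comparison.
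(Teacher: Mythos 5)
There is a genuine gap: you never actually define the global morphism $\tau.$ Corollary~\ref{c:locality-of-being-an-isomorphism} only lets you check that an already-given morphism of $\DCoh(\tildew{X},-W)$ is invertible by restricting to an open cover; it does not let you build the morphism locally, and morphisms in a Verdier quotient such as $\DCoh(\tildew{X},-W)$ do not glue from local data. Your $\tilde F$ (and hence your candidate $\tau_F$) is manufactured out of non-canonical choices of lifts $\tilde f_i,g_i$ on an affine chart where $\mathcal{O}(1)$ has been trivialised; even after invoking Lemma~\ref{l:rearrange-modified-resolution} to show independence of these choices up to homotopy, you are left with a family of chart-by-chart morphisms and no mechanism for assembling them into a single natural transformation of functors. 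The paper's proof supplies exactly this missing ingredient: $\tau$ is defined globally and canonically as the composite of the natural map $j_*\bR\sheafHom_{\mathcal{O}_E}(-,\mathcal{D}_E)\ra\bR\sheafHom_{\mathcal{O}_{\tildew{X}}}(j_*(-),j_*\mathcal{D}_E)$ with the map induced by the connecting morphism $\delta\colon j_*\mathcal{O}_E\ra[1](1)\mathcal{O}_{\tildew{X}}$ of the triangle coming from the ideal-sheaf sequence \eqref{eq:ses-ideal-sheaf-E}; only after that does one restrict to affine charts and run an explicit matrix computation (close in spirit to the resolution you sketch) to verify invertibility.

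A second, related problem is that your opening appeal to Grothendieck--Verdier duality is circular in this paper's logical order: the adjunction $(j_*,j^!)$ on matrix factorization categories is not available here --- on the contrary, the proof of \ref{enum:c:semi-orthog-2-mf} uses the present proposition to manufacture the right adjoint of $j_*$ as $F\mapsto D_E(j^*[1](1)D_{\tildew{X}}(F)).$ Using duality theory as a heuristic for the expected twist and shift is fine, but the explicit realisation must not presuppose $j^!,$ and as written the only non-heuristic content of your argument is the local computation, which by itself does not prove the proposition.
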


\begin{proof}
  We first define the morphism $\tau$ globally and show afterwards
  locally that it is an isomorphism.

  The short exact sequence \eqref{eq:ses-ideal-sheaf-E}
  gives rise to a short exact sequence 
  in $Z_0(\Coh(\tildew{X}, 0))$ and then to a triangle
  \begin{equation*}
    % \label{eq:triangle-cohMF-ideal-sheaf-E}
    \mathovalbox{\matfak{0}{}{(1)\mathcal{O}_{\tildew{X}}}{}}
    \hra
    \mathovalbox{\matfak{0}{}{\mathcal{O}_{\tildew{X}}}{}}
    \sra
    \mathovalbox{\matfak{0}{}{j_*\mathcal{O}_E}{}}
    \xra{\delta}
    \mathovalbox{\matfak{(1)\mathcal{O}_{\tildew{X}}}{}{0}{}}
  \end{equation*}
  in $\DCoh(\tildew{X}, 0).$ For later use we describe $\delta$
  explicitly. Consider the obvious morphisms
  \begin{equation*}
    \mathovalbox{\matfak{0}{}{j_*\mathcal{O}_E}{}}
    \xla{\rho}
    \mathovalbox{\matfak{(1)\mathcal{O}_{\tildew{X}}}{\iota}
      {\mathcal{O}_{\tildew{X}}}{0}}
    \xra{\delta'}
    \mathovalbox{\matfak{(1)\mathcal{O}_{\tildew{X}}}{}{0}{}}
  \end{equation*}
  in $Z_0(\Coh(\tildew{X},0))$ 
  where the inclusion $(1)\mathcal{O}_{\tildew{X}}
  \hra \mathcal{O}_{\tildew{X}}$ is denoted by $\iota.$
  The morphism $\rho$ becomes
  invertible in $\DCoh(\tildew{X}, 0),$ and there we have
  $\delta= \delta' \comp \rho\inv.$

  Now define $\tau$ to be the composition
  \begin{multline}
    \label{eq:definition-tau}
    \tau \colon  j_* \comp D_E = 
    j_* \bR\sheafHom_{\mathcal{O}_E}
    (-, \big(\matfak{0}{}{\mathcal{O}_E}{}\big))
    \ra
    \bR\sheafHom_{\mathcal{O}_{\tildew{X}}}(j_*(-),
    j_*\big(\matfak{0}{}{\mathcal{O}_E}{}\big))\\
    \xra{\delta_*}
    \bR\sheafHom_{\mathcal{O}_{\tildew{X}}}(j_*(-),
    \big(\matfak{(1)\mathcal{O}_{\tildew{X}}}{}{0}{}\big))
    =[1](1) D_{\tildew{X}} \comp j_*
  \end{multline}
  where the first morphism is the obvious one
  and the second one is induced by $\delta.$ The last equality
  is obvious.

  Our aim is now to show that $\tau$ is in fact an isomorphism.
  It is enough to test this locally
  on an affine open subset $\Spec A \subset \tildew{X}$
  (use Corollary~\ref{c:locality-of-being-an-isomorphism}). 
  We can moreover assume that 
  \eqref{eq:ses-ideal-sheaf-E} is given by 
  $A \xhra{f} A \xsra{c} A/f,$ for some $f \in A.$

  Let $M = (\matfak{M_1}{m_1}{M_0}{m_0}) \in \bfMF(\Spec A/f,
  W).$ By further shrinking $\Spec A$ we can and will assume
  that the components of $M$ are free $A/f$ modules of finite
  rank, $M_0=(A/f)^{\oplus s_0}$ and $M_1=(A/f)^{\oplus s_1}$
  for suitable $s_0, s_1 \in \DN.$

  Let $P_i := A^{\oplus s_i}.$ We denote the morphisms
  $c^{\oplus s_i} \colon  P_i \sra M_i$ and $f^{\oplus s_i} \colon  P_i
  \hra P_i$ simply by $c$ and $f$ respectively.

  The method
  of Lemma~\ref{l:resolutions}.\ref{enum:MF-reso}
  (with a little help from 
  Lemma~\ref{l:rearrange-modified-resolution})
  provides
  the following (vertical) short exact sequence
  $Q^{-1} \xhra{q} Q^0 \xsra{r} j_*M$
  in $Z_0(\Coh(\Spec A,W)),$ a two-step resolution of
  $j_*(M)$ by objects of $\MF(\Spec A, W).$ 
  \begin{equation*}
    \xymatrix@R=2cm@C=1.5cm{
      {j_*M \colon } & 
      {M_1}
      \ar@<0.3ex>[rr]^-{m_1}
      &&
      {M_0}
      \ar@<0.3ex>[ll]^-{m_0}
      \\
      {Q^0 \colon } \ar[u]^-{r} &
      {P_0 \oplus P_1}
      \ar@<0.3ex>[rr]^-{
        \footnotesize{\begin{bmatrix}
            fu_1 & \beta \\
            -\alpha & 1
          \end{bmatrix}}
      }
      \ar[u]^-{
        \footnotesize{\begin{bmatrix}
            0 & c
          \end{bmatrix}}
      }
      &&
      {P_0 \oplus P_1}
      \ar@<0.3ex>[ll]^-{
        \footnotesize{\begin{bmatrix}
            1 & -\beta \\
            \alpha & fu_0
          \end{bmatrix}}
      }
      \ar[u]_-{
        \footnotesize{\begin{bmatrix}
            c & 0
          \end{bmatrix}}
      }
      \\
      {Q^{-1} \colon } \ar[u]^-{q} &
      {P_0 \oplus P_1}
      \ar@<0.3ex>[rr]^-{
        \footnotesize{\begin{bmatrix}
            u_1 & \beta \\
            - \alpha & f
          \end{bmatrix}}
      }
      \ar[u]^-{
        \footnotesize{\begin{bmatrix}
            1 & 0 \\
            0 & f
          \end{bmatrix}}
      }
      &&
      {P_0 \oplus P_1}
      \ar@<0.3ex>[ll]^-{
        \footnotesize{\begin{bmatrix}
            f & - \beta \\
            \alpha & u_0
          \end{bmatrix}}
      }
      \ar[u]_-{
        \footnotesize{\begin{bmatrix}
            f & 0 \\
            0 & 1
          \end{bmatrix}}
      }
    }
  \end{equation*}
  Here $\alpha, \beta, u_0, u_1$ are suitable morphisms
  satisfying
  \begin{align*}
    % \label{eq:identities-among-W-alpha-beta-us}
    c \alpha & = m_0 c, & W & = fu_1 + \beta \alpha, & 
    \alpha u_1 & = u_0 \alpha,\\
    \notag
    c \beta & = m_1 c, & W & = fu_0 + \alpha \beta, &
    \beta u_0 & = u_1 \beta.
  \end{align*}
  Note that $Q^0$ is isomorphic to zero in $[\Coh(\Spec A, W)],$
  as observed in Remark~\ref{rem:middle-objects-vanish}.

  Let $T:= \Tot(Q^{-1} \xra{q} Q^0)$ be the cone of $q.$ Then $r$
  defines a 
  morphism $r' \colon  T \ra j_*M$ in $Z_0(\Coh(\Spec A, W))$ that
  becomes an 
  isomorphism in $\DCoh(\Spec A, W).$
  Explicitly $r'$ is given by the upper part of the following
  diagram.
  \begin{equation*}
    \xymatrix@R=2.7cm@C=1.6cm{
      {j_*M \colon } & 
      {M_1}
      \ar@<0.3ex>[rrr]^-{m_1}
      &&&
      {M_0}
      \ar@<0.3ex>[lll]^-{m_0}
      \\
      {T \colon } \ar[u]^-{r'} \ar[d]^-{u} &
      {P_0 \oplus P_1 \oplus P_0 \oplus P_1}
      \ar@<0.3ex>[rrr]^-{
        \footnotesize{\begin{bmatrix}
            fu_1 & \beta & f & 0 \\
            -\alpha & 1 & 0 & 1 \\
            0 & 0 & -f & \beta \\
            0 & 0 & -\alpha & -u_0 
          \end{bmatrix}}
      }
      \ar[u]^-{
        \footnotesize{\begin{bmatrix}
            0 & c & 0 & 0
          \end{bmatrix}}
      }
      \ar[d]^-{
        u_1 =
        \footnotesize{\begin{bmatrix}
            0 & 0 & 1 & 0\\
            0 & 0 & 0 & 1
          \end{bmatrix}}      
      }
      &&&
      {P_0 \oplus P_1 \oplus P_0 \oplus P_1.}
      \ar@<0.3ex>[lll]^-{
        \footnotesize{\begin{bmatrix}
            1 & -\beta & 1 & 0\\
            \alpha & fu_0 & 0 & f\\
            0 & 0 & -u_1 & -\beta\\
            0 & 0 & \alpha & -f
          \end{bmatrix}}
      }
      \ar[u]_-{
        \footnotesize{\begin{bmatrix}
            c & 0 & 0 & 0
          \end{bmatrix}}
      }
      \ar[d]^-{
        u_0 =
        \footnotesize{\begin{bmatrix}
            0 & 0 & 1 & 0\\
            0 & 0 & 0 & 1
          \end{bmatrix}}      
      }
      \\
      {[1]Q^{-1} \colon } 
      \ar@/^3mm/@{..>}[u]^-{
        t
      }
      &
      {P_0 \oplus P_1}
      \ar@<0.3ex>[rrr]^-{
        \footnotesize{\begin{bmatrix}
            -f & \beta \\
            -\alpha & -u_0 
          \end{bmatrix}}
      }
      \ar@/^3mm/@{..>}[u]^-{
        t_1=
        \footnotesize{\begin{bmatrix}
            0 & 0\\
            0 & -1\\
            1 & 0\\
            0 & 1
          \end{bmatrix}}
      }
      &&&
      {P_0 \oplus P_1.}
      \ar@<0.3ex>[lll]^-{
        \footnotesize{\begin{bmatrix}
            -u_1 & -\beta\\
            \alpha & -f
          \end{bmatrix}}
      }
      \ar@/^3mm/@{..>}[u]^-{
        t_0=
        \footnotesize{\begin{bmatrix}
            -1 & 0\\
            0 & 0\\
            1 & 0\\
            0 & 1
          \end{bmatrix}}
      }
    }
  \end{equation*}
  The morphism $u$ in the lower part of this diagram is the
  obvious projection morphism in the triangle $Q^{-1} \xra{q} Q^0
  \ra T \xra{u} [1]Q^{-1}$ in $[\Coh(\Spec A, W)].$ 
  We have observed above that $Q^0 = 0$ in $[\Coh(\Spec A, W)],$
  so $u$ is an isomorphism. The dotted morphism $t$ in the
  diagram in  
  $Z_0(\Coh(\Spec A, W))$ represents the inverse of $u$ in 
  $[\Coh(\Spec A, W)].$
  
  Hence the morphism $r'':= r' \comp t \colon [1]Q^{-1} \ra j_*M$ in
  $Z_0(\Coh(\Spec A, W))$ becomes an isomorphism in $\DCoh(\Spec
  A, W).$ It is given by
  \begin{equation*}
    \xymatrix@R=2cm@C=1.5cm{
      {j_*M \colon } & 
      {M_1}
      \ar@<0.3ex>[rr]^-{m_1}
      &&
      {M_0}
      \ar@<0.3ex>[ll]^-{m_0}
      \\
      {[1]Q^{-1} \colon } 
      \ar[u]^-{
        r''
      }
      &
      {P_0 \oplus P_1}
      \ar@<0.3ex>[rr]^-{
        \footnotesize{\begin{bmatrix}
            -f & \beta \\
            -\alpha & -u_0 
          \end{bmatrix}}
      }
      \ar[u]^-{
        r''_1=
        \footnotesize{\begin{bmatrix}
            0 & -c
          \end{bmatrix}}
      }
      &&
      {P_0 \oplus P_1.}
      \ar@<0.3ex>[ll]^-{
        \footnotesize{\begin{bmatrix}
            -u_1 & -\beta\\
            \alpha & -f
          \end{bmatrix}}
      }
      \ar[u]_-{
        r''_0=
        \footnotesize{\begin{bmatrix}
            -c & 0
          \end{bmatrix}}
      }
    }
  \end{equation*}
  We need to prove that the composition
  \begin{multline*}
    \Hom_{A/f}(M, \big(\matfak{0}{}{A/f}{}\big))\\
    \ra
    \mathcal{E}:=\Hom_{A}(j_*M, \big(\matfak{0}{}{A/f}{}\big))
    \xra{r''^*}
    \mathcal{F}:=\Hom_{A}([1]Q^{-1}, \big(\matfak{0}{}{A/f}{}\big))
    \\
    \xra{(\rho_*)\inv}
    \mathcal{G}:=\Hom_{A}([1]Q^{-1}, \big(\matfak{A}{f}{A}{0}\big))
    \xra{\delta'_*}
    \mathcal{H}:=\Hom_{A}([1]Q^{-1}, \big(\matfak{A}{}{0}{}\big))
  \end{multline*}
  in $\DCoh(\Spec A, -W)$ is an isomorphism.
  The first arrow clearly is an isomorphism.
  The first (resp.\ last) two arrows correspond to the first
  (resp.\ second) arrow in the definition  
  \eqref{eq:definition-tau} of $\tau_M.$
  The following diagram in
  $Z_0(\Coh(\Spec A, -W))$ 
  depicts 
  $\mathcal{E}
  \xra{r''^*}
  \mathcal{F}
  \xla{\rho_*}
  \mathcal{G}
  \xra{\delta'_*}
  \mathcal{H}$
  explicitly, cf.\ 
  \eqref{eq:duality-explicitly}
  and
  \eqref{eq:def-sheafHom-QcohXWV}
  (we write $\ol{P}_i$ for $P_i/f P_i;$
  note that $\Hom_A(A/f,A/f)=A/f$ and $\Hom_A(A,A/f)=A/f$ and 
  $\Hom_A(A,A)=A$ canonically, so that we can for example
  identify $\Hom_A(M_i, A/f)=M_i$ and $\Hom(P_0,
  A/f)=\ol{P}_0;$ 
  note that some matrix entries are zero
  since $f \colon A/f \ra A/f$ is zero;
  we indicate the transpose of a matrix by an upper index~$\tp$).
  \begin{equation*}
    % \label{eq:all-dualised-objects}
    \xymatrix@R=2.8cm@C=1.4cm{
    % \xymatrix@C=2.2cm@R=3cm{
      {\mathcal{E}} 
      \ar[d]_-{r''^*}
      &
      {M_1}
      \ar@<0.3ex>[rrr]^-{m_0^\tp}
      \ar[d]_-{r''^*_1=
        \footnotesize{\begin{bmatrix}
            0 \\ 
            -1
          \end{bmatrix}}
      }
      &&&
      {M_0}
      \ar@<0.3ex>[lll]^-{-m_1^\tp}
      \ar[d]^-{r''^*_0=
        \footnotesize{\begin{bmatrix}
            -1 \\
            0 
          \end{bmatrix}}
      }
      \\
      {\mathcal{F}}
      &
      {\ol{P}_0 \oplus \ol{P}_1}
      \ar@<0.3ex>[rrr]^-{
        \footnotesize{\begin{bmatrix}
            -\ol{u}_1^\tp & \ol{\alpha}^\tp \\
            -\ol{\beta}^\tp & -f=0
          \end{bmatrix}}
      }
      &&&
      {\ol{P}_0 \oplus \ol{P}_1}
      \ar@<0.3ex>[lll]^-{
        \footnotesize{\begin{bmatrix}
            f=0 & \ol{\alpha}^\tp \\
            -\ol{\beta}^\tp & \ol{u}_0^\tp 
          \end{bmatrix}}
      }
      \\
      {\mathcal{G}}
      \ar[u]^-{\rho_*}
      \ar[d]^-{\delta'_*}
      &
      {P_0 \oplus P_1 \oplus P_0 \oplus P_1}
      \ar[u]^-{\rho_{*1}=\can \comp \pr_{12}}
      \ar[d]^-{\delta'_{*1}=\pr_{34}}
      \ar@<0.3ex>[rrr]^-{
        \footnotesize{\begin{bmatrix}
            -u_1^\tp & \alpha^\tp & f\\
            -\beta^\tp & -f && f\\
            && -f & -\alpha^\tp \\
            && \beta^\tp & -u_0^\tp
          \end{bmatrix}}
      }
      &&&
      {P_0 \oplus P_1 \oplus P_0 \oplus P_1}
      \ar[u]_-{\rho_{*0}=\can \comp \pr_{12}}
      \ar[d]^-{\delta'_{*0}=\pr_{34}}
      \ar@<0.3ex>[lll]^-{
        \footnotesize{\begin{bmatrix}
            f & \alpha^\tp & f & \\
            -\beta^\tp & u_0^\tp & & f\\
            && u_1^\tp & -\alpha^\tp \\
            && \beta^\tp & f
          \end{bmatrix}}
      }
      \\
      {\mathcal{H}}
      \ar@/^5mm/@{..>}[u]^-{s}
      &
      {P_0 \oplus P_1}
      \ar@/^3mm/@{..>}[u]^-{
        s_1:=
        \footnotesize{\begin{bmatrix}
            0 & 0\\
            0 & 1\\
            1 & 0\\
            0 & 1
          \end{bmatrix}}
      }
      \ar@<0.3ex>[rrr]^-{
        \footnotesize{\begin{bmatrix}
            -f & -\alpha^\tp \\
            \beta^\tp & -u_0^\tp
          \end{bmatrix}}
      }
      &&&
      {P_0 \oplus P_1.}
      \ar@/^3mm/@{..>}[u]^-{
        s_0:=
        \footnotesize{\begin{bmatrix}
            -1 & 0\\
            0 & 0\\
            1 & 0\\
            0 & 1\\
          \end{bmatrix}}
      }
      \ar@<0.3ex>[lll]^-{
        \footnotesize{\begin{bmatrix}
            u_1^\tp & -\alpha^\tp \\
            \beta^\tp & f
          \end{bmatrix}}
      }
    }
  \end{equation*}
  Additionally we have added the dotted morphism $s$ 
  which shows
  that the surjection $\delta'_*$ splits
  in
  $Z_0(\Coh(\Spec A, -W)).$
  Note that $\rho_*$ maps $s(\mathcal{H})$ onto
  $r''^*(\mathcal{E}),$ so we can consider the commutative diagram
  \begin{equation*}
    \xymatrix{
      {\mathcal{E}} \gar[d] \ar[r]^-{r''^*}_-{\sim} 
      & {r''^*(\mathcal{E})} \ar@{}[d]|-{\cap}
      & {s(\mathcal{H})} 
      \ar@{}[d]|-{\cap} 
      \ar[l]_-{\rho_*} 
      \ar[r]^-{\delta'_*}_-{\sim}
      & {\mathcal{H}} \gar[d]
      \\
      {\mathcal{E}} 
      \ar[r]^-{r''^*}
      & {\mathcal{F}}
      & {\mathcal{G}}
      \ar[l]_-{\rho_*} 
      \ar[r]^-{\delta'_*}
      & {\mathcal{H}} 
    }
  \end{equation*}
  in $Z_0(\Coh(\Spec A, -W)).$
  Our aim is to show that the lower row defines an isomorphism
  $\delta'_* \comp (\rho_*)\inv \comp r''^*$ in
  $\DCoh(\Spec A, -W).$ 
  For this it is clearly sufficient to
  show that 
  \begin{equation*}
    % \label{eq:composition-rho-s}
    \mathcal{H} \xsira{s} s(\mathcal{H}) \xra{\rho_*}
    r''^*(\mathcal{E}) 
  \end{equation*}
  becomes an isomorphism in
  $\DCoh(\Spec A, -W).$  
  This morphism occurs as the epimorphism in the short
  exact sequence 
  \begin{equation*}
    % \label{eq:ses-for-lower-object}
    \xymatrix@R=2cm@C=1.5cm{
      {r''^*(\mathcal{E}) \colon }
      &
      {\ol{P}_1}
      \ar@<0.3ex>[rr]^-{\ol{\alpha}^\tp}
      &&
      {\ol{P}_0}
      \ar@<0.3ex>[ll]^-{-\ol{\beta}^\tp}
      \\
      {\mathcal{H} \colon }
      \ar[u]^-{\rho_* \comp s}
      &
      {P_0 \oplus P_1}
      \ar[u]^-{\can \comp
        \footnotesize{\begin{bmatrix}
            0 & 1
          \end{bmatrix}
        }
      }
      \ar@<0.3ex>[rr]^-{
        \footnotesize{
          \begin{bmatrix}
            -f & -\alpha^\tp\\
            \beta^\tp & -u_0^\tp
          \end{bmatrix}
        }
      }
      &&
      {P_0 \oplus P_1}
      \ar[u]_-{\can \comp 
        \footnotesize{
          \begin{bmatrix}
            -1 & 0
          \end{bmatrix}
        }
      }
      \ar@<0.3ex>[ll]^-{
        \footnotesize{
          \begin{bmatrix}
            u_1^\tp & -\alpha^\tp\\
            \beta^\tp & f
          \end{bmatrix}
        }
      }
      \\
      &
      {P_0 \oplus P_1}
      \ar@<0.3ex>[rr]^-{
        \footnotesize{
          \begin{bmatrix}
            1 & \alpha^\tp \\
            -\beta^\tp & fu_0^\tp
          \end{bmatrix}
        }
      }
      \ar[u]^-{
        \footnotesize{
          \begin{bmatrix}
            -1 & 0 \\
            0 & -f
          \end{bmatrix}
        }
      }
      &&
      {P_0 \oplus P_1}
      \ar@<0.3ex>[ll]^-{
        \footnotesize{
          \begin{bmatrix}
            -fu_1^\tp & \alpha^\tp \\
            -\beta^\tp & -1
          \end{bmatrix}
        }
      }
      \ar[u]_-{
        \footnotesize{
          \begin{bmatrix}
            f & 0 \\
            0 & 1
          \end{bmatrix}
        }
      }
    }
  \end{equation*}
  in $Z_0(\Coh(\Spec A, -W)).$
  The lower object in this short exact sequence becomes zero in
  $[\Coh(\Spec A,-W)]$ (use the homotopy with components 
  $h_1=\tzmat 000{-1}$
  and
  $h_0=\tzmat 1000$). This implies that $\rho_* \comp s \colon 
  \mathcal{H} \ra r''^*(\mathcal{E})$ becomes an isomorphism 
  in $\DCoh(\Spec A, -W)$  
  and finishes the proof
  of Proposition~\ref{p:j-ls-and-duality-MF-setting}.
\end{proof}

\begin{remark}
  \label{rem:duality-and-some-subcategories} 
  Lemma~\ref{l:pi-us-and-duality-MF-setting} shows that the
  subcategory $\pi^*\bfMF(X,W) \subset \bfMF(\tildew{X},W)$ is
  invariant under the duality 
  $D_{\tildew{X}} \colon  \bfMF(\tildew{X},W)\ra
  \bfMF(\tildew{X},-W)^\opp.$
  This duality takes the 
  subcategory $\bfMF(Y,W)_l$
  to the subcategory $\bfMF(Y,-W)_{-l-1},$ as follows from
  Proposition~\ref{p:j-ls-and-duality-MF-setting}
  and Lemma~\ref{l:pi-us-and-duality-MF-setting} again.
\end{remark}

\begin{proof}[Proof of \ref{enum:c:semi-orthog-2-mf}]
  The method used to prove \ref{enum:b:semi-orthog-1-mf}
  also shows that 
  $\pi^*\bfMF(X,W) \subset \bfMF(\tildew{X},W)$ is admissible.

  Let us prove that 
  $\bfMF(Y,W)_l \subset \bfMF(\tildew{X},W)$ is admissible.

  From the proof of 
  \ref{enum:b:semi-orthog-1-mf}
  it is clear that the functor
  functor 
  $\mathcal{O} (l) \otimes p^*(-) \colon \bfMF(Y,W)\ra \bfMF(E,W)$ 
  has a right and a left adjoint functor.
  Let us view $j_*$ as a functor 
  $\bfMF(E,W) \ra \bfMF(\tildew{X},W)$; it has a left adjoint
  $j^*$ by 
  Lemma~\ref{l:pi-proper-Rpi-lower-star-and-MF-and-adjunction}.
  Proposition~\ref{p:j-ls-and-duality-MF-setting}
  shows that 
  $F \mapsto D_E(j^*[1](1)D_{\tildew{X}}(F))$ 
  is right adjoint to $j_*.$
  As above
  Remark~\ref{rem:adjoints-and-semi-orthogonal-decomposition} 
  and its dual show 
  that $\bfMF(Y,W)_l \subset \bfMF(\tildew{X},W)$ is admissible.
\end{proof}  

% It remains to prove
% % \ref{enum:c:semi-orthog-1-mf}
% % and
% \ref{enum:d:semi-orthog-2-mf}. More precisely we need to prove
% that the specified sequences of admissible subcategories are
% semi-orthogonal and complete (see
% Definition~\ref{d:semi-orthogonal-decomposition}). 

\begin{proof}[Proof of semi-orthogonality in 
  \ref{enum:d:semi-orthog-2-mf}]
  Lemma~\ref{l:orthog-mf} shows that this
  is a direct consequence of
  Theorem~\ref{t:basic-semi-orthog-2}.\ref{enum:c:basic-semi-orthog-2}
  (and this statement is not difficult to prove using the
  local-to-global Ext spectral sequence).
\end{proof}

It remains to
prove completeness 
in \ref{enum:d:semi-orthog-2-mf}.

\begin{proposition}
  \label{p:semi-orthog-Xtilde-reformulated}
  The condition \ref{enum:d:semi-orthog-2-mf}
  is equivalent to the following condition
  \begin{enumerate}[resume*=MFXtilde]
  \item
    \label{enum:e:semi-orthog-2-mf}
    There is a semi-orthogonal decomposition 
    \begin{equation*}
      \bfMF(\tildew{X},-W)=
      \Big\langle \pi^*\bfMF(X,-W),\bfMF(Y,-W)_0,
      \dots, \bfMF(Y,-W)_{r-2} \Big\rangle.
    \end{equation*}
  \end{enumerate}
\end{proposition}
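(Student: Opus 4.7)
The plan is to deduce the equivalence of \ref{enum:d:semi-orthog-2-mf} and \ref{enum:e:semi-orthog-2-mf} from the duality \eqref{eq:duality-derived-level}. The duality $D_{\tildew{X}} \colon \bfMF(\tildew{X},W) \ra \bfMF(\tildew{X},-W)^{\opp}$ is an equivalence of triangulated categories with $D_{\tildew{X}}^2 \naturaliso \id,$ so it is in particular a contravariant equivalence. The same holds with $W$ and $-W$ interchanged, so by symmetry it suffices to show that \ref{enum:d:semi-orthog-2-mf} implies \ref{enum:e:semi-orthog-2-mf}.

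First I would record the general fact that if $\mathcal{T}$ has a semi-orthogonal decomposition $\mathcal{T} = \langle \mathcal{A}_1, \dots, \mathcal{A}_n \rangle$ into admissible subcategories and $F \colon \mathcal{T} \sira \mathcal{T}'^{\opp}$ is a contravariant equivalence, then $\mathcal{T}' = \langle F(\mathcal{A}_n), \dots, F(\mathcal{A}_1) \rangle$ is again a semi-orthogonal decomposition into admissible subcategories. Semi-orthogonality reverses because for $i < j$ we have $\Hom_{\mathcal{T}'}(F(\mathcal{A}_i), F(\mathcal{A}_j)) = \Hom_{\mathcal{T}}(\mathcal{A}_j, \mathcal{A}_i) = 0;$ completeness is preserved since $F$ is an equivalence; and admissibility is preserved because $F$ swaps left and right adjoints of inclusion functors.

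Next I would apply this to the decomposition given by \ref{enum:d:semi-orthog-2-mf} and the contravariant equivalence $D_{\tildew{X}}.$ By Remark~\ref{rem:duality-and-some-subcategories}, $D_{\tildew{X}}$ sends $\pi^{*}\bfMF(X,W)$ to $\pi^{*}\bfMF(X,-W)$ and $\bfMF(Y,W)_{l}$ to $\bfMF(Y,-W)_{-l-1}.$ In particular, $\bfMF(Y,W)_{-r+1}, \dots, \bfMF(Y,W)_{-1}$ are sent to $\bfMF(Y,-W)_{r-2}, \dots, \bfMF(Y,-W)_{0}$ respectively, and reversing the order produces exactly the sequence appearing in \ref{enum:e:semi-orthog-2-mf}.

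I expect no serious obstacle: the only point worth double-checking is the bookkeeping of the shift in Remark~\ref{rem:duality-and-some-subcategories} (namely that $D_{\tildew{X}}$ really sends $\bfMF(Y,W)_{l}$ to $\bfMF(Y,-W)_{-l-1},$ via the twist $[1](1)$ in Proposition~\ref{p:j-ls-and-duality-MF-setting} combined with Lemma~\ref{l:pi-us-and-duality-MF-setting}), and the sign conventions in the indexing. Once the index calculation $\{-r+1, \dots, -1\} \mapsto \{r-2, \dots, 0\}$ is verified, the conclusion is immediate, and the converse implication follows by applying the same argument with $-W$ in place of $W.$
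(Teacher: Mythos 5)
Your proof is correct and is essentially the paper's own argument: the paper simply cites Remark~\ref{rem:duality-and-some-subcategories} (which records exactly that $D_{\tildew{X}}$ preserves $\pi^*\bfMF(X,W)$ and sends $\bfMF(Y,W)_l$ to $\bfMF(Y,-W)_{-l-1}$), the general fact about contravariant equivalences reversing semi-orthogonal decompositions being left implicit. Your index bookkeeping $\{-r+1,\dots,-1\}\mapsto\{r-2,\dots,0\}$ and the order reversal are both right.
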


\begin{proof}
  This follows from
  Remark~\ref{rem:duality-and-some-subcategories}. 
\end{proof}

This proof of course shows that
semi-orthogonality holds in \ref{enum:e:semi-orthog-2-mf}
since it holds in 
\ref{enum:d:semi-orthog-2-mf}.
Hence we have to prove 
completeness in 
\ref{enum:e:semi-orthog-2-mf} (obviously we can replace $W$ by
$-W$ there).
Our first aim is to prove the weaker statement
of Proposition~\ref{p:left-orthog-of-pi-0-bis-r-1}
below.

\begin{lemma}
  \label{l:locality-of-being-left-orthogonal}
  Let $B\in \bfMF(\tildew{X},W).$
  \begin{enumerate}
  \item 
    \label{enum:left-orthogonal-to-pi-us}
    The condition 
    $B \in \leftidx{^\perp}{(\pi^*\bfMF(X,W))}{}$
    is equivalent to $\bR \pi_*(D_{\tildew{X}}(B))=0.$
    In particular, it is local
    on $X$ in the following sense:
    if $\mathcal{U}$ is any open covering of $X,$
    then 
    $B \in \leftidx{^\perp}{(\pi^*\bfMF(X,W))}{}$
    if and only if
    $B|_{\pi\inv(U)} \in \leftidx{^\perp}{(\pi^*\bfMF(U,W))}{}$
    for all $U \in \mathcal{U}.$
  \item 
    \label{enum:left-orthogonal-to-j-ls-twist-p-us}
    Let $l \in \DZ.$
    The condition $B\in \leftidx{^\perp}{(\bfMF(Y,W)_l)}{}$ is
    local on $X.$
  \end{enumerate}
\end{lemma}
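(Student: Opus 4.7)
The plan is to reformulate both orthogonality conditions as vanishing statements for derived direct images via adjunctions and duality, and then invoke Lemma~\ref{l:locality-of-being-zero} combined with base change along open immersions. In both cases the object whose vanishing will be tested lies in a category of matrix factorizations via Lemma~\ref{l:pi-proper-Rpi-lower-star-and-MF-and-adjunction} (as $\pi$ and $p$ are proper), so vanishing of all Homs \emph{from} that whole category of matrix factorizations \emph{to} this object is equivalent to the object itself being zero (test on its identity).

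For part~\ref{enum:left-orthogonal-to-pi-us} I would use the isomorphism $D_{\tildew{X}}\pi^* \cong \pi^* D_X$ of Lemma~\ref{l:pi-us-and-duality-MF-setting} together with the $(\pi^*, \bR\pi_*)$-adjunction to produce, for any $A \in \bfMF(X,W)$ and $v \in \DZ_2$, a natural isomorphism
\begin{equation*}
  \Hom_{\bfMF(\tildew{X},W)}(B, [v]\pi^*A) \cong \Hom_{\bfMF(X,-W)}([v]D_X(A), \bR\pi_*D_{\tildew{X}}(B)).
\end{equation*}
Since $D_X$ is an equivalence, $D_X(A)$ ranges over all of $\bfMF(X,-W)$, and by the remark above, vanishing of all these Homs is equivalent to $\bR\pi_*D_{\tildew{X}}(B) = 0$. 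This gives the first claimed equivalence. Locality then follows by applying Lemma~\ref{l:locality-of-being-zero} to $\bR\pi_*D_{\tildew{X}}(B) \in \DQcoh(X,-W)$: for an open $U \subset X$ with preimage $\tildew{U} = \pi^{-1}(U)$ and restricted map $\pi_U \colon \tildew{U} \ra U$, flat base change along the open immersion $U \hra X$ (trivial using that injective quasi-coherent sheaves restrict to injective quasi-coherent sheaves, Theorem~\ref{t:injective-in-qcoh-vs-all-OX}.\ref{enum:inj-qcoh-restrict-to-inj-qcoh}) identifies $(\bR\pi_*D_{\tildew{X}}(B))|_U$ with $\bR\pi_{U*}D_{\tildew{U}}(B|_{\tildew{U}})$, and the equivalence already established, applied over $U$, yields $B|_{\tildew{U}} \in {}^\perp(\pi_U^*\bfMF(U,W))$.

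For part~\ref{enum:left-orthogonal-to-j-ls-twist-p-us} the strategy is the same. A chain of adjunctions and dualities --- $(j^*, j_*)$, $(p^*, \bR p_*)$, the tensor-Hom adjunction for the invertible sheaf $\mathcal{O}(l)$, the identity $D_E(\mathcal{O}(-l) \otimes F) \cong \mathcal{O}(l) \otimes D_E(F)$, and the isomorphism $D_E p^* \cong p^* D_Y$ (Lemma~\ref{l:pi-us-and-duality-MF-setting} applied to $p$) --- yields, for $M \in \bfMF(Y,W)$ and $v \in \DZ_2$, a natural isomorphism
\begin{equation*}
  \Hom_{\bfMF(\tildew{X},W)}(B, [v]j_*(\mathcal{O}(l) \otimes p^*M)) \cong \Hom_{\bfMF(Y,-W)}([v]D_Y(M), \bR p_*(\mathcal{O}(l) \otimes D_E(j^*B))).
\end{equation*}
As in part~\ref{enum:left-orthogonal-to-pi-us}, vanishing of all these Homs is equivalent to $\bR p_*(\mathcal{O}(l) \otimes D_E(j^*B)) = 0$ in $\bfMF(Y,-W)$. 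Since $\{U \cap Y\}_{U \in \mathcal{U}}$ is an open covering of $Y$, and each of $j^*$, $D_E$, tensoring by $\mathcal{O}(l)$, and $\bR p_*$ commutes with restriction to the relevant open subsets (by base change along open immersions, exactly as in part~\ref{enum:left-orthogonal-to-pi-us}), Lemma~\ref{l:locality-of-being-zero} then gives locality on $X$.

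The whole argument is a routine chase of adjunctions plus Lemma~\ref{l:locality-of-being-zero}; no serious obstacle appears. The only points to watch are that $\bR\pi_*$ and $\bR p_*$ really map matrix factorizations to matrix factorizations (Lemma~\ref{l:pi-proper-Rpi-lower-star-and-MF-and-adjunction}), that $j^*B$ can be computed naively since $B$ has vector bundle components, and the correct bookkeeping of the $\mathcal{O}(l)$-twists under $D_E$.
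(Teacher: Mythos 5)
Your proposal is correct and follows essentially the same route as the paper: both reformulate the orthogonality conditions via the duality isomorphisms of Lemma~\ref{l:pi-us-and-duality-MF-setting} and the adjunctions of Lemma~\ref{l:pi-proper-Rpi-lower-star-and-MF-and-adjunction} as the vanishing of $\bR\pi_*D_{\tildew{X}}(B)$ (resp.\ $\bR p_*(\mathcal{O}(l)\otimes D_E(j^*B))=\bR p_*(D_E(\mathcal{O}(-l)\otimes j^*B))$), test that vanishing against the identity morphism, and conclude locality from Lemma~\ref{l:locality-of-being-zero}. Your explicit justification of the compatibility of $\bR\pi_*$, $\bR p_*$ and $D$ with restriction to open subsets is a welcome elaboration of a step the paper declares to be clear.
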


\begin{proof}  
  \ref{enum:left-orthogonal-to-pi-us}:
  In somewhat risky
  notation we have
  \begin{align}
    \label{eq:Homs-B-pi-us}
    \Hom_{\bfMF(\tildew{X},W)}(B, \pi^*\bfMF(X,W))
    & = \Hom_{\bfMF(\tildew{X},-W)}(\pi^*\bfMF(X,-W), D_{\tildew{X}}(B))\\
    \notag
    & = \Hom_{\bfMF(X,-W)}(\bfMF(X,-W), \bR \pi_*D_{\tildew{X}}(B)).
  \end{align}
  The first equality uses the duality $D_{\tildew{X}}$ and
  Remark~\ref{rem:duality-and-some-subcategories}, the second
  equality uses the adjunction of
  Lemma~\ref{l:pi-proper-Rpi-lower-star-and-MF-and-adjunction}.
  Hence $B\in \leftidx{^\perp}{(\pi^*\bfMF(X,W))}{}$ is
  equivalent to $\bR \pi_*(D_{\tildew{X}}(B))=0,$ and this
  condition is clearly local on $X$
  (use Lemma~\ref{l:locality-of-being-zero}).

  \ref{enum:left-orthogonal-to-j-ls-twist-p-us}:
  We have
  \begin{align*}
    \Hom_{\bfMF(\tildew{X},W)}(B, \bfMF(Y,W)_l)
    & = \Hom_{\bfMF(E,W)}(j^*B, \mathcal{O}_E(l) \otimes
    p^*\bfMF(Y,W))\\ 
    & = \Hom_{\bfMF(E,W)}(\mathcal{O}_E(-l) \otimes j^*B,
    p^*\bfMF(Y,W))\\ 
    & = \Hom_{\bfMF(Y,-W)}(\bfMF(Y,-W),
    \bR p_*(D_E(\mathcal{O}_E(-l) \otimes j^*B)))
  \end{align*}
  The first equality follows from the adjunction of
  Lemma~\ref{l:pi-proper-Rpi-lower-star-and-MF-and-adjunction},
  the second equality is just the twist, and the last
  equality is obtained similarly as
  \eqref{eq:Homs-B-pi-us} (for $p$ instead of $\pi$).
  Clearly the condition 
  $\bR p_*(D_E(\mathcal{O}_E(-l) \otimes j^*B))=0$ is local on $X.$
\end{proof}

\begin{proposition}
  \label{p:left-orthog-of-pi-0-bis-r-1}
  The left orthogonal of the full triangulated subcategory
  \begin{equation*}
    \mathcal{C}:=\tria(\pi^*\bfMF(X,W), \bfMF(Y,W)_0, \dots,
    \bfMF(Y,W)_{r-1}) 
  \end{equation*}
  in $\bfMF(\tildew{X},W)$ is zero, 
  $\leftidx{^\perp}{\mathcal{C}}{}=0.$
\end{proposition}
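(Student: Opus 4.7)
The plan is to prove directly that any $B \in \bfMF(\tildew{X}, W)$ lying in $\leftidx{^\perp}{\mathcal{C}}{}$ vanishes, via three steps: (i) $B|_{\tildew{X} - E} = 0$, (ii) $j^* B = 0$ in $\bfMF(E, W)$, and (iii) $B = 0$.

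For (i), since $\pi$ restricts to an isomorphism $\tildew{X} - E \sira X - Y$, the pullback $\pi^* \colon \bfMF(X - Y, W) \ra \bfMF(\tildew{X} - E, W)$ is an equivalence. By the locality of the left-orthogonality condition in Lemma~\ref{l:locality-of-being-left-orthogonal}.\ref{enum:left-orthogonal-to-pi-us} applied to the open subset $X - Y \subset X$, the restriction $B|_{\tildew{X} - E}$ is left-orthogonal to all of $\bfMF(\tildew{X} - E, W)$, and in particular to itself, forcing $B|_{\tildew{X} - E} = 0$.

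For (ii), I would use the adjunction $(j^*, j_*)$ of Lemma~\ref{l:pi-proper-Rpi-lower-star-and-MF-and-adjunction} to rewrite the condition $B \in \leftidx{^\perp}{(\bfMF(Y, W)_l)}{}$ for each $l = 0, \dots, r - 1$ as $j^* B \in \leftidx{^\perp}{(\mathcal{O}_E(l) \otimes p^* \bfMF(Y, W))}{}$. Twisting the projective-bundle decomposition of Theorem~\ref{t:semi-orthog-1-mf}.\ref{enum:c:semi-orthog-1-mf} by $\mathcal{O}_E(r - 1)$ gives the semi-orthogonal decomposition
\[
\bfMF(E, W) = \langle p^*\bfMF(Y, W), \mathcal{O}_E(1) \otimes p^* \bfMF(Y, W), \dots, \mathcal{O}_E(r - 1) \otimes p^* \bfMF(Y, W) \rangle,
\]
so $j^* B$ is left-orthogonal to every generating piece, hence to the entire category, hence zero.

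For (iii), Proposition~\ref{p:support-mf} applied to $B$ and the divisor $E \subset \tildew{X}$ (using (i)) gives $n \gg 0$ such that $B$ is a direct summand of $B / \mathcal{I}_E^n B$ in $\DCoh(\tildew{X}, W)$. A filtration of $B / \mathcal{I}_E^n B$ with successive quotients $\mathcal{I}_E^{k-1} / \mathcal{I}_E^k \cong j_* \mathcal{O}_E(k - 1)$ (since $\mathcal{I}_E = \mathcal{O}_{\tildew{X}}(1)$), combined with the projection formula in the matrix factorization setting
\[
B \otimes^{\bL} j_* \mathcal{O}_E(k - 1) \cong j_* \bigl(j^* B \otimes \mathcal{O}_E(k - 1)\bigr),
\]
then kills each graded piece by (ii); induction on $n$ yields $B / \mathcal{I}_E^n B = 0$ in $\DCoh(\tildew{X}, W)$, hence $B = 0$. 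The main technical step will be verifying the projection formula used here for $W$-curved dg sheaves and the closed immersion $j$. Such identities were already invoked (though not spelled out in detail) in the proof of Theorem~\ref{t:semi-orthog-1-mf}.\ref{enum:c:semi-orthog-1-mf}; the proof should proceed via the injective enhancement of $\DQcoh$ from Theorem~\ref{t:equivalences-curved-categories}.\ref{enum:injQcoh-DQcoh-equiv}, using that $j$ is an affine closed immersion (so $\bR j_* = j_*$ on $\bfMF$ by Remark~\ref{rem:derived-direct-image-for-affine-morphism}) together with Lemma~\ref{l:componentwise-acyclics}.
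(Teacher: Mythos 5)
Your proposal is correct and follows essentially the same route as the paper's proof: vanishing of $B|_{\tildew{X}\setminus E}$ via Lemma~\ref{l:locality-of-being-left-orthogonal}, vanishing of $j^*B$ via the adjunction of Lemma~\ref{l:pi-proper-Rpi-lower-star-and-MF-and-adjunction} together with the twisted decomposition from \ref{enum:c:semi-orthog-1-mf}, and then Proposition~\ref{p:support-mf} to realize $B$ as a summand of $B/\mathcal{I}^nB$. The only cosmetic difference is the final step, where the paper deduces $B/\mathcal{I}^nB=0$ by observing that $B/\mathcal{I}B=j_*j^*B=0$ forces $B(1)\ra B$ (and hence $B(n)\ra B$) to be an isomorphism, rather than running your filtration-plus-projection-formula induction; both arguments are valid.
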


\begin{proof}
  Let $B \in \leftidx{^\perp}{\mathcal{C}}{}.$
  From
  Lemma~\ref{l:locality-of-being-left-orthogonal}.\ref{enum:left-orthogonal-to-pi-us}
  we obtain
  $\bR \pi_*(D_{\tildew{X}}(B))=0.$ 
  Let $U\subset \tildew{X}$ be the open complement of $E \subset
  \tildew{X}.$ Then
  $\bR \pi_*(D_U(B|_U))=0,$ and 
  the restriction $B|_U$ is zero in $\bfMF(U,W).$

  Recall that the line bundle $\mathcal{I} :=
  \mathcal{O}_{\tildew{X}}(1) \subset 
  \mathcal{O}_{\tildew{X}}$ is the ideal
  sheaf defining $E.$ 
  Note that the obvious morphism
  $B(n)=\mathcal{O}_{\tildew{X}}(n) \otimes B \ra \mathcal{I}^nB$ is an
  isomorphism for all $n \geq 0,$ in particular $\mathcal{I}^n B
  \in \bfMF(\tildew{X},W).$

  Consider for $n \geq 0$ the short exact sequence
  \begin{equation*}
    0 \ra \mathcal{I}^nB \ra B \ra B/\mathcal{I}^nB \ra 0
  \end{equation*}
  in $Z_0(\Coh(\tildew{X},W)).$ 
  Since $B|_U=0,$ Proposition~\ref{p:support-mf} shows that
  $B$ is a direct summand of $B/\mathcal{I}^n B$  
  for $n\gg 0.$ Fix $n\gg 0.$ It suffices to
  prove that $B/\mathcal{I}^nB=0$ in $\DCoh(\tildew{X}, W).$

  Since $B\in \leftidx{^\perp}{\mathcal{C}}{}$ the adjunction
  of Lemma~\ref{l:pi-proper-Rpi-lower-star-and-MF-and-adjunction},
  implies that
  \begin{equation*}
    j^*B\in \leftidx{^\perp}{\tria(
      p^*\bfMF(Y,W),
      \dots,
      \mathcal{O}(r-1)
      \otimes 
      p^*\bfMF(Y,W))}{}.   
  \end{equation*}
  Hence \ref{enum:c:semi-orthog-1-mf} implies that 
  $j^*B =0$ in $\bfMF(E,W).$ But then $B/B(1) \cong
  B/\mathcal{I}B=j_* j^*B=0$ in $\bfMF(\tildew{X}, W).$
  Hence $B(1) \ra B$ becomes an isomorphism
  in $\bfMF(\tildew{X}, W);$ the same is then true for $B(n) \ra
  B,$ and hence $0 = B/B(n) \cong B/\mathcal{I}^n B$ in
  $\DCoh(\tildew{X}, W).$ 
\end{proof}

We give a local description of the inclusion $Y \subset X$ around
a closed point $y \in Y.$
Let $\Spec R$ be an affine open 
neighborhood of $y$ in $X,$
and let $I \subset R$ be the ideal defining $Y \cap \Spec R.$
By possibly
shrinking $\Spec R$ 
we can find 
$r$ elements $x_1, \dots, x_r \in R$ that can be extended to 
a system of uniformizing
parameters on $\Spec R$ such that $I=(x_1, \dots, x_r)$
(this follows for example from 
\cite[Cor.~4.2.15]{Liu} applied to $R$ localized at the maximal
ideal corresponding to $y$).

In the following subsection~\ref{sec:local-considerations}
we prove 
some results,
in particular completeness in \ref{enum:e:semi-orthog-2-mf},
for the
local situation $\Spec R/I \subset \Spec R$
just described.
In subsection~\ref{sec:back-global-setting}
we then deduce completeness in \ref{enum:e:semi-orthog-2-mf}
in the global setting.

\subsubsection{Local considerations}
\label{sec:local-considerations}

Let $R$ be a regular Noetherian $k$-algebra (with $\Spec R$ of
finite Krull dimension) and let 
$I \subset R$ be an ideal that is generated by elements 
$x_1, \dots, x_r \in R$ which are part of a system of
uniformizing parameters on $\Spec R.$ 
Abbreviate $\ol{R}:= R/I.$
We assume in this whole subsection~\ref{sec:local-considerations}
that the inclusion $i \colon Y \ra X$ 
is given by 
$\Spec \ol{R} \subset \Spec R.$
Then $\tildew{X} = \Proj(R \oplus I \oplus I^2 \oplus \dots),$
where $R \oplus I \oplus I^2 \oplus \dots$ is the Rees algebra of
$I \subset R.$ 
We define $y_a:= x_a \in I \subset R \oplus I \oplus I^2 \oplus
\dots,$ i.\,e.\ $y_a$ is $x_a$ considered as an element of
degree 1 in the Rees algebra.
Since $x_1, \dots, x_r$ is a regular sequence in $R,$
the $\ol{R}$-module $I/I^2$ is free with basis the images
$\ol{y}_a$ of the $y_a,$ and the natural map
$\ol{R}[\ol{y}_1, \dots, \ol{y}_r] = \Sym_{R/I}(I/I^2) \ra R/I
\oplus I/I^2 \oplus I^2/I^3 \oplus \dots$ is an isomorphism.
Hence $E = \Proj (R/I \oplus I/I^2 \oplus I^2/I^3
\oplus \dots)=\DP^{r-1}_{\ol{R}}.$

Let
\begin{equation*}
  K_E=\Big(0
  \ra \mathcal{O}_E(-r)
  \xra{\partial_E^{-r}}
  \mathcal{O}_{E}(-r+1)^{\oplus {{r}\choose{r-1}}}
  \xra{\partial_E^{-r+1}}
  \ldots
  \xra{\partial_E^{-2}}
  \mathcal{O}_E(-1)^{\oplus {{r}\choose {1}}}
  \xra{\partial_E^{-1}}
  \mathcal{O}_E \ra
  0\Big),
\end{equation*}
be the acyclic
Koszul complex $K_E$ on $E$
defined as the 
following tensor product of complexes,
\begin{equation*}
  K_E:=\bigotimes_{a=1}^r
  \big(\mathcal{O}_E(-1)
  \xra{\ol{y}_a}
  \mathcal{O}_E\big).
\end{equation*}

\begin{remark}
  \label{rem:diff-forms-and-koszul-komplex}
  The kernel of $\partial_E^{-s}$ is canonically isomorphic
  to the vector bundle $\Omega^s_{E/Y}$
  (for example $\mathcal{O}_E(-r) = \Omega^{r-1}_{E/Y}$). 
  Indeed, it is a nice exercise to show that the complex $K_E$
  can also be obtained as follows: the dual of the Euler sequence
  gives rise to several 
  short exact sequences (see
  \cite[I.1.1.(3)]{okonek-schneider-spindler-vector-bundles});
  combining these in the obvious manner yields a long
  exact sequence which coincides with $K_E.$
\end{remark}

\begin{corollary}
  \label{c:diff-forms-generated}
  For any $s \geq 0$ we have
  \begin{equation*}
    \Omega^s_{E/Y}(s) \otimes p^*\bfMF(Y,W) 
    \subset
    \tria\Big(
    p^*\bfMF(Y,W), \dots, 
    \mathcal{O}_E(s) \otimes p^*\bfMF(Y,W)
    \Big).    
  \end{equation*}
  in $\bfMF(E,W).$
\end{corollary}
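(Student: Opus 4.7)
My plan is to exploit the acyclic Koszul complex $K_E$ together with the identification of $\Omega^s_{E/Y}$ as the kernel of $\partial_E^{-s}$ supplied by Remark~\ref{rem:diff-forms-and-koszul-komplex}. Since $K_E$ is exact, breaking it off at position $-s$ yields an exact sequence of vector bundles
\begin{equation*}
0 \ra \Omega^s_{E/Y} \ra \mathcal{O}_E(-s)^{\oplus {r \choose s}} \ra \mathcal{O}_E(-s+1)^{\oplus {r \choose s-1}} \ra \ldots \ra \mathcal{O}_E(-1)^{\oplus r} \ra \mathcal{O}_E \ra 0
\end{equation*}
on $E$. Twisting by $\mathcal{O}_E(s)$ gives an exact sequence whose terms $\mathcal{O}_E(i)^{\oplus {r \choose s-i}}$ (for $i=0,\dots,s$) have only non-negative twists, with $\Omega^s_{E/Y}(s)$ on the left.

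Given $M\in\MF(Y,W)$, I would then tensor this exact sequence with $p^*M$. Each $\mathcal{O}_E(i)$ is a line bundle, hence flat, so tensoring with $p^*M$ (viewed componentwise) preserves exactness and produces a bounded exact complex
\begin{equation*}
F = \Big(0 \ra \Omega^s_{E/Y}(s) \otimes p^*M \ra B^0 \ra B^1 \ra \ldots \ra B^s \ra 0\Big)
\end{equation*}
in $Z_0(\Coh(E,W))$, where $B^i = \mathcal{O}_E(i) \otimes p^*(M^{\oplus {r \choose s-i}})$ is an object of $\mathcal{O}_E(i) \otimes p^*\MF(Y,W)$.

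Finally, I would apply Lemma~\ref{l:totalization}.\ref{enum:brutal-truncation-and-totalization} to split $F$ as the composition of the stupid truncation $P$ (consisting of $\Omega^s_{E/Y}(s) \otimes p^*M$ placed in degree $-1$) and $I = (B^0 \ra \dots \ra B^s)$. Since $F$ is exact, the lemma yields an isomorphism $[1]\Tot(P) \sira \Tot(I)$ in $\bfMF(E,W)$; using the $\DZ_2$-periodicity of shifts this reads as an isomorphism $\Omega^s_{E/Y}(s) \otimes p^*M \sira \Tot(I)$. Iterated application of the same brutal-truncation triangle to $I$ (or equivalently, Lemma~\ref{l:useful-lemma}.\ref{enum:cohomologies-in-MF-then-Tot} together with induction on the length) shows that $\Tot(I)$ lies in the triangulated subcategory generated by the $B^i$, i.e.\ in $\tria(p^*\bfMF(Y,W),\dots,\mathcal{O}_E(s)\otimes p^*\bfMF(Y,W))$. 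This gives the desired inclusion. No step looks like a serious obstacle; the only care needed is to track that the $\DZ_2$-shifts and signs line up so that $[1]\Tot(P)$ really represents $\Omega^s_{E/Y}(s) \otimes p^*M$ in $\bfMF(E,W)$.
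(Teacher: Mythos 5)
Your argument is correct and is essentially the paper's own proof: both take the acyclic truncation of the Koszul complex $K_E$ ending in $\Omega^s_{E/Y}$, twist by $\mathcal{O}_E(s)$, tensor with $p^*M$, and then combine Lemma~\ref{l:totalization}.\ref{enum:brutal-truncation-and-totalization} with the iterated-truncation method of part~\ref{enum:totalization-of-finite-complex-components-null-homotopic}. (The only cosmetic point: exactness is preserved under $-\otimes p^*M$ because all terms \emph{and all syzygies} $\Omega^t_{E/Y}$ are vector bundles, or because $p^*M$ has locally free components, rather than merely because the $\mathcal{O}_E(i)$ are line bundles.)
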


\begin{proof}
  Remark~\ref{rem:diff-forms-and-koszul-komplex}
  provides an acyclic subcomplex of $K_E$ which vanishes in
  degrees $<-s-1,$ whose component in degree $-s-1$ is 
  isomorphic to $\Omega^s_{E/Y},$ and which coincides with $K_E$
  in degrees $\geq -s.$ Given an object $M \in \bfMF(Y,W),$ tensor
  $p^*(M)$ with this complex and twist by
  $\mathcal{O}_{E}(s).$
  Now use
  Lemma~\ref{l:totalization}.\ref{enum:brutal-truncation-and-totalization}
  and the method used to prove
  part~\ref{enum:totalization-of-finite-complex-components-null-homotopic}
  of the same lemma.
\end{proof}

\begin{proposition}
  \label{p:cohomology-of-inverse-image}
  The cohomology sheaves of
  $\mathbf{L}\pi^*(i_*\mathcal{O}_Y) 
  \in D^b(\Coh(\tildew{X}))$ are given as follows.
  \begin{enumerate}
  \item
    \label{enum:cohomology-of-inverse-image-inside}
    $H^{-s}(\mathbf{L}\pi^*(i_*\mathcal{O}_Y)=j_*\Omega^s_{E/Y}(s)$
    for $-s \in [-r+1, 0];$ 
  \item
    \label{enum:cohomology-of-inverse-image-outside}
    $H^t(\mathbf{L}\pi^*(i_*\mathcal{O}_Y)=0$ for $t \notin
    [-r+1, 0].$ 
  \end{enumerate}
  In fact, for $-s \in [-r+1, 0],$ there is an isomorphism
  \begin{equation}
    \label{eq:coho-sheaves-and-omega}
    H^{-s}(\bL\pi^*(i_*(-))) \sira j_*(\Omega^s_{E/Y}(s)
    \otimes p^*(-)) 
  \end{equation}
  of functors $\free(Y) \ra \Coh(\tildew{X}),$
  where $\free(Y) \subset \Coh(Y)$ is the full
  subcategory consisting of free $\mathcal{O}_Y$-modules 
  of finite rank, and the functor on the left is the 
  composition
  \begin{equation*}
    \free(Y) \subset \Coh(Y) \subset D^b(\Coh(Y)) \xra{i_*}
    D^b(\Coh(X) \xra{\bL\pi^*} D^b(\Coh(\tildew{X})) \xra{H^{-s}} 
    \Coh(\tildew{X}).
  \end{equation*}
\end{proposition}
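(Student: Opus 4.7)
The plan is to compute $\bL\pi^*(i_*\mathcal{O}_Y)$ explicitly via the Koszul resolution and then identify its cohomology sheaves with the help of an auxiliary acyclic twisted Koszul complex on $\tildew{X}$. Since $x_1,\dots,x_r$ is a regular sequence in $R$, the Koszul complex $K_R^\bullet$ (with $K_R^{-s}=\Lambda^s R^r$) is a finite free resolution of $R/I = i_*\mathcal{O}_Y$ over $R$, so applying $\pi^*$ termwise gives a bounded complex $C^\bullet := \pi^* K_R^\bullet$ of free $\mathcal{O}_{\tildew{X}}$-modules of rank $\binom{r}{s}$ in degree $-s$, representing $\bL\pi^*(i_*\mathcal{O}_Y)$. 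The crucial observation is that its Koszul differentials factor as $\pi^*(x_a) = \iota\circ y_a$, where $y_a\colon \mathcal{O}_{\tildew{X}}\to\mathcal{O}_{\tildew{X}}(1)$ is multiplication by the section $y_a$ and $\iota\colon \mathcal{O}_{\tildew{X}}(1)\hookrightarrow\mathcal{O}_{\tildew{X}}$ is the tautological inclusion of the ideal of $E$.

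Next I would introduce the twisted Koszul complex $\widetilde{K}^\bullet := \bigotimes_{a=1}^{r}\bigl(\mathcal{O}_{\tildew{X}}(-1)\xra{y_a}\mathcal{O}_{\tildew{X}}\bigr)$ on $\tildew{X}$, with $\widetilde{K}^{-s} = \mathcal{O}_{\tildew{X}}(-s)^{\binom{r}{s}}$. This complex is acyclic because on the affine chart $U_a = D_+(y_a)$ the section $y_a$ trivializes $\mathcal{O}_{\tildew{X}}(1)|_{U_a}$, so after trivialization one Koszul entry becomes the unit $1$ and the restricted complex is null-homotopic. Exactly as in Remark~\ref{rem:diff-forms-and-koszul-komplex} applied on $\tildew{X}$ in place of $E$, the kernels $\widetilde{\Omega}^s := \ker(\widetilde{K}^{-s}\to\widetilde{K}^{-s+1})$ are locally free of rank $\binom{r-1}{s}$, with $\widetilde{\Omega}^r = 0$ and $\widetilde{\Omega}^0 = \mathcal{O}_{\tildew{X}}$. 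Because the associated short exact sequences $0\to\widetilde{\Omega}^s\to\widetilde{K}^{-s}\to\widetilde{\Omega}^{s-1}\to 0$ consist of locally free sheaves, they remain exact after applying $j^*$, and together with $\widetilde{K}^\bullet|_E = K_E$ this yields $j^*\widetilde{\Omega}^s = \Omega^s_{E/Y}$ by induction on $s$.

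The heart of the argument is to match $C^\bullet$ against $\widetilde{K}^\bullet$. After twisting $\widetilde{K}^\bullet$ by $\mathcal{O}_{\tildew{X}}(s)$ in degree $-s$, one has a canonical identification $\widetilde{K}^{-s}(s) = C^{-s}$ under which the twisted differential coincides with the ``factored part'' $\widetilde{d}^{-s}\colon C^{-s}\to\mathcal{O}_{\tildew{X}}(1)^{\binom{r}{s-1}}$ of $d^{-s}$, so $d^{-s} = \iota\circ\widetilde{d}^{-s}$. Injectivity of $\iota$ gives $\ker d^{-s} = \widetilde{\Omega}^s(s) \subset C^{-s}$, while the acyclicity of $\widetilde{K}^\bullet$ identifies $\im \widetilde{d}^{-s-1}$ with $\widetilde{\Omega}^s(s+1) \subset \mathcal{O}_{\tildew{X}}(1)^{\binom{r}{s}}$, and composing with $\iota$ shows that $\im d^{-s-1} = \widetilde{\Omega}^s(s+1) = \mathcal{I}_E\cdot\widetilde{\Omega}^s(s)$ as a subsheaf of $\widetilde{\Omega}^s(s)$. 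Therefore
\[
\mathcal{H}^{-s}(C^\bullet) = \widetilde{\Omega}^s(s)/\mathcal{I}_E\cdot\widetilde{\Omega}^s(s) = \widetilde{\Omega}^s(s)\otimes_{\mathcal{O}_{\tildew{X}}}\mathcal{O}_E = j_*\bigl(j^*\widetilde{\Omega}^s\otimes\mathcal{O}_E(s)\bigr) = j_*\bigl(\Omega^s_{E/Y}(s)\bigr),
\]
which proves both parts of the proposition (with $s = 0$ giving $j_*\mathcal{O}_E$, $s = r$ giving $0$ because $\widetilde{\Omega}^r = 0$, and values of $t$ outside $[-r+1,0]$ trivial since $C^\bullet$ is concentrated in degrees $[-r,0]$ and $\mathcal{H}^{-r}(C^\bullet) = \widetilde{\Omega}^r(r) = 0$). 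Naturality on $\free(Y)$ then follows immediately from additivity of all the functors involved. The main obstacle in carrying out this plan is the identification $\im d^{-s-1} = \widetilde{\Omega}^s(s+1) \subset \widetilde{\Omega}^s(s)$: this requires careful bookkeeping of the twists and the two different roles played by $y_a$ (as a section of $\mathcal{O}_{\tildew{X}}(1)$ in the twisted Koszul differential versus as a regular function on $\tildew{X}$ in $C^\bullet$ after composing with $\iota$), together with the verification that the natural inclusion $\widetilde{\Omega}^s(s+1)\hookrightarrow\widetilde{\Omega}^s(s)$ is indeed the one induced by $\iota$.
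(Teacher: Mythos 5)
Your argument is correct, and it reaches the answer by a genuinely different mechanism than the paper's proof, even though the main ingredients overlap: both use the Koszul resolution $K_R$, the acyclic twisted Koszul complex $K_{\tildew{X}}=\bigotimes_a\big(\mathcal{O}_{\tildew{X}}(-1)\xra{y_a}\mathcal{O}_{\tildew{X}}\big)$, and the identification of its syzygies on $E$ with the $\Omega^s_{E/Y}$. The paper embeds $\pi^*(K_R)$ into $K_{\tildew{X}}$ via the powers $\gamma^{\otimes s}$ of a section of $\mathcal{O}_{\tildew{X}}(-1)$, passes to the cokernel complex $\ol{K}$, whose terms live on the infinitesimal neighborhoods $E^{s-1}$ of $E$, and extracts the cohomology from the spectral sequence of the $\mathcal{I}$-adic filtration of $\ol{K}$ (which degenerates at $E_1$). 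You instead compute $\ker d^{-s}$ and $\im d^{-s-1}$ of $\pi^*(K_R)$ directly, using the factorization $d=\iota\circ\widetilde{d}$ through the ideal sheaf together with the exactness of $K_{\tildew{X}}$, and obtain $H^{-s}=\widetilde{\Omega}^s(s)/\mathcal{I}\cdot\widetilde{\Omega}^s(s)=j_*\Omega^s_{E/Y}(s)$. This avoids both the infinitesimal neighborhoods and the spectral sequence and makes the canonicity of the isomorphism quite transparent; the price is exactly the bookkeeping you flag, namely that $\iota$ carries $\widetilde{\Omega}^s(s+1)\subset\mathcal{O}(1)^{\binom{r}{s}}$ onto $\mathcal{I}\cdot\widetilde{\Omega}^s(s)\subset\widetilde{\Omega}^s(s)$, which comes down to a local computation with a single nonzerodivisor generating $\mathcal{I}$. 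One point is stated too quickly: functoriality on $\free(Y)$ does not follow from additivity alone, since a morphism $\ol{R}^m\ra\ol{R}^n$ is an arbitrary matrix over $\ol{R}$. As in the paper, you should lift the matrix to $R$, act componentwise on the Koszul resolutions to represent the induced map on $\bL\pi^*(i_*(-))$, and observe that every step of your identification commutes with multiplication by the lifted entries, so that the induced map on $H^{-s}$ agrees with the one induced by $j_*(\Omega^s_{E/Y}(s)\otimes p^*(-))$ and is independent of the lift.
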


\begin{proof}
  Consider the Koszul complex $K_R:=(R;x_1,\dots,x_r)$ which is
  a resolution of $\ol{R},$
  \begin{equation*}
    (K_R \ra \ol{R}) =
    (0 \ra
    R \ra
    R^{\oplus r} \ra
    \ldots \ra
    R^{\oplus {{r}\choose {2}}} \ra
    R^{\oplus r} \ra
    R \ra
    \ol{R} \ra 0)
  \end{equation*}
  Then $\mathbf{L}\pi^*(i_*\mathcal{O}_Y) = \pi^*(K_R).$ 
  This already implies that the cohomology sheaves of 
  $\mathbf{L}\pi^*(i_*\mathcal{O}_Y)$ are zero outside $[-r,0].$
  Note that $K_R$
  is the tensor product of the complexes
  $(R;x_a)=(R \xra{x_a} R),$ hence
  \begin{equation*}
    \pi^*(K_R)
    = \bigotimes_{a=1}^r \big(\mathcal{O}_{\tildew{X}}
    \xra{x_a}
    \mathcal{O}_{\tildew{X}}
    \big).
  \end{equation*}

  We will calculate the cohomology of the complex
  $\pi^*(K_R)$ by comparing it to the acyclic Koszul complex
  \begin{equation*}
    K_{\tildew{X}} =
    \big( 0\ra \mathcal{O}_{\tildew{X}}(-r)
    \ra
    \mathcal{O}_{\tildew{X}}(-r+1)^{\oplus {{r}\choose{r-1}}}
    \ra
    \ldots
    \ra
    \mathcal{O}_{\tildew{X}}(-1)^{\oplus {{r}\choose {1}}}
    \ra
    \mathcal{O}_{\tildew{X}} \ra 0\big)
  \end{equation*}
  which is defined to be the following tensor product,
  \begin{equation*}
    K_{\tildew{X}}:=\bigotimes_{a=1}^r
    \big(\mathcal{O}_{\tildew{X}}(-1)
    \xra{y_a}
    \mathcal{O}_{\tildew{X}}\big).
  \end{equation*}

  Note that the Koszul complex $K_E$ above is the restriction
  $K_{\tildew{X}}|_{E}$ of $K_{\tildew{X}}$ to the divisor $E.$

  Consider the global section $\gamma$
  of the line bundle
  $\mathcal{O}_{\tildew{X}}(-1)$
  defined by
  $\gamma|_{\{y_b\not=0\}}=\frac{x_b}{y_b}$ 
  (on the chart $\{y_b \not= 0\}$) 
  for $1 \leq b \leq r.$
  It corresponds to a morphism $\gamma \colon  \mathcal{O}_{\tildew{X}}
  \ra \mathcal{O}_{\tildew{X}}(-1)$ (which is just
  the $(-1)$-twist of
  the first morphism in \eqref{eq:ses-ideal-sheaf-E}).
  The vertical arrows in the commutative square
  \begin{equation*}
    \xymatrix{
      {\mathcal{O}_{\tildew{X}}} \ar[d]^-{\gamma} \ar[r]^-{x_a} &
      {\mathcal{O}_{\tildew{X}}} \ar[d]^-{\id}\\
      {\mathcal{O}_{\tildew{X}}(-1)} \ar[r]^-{y_a} &
      {\mathcal{O}_{\tildew{X}}}
    }
  \end{equation*}
  define an injective morphism of two-term complexes indexed by
  $1 \leq a \leq r.$ Their tensor product
  is an injective morphism
  $\pi^*(K_R) \ra K.$
  In degree $(-s)$ it is given by the ${r \choose s}$-fold coproduct
  of the map
  \begin{equation*}
    \gamma^{\otimes s} \colon  \mathcal{O}_{\tildew{X}} \ra \mathcal{O}_{\tildew{X}}(-s).
  \end{equation*}
  We denote its cokernel by $\ol{K}$ and
  obtain a short exact sequence
  \begin{equation*}
    0\ra \pi^*(K_R) \ra K_{\tildew{X}} \ra \ol{K}\ra 0
  \end{equation*}
  of complexes of sheaves on $\tildew{X}.$ Its middle term is
  acyclic, so $H^t(\pi^*(K_R))=H^{t-1}(\ol{K}).$ In
  particular, it becomes clear 
  that $H^{-r}(\mathbf{L}\pi^*(i_*\mathcal{O}_Y))$
  vanishes; hence \ref{enum:cohomology-of-inverse-image-outside}
  is proved.

  For $n\geq 0$ denote by $E^n$ the $n$-th
  infinitesimal neighborhood of $E$ in $\tildew{X},$ i.\,e.\ the
  closed subscheme of $\tildew{X}$ defined by the $(n+1)$-st
  power of the ideal sheaf
  $\mathcal{I}=\mathcal{O}_{\tildew{X}}(1)$ of $E.$ The cokernel
  of the map $\gamma^{\otimes s} \colon  \mathcal{O}_{\tildew{X}} \ra
  \mathcal{O}_{\tildew{X}}(-s)$ is $\mathcal{O}_{E^{s-1}}(-s).$
  Hence $\ol{K}^0=0$ and
  $\ol{K}^{-s}=\mathcal{O}_{E^{s-1}}(-s)^{\oplus {r \choose
      s}}={K^{-s}_{\tildew{X}}}|_{E^{s-1}}$ for $s \geq 1.$ Note
  that the complex $\ol{K}$ has the finite descending filtration
  $\ol{K} \supset \mathcal{I}\ol{K} \supset \mathcal{I}^2 \ol{K}
  \supset \dots \supset \mathcal{I}^r\ol{K}=0.$ We include a
  picture
  of
  \begin{equation*}
    \xymatrix@R=0cm{
      {\ol{K} \colon } &
      {\dots} \ar[r] &
      {\mathcal{O}_{E^2}(-3)^{\oplus {r \choose 3}}} \ar[r] &
      {\mathcal{O}_{E^1}(-2)^{\oplus {r \choose 2}}} \ar[r] &
      {\mathcal{O}_{E}(-1)^{\oplus r}} \ar[r] &
      {0,}
    }
  \end{equation*}
  and of the (non-trivial) associated graded
  complexes
  \begin{equation*}
    \xymatrix@R=0cm{
      {\gr^0(\ol{K}) \colon } &
      {\dots} \ar[r] &
      {\mathcal{O}_{E}(-3)^{\oplus {r \choose 3}}} \ar[r]^-{\partial_E^{-3}} &
      {\mathcal{O}_{E}(-2)^{\oplus {r \choose 2}}} \ar[r]^-{\partial_E^{-2}} &
      {\mathcal{O}_{E}(-1)^{\oplus r}} \ar[r] &
      {0,}\\
      {\gr^{1}(\ol{K}) \colon } &
      {\dots} \ar[r] &
      {\mathcal{O}_{E}(-2)^{\oplus {r \choose 3}}} \ar[r]^-{\partial_E^{-3}(1)} &
      {\mathcal{O}_{E}(-1)^{\oplus {r \choose 2}}} \ar[r] &
      {0} \ar[r] &
      {0,}\\
      {\gr^{2}(\ol{K}) \colon } &
      {\dots} \ar[r] &
      {\mathcal{O}_{E}(-1)^{\oplus {r \choose 3}}} \ar[r] &
      {0} \ar[r] &
      {0} \ar[r] &
      {0,}\\
    }
  \end{equation*}
  in degrees between $-3$ and $0.$
  Remark~\ref{rem:diff-forms-and-koszul-komplex} shows that the
  cohomology of $\gr^s(\ol{K})$ is concentrated in degree $-s-1$
  and canonically isomorphic to
  $\Kern(\partial_E^{-s}(s)) = \Omega_{E/Y}^s(s),$ or more
  precisely to $j_*\Omega_{E/Y}^s(s).$

  It is straightforward to see that
  spectral sequence associated to our filtered complex $\ol{K}$
  (whose $E_0$-page is $\gr^*(\ol{K})$ depicted above, up to a
  coordinate change) satisfies $E_1=E_2= \dots =E_\infty,$
  and that the induced filtration on each $H^t(\ol{K})$ has at
  most one non-trivial subquotient.   
  We hence obtain canonically
  \begin{equation}
    \label{eq:structure-sheaf-omega}
    H^{-s}(\bL\pi^*i_*\mathcal{O}_Y)=
    H^{-s}(\pi^*(K_R))=
    H^{-s-1}(\ol{K})
    =
    H^{-s-1}(\gr^s(\ol{K}))
    =
    \Kern(\partial_E^{-s}(s))
    =
    j_*\Omega_{E/Y}^s(s)
  \end{equation}
  for $-s \in [-r+1,0].$ 
  This proves \ref{enum:cohomology-of-inverse-image-inside}.

  It remains to construct the isomorphism
  \eqref{eq:coho-sheaves-and-omega}.
  Given 
  $M=\ol{R}^m$ a free $\ol{R}$-module of
  finite rank, we take the $m$-fold coproduct of the above
  construction and obtain an isomorphism
  $H^{-s}(\bL\pi^*(i_*(M))) \sira j_*(\Omega^s_{E/Y}(s)
  \otimes p^*(M))$ 
  in $\Coh(\tildew{X})$ as the $m$-fold coproduct of
  \eqref{eq:structure-sheaf-omega}.
  This defines
  \eqref{eq:coho-sheaves-and-omega} on objects. We claim that
  this is compatible with morphisms $M \ra N$ in $\free(Y).$ It
  is certainly sufficient to treat the case $M=N=\ol{R}.$
  Then any morphism $M \ra N$ is given by some $\ol{f} \in
  \ol{R}.$ Choose $f \in R$ with image $\ol{f}$ in $\ol{R}.$
  Componentwise multiplication with $f$ lifts $f \colon  M=R \ra N=R$ to
  the Koszul resolution $K_R,$ and we can use this lift to
  compute the image of $f$
  under $H^{-s}(\bL \pi^*(i_*(-)))$ (and this image does not
  depend on the choice of $f$ since all objects in
  \eqref{eq:coho-sheaves-and-omega} are supported on $E$).  The
  image of $f$ under $j_*(\Omega^s_{E/Y}(s) \otimes p^*(-))$ is
  obvious. Now note that all constructions involved in the
  definition of the isomorphism \eqref{eq:coho-sheaves-and-omega}
  are compatible with multiplication by $f.$ This proves our
  claim.
\end{proof}

\begin{corollary}
  \label{c:kuznetsov-method}
  \rule{1mm}{0mm}
  \begin{enumerate}
  \item
    \label{enum:cohomologies-pi-us-i-ls}
    Let $M\in \MF(Y,W)$ and assume that its components $M_0$ and
    $M_1$ are free $\ol{R}$-modules of finite rank. 
    Let $0 \ra Q^n \ra \dots \ra Q^0 \ra i_*(M)$ be an exact
    sequence in $Z_0(\Coh(X, W))$ with all $Q^i \in \MF(X,W),$
    cf.\ Lemma~\ref{l:resolutions}.\ref{enum:MF-reso}.
    Then the cohomologies of
    $\pi^*(Q),$ considered as a complex in $Z_0(\Coh(\tildew{X},
    W)),$   
    are given as follows. 
    \begin{enumerate}
    \item
      \label{enum:cohomology-of-inverse-image-free-comp-inside}
      $H^{-s}(\pi^*(Q)) \cong
      j_*(\Omega^s_{E/Y}(s) \otimes p^*(M))$ 
      in $\Coh(\tildew{X}, W),$
      for $-s \in [-r+1, 0];$
    \item
      \label{enum:cohomology-of-inverse-image-free-comp-outside}
      $H^t(\pi^*(Q))=0$ for $t \notin [-r+1, 0].$
    \end{enumerate}
  \item
    \label{enum:MF-minus-one-contained}
    We have
    \begin{equation*}
      \bfMF(Y,W)_{-1} \subset \tria(\pi^*\bfMF(X,W), \bfMF(Y,W)_0,
      \dots, \bfMF(Y,W)_{r-2}). 
    \end{equation*}
  \end{enumerate}
\end{corollary}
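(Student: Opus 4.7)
For part~\ref{enum:cohomologies-pi-us-i-ls}, I argue componentwise. Since each $Q^i \in \MF(X,W)$ has components $Q^i_0, Q^i_1$ that are locally free of finite rank on $X$, exactness of $0 \to Q^n \to \dots \to Q^0 \to i_*M \to 0$ in $Z_0(\Coh(X,W))$ means that for each $p \in \DZ_2$ the sequence $0 \to Q^n_p \to \dots \to Q^0_p \to i_*(M_p) \to 0$ is a finite locally free resolution in $\Coh(X)$. Hence $\pi^*(Q^\bullet_p)$ computes $\bL\pi^*(i_*(M_p))$ in $D^b(\Coh(\tildew{X}))$. Since $M_p$ is a free $\ol{R}$-module of finite rank, Proposition~\ref{p:cohomology-of-inverse-image} identifies the $-s$-th cohomology of this complex with $j_*(\Omega^s_{E/Y}(s) \otimes p^*M_p)$ for $-s \in [-r+1,0]$ and with $0$ otherwise. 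The naturality of this isomorphism with respect to morphisms in $\free(Y)$, applied to the structure maps $m_0$ and $m_1$ of $M$, shows that the componentwise isomorphisms intertwine the induced $W$-curved differentials; they therefore assemble into an isomorphism in $\Coh(\tildew{X}, W)$, proving \ref{enum:cohomology-of-inverse-image-free-comp-inside} and \ref{enum:cohomology-of-inverse-image-free-comp-outside}.

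For part~\ref{enum:MF-minus-one-contained}, I first observe that the natural isomorphism of Proposition~\ref{p:cohomology-of-inverse-image} extends from $\free(Y)$ to all finitely generated projective $\ol{R}$-modules by an idempotent-splitting argument: both functors in question are additive, so the iso descends to any direct summand of a free module in a canonical way. Consequently part~\ref{enum:cohomologies-pi-us-i-ls} holds for any $M \in \MF(Y,W)$, whose components are automatically projective. Fix such an $M$ and choose a resolution $0 \to Q^n \to \dots \to Q^0 \to i_*M \to 0$ by matrix factorizations via Lemma~\ref{l:resolutions}.\ref{enum:MF-reso}. For $s = r-1$ the top term $\Omega^{r-1}_{E/Y} = \mathcal{O}_E(-r)$ of the Koszul complex $K_E$ gives $\Omega^{r-1}_{E/Y}(r-1) = \mathcal{O}_E(-1)$, whence
\begin{equation*}
H^{-(r-1)}(\pi^*(Q)) \cong j_*(\mathcal{O}_E(-1) \otimes p^*M) = s_{-1}(M) \in \bfMF(Y,W)_{-1}.
\end{equation*}

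To conclude, apply the iterated truncation triangles constructed in the proof of Lemma~\ref{l:useful-lemma}.\ref{enum:cohomologies-in-MF-then-Tot} to $F := \pi^*(Q)$ inside $\DCoh(\tildew{X}, W)$: they place $\Tot(F)$ in the triangulated envelope of suitable shifts of $H^{-s}(F)$ for $s \in [0,r-1]$, and rearranging expresses $H^{-(r-1)}(F) = s_{-1}(M)$ as an object of the triangulated subcategory generated by $\Tot(F)$ together with $H^{-s}(F)$ for $s \in [0, r-2]$. Now $\Tot(F) = \Tot(\pi^*(Q)) = \pi^*(\Tot(Q))$ since $\pi^*$ commutes with finite direct sums; as $\Tot(Q)$ has locally free components one has $\bL\pi^*(\Tot(Q)) \cong \pi^*(\Tot(Q))$, and $\Tot(Q) \cong i_*M$ in $\DCoh(X,W)$ shows $\Tot(F) \in \pi^*\bfMF(X,W)$. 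For each $s \in [0, r-2]$, Corollary~\ref{c:diff-forms-generated} yields $\Omega^s_{E/Y}(s) \otimes p^*M \in \tria(p^*\bfMF(Y,W), \dots, \mathcal{O}_E(s) \otimes p^*\bfMF(Y,W))$, and applying $j_*$ gives $H^{-s}(F) \in \tria(\bfMF(Y,W)_0, \dots, \bfMF(Y,W)_s) \subset \tria(\bfMF(Y,W)_0, \dots, \bfMF(Y,W)_{r-2})$. Combining, $s_{-1}(M)$ lies in $\tria(\pi^*\bfMF(X,W), \bfMF(Y,W)_0, \dots, \bfMF(Y,W)_{r-2})$, as required.

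The main obstacle I anticipate is the naturality argument in part~\ref{enum:cohomologies-pi-us-i-ls}: verifying that Proposition~\ref{p:cohomology-of-inverse-image} lifts to an isomorphism of $W$-curved objects rather than of graded sheaves alone. Once this is pinned down, the extension from free to projective components in part~\ref{enum:MF-minus-one-contained} and the extraction of $s_{-1}(M)$ via truncation triangles are essentially formal.
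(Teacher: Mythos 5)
Your proof is correct, and part (a) as well as the skeleton of part (b) — identifying $H^{-(r-1)}(\pi^*(Q))$ with $j_*(\mathcal{O}_E(-1)\otimes p^*M)$ via $\Omega^{r-1}_{E/Y}(r-1)\cong\mathcal{O}_E(-1)$, putting the remaining cohomologies into $\tria(\bfMF(Y,W)_0,\dots,\bfMF(Y,W)_{r-2})$ by Corollary~\ref{c:diff-forms-generated}, and extracting the top cohomology with the truncation triangles of Lemma~\ref{l:useful-lemma} — coincide with the paper's argument. The one genuine divergence is the passage from free to arbitrary components. The paper keeps Proposition~\ref{p:cohomology-of-inverse-image} restricted to $\free(Y)$ and, for arbitrary $N\in\bfMF(Y,W)$, realizes $j_*(\mathcal{O}_E(-1)\otimes p^*N)$ as a direct summand of the corresponding object for $M:=N\oplus[1]N\oplus(\matfak{P}{1}{P}{W})\oplus(\matfak{Q}{1}{Q}{W})$, which does have free components; this forces an appeal to the thickness of $\mathcal{S}=\tria(\pi^*\bfMF(X,W),\bfMF(Y,W)_0,\dots,\bfMF(Y,W)_{r-2})$, obtained from its admissibility via the already-established parts of Theorem~\ref{t:semi-orthog-2-mf} and Appendix~\ref{sec:app:remind-admiss-subc}. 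You instead extend the natural isomorphism \eqref{eq:coho-sheaves-and-omega} from $\free(Y)$ to all finitely generated projective $\ol{R}$-modules by the idempotent-completion argument, so that part (a) holds verbatim for every $M\in\MF(Y,W)$ and no thickness of $\mathcal{S}$ is needed. Both routes are valid; yours trades the appeal to admissibility for the (routine, but worth recording explicitly) check that the Karoubi extension of the isomorphism of additive functors is well defined, independent of the chosen splitting, and natural — remember that the paper constructs the isomorphism only on free modules via explicit Koszul resolutions, so this extension is the one point you must actually verify rather than cite.
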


\begin{proof}
  \ref{enum:cohomologies-pi-us-i-ls}:
  The image of the morphisms $m_0 \colon  M_0 \ra M_1$
  and $m_1 \colon  M_1 \ra M_0$ 
  under the functor 
  $H^{-s}(\bL \pi^*(i_*(-))) \colon  \free(Y) \ra \Coh(\tildew{X})$ 
  can be computed using the morphisms 
  $q_0 \colon  Q_0 \ra Q_1$ 
  and $q_1 \colon  Q_1 \ra Q_0$ 
  of complexes in $\Coh(X).$ Now use
  the isomorphism of functors
  \eqref{eq:coho-sheaves-and-omega} 
  (and
  \ref{enum:cohomology-of-inverse-image-outside})
  in Proposition~\ref{p:cohomology-of-inverse-image}.

  \ref{enum:MF-minus-one-contained}
  Let $\mathcal{S}$ be the specified triangulated envelope. 

  Let $M \in \MF(Y,W)$ have free components, and let $Q \ra
  i_*(M)$ be as in 
  \ref{enum:cohomologies-pi-us-i-ls}.
  We claim that 
  $j_*(\mathcal{O}_E(-1) \otimes p^*(M)) \in \mathcal{S}.$

  Note that 
  $j_*(\mathcal{O}_E(-1) \otimes p^*(M)) =
  j_*(\Omega^{r-1}_{E/Y}(r-1) \otimes p^*(M))$ 
  by Remark~\ref{rem:diff-forms-and-koszul-komplex}.
  Hence, 
  by \ref{enum:cohomologies-pi-us-i-ls},
  $j_*(\mathcal{O}_E(-1) \otimes p^*(M))$ is the 
  $(-r+1)$-st cohomology of
  the complex $\pi^*(Q)$
  whose totalization
  $\mathbf{L}\pi^*(i_*M)$
  trivially belongs to $\pi^*\bfMF(X,W).$
  The other cohomologies of this complex are in
  the full triangulated subcategory generated by
  $\bfMF(Y,W)_0, \dots, \bfMF(Y,W)_{r-2},$
  by
  part \ref{enum:cohomologies-pi-us-i-ls} again
  and
  Corollary~\ref{c:diff-forms-generated}. 
  The claim follows
  (by the technique used in the proof of
  Lemma~\ref{l:useful-lemma}.\ref{enum:cohomologies-in-MF-then-Tot}). 

  Now let $N \in \bfMF(Y,W)$ be arbitrary.
  Certainly we find $\ol{R}$-modules $P$ and $Q$ such
  $N_0 \oplus P$ and $N_1 \oplus Q$ are free $\ol{R}$-modules of
  finite rank. Note that
  the components of
  \begin{equation*}
    M:= N \oplus [1]N 
    \oplus (\matfak{P}{1}{P}{W})
    \oplus (\matfak{Q}{1}{Q}{W}) 
  \end{equation*}
  are free $\ol{R}$-modules of finite rank.
  We already know that
  $j_*(\mathcal{O}_E(-1) \otimes p^*(M)) \in \mathcal{S}.$
  Hence 
  $j_*(\mathcal{O}_E(-1) \otimes p^*(N))$ is a direct summand 
  of an object of $\mathcal{S}.$
  But 
  $\mathcal{S}$ is an admissible subcategory of
  $\bfMF(\tildew{X}, W),$
  by  
  Lemma~\ref{l:right-admissible-orthogonals-generate-right-admissible}
  since
  \ref{enum:c:semi-orthog-2-mf} and
  semi-orthogonality in
  \ref{enum:e:semi-orthog-2-mf} are already known.
  In particular, it is a thick subcategory by 
  Corollary~\ref{c:admissibility-and-perpendicularity}.
  Hence 
  $j_*(\mathcal{O}_E(-1) \otimes p^*(N)) \in \mathcal{S}.$
\end{proof}

\begin{proof}[Proof of completeness in 
  \ref{enum:e:semi-orthog-2-mf} (in the local situation)]

  If we twist the semi-orthogonal decomposition in
  \ref{enum:c:semi-orthog-1-mf}
  by $\mathcal{O}(r-2)$
  we see that 
  \begin{equation*}
    \mathcal{O} (r-1)\otimes
    p^*\bfMF(Y,W)
    \subset \tria(\mathcal{O} (-1)\otimes
    p^*\bfMF(Y,W), \dots,
    \mathcal{O} (r-2)\otimes
    p^*\bfMF(Y,W)).
  \end{equation*}
  Apply $j_*$ to this inclusion. This yields the first inclusion
  in
  \begin{align*}
    \bfMF(Y,W)_{r-1}
    & \subset
    \tria(\bfMF(Y,W)_{-1}, \bfMF(Y,W)_{0}, \dots, \bfMF(Y,W)_{r-2})\\
    & \subset
    \mathcal{D}:= \tria(\pi^*\bfMF(X,W), \bfMF(Y,W)_{0}, \dots,
    \bfMF(Y,W)_{r-2}),
  \end{align*}
  and the second inclusion follows from
  Corollary~\ref{c:kuznetsov-method}.\ref{enum:MF-minus-one-contained}. 
  This and 
  Proposition~\ref{p:left-orthog-of-pi-0-bis-r-1} imply that
  $\leftidx{^\perp}{\mathcal{D}}{}=0.$
  Note that $\mathcal{D}$ is admissible by 
  \ref{enum:c:semi-orthog-2-mf} and  
  Lemma~\ref{l:right-admissible-orthogonals-generate-right-admissible} since we already know semi-orthogonality in
  \ref{enum:e:semi-orthog-2-mf}.
  But then Remark~\ref{rem:right-admissible-subcat-equal} shows
  that $\mathcal{D}=\bfMF(\tildew{X},W).$
\end{proof}

Now the proof of Theorem~\ref{t:semi-orthog-2-mf} is complete in
the local situation 
described at the beginning of this
subsection~\ref{sec:local-considerations}.

\subsubsection{Back to the global setting}
\label{sec:back-global-setting}

We now return to the global blowing-up setting 
described in subsection~\ref{sec:blowing-ups}.
% at the beginning of
% section~\ref{sec:semi-orth-decomp}.

\begin{proof}[Proof of completeness in 
  \ref{enum:e:semi-orthog-2-mf} (in the global setting)]
  If $U \subset X$ is an open subscheme, we define
  $\mathcal{S}_U$ to be the subcategory of $\bfMF(\pi\inv(U), W)$
  defined by 
  \begin{equation*}
    \mathcal{S}_U:=
    \tria(\pi^*\bfMF(U, W), \bfMF(Y \cap U, W)_0,
    \dots, \bfMF(Y \cap U, W)_{r-2}).
  \end{equation*}
  Each $\mathcal{S}_U$
  is admissible by
  Lemma~\ref{l:right-admissible-orthogonals-generate-right-admissible}
  since \ref{enum:c:semi-orthog-2-mf}
  and
  semi-orthogonality in
  \ref{enum:e:semi-orthog-2-mf} are already known.
  Let $\mathcal{S}:= \mathcal{S}_X.$
  We need to show that $\mathcal{S}=\bfMF(\tildew{X},W).$
  By Remark~\ref{rem:right-admissible-subcat-equal}
  it suffices to prove that
  the left orthogonal
  $\leftidx{^\perp}{\mathcal{S}}{}$ is zero.

  Let $B \in \leftidx{^\perp}{\mathcal{S}}{}.$
  Lemma~\ref{l:locality-of-being-left-orthogonal}
  shows that $B|_{\pi\inv(U)} \in \leftidx{^\perp}{(\mathcal{S}_U)}{}$
  for all open $U \subset X.$

  Each point of $Y$ has an open neighborhood $U$ in $X$
  such that the inclusion $Y \cap U \subset X \cap U$
  is isomorphic to $\Spec R/I \subset \Spec R$ with $I \subset R$
  as described at the beginning of
  subsection~\ref{sec:local-considerations}.
  Since we already proved
  \ref{enum:e:semi-orthog-2-mf} for this local setting
  we know that
  $\mathcal{S}_U=\bfMF(\pi\inv(U), W).$ 
  Hence $B|_{\pi\inv(U)}=0$ in $\bfMF(\pi\inv(U), W).$

  Trivially we have $\mathcal{S}_{X \setminus Y}=
  \bfMF(\tildew{X} \setminus E, W)$
  and hence $B|_{\tildew{X} \setminus E} =0$ in 
  $\bfMF(\tildew{X} \setminus E).$
  Now Lemma~\ref{l:locality-of-being-zero}
  shows that $B=0$ in $\bfMF(\tildew{X}, W).$
\end{proof}

This finishes the proof of Theorem~\ref{t:semi-orthog-2-mf}.

We will also need the following lift of this result to the dg level.

\begin{corollary}
  \label{c:semi-orthog-2-mf-dg-level}
  In the situation of Theorem~\ref{t:semi-orthog-2-mf},
  there is a
  full dg subcategory 
  $\mathcal{X}$ of
  $\Coh(\tildew{X},W)/\AcyclCoh(\tildew{X},W)$ which is
  quasi-equivalent to 
  $\Coh(X,W)/\AcyclCoh(X,W),$ and
  there are full dg subcategories $\mathcal{Y}'_{l}$ of
  $\Coh(\tildew{X},W)/\AcyclCoh(\tildew{X},W)$ (for $l \in \DZ$)
  which are
  quasi-equivalent to $\Coh(Y,W)/\AcyclCoh(Y,W),$
  such that 
  the semi-orthogonal decomposition into admissible
  subcategories from 
  Theorem~\ref{t:semi-orthog-2-mf}.\ref{enum:d:semi-orthog-2-mf}
  is given by
  \begin{equation*}
    [\Coh(\tildew{X},W)/\AcyclCoh(\tildew{X},W)]= \langle [\mathcal{Y}'_{-r+1}], \dots,
      [\mathcal{Y}'_{-1}], [\mathcal{X}] \rangle
  \end{equation*}
  if we identify $\bfMF(\tildew{X},W) \sira \DCoh(\tildew{X},W)$
  with the left-hand side as explained in
  section~\ref{sec:enhanc-dg-quot}.
\end{corollary}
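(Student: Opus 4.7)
The plan is to adapt the strategy of Corollary~\ref{c:semi-orthog-1-mf-dg-level}, with the crucial modification that we use the enhancement $\Coh(-,W)/\AcyclCoh(-,W)$ rather than $\MF(-,W)/\AcyclMF(-,W)$, since the direct image $j_*$ along the closed immersion takes matrix factorizations to coherent sheaves that are generally no longer matrix factorizations. The idea is that each functor appearing in Theorem~\ref{t:semi-orthog-2-mf} can be realized by a composition of dg functors at the level of enhancements that preserve the appropriate classes of acyclic objects, and then the pieces of the semi-orthogonal decomposition can be cut out as full dg subcategories.

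First I would treat the subcategories $\mathcal{Y}'_l$. The composition
\[
\Coh(Y,W) \xra{p^*} \Coh(E,W) \xra{\mathcal{O}_E(l)\otimes -} \Coh(E,W) \xra{j_*} \Coh(\tildew{X},W)
\]
is a dg functor whose three factors are all exact on (quasi-)coherent sheaves: $p^*$ because $p$ is a $\DP^{r-1}$-bundle (hence flat), the twist by a line bundle trivially, and $j_*$ because $j$ is a closed (hence affine) immersion. So this composition preserves $\AcyclCoh$ and descends to a dg functor
$\Coh(Y,W)/\AcyclCoh(Y,W) \to \Coh(\tildew{X},W)/\AcyclCoh(\tildew{X},W)$.
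Using Remark~\ref{rem:derived-direct-image-for-affine-morphism} (so that $j_* = \bR j_*$) and flatness of $p$ (so that $p^* = \bL p^*$), the induced functor on homotopy categories coincides with $s_l$. I would then define $\mathcal{Y}'_l$ to be the full dg subcategory of $\Coh(\tildew{X},W)/\AcyclCoh(\tildew{X},W)$ on those objects whose image in $[\Coh(\tildew{X},W)/\AcyclCoh(\tildew{X},W)] \cong \DCoh(\tildew{X},W) \cong \bfMF(\tildew{X},W)$ lies in $\bfMF(Y,W)_l$. The dg functor above factors through $\mathcal{Y}'_l$ by construction; by Theorem~\ref{t:semi-orthog-2-mf}.\ref{enum:b:semi-orthog-2-mf} the induced homotopy functor is fully faithful with essential image $\bfMF(Y,W)_l$, and compatibility with shifts forces quasi-isomorphisms on all morphism complexes. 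Hence the factored dg functor $\Coh(Y,W)/\AcyclCoh(Y,W) \to \mathcal{Y}'_l$ is a quasi-equivalence.

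For $\mathcal{X}$ the direct approach meets a subtle obstacle and is the main point where one has to work: on coherent sheaves $\pi^*$ is only right exact, so it does not obviously preserve $\AcyclCoh(X,W)$. However $\pi^*$ is exact on vector bundles (short exact sequences of matrix factorizations are locally split), so
$\pi^* \colon \MF(X,W) \to \MF(\tildew{X},W) \hra \Coh(\tildew{X},W)$
is a dg functor sending $\AcyclMF(X,W)$ into $\AcyclCoh(\tildew{X},W)$, and it induces a dg functor
$\MF(X,W)/\AcyclMF(X,W) \to \Coh(\tildew{X},W)/\AcyclCoh(\tildew{X},W)$
realizing $\pi^* \colon \bfMF(X,W) \to \bfMF(\tildew{X},W)$ on homotopy categories. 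Defining $\mathcal{X}$ as the full dg subcategory of the target on objects whose homotopy class lies in $\pi^*\bfMF(X,W)$, the same argument as for $\mathcal{Y}'_l$, now using Theorem~\ref{t:semi-orthog-2-mf}.\ref{enum:a:semi-orthog-2-mf}, gives a quasi-equivalence
$\MF(X,W)/\AcyclMF(X,W) \sira \mathcal{X}$.
On the other hand, the inclusion $\MF(X,W) \hra \Coh(X,W)$ trivially carries $\AcyclMF$ into $\AcyclCoh$ and, using Theorem~\ref{t:equivalences-curved-categories}.\ref{enum:MF-Dcoh-equiv} and the shift-compatibility argument above, induces a quasi-equivalence
$\MF(X,W)/\AcyclMF(X,W) \sira \Coh(X,W)/\AcyclCoh(X,W)$.
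Composing these two quasi-equivalences in a zig-zag yields $\mathcal{X} \simeq \Coh(X,W)/\AcyclCoh(X,W)$.

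Finally, the semi-orthogonal decomposition assertion is immediate: by construction the homotopy categories of $\mathcal{Y}'_{-r+1}, \dots, \mathcal{Y}'_{-1}, \mathcal{X}$ are exactly the admissible subcategories $\bfMF(Y,W)_{-r+1}, \dots, \bfMF(Y,W)_{-1}, \pi^*\bfMF(X,W)$ of $\bfMF(\tildew{X},W)$, and these form the semi-orthogonal decomposition of Theorem~\ref{t:semi-orthog-2-mf}.\ref{enum:d:semi-orthog-2-mf}. The only non-routine step is the mismatch between the enhancement $\Coh/\AcyclCoh$ (needed to accommodate $j_*$) and the fact that $\pi^*$ only behaves well on the $\MF/\AcyclMF$ side, which is exactly what the zig-zag above handles.
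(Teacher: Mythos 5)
Your proposal is correct and follows essentially the same route as the paper, whose proof simply refers back to Corollary~\ref{c:semi-orthog-1-mf-dg-level} with $\Coh(-,?)/\AcyclCoh(-,?)$ in place of $\MF(-,?)/\AcyclMF(-,?)$: realize the functors as dg functors preserving acyclics and cut out the components as full dg subcategories. Your extra zig-zag through $\MF(X,W)/\AcyclMF(X,W)$ to handle $\pi^*$ (which is not exact on arbitrary coherent sheaves, only on the locally split short exact sequences of matrix factorizations) correctly makes explicit a point that the paper's one-line proof leaves implicit.
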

 
\begin{proof}
  This is proved as Corollary~\ref{c:semi-orthog-1-mf-dg-level}.
  We could have used the dg categories 
  $\Coh(-,?)$ and $\AcyclCoh(-,?)$ instead of
  $\MF(-,?)$ and $\AcyclMF(-,?)$ there. Here we need to do this
  since we have to deal with the functor $j_*(\mathcal{O}(l)
  \otimes p^*(-)).$ 
\end{proof}

\subsection{Applications}
\label{sec:applications}

Certainly we can apply Theorem~\ref{t:semi-orthog-1-mf} to
$\DP^n_k \ra \Spec k$ and $W=0.$ We obtain a semi-orthogonal
decomposition of $\bfMF(\DP^n_k, 0)$ into admissible
subcategories. 
Let us denote the object 
$(\matfak{0}{}{\mathcal{O}_{\DP^n_k}(i)}{}) \in \bfMF(\DP^n_k,0)$
by $\mathcal{O}_{\DP^n_k}(i)$ (by abuse of notation).
Then it is not difficult to see that the objects
\begin{equation*}
  \mathcal{O}_{\DP^n_k}(-n), \dots, \mathcal{O}_{\DP^n_k}
\end{equation*}
define a strong full exceptional collection (in the
$\DZ_2$-graded sense) in $\bfMF(\DP^n_k,0).$
We will explain this in \cite{olaf-folding-derived-categories-in-prep}
using the folding functor.

We mention some corollaries of 
Theorem~\ref{t:semi-orthog-2-mf}.

\begin{corollary} 
  \label{c:semi-orthog-2-mf-zero-set-regular}
  Let $X$ be a scheme satisfying condition~\ref{enum:srNfKd}
  and let
  $\tildew{X}$ be the blowing-up of $X$ along 
  a regular equi-codimenisonal closed
  subscheme $Y$ of codimension $r \geq 2.$
  Let $W \colon X \ra \DA^1$ be a morphism.
 \begin{enumerate}
  \item 
    \label{enum:W-flat-and-Xzero-regular}
    Assume that $W$ is flat and that the scheme-theoretic zero
    fiber $X_0$ of $W \colon  X \ra \DA^1$ 
    is regular.  
    Then the
    category 
    $\bfMF(\tildew{X},W)$ has a semi-orthogonal decomposition
    into $r-1$ admissible subcategories that are all equivalent to 
    $\bfMF(Y,W).$ 
    In particular, if the codimension $r=2,$ then
    $j_*p^* \colon  \bfMF(Y,W) \ra \DCoh(\tildew{X},W)$ induces an
    equivalence 
    $\bfMF(Y,W) \sira \bfMF(\tildew{X},W).$
  \item 
    \label{enum:WresY-flat-and-Yzero-regular}
    Assume that $W|_Y \colon  Y \ra $ is flat and that its scheme
    theoretic zero fiber $Y_0$ is regular.
    Then 
    $\pi^* \colon  \bfMF(X,W) \sira \bfMF(\tildew{X}, W)$
    is an equivalence.
  \item 
    \label{enum:both-flat-and-zero-fibers-regular}
    If both $W$ and $W|_Y$ are flat and have
    regular scheme-theoretic zero fibers $X_0$ and $Y_0,$
    respectively, then $\bfMF(\tildew{X}, W)=0.$
  \end{enumerate}
\end{corollary}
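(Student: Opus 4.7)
The plan is to combine the main semi-orthogonal decomposition of Theorem~\ref{t:semi-orthog-2-mf}.\ref{enum:d:semi-orthog-2-mf} (together with its dual form arising from Proposition~\ref{p:semi-orthog-Xtilde-reformulated}) with Orlov's equivalence $\bfMF(Z,V) \simeq D_\Sg(Z_0)$ from Theorem~\ref{t:factorizations=singularity}, exploiting the elementary fact that $D_\Sg(Z) = 0$ whenever $Z$ is a regular Noetherian scheme of finite Krull dimension (since every object of $D^b(\Coh(Z))$ is then perfect). Once these two ingredients are in hand the corollary is essentially bookkeeping; no substantive obstacle is anticipated.

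For \ref{enum:W-flat-and-Xzero-regular}, flatness of $W$ together with regularity of $X_0$ gives $\bfMF(X,W) \simeq D_\Sg(X_0) = 0$ by Theorem~\ref{t:factorizations=singularity}. Substituting into Theorem~\ref{t:semi-orthog-2-mf}.\ref{enum:d:semi-orthog-2-mf} leaves
\begin{equation*}
  \bfMF(\tildew{X},W) = \langle \bfMF(Y,W)_{-r+1}, \dots, \bfMF(Y,W)_{-1} \rangle,
\end{equation*}
exhibiting $r-1$ admissible pieces, each equivalent to $\bfMF(Y,W)$ via the full and faithful functor $s_l$ of \ref{enum:b:semi-orthog-2-mf}. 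For the refinement in the case $r = 2$, I would apply Theorem~\ref{t:semi-orthog-2-mf} with $-W$ in place of $W$ and then invoke Proposition~\ref{p:semi-orthog-Xtilde-reformulated} together with Remark~\ref{rem:duality-and-some-subcategories} to obtain the dual decomposition $\bfMF(\tildew{X},W) = \langle \pi^*\bfMF(X,W), \bfMF(Y,W)_0 \rangle$; the vanishing of $\pi^*\bfMF(X,W)$ then identifies $\bfMF(\tildew{X},W)$ with $\bfMF(Y,W)_0$, which is precisely the essential image of $s_0 = j_*p^*$.

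For \ref{enum:WresY-flat-and-Yzero-regular}, the analogous argument applied to $Y$ in place of $X$ gives $\bfMF(Y,W) \simeq D_\Sg(Y_0) = 0$, so every subcategory $\bfMF(Y,W)_l$ in the decomposition is zero. The decomposition then collapses to $\bfMF(\tildew{X},W) = \pi^*\bfMF(X,W)$, and combining with the full and faithfulness of $\pi^*$ established in \ref{enum:a:semi-orthog-2-mf} yields the asserted equivalence $\pi^* \colon \bfMF(X,W) \sira \bfMF(\tildew{X},W)$.

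Part \ref{enum:both-flat-and-zero-fibers-regular} is then immediate from either \ref{enum:W-flat-and-Xzero-regular} or \ref{enum:WresY-flat-and-Yzero-regular}: under the combined hypotheses every component of the semi-orthogonal decomposition vanishes, whence $\bfMF(\tildew{X},W) = 0$. The only point requiring a moment of care is the identification of $s_0 = j_*p^*$ in the $r = 2$ case of \ref{enum:W-flat-and-Xzero-regular}, which forces the use of the dual form of the main decomposition; but this dual form is exactly what Proposition~\ref{p:semi-orthog-Xtilde-reformulated} records.
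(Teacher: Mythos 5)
Your proposal is correct and follows essentially the same route as the paper: apply Theorem~\ref{t:factorizations=singularity} to conclude $\bfMF(X,W)=0$ (resp.\ $\bfMF(Y,W)=0$) from regularity of the zero fiber, then read off the claims from the semi-orthogonal decomposition of Theorem~\ref{t:semi-orthog-2-mf}. Your extra care in the $r=2$ case---passing to the dual decomposition via Proposition~\ref{p:semi-orthog-Xtilde-reformulated} so that the remaining piece is $\bfMF(Y,W)_0$, the image of $s_0=j_*p^*$, rather than $\bfMF(Y,W)_{-1}$---is a genuine detail that the paper's one-line proof leaves implicit, and it is handled correctly.
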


\begin{proof}
  Theorem~\ref{t:factorizations=singularity} shows that
  $\bfMF(X,W)=0$ (resp.\ $\bfMF(Y, W)=0$). 
  All claims then follow from 
  Theorem~\ref{t:semi-orthog-2-mf}.
\end{proof}

\begin{example}
  \label{exam:origin-in-A2}
  Let
  $X=\DA^2_k=\Spec k[x,y],$ $W=x$ and
  $Y=\Spec k[x,y]/(x,y)=\{(0,0)\}.$
  Then 
  Corollary~\ref{c:semi-orthog-2-mf-zero-set-regular}.\ref{enum:W-flat-and-Xzero-regular}
  shows that $\bfMF(\Spec k,0) \sira \bfMF(\tildew{X},W).$
  Write
  $\tildew{X}=\Proj k[x,y][u,v]/(xv-yu)$ and let
  $U \subset \tildew{X}$ be the affine open subset defined by
  $v\not=0.$ Then $U=\Spec k[y,z]=\DA^2_k$ where $z=u/v,$ and
  $W=x=yz.$ 
  % Then 
  Theorem~\ref{t:factorizations=singularity}
  and \cite[Prop.~1.14]{orlov-tri-cat-of-sings-and-d-branes}
  imply that $\bfMF(\tildew{X},W) \ra \bfMF(U,W)$ is an
  equivalence.
  Altogether we obtain an equivalence
  \begin{equation*}
    \bfMF(\Spec k,0) \cong
    \bfMF(\DA^2_k,yz).
  \end{equation*}
  This is, of course, well known.
\end{example}

\begin{definition}
  \label{d:resolved-potential}
  Let $Z$ be a scheme satisfying condition~\ref{enum:srNfKd}
  and let
  let $W \colon Z \ra \DA^1$ be a regular function. We call $W$
  \define{resolved} if
  the ideal sheaf generated by $W$ is locally monomial,
  i.\,e.\ 
  $Z_0=\{W=0\}$ is a simple normal crossing divisor.
  % That means that
  % $W$ is locally equal (UP TO UNIT IN LOCAL RING) to a monomial
  % in a system of local 
  % parameters. it implies that
  % each reduced irreducible component of $Z_0$ is smooth. 
  We then also call the
  corresponding category $\bfMF(Z,W)$ \define{resolved}.
\end{definition}

In the rest of this section we assume in addition that
$\charakteristik k=0.$
Let $X$ be a separated connected smooth scheme of finite type and let
$W \colon X \ra \DA^1$ be a non-zero regular function.  
By
\cite[Thm.~3.35]{kollar-singularities}
there exists an "embedded resolution
of singularities" $\pi  \colon \tildew{X}\ra X$ of the divisor
$X_0=\{W=0\}$ such that $W \colon  \tildew{X} \ra \DA^1$ is resolved.
It is obtained by a sequence of
blowing-ups with smooth centers
$Y_1, \dots, Y_s$ which are contained
in the zero sets
of the pullbacks of $W$ (as confirmed to us by J\'anos Koll\'ar).
We can assume that the $Y_i$ are connected.

\begin{corollary} 
  \label{c:any-MF-in-resolved-MF}
  In the above setting the triangulated category
  $\bfMF(\tildew{X}, W)$ has a semi-orthogonal decomposition into
  admissible subcategories that are equivalent to
  $\bfMF(Y_i,0)$ (for $1 \leq i \leq s$) or
  $\bfMF(X,W).$ More precisely,
  the multiplicity of $\bfMF(Y_i,0)$ is equal to 
  the codimension of $Y_i$ minus 1, and 
  $\bfMF(X,W)$ appears with multiplicity one.
  In particular, the category
  $\bfMF(X,W)$ is a semi-orthogonal summand in a resolved
  category $\bfMF(\tildew{X},W).$
\end{corollary}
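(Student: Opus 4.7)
The plan is to proceed by induction on the number $s$ of blow-ups. Set $X_0:=X$ and, for $1\leq i\leq s$, let $\pi_i \colon X_i \ra X_{i-1}$ be the blow-up of $X_{i-1}$ along the smooth connected center $Y_i\subset X_{i-1}$ of codimension $r_i$, so that $\tildew{X}=X_s$ and $\pi=\pi_s\circ\cdots\circ\pi_1$. I denote the pullback of $W$ to any $X_i$ (or any $Y_j$) again by $W$. The key observation that permits the inductive step is that since $Y_i$ is contained in the zero set of the pullback of $W$ to $X_{i-1}$, we have $W|_{Y_i}=0$, whence $\bfMF(Y_i,W)=\bfMF(Y_i,0)$. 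Moreover each $Y_i$ is smooth and connected (hence satisfies condition~\ref{enum:srNfKd}), and we may assume $r_i\geq 2$, because whenever $r_i=1$ the blow-up $\pi_i$ is an isomorphism and contributes nothing (cf.\ the footnote in Theorem~\ref{t:semi-orthog-2-mf}).

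At step $i$, Theorem~\ref{t:semi-orthog-2-mf} applied to $\pi_i\colon X_i\to X_{i-1}$ along $Y_i$ produces the semi-orthogonal decomposition into admissible subcategories
\begin{equation*}
  \bfMF(X_i,W)=\Big\langle \bfMF(Y_i,0)_{-r_i+1},\dots,\bfMF(Y_i,0)_{-1},\pi_i^*\bfMF(X_{i-1},W)\Big\rangle,
\end{equation*}
where each $\bfMF(Y_i,0)_{l}$ is equivalent to $\bfMF(Y_i,0)$ by full-and-faithfulness of $j_*(\mathcal{O}(l)\otimes p^*(-))$, and $\pi_i^*\colon \bfMF(X_{i-1},W)\sira \pi_i^*\bfMF(X_{i-1},W)$ is an equivalence by part~\ref{enum:a:semi-orthog-2-mf}. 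The inductive hypothesis gives a semi-orthogonal decomposition of $\bfMF(X_{i-1},W)$ into admissible subcategories equivalent to $\bfMF(Y_j,0)$ (for $j<i$, with multiplicity $r_j-1$) and to $\bfMF(X,W)$ (with multiplicity one). Transferring this decomposition across the equivalence $\pi_i^*$ and then refining the displayed decomposition of $\bfMF(X_i,W)$ by replacing its last factor with this finer decomposition yields the claim at stage $i$. Iterating $s$ times and summing multiplicities produces the statement of the corollary.

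The main routine point to verify is that nesting semi-orthogonal decompositions is valid in the following form: if $\mathcal{T}=\langle \mathcal{A}_1,\dots,\mathcal{A}_n\rangle$ is a semi-orthogonal decomposition into admissible subcategories and some $\mathcal{A}_k$ itself admits a semi-orthogonal decomposition $\mathcal{A}_k=\langle\mathcal{B}_1,\dots,\mathcal{B}_m\rangle$ into subcategories that are admissible in $\mathcal{A}_k$, then the refined sequence is again a semi-orthogonal decomposition of $\mathcal{T}$ into admissible subcategories of $\mathcal{T}$. This follows from general properties of admissibility recalled in Appendix~\ref{sec:app:remind-admiss-subc}: semi-orthogonality is inherited because $\mathcal{B}_j\subset\mathcal{A}_k$ remains orthogonal to the other $\mathcal{A}_{k'}$, and admissibility of each $\mathcal{B}_j$ in $\mathcal{T}$ follows by composing its adjoints inside $\mathcal{A}_k$ with the projection $\mathcal{T}\to\mathcal{A}_k$ supplied by admissibility of $\mathcal{A}_k$.

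The expected main obstacle is essentially bookkeeping—ensuring that at each stage the transfer under the equivalence $\pi_i^*$ and the subsequent refinement stay within the world of admissible subcategories of $\bfMF(X_i,W)$, and that the chosen indexing is compatible with a single semi-orthogonal sequence. Because both $\pi_i^*$ and the functors $j_*(\mathcal{O}(l)\otimes p^*(-))$ are fully faithful with adjoints on both sides (coming from Lemma~\ref{l:pi-proper-Rpi-lower-star-and-MF-and-adjunction} and the duality of Lemma~\ref{l:pi-us-and-duality-MF-setting}, as used in the proofs of parts~\ref{enum:a:semi-orthog-2-mf} and~\ref{enum:c:semi-orthog-2-mf}), no genuine difficulty arises. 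The last assertion of the corollary—that $\bfMF(X,W)$ is a semi-orthogonal summand of the resolved category $\bfMF(\tildew{X},W)$—is then immediate, since the distinguished factor $\pi^*\bfMF(X,W)$ is admissible in $\bfMF(\tildew{X},W)$.
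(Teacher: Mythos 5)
Your proof is correct and follows the same route as the paper, whose own argument is just the one-line observation that the result follows from Theorem~\ref{t:semi-orthog-2-mf} applied iteratively to the sequence of blowing-ups (using that $W$ vanishes on each center, so $\bfMF(Y_i,W)=\bfMF(Y_i,0)$). You have merely made explicit the routine induction and the standard refinement lemma for nested semi-orthogonal decompositions into admissible subcategories, both of which are handled correctly.
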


\begin{proof}
  This follows from the above and
  Theorem~\ref{t:semi-orthog-2-mf}.
\end{proof}

Corollary~\ref{c:any-MF-in-resolved-MF}
may allow us sometimes (depending on the problem
we are interested in) to reduce the study of
the category $\bfMF(X,W)$ to the case that the divisor $X_0$ is a
simple normal crossing divisor.
In view of this result we would like to ask the
following question.

\begin{question}
  \label{q:describe-resolved-MF}
  Can one give a "reasonable" description
  of a resolved category $\bfMF(Z,W)$? Or, at least, of its
  idempotent completion? The simplest non-trivial
  example would be that of the
  category $\bfMF(\DA^2,W=xy^2).$
\end{question}

\appendix

\section{Admissible subcategories and semi-orthogonal decompositions}
\label{sec:app:remind-admiss-subc}

We remind the reader of some definitions and facts from
\cite{bondal-kapranov-representable-functors,
  bondal-larsen-lunts-grothendieck-ring}.
Let $\mathcal{T}$ be a triangulated category.

Let $\mathcal{S} \subset \mathcal{T}$ be a subcategory. 
Recall that the
\define{right orthogonal} $\mathcal{S}^\perp$ to $\mathcal{S}$ in
$\mathcal{T}$ is the full subcategory of $\mathcal{T}$ consisting
of all objects $C \in \mathcal{T}$ such that $\mathcal{T}(S,C)=0$
for all $S\in \mathcal{S}.$
It is a triangulated subcategory of $\mathcal{T}.$ Similarly one
defines the \define{left orthogonal}
$\leftidx{^\perp}{\mathcal{S}}{}.$

\begin{definition}
  \label{d:right-left-admissible-subcats}
  A \define{right admissible} (resp.\ \define{left admissible})
  subcategory of $\mathcal{T}$ is a strict full triangulated
  subcategory $\mathcal{S}$ of $\mathcal{T}$ such that for any $A
  \in \mathcal{T}$ there is a triangle $A_\mathcal{S} \ra A \ra
  A_{\mathcal{S}^\perp} \ra [1]A_\mathcal{S}$ (resp.\
  $A_{\leftidx{^\perp}{\mathcal{S}}{}} \ra A \ra A_{\mathcal{S}}
  \ra [1]A_{\leftidx{^\perp}{\mathcal{S}}{}}$) with
  $A_\mathcal{S} \in \mathcal{S}$ and $A_{\mathcal{S}^\perp} \in
  \mathcal{S}^\perp$ (resp.\ $A_{\leftidx{^\perp}{\mathcal{S}}{}}
  \in \leftidx{^\perp}{\mathcal{S}}{}$).  An \define{admissible}
  subcategory is a subcategory which is both right and left
  admissible.
\end{definition}

\begin{remark}
  \label{rem:right-admissible-subcat-equal}
  Let $\mathcal{S}$ be a right (resp.\ left) admissible
  subcategory of $\mathcal{T}. $ If $\mathcal{S}^\perp = 0$
  (resp.\ $\leftidx{^\perp}{\mathcal{S}}{}=0$), then obviously
  $\mathcal{S} = \mathcal{T}.$
\end{remark}

\begin{lemma}
  [{\cite[Prop.~1.5]{bondal-kapranov-representable-functors}}]
  \label{l:admissibility-TFAE}
  Let $\mathcal{S}$ be a strict full triangulated subcategory of a
  triangulated category $\mathcal{T}.$ Then the following are equivalent.
  \begin{enumerate}
  \item
    $\mathcal{S} $ is right (resp.\ left) admissible.
  \item
    The inclusion functor $\mathcal{S} \hra \mathcal{T}$ has a right
    (resp.\ left) adjoint.
  \item
    $\mathcal{T}$ is the triangulated envelope of $\mathcal{S}$ and
    $\mathcal{S}^\perp$
    (resp.\ of $\leftidx{^\perp}{\mathcal{S}}{}$ and $\mathcal{S}$).
  \end{enumerate}
\end{lemma}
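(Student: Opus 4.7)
By the standard duality between right and left (apply to $\mathcal{T}^\opp$), it suffices to treat the right admissible case. The plan is to prove (a)$\Leftrightarrow$(b) via the usual pointwise-adjoint construction, observe that (a)$\Rightarrow$(c) is automatic, and handle (c)$\Rightarrow$(a) by an octahedral gluing argument showing that the class of objects admitting an admissibility triangle is closed under triangles.

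For (a)$\Rightarrow$(b), given $A \in \mathcal{T}$ choose an admissibility triangle $A_\mathcal{S} \ra A \ra A_{\mathcal{S}^\perp} \ra [1]A_\mathcal{S}$ and set $R(A) := A_\mathcal{S}$. For any $S \in \mathcal{S}$ the long exact sequence obtained by applying $\Hom_\mathcal{T}(S,-)$ collapses (because $\Hom_\mathcal{T}(S, [i]A_{\mathcal{S}^\perp}) = 0$ for all $i \in \DZ$) to an isomorphism $\Hom_\mathcal{T}(S, A_\mathcal{S}) \sira \Hom_\mathcal{T}(S, A)$. The family of these isomorphisms identifies $A_\mathcal{S}$ as the value on $A$ of a right adjoint to the inclusion $i \colon \mathcal{S} \hra \mathcal{T}$, and functoriality of $R$ is then forced by the universal property. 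Conversely, for (b)$\Rightarrow$(a), let $R$ be the right adjoint with counit $\epsilon \colon  i R \ra \id_\mathcal{T}$, and complete $\epsilon_A \colon  R(A) \ra A$ to a triangle $R(A) \ra A \ra C \ra [1]R(A)$. Since $i$ is fully faithful, the unit $\id_\mathcal{S} \ra R i$ is an isomorphism, so $\Hom_\mathcal{T}(S, R(A)) \ra \Hom_\mathcal{T}(S, A)$ is an isomorphism for every $S \in \mathcal{S}$ and every shift of $S$; the long exact sequence then forces $C \in \mathcal{S}^\perp$.

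The implication (a)$\Rightarrow$(c) is immediate: every $A \in \mathcal{T}$ already sits in a triangle whose outer terms lie in $\mathcal{S}$ and $\mathcal{S}^\perp$, so $\mathcal{T}$ coincides with the triangulated envelope of these two subcategories.

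The main work is (c)$\Rightarrow$(a). Let $\mathcal{E} \subset \mathcal{T}$ be the full subcategory of objects admitting an admissibility triangle. Clearly $\mathcal{S} \subset \mathcal{E}$ and $\mathcal{S}^\perp \subset \mathcal{E}$ (take the trivial decompositions), and $\mathcal{E}$ is stable under shifts. It therefore suffices to show $\mathcal{E}$ is closed under triangles, so that $\mathcal{E} = \mathcal{T}$ by (c). Given a triangle $A \ra B \ra C \ra [1]A$ with $A, C \in \mathcal{E}$, fix decompositions $A_\mathcal{S} \ra A \ra A_{\mathcal{S}^\perp}$ and $C_\mathcal{S} \ra C \ra C_{\mathcal{S}^\perp}$. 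The composite $A_\mathcal{S} \ra A \ra C \ra C_{\mathcal{S}^\perp}$ vanishes by semi-orthogonality, so $A_\mathcal{S} \ra C$ lifts to a (unique, by the same vanishing applied to one shift) morphism $\phi \colon  A_\mathcal{S} \ra C_\mathcal{S}$. Completing $\phi$ to a triangle and applying the octahedral axiom (or $3 \times 3$ lemma) to the square
\begin{equation*}
  \xymatrix{
    {A_\mathcal{S}} \ar[r] \ar[d]^-\phi & {A} \ar[d] \\
    {C_\mathcal{S}} \ar[r] & {C}
  }
\end{equation*}
produces a triangle $B_\mathcal{S} \ra B \ra B_{\mathcal{S}^\perp} \ra [1]B_\mathcal{S}$ whose outer terms fit into triangles $A_\mathcal{S} \ra B_\mathcal{S} \ra C_\mathcal{S}$ and $A_{\mathcal{S}^\perp} \ra B_{\mathcal{S}^\perp} \ra C_{\mathcal{S}^\perp}$; in particular $B_\mathcal{S} \in \mathcal{S}$ and $B_{\mathcal{S}^\perp} \in \mathcal{S}^\perp$, so $B \in \mathcal{E}$. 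The hardest step is the construction of $\phi$ and the verification that its cone fits into the outer triangle correctly; this is the standard semi-orthogonal gluing computation, and the vanishing $\Hom_\mathcal{T}(\mathcal{S}, [i]\mathcal{S}^\perp) = 0$ is what makes both the lifting and the octahedral bookkeeping go through cleanly.
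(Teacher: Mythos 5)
The paper does not prove this lemma itself (it is quoted from Bondal--Kapranov), but the hard direction is carried out in the proof of Lemma~\ref{l:right-admissible-by-triangulated-and-classical-generators}, and your overall strategy coincides with it: (a)$\Leftrightarrow$(b) by the pointwise adjoint construction, (a)$\Rightarrow$(c) trivially, and (c)$\Rightarrow$(a) by showing the class $\mathcal{E}$ of decomposable objects is a triangulated subcategory via a $3\times 3$ gluing. Your (a)$\Leftrightarrow$(b) and (a)$\Rightarrow$(c) are correct.

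There is, however, a concrete defect in the gluing step of (c)$\Rightarrow$(a). In the triangle $A \ra B \ra C \ra [1]A$ there is no morphism $A \ra C$: the only maps available are $A\ra B$, $B \ra C$ and the connecting map $C \ra [1]A$, and the composite $A \ra B \ra C$ is zero. So the composite ``$A_\mathcal{S} \ra A \ra C \ra C_{\mathcal{S}^\perp}$'' on which you build $\phi$ is either undefined or identically zero, and the commutative square you feed into the octahedron is based on a morphism $A \ra C$ that does not exist. The repair is standard and is exactly what the paper does: take the two decomposed terms to be \emph{adjacent} in the triangle, i.e.\ assume $A, B \in \mathcal{E}$ and extend the given morphism $A \ra B$ to a morphism between the two decomposition triangles. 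Existence and uniqueness of the induced maps $A_\mathcal{S} \ra B_\mathcal{S}$ and $A_{\mathcal{S}^\perp} \ra B_{\mathcal{S}^\perp}$ follow from \cite[Prop.~1.1.9]{BBD} together with the vanishing $\Hom_{\mathcal{T}}(\mathcal{S},[i]\mathcal{S}^\perp)=0$ for all $i$, and \cite[Prop.~1.1.11]{BBD} then produces the $3\times 3$ diagram decomposing the cone $C$. Since $\mathcal{E}$ is closed under shifts, closure under cones of morphisms between objects of $\mathcal{E}$ already makes $\mathcal{E}$ a triangulated subcategory, so this suffices; alternatively, keep your configuration $A,C\in\mathcal{E}$ but lift the connecting morphism $C \ra [1]A$ to $C_\mathcal{S} \ra [1]A_\mathcal{S}$ (the lift exists and is unique for the same orthogonality reasons) and decompose $[1]B$, hence $B$. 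With that correction the argument goes through; strictness of $\mathcal{S}$ and of $\mathcal{S}^\perp$ is what guarantees that the outer terms of the resulting triangle actually lie in $\mathcal{S}$ and $\mathcal{S}^\perp$ rather than merely being isomorphic to such objects.
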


\begin{remark}
  If $\mathcal{S}$ is right (resp.\ left) admissible 
  and we fix for any $A \in \mathcal{T}$ a
  triangle $A_\mathcal{S} \ra A \ra A_{\mathcal{S}^\perp} \ra
  [1]A_\mathcal{S}$ (resp.\
  $A_{\leftidx{^\perp}{\mathcal{S}}{}} \ra A \ra A_{\mathcal{S}} \ra
  [1]A_{\leftidx{^\perp}{\mathcal{S}}{}}$)
  as above, then $A \mapsto A_\mathcal{S}$ 
  extends uniquely to a right (resp.\ left) adjoint functor to
  the inclusion $\mathcal{S} \hra \mathcal{T}.$ 
\end{remark}

\begin{remark}
  \label{rem:adjoints-and-semi-orthogonal-decomposition}
  Let $F \colon  \mathcal{B} \ra \mathcal{T}$ be a full and faithful
  functor of triangulated categories, and assume that $F$
  admits a right adjoint functor. Then the essential image
  of $F$ is a right admissible
  subcategory of $\mathcal{T}.$
  This is obvious from Lemma~\ref{l:admissibility-TFAE}.
\end{remark}

\begin{lemma}
  [{cf.\ \cite[Lemma~2.20]{bondal-larsen-lunts-grothendieck-ring}}]
  \label{l:right-admissible-by-triangulated-and-classical-generators}
  Let $\mathcal{T}$ be a triangulated category, and let
  $\mathcal{U},$ $\mathcal{V}$ be strict full triangulated
  subcategories of 
  $\mathcal{T}$ satisfying $\mathcal{T}(\mathcal{V}, \mathcal{U})=0.$
  Assume that there is a full subcategory $\mathcal{E} \subset
  \mathcal{T}$ such that for each $E \in \mathcal{E}$ there is a
  triangle
  \begin{equation*}
    E_\mathcal{V} \ra E \ra E_{\mathcal{U}} \ra [1]E_\mathcal{V}
  \end{equation*}
  with $E_\mathcal{V} \in \mathcal{V}$ and $E_\mathcal{U} \in
  \mathcal{U}.$
  Assume that one of the following two statements is true.
  \begin{enumerate}
  \item
    \label{enum:T-is-triang-envelope-of-E}
    We have $\tria(\mathcal{E})=\mathcal{T},$ where
    $\tria(\mathcal{E})$ is the triangulated envelope of $\mathcal{E}$ in
    $\mathcal{T}.$
  \item
    \label{enum:T-is-thick-envelope-of-E}
    The categories $\mathcal{U}$ and $\mathcal{V}$ are thick
    subcategories of $\mathcal{T},$ one of $\mathcal{U},$
    $\mathcal{V}$ is idempotent complete,
    and $\thick(\mathcal{E})=\mathcal{T},$ where $\thick(\mathcal{E})$
    is the thick envelope of $\mathcal{E}$ in $\mathcal{T},$
    i.\,e.\ the objects of $\mathcal{E}$ classically generate
    $\mathcal{T}.$
  \end{enumerate}
  Then $\mathcal{V}$ is right admissible in $\mathcal{T},$
  $\mathcal{U}$ is left admissible in $\mathcal{T},$ we have
  $\mathcal{U}=\mathcal{V}^\perp$ and
  $\mathcal{V}=\leftidx{^\perp}{\mathcal{U}}{},$ and
  $\mathcal{T}$ is the triangulated envelope of $\mathcal{U} \cup
  \mathcal{V}.$
  In the terminology of 
  Definition~\ref{d:semi-orthogonal-decomposition}
  below this says that
  $\mathcal{T}=\langle \mathcal{U}, \mathcal{V}\rangle$ is a
  semi-orthogonal decomposition of $\mathcal{T}.$
\end{lemma}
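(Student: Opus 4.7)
The plan is to introduce the larger full subcategory
$\mathcal{E}' \subset \mathcal{T}$ consisting of all objects $A$ that fit into a triangle
$A_\mathcal{V} \to A \to A_\mathcal{U} \to [1]A_\mathcal{V}$
with $A_\mathcal{V} \in \mathcal{V}$ and $A_\mathcal{U} \in \mathcal{U}$, and to prove that $\mathcal{E}' = \mathcal{T}$. First I would show that for $A \in \mathcal{E}'$ any such triangle is functorial in $A$: the orthogonality $\mathcal{T}(\mathcal{V},\mathcal{U}) = 0$ together with the long exact sequences obtained from the triangle yield the universal properties
\begin{equation*}
\mathcal{T}(V,A_\mathcal{V}) \xsira{} \mathcal{T}(V,A) \quad \text{and} \quad \mathcal{T}(A_\mathcal{U},U) \xsira{} \mathcal{T}(A,U)
\end{equation*}
for $V \in \mathcal{V}$ and $U \in \mathcal{U}$. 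In particular $A_\mathcal{V}$ and $A_\mathcal{U}$ are determined up to unique isomorphism by $A$, and a morphism $A \to A'$ in $\mathcal{E}'$ lifts uniquely to $A_\mathcal{V} \to A'_\mathcal{V}$ and descends uniquely to $A_\mathcal{U} \to A'_\mathcal{U}$.

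Next I would verify that $\mathcal{E}'$ is a strict full triangulated subcategory of $\mathcal{T}$. Closure under shifts is immediate. For closure under cones, given $A \to A'$ in $\mathcal{E}'$ I would build the lift $A_\mathcal{V} \to A'_\mathcal{V}$ and its descent $A_\mathcal{U} \to A'_\mathcal{U}$, apply the octahedral axiom to
\begin{equation*}
A_\mathcal{V} \to A \to A_\mathcal{U}, \quad A'_\mathcal{V} \to A' \to A'_\mathcal{U}, \quad A_\mathcal{V} \to A'_\mathcal{V} \to A',
\end{equation*}
and extract the desired triangle $\Cone(A_\mathcal{V} \to A'_\mathcal{V}) \to \Cone(A \to A') \to \Cone(A_\mathcal{U} \to A'_\mathcal{U})$; the outer terms lie in $\mathcal{V}$ and $\mathcal{U}$ respectively because both are triangulated subcategories. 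Thus in case \ref{enum:T-is-triang-envelope-of-E} we immediately obtain $\tria(\mathcal{E}) \subset \mathcal{E}'$ and hence $\mathcal{E}' = \mathcal{T}$.

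The main obstacle is case \ref{enum:T-is-thick-envelope-of-E}, where I additionally need $\mathcal{E}'$ to be closed under direct summands. Assume for concreteness that $\mathcal{V}$ is idempotent complete (the other case is symmetric). Given $A = B \oplus C$ in $\mathcal{E}'$, the projector $e_B \in \End(A)$ is an idempotent; by the universal property above it lifts uniquely to an endomorphism $\tilde{e}_B$ of $A_\mathcal{V}$ with $f \tilde{e}_B = e_B f$, and the uniqueness part of the universal property forces $\tilde{e}_B^2 = \tilde{e}_B$. By idempotent completeness of $\mathcal{V}$ this splits as $A_\mathcal{V} = B_\mathcal{V} \oplus C_\mathcal{V}$ inside $\mathcal{V}$, compatibly with the splitting of $A$. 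Taking cones then yields $A_\mathcal{U} = \Cone(f) = B_\mathcal{U} \oplus C_\mathcal{U}$; since $\mathcal{U}$ is thick, both summands lie in $\mathcal{U}$, and I obtain the decomposition triangles for $B$ and $C$ separately. Hence $\mathcal{E}'$ is thick, and $\thick(\mathcal{E}) \subset \mathcal{E}'$ gives $\mathcal{E}' = \mathcal{T}$ in this case as well.

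Finally, with $\mathcal{E}' = \mathcal{T}$ in hand I would read off the conclusions. The existence of the decomposition triangle for every $A \in \mathcal{T}$ shows that $\mathcal{V}$ is right admissible and $\mathcal{U}$ is left admissible in the sense of Definition~\ref{d:right-left-admissible-subcats}, and that $\mathcal{T} = \tria(\mathcal{U} \cup \mathcal{V})$. The inclusions $\mathcal{U} \subset \mathcal{V}^\perp$ and $\mathcal{V} \subset \leftidx{^\perp}{\mathcal{U}}{}$ are immediate from the orthogonality hypothesis. For the reverse inclusion, if $X \in \mathcal{V}^\perp$ then the representability $\mathcal{T}(V,X_\mathcal{V}) = \mathcal{T}(V,X) = 0$ for every $V \in \mathcal{V}$, applied with $V = X_\mathcal{V}$, forces $\id_{X_\mathcal{V}} = 0$ and hence $X_\mathcal{V} = 0$; the triangle then collapses to an isomorphism $X \xsira{} X_\mathcal{U} \in \mathcal{U}$. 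The argument for $\leftidx{^\perp}{\mathcal{U}}{} = \mathcal{V}$ is dual. Thus $\mathcal{T} = \langle \mathcal{U}, \mathcal{V} \rangle$ as a semi-orthogonal decomposition.
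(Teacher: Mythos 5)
Your proposal is correct and follows essentially the same route as the paper: you introduce the full subcategory of objects admitting a decomposition triangle (the paper's $\mathcal{S}$, your $\mathcal{E}'$), show it is a strict triangulated subcategory using the unique extension of morphisms to morphisms of decomposition triangles together with the $3\times 3$/octahedron argument, establish thickness in case (b) by lifting the idempotent to $A_\mathcal{V}$, splitting it there, and using thickness of $\mathcal{U}$ on the cones, and then read off admissibility and the identities $\mathcal{U}=\mathcal{V}^\perp,$ $\mathcal{V}=\leftidx{^\perp}{\mathcal{U}}{}.$ The only differences are cosmetic (you phrase the uniqueness via the representability isomorphisms $\mathcal{T}(V,A_\mathcal{V})\cong\mathcal{T}(V,A),$ and your final orthogonality argument kills $\id_{X_\mathcal{V}}$ directly rather than factoring $\id_{[1]V}$ through $\mathcal{U}$), so no changes are needed.
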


\begin{proof}
  Let $\mathcal{S}$ be the full subcategory of $\mathcal{T}$
  consisting of those objects $X \in \mathcal{T}$ such that there is a
  triangle
  \begin{equation}
    \label{eq:witness-decomposable-in-triangle}
    X_\mathcal{V} \ra X \ra X_\mathcal{U} \ra [1]X_\mathcal{V}
  \end{equation}
  with $X_V \in \mathcal{V}$ and $X_U \in \mathcal{U}.$
  We claim that $\mathcal{S} =\mathcal{T}.$

  Obviously $\mathcal{S}$ is a strict subcategory containing
  $\mathcal{E},$ $\mathcal{V}$ and $\mathcal{U},$ and it is closed under all
  shifts. Assume that $X \ra Y \ra Z \ra [1]X$ is a triangle with $X,
  Y \in \mathcal{S}.$ Assume that there is a triangle
  \eqref{eq:witness-decomposable-in-triangle}
  as above for $X,$
  and similarly for $Y.$ The morphism $X \ra Y$ extends uniquely two a
  morphism between these two triangles
  (use \cite[Prop.~1.1.9]{BBD}), and this morphism fits (since
  it is unique) into the following
  $3 \times 3$-diagram constructed using \cite[Prop.~1.1.11]{BBD}.
  \begin{equation*}
    \xymatrix{
      {[1]X_\mathcal{V}} \ar@{..>}[r] &
      {[1]Y_\mathcal{V}} \ar@{..>}[r] &
      {[1]Z'} \ar@{..>}[r] \ar@{}[rd]|{\anticomm}&
      {[2]X_\mathcal{V}} \\
      {X_\mathcal{U}} \ar[u] \ar[r] &
      {Y_\mathcal{U}} \ar[u] \ar[r] &
      {Z''} \ar[u] \ar[r] &
      {[1]X_\mathcal{U}} \ar@{..>}[u] \\
      {X} \ar[u] \ar[r] &
      {Y} \ar[u] \ar[r] &
      {Z} \ar[u] \ar[r] &
      {[1]X} \ar@{..>}[u] \\
      {X_\mathcal{V}} \ar[u] \ar[r] &
      {Y_\mathcal{V}} \ar[u] \ar[r] &
      {Z'} \ar[u] \ar[r] &
      {[1]X_\mathcal{V}} \ar@{..>}[u]
    }
  \end{equation*}
  Since $\mathcal{U}$ and $\mathcal{V}$ are strict full triangulated
  subcategories of 
  $\mathcal{T},$ we have $Z' \in
  \mathcal{V}$ and $Z'' \in \mathcal{U},$ so $Z \in \mathcal{S}.$
  This argument shows that $\mathcal{S}$ is a strict triangulated
  subcategory 
  of $\mathcal{T}.$
  If \ref{enum:T-is-triang-envelope-of-E} is satisfied this
  already shows 
  that $\mathcal{S}=\mathcal{T}.$

  Now assume that \ref{enum:T-is-thick-envelope-of-E} is satisfied.
  We claim that $\mathcal{S}$ is a thick subcategory.
  Let $X \in \mathcal{S}$ and assume that $X\cong X_1 \oplus X_2$ in
  $\mathcal{T}.$ We can even assume that $X=X_1 \oplus X_2.$
  Let $V \ra X \ra U \ra [1]V$ be a triangle with $V \in \mathcal{V}$
  and $U \in \mathcal{U}.$ Then the idempotent $e:=\tzmat 1000 \colon X \ra
  X$ can be uniquely extended to a morphism
  \begin{equation*}
    \xymatrix{
      {V} \ar[r]^f \ar[d]^v &
      {X} \ar[r]^g \ar[d]^e &
      {U} \ar[r]^h \ar[d]^u &
      {[1]V} \ar[d]^{[1]v} \\
      {V} \ar[r]^f &
      {X} \ar[r]^g &
      {U} \ar[r]^h &
      {[1]V}
    }
  \end{equation*}
  of triangles (\cite[Prop.~1.1.9]{BBD}), and both $u$ and $v$
  are idempotent. Assume that $\mathcal{V}$ is idempotent
  complete. Then we can assume that $V=V_1\oplus V_2$ with $V_1,$
  $V_2 
  \in \mathcal{V}$ and that $v=\tzmat 1000.$ We have
  $f=\tzmat{f_1}00{f_2}$ since $ef=fv.$
  Complete the morphisms $f_i \colon  V_i \ra X_i$ into triangles
  \begin{equation}
    \label{eq:summand-triangle}
    V_i \xra{f_i} X_i \ra U_i \ra [1]V_i,
  \end{equation}
  for $i=1, 2.$
  The direct sum of these two triangles is a triangle, and there
  is a 
  morphism $\varphi$ such that
  \begin{equation*}
    \xymatrix{
      {V_1\oplus V_2} \ar[r]^{f_1\oplus f_2} \gar[d] &
      {X_1\oplus X_2} \ar[r] \gar[d] &
      {U_1\oplus U_2} \ar[r] \ar@{..>}[d]^\varphi &
      {[1](V_1\oplus V_2)} \gar[d] \\
      {V} \ar[r]^f &
      {X} \ar[r]^g &
      {U} \ar[r]^h &
      {[1]V}
    }
  \end{equation*}
  is morphism of triangles; hence $\varphi$ is an isomorphism. Since $\mathcal{U}$ is a
  thick subcategory, we have $U_1,$ $U_2 \in \mathcal{U}.$
  The above triangles \eqref{eq:summand-triangle} for $i=1,$ $2$
  (and the similar argument in case $\mathcal{U}$ is idempotent complete)
  show that $\mathcal{S}$ is a thick subcategory of
  $\mathcal{T}.$ Hence $\mathcal{S}=\mathcal{T}.$

  We have proved that $\mathcal{S}=\mathcal{T}$ if 
  \ref{enum:T-is-triang-envelope-of-E} or
  \ref{enum:T-is-thick-envelope-of-E}
  is satisfied.

  By assumption we have $\mathcal{U} \subset \mathcal{V}^\perp.$
  Let $X \in \mathcal{V}^\perp.$ Since $\mathcal{S}=\mathcal{T}$ we
  have a triangle
  \begin{equation*}
    V \ra X \ra U \ra [1]V
  \end{equation*}
  with $V \in \mathcal{V}$ and $U \in \mathcal{U}.$ Since
  $X \in \mathcal{V}^\perp$ the morphism $V \ra X$ is zero and
  $\id_{[1]V}$ factors through $U$ (in fact $U \cong X \oplus
  [1]V$). But $\mathcal{T}(\mathcal{V}, \mathcal{U})=0$ and hence
  $[1]V =0.$ Hence $X \ra U$ is an isomorphism, and $X \in
  \mathcal{U}$ by strictness.
  This shows $\mathcal{U} = \mathcal{V}^\perp.$
  Similarly we obtain
  $\mathcal{V}=\leftidx{^\perp}{\mathcal{U}}{}.$

  Right admissibility of $\mathcal{V},$ left admissibility of
  $\mathcal{U},$ and the fact that $\mathcal{T}$ is the triangulated
  envelope of $\mathcal{U} \cup \mathcal{V}$ follow directly from the
  definition of $\mathcal{S}$ (cf.\
  \eqref{eq:witness-decomposable-in-triangle})
  and the fact that $\mathcal{S}=\mathcal{T}.$
\end{proof}

\begin{corollary}
  [{\cite[Lemma~1.7]{bondal-kapranov-representable-functors}}]
  \label{c:admissibility-and-perpendicularity}
  If $\mathcal{S}$ is a right admissible subcategory of a $\mathcal{T},$
  then $\mathcal{S}=\leftidx{^\perp}{(\mathcal{S}^\perp)}{},$ so in
  particular $\mathcal{S}$ is a thick subcategory of
  $\mathcal{T}.$
  Similarly, if $\mathcal{S}$ is left admissible,
  then $\mathcal{S}=(\leftidx{^\perp}{\mathcal{S}}{})^\perp$ is thick.
\end{corollary}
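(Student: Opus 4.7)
The proof has two directions; I will focus on the right-admissible case, as the left-admissible statement follows by formal duality (passing to $\mathcal{T}^{\mathrm{op}}$ interchanges left and right orthogonals, and interchanges left and right admissibility).

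My plan is to prove $\mathcal{S}=\leftidx{^\perp}{(\mathcal{S}^\perp)}{}$. The inclusion $\mathcal{S}\subset\leftidx{^\perp}{(\mathcal{S}^\perp)}{}$ is immediate from the definition of right orthogonal. For the reverse inclusion, take $A\in\leftidx{^\perp}{(\mathcal{S}^\perp)}{}$. By right admissibility of $\mathcal{S}$, fix a triangle
\begin{equation*}
A_\mathcal{S}\xra{f} A\xra{g} A_{\mathcal{S}^\perp}\xra{h}[1]A_\mathcal{S}
\end{equation*}
with $A_\mathcal{S}\in\mathcal{S}$ and $A_{\mathcal{S}^\perp}\in\mathcal{S}^\perp$. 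Since $A\in\leftidx{^\perp}{(\mathcal{S}^\perp)}{}$ and $A_{\mathcal{S}^\perp}\in\mathcal{S}^\perp$, we have $g\in\Hom_\mathcal{T}(A,A_{\mathcal{S}^\perp})=0$, so $g=0$.

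Now I would exploit the standard splitting fact for triangulated categories: a triangle $X\to Y\to Z\xra{\delta}[1]X$ splits (giving $Y\cong X\oplus Z$) iff $\delta=0$. Rotating our triangle to $[-1]A_{\mathcal{S}^\perp}\xra{-[-1]h}A_\mathcal{S}\xra{f}A\xra{g}A_{\mathcal{S}^\perp}$ turns the condition $g=0$ into exactly this splitting condition for the rotated triangle, yielding a direct sum decomposition
\begin{equation*}
A_\mathcal{S}\;\cong\;A\oplus[-1]A_{\mathcal{S}^\perp}
\end{equation*}
in $\mathcal{T}$. The projection $\pi\colon A_\mathcal{S}\to[-1]A_{\mathcal{S}^\perp}$ lies in $\Hom_\mathcal{T}(A_\mathcal{S},[-1]A_{\mathcal{S}^\perp})$ with $A_\mathcal{S}\in\mathcal{S}$ and $[-1]A_{\mathcal{S}^\perp}\in\mathcal{S}^\perp$ (the latter since $\mathcal{S}^\perp$ is a triangulated subcategory, hence closed under shifts). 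By the very definition of $\mathcal{S}^\perp$, this Hom-group vanishes, so $\pi=0$. But $\pi$ admits a section, so $\id_{[-1]A_{\mathcal{S}^\perp}}=0$, forcing $A_{\mathcal{S}^\perp}=0$. Then $f\colon A_\mathcal{S}\sira A$ is an isomorphism, and since $\mathcal{S}$ is a strict full subcategory, $A\in\mathcal{S}$.

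For thickness, observe that $\leftidx{^\perp}{(\mathcal{S}^\perp)}{}$ is automatically closed under direct summands: if $X\oplus Y\in\leftidx{^\perp}{(\mathcal{S}^\perp)}{}$ and $C\in\mathcal{S}^\perp$, then $\Hom(X,C)\oplus\Hom(Y,C)=\Hom(X\oplus Y,C)=0$, so both summands lie in $\leftidx{^\perp}{(\mathcal{S}^\perp)}{}$. Combined with the equality $\mathcal{S}=\leftidx{^\perp}{(\mathcal{S}^\perp)}{}$ just proved, this yields that $\mathcal{S}$ is thick. The main subtle point is the splitting step: one must be careful that the condition $g=0$ does not in itself split the original triangle (the splitting condition is $h=0$), but after rotation it becomes precisely the splitting condition, and this is where the vanishing $\Hom(\mathcal{S},\mathcal{S}^\perp)=0$ is then leveraged to kill the extra summand.
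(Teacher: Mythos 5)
Your proof is correct and uses essentially the same argument as the paper: the paper deduces the corollary from Lemma~\ref{l:right-admissible-by-triangulated-and-classical-generators} (with $\mathcal{U}=\mathcal{S}^\perp$, $\mathcal{V}=\mathcal{S}$, $\mathcal{E}=\mathcal{T}$), and the relevant part of that lemma's proof is exactly your splitting step --- the vanishing of one map in the admissibility triangle forces a splitting after rotation, and the semi-orthogonality $\Hom(\mathcal{S},\mathcal{S}^\perp)=0$ then kills the complementary summand. You have simply inlined the special case, and your direct verification that a left orthogonal is closed under summands is a valid (equivalent) route to thickness.
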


\begin{proof}
  The first statement follows from
  Lemma~\ref{l:right-admissible-by-triangulated-and-classical-generators}
  by taking $\mathcal{U}= \mathcal{S}^\perp,$ $\mathcal{V}=
  \mathcal{S}$ and $\mathcal{E}=\mathcal{T}.$
  For the second statement take
  $\mathcal{U}= \mathcal{S},$
  $\mathcal{V}=\leftidx{^\perp}{\mathcal{S}}{}$ and
  $\mathcal{E}=\mathcal{T}.$
\end{proof}

\begin{lemma}
  \label{l:quotients-by-admissibles}
  If $\mathcal{S}$ is right admissible, 
  the functors
  $\mathcal{S}^\perp \ra \mathcal{T}/\mathcal{S}$ and
  $\mathcal{S} \ra \mathcal{T}/\mathcal{S}^\perp$ are
  equivalences.
  If $\mathcal{S}$ is left admissible, 
  the functors
  $\mathcal{S} \ra \mathcal{T}/\leftidx{^\perp}{\mathcal{S}}{}$
  and 
  $\leftidx{^\perp}{\mathcal{S}}{} \ra \mathcal{T}/\mathcal{S}$ 
  are equivalences.
\end{lemma}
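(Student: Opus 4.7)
The plan is to treat the right admissible case in detail and deduce the left admissible case by passing to the opposite triangulated category (where right admissibility of $\mathcal{S}^{\opp}$ in $\mathcal{T}^{\opp}$ corresponds to left admissibility of $\mathcal{S}$ in $\mathcal{T}$). So assume $\mathcal{S}$ is right admissible, and for every $A \in \mathcal{T}$ fix a triangle $A_\mathcal{S} \to A \to A_{\mathcal{S}^\perp} \to [1]A_\mathcal{S}$ as in Definition~\ref{d:right-left-admissible-subcats}. Recall that $\mathcal{S}$ and $\mathcal{S}^\perp$ are thick subcategories of $\mathcal{T}$ (the first by Corollary~\ref{c:admissibility-and-perpendicularity}, the second as a right orthogonal), so the Verdier quotients $\mathcal{T}/\mathcal{S}$ and $\mathcal{T}/\mathcal{S}^\perp$ are well defined.

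For the functor $\Phi \colon \mathcal{S}^\perp \to \mathcal{T}/\mathcal{S}$, essential surjectivity is immediate: the morphism $A \to A_{\mathcal{S}^\perp}$ in the above triangle has cone $[1]A_\mathcal{S} \in \mathcal{S}$, hence becomes invertible in $\mathcal{T}/\mathcal{S}$. For full faithfulness, take $X, Y \in \mathcal{S}^\perp$ and represent a morphism from $X$ to $Y$ in $\mathcal{T}/\mathcal{S}$ by a roof $X \xleftarrow{s} Z \xrightarrow{f} Y$ whose first leg has cone $C \in \mathcal{S}$. Applying $\Hom_\mathcal{T}(-, Y)$ to the triangle $Z \xrightarrow{s} X \to C \to [1]Z$ and using $\Hom_\mathcal{T}(C, Y) = 0 = \Hom_\mathcal{T}([-1]C, Y)$ (because $Y \in \mathcal{S}^\perp$), I conclude that $s^*$ is bijective, so $f$ lifts uniquely to some $\tilde f \colon X \to Y$ in $\mathcal{S}^\perp$; this proves $\Phi$ is full, and the same argument, applied to a morphism in $\mathcal{S}^\perp$ that becomes zero in the quotient, proves it is faithful.

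For $\Psi \colon \mathcal{S} \to \mathcal{T}/\mathcal{S}^\perp$ I use the right adjoint $\rho \colon \mathcal{T} \to \mathcal{S}$ of the inclusion, provided by Lemma~\ref{l:admissibility-TFAE}; explicitly $\rho(A) = A_\mathcal{S}$. Since $\rho$ kills $\mathcal{S}^\perp$ (because $\rho(A) = 0$ iff $\Hom_\mathcal{T}(\mathcal{S}, A) = 0$), it factors through a triangulated functor $\bar\rho \colon \mathcal{T}/\mathcal{S}^\perp \to \mathcal{S}$. The composition $\bar\rho \comp \Psi$ is naturally isomorphic to the identity on $\mathcal{S}$ because the inclusion is fully faithful; and the composition $\Psi \comp \bar\rho$ is naturally isomorphic to the identity on $\mathcal{T}/\mathcal{S}^\perp$ because for each $A \in \mathcal{T}$ the counit $\rho(A) = A_\mathcal{S} \to A$ has cone $A_{\mathcal{S}^\perp} \in \mathcal{S}^\perp$, hence becomes an isomorphism in the quotient. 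So $\Psi$ is an equivalence.

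The left admissible case then follows formally by dualizing. I do not expect a serious obstacle: the arguments are entirely standard roof manipulations plus the adjoint-functor bookkeeping. The one place that needs a little care, more bureaucratic than hard, is verifying that my lift $\tilde f$ in the proof of full faithfulness of $\Phi$ really represents the original roof in $\mathcal{T}/\mathcal{S}$, i.e. that the lift is compatible with the equivalence relation on roofs; this reduces to the uniqueness supplied by the vanishing of $\Hom_\mathcal{T}(C, Y)$, which is precisely why $\mathcal{S}^\perp$ was the right category to start with.
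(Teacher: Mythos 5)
Your proof is correct, but it takes a different route from the paper. The paper disposes of all four functors in one line by invoking Proposition~\ref{p:verdier-localization-induced-functor-ff} from Appendix~\ref{sec:app:embedd-verd-quot}: conditions \ref{enum:verdier-loc-CW-factors-CVW} and \ref{enum:verdier-loc-WC-factors-WVC} there (with $\mathcal{V}=0$ and $(\mathcal{W},\mathcal{C})$ equal to $(\mathcal{S}^\perp,\mathcal{S})$ resp.\ $(\mathcal{S},\mathcal{S}^\perp)$, and dually) hold trivially because every morphism from $\mathcal{S}$ to $\mathcal{S}^\perp$ is zero and hence factors through $0$; full faithfulness follows, and essential surjectivity is read off the admissibility triangles exactly as you do. You instead argue by hand: for $\mathcal{S}^\perp\to\mathcal{T}/\mathcal{S}$ you run the roof calculus directly, using that $\Hom_{\mathcal{T}}(C,Y)=\Hom_{\mathcal{T}}([-1]C,Y)=0$ for $C\in\mathcal{S}$, $Y\in\mathcal{S}^\perp$ makes $s^*$ bijective (this is precisely the content the appendix proposition packages abstractly); and for $\mathcal{S}\to\mathcal{T}/\mathcal{S}^\perp$ you build an explicit quasi-inverse from the right adjoint $\rho$ of the inclusion, checking that $\rho$ kills $\mathcal{S}^\perp$ and that the counit has cone in $\mathcal{S}^\perp$. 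Your verification that $\rho(A)=0$ for $A\in\mathcal{S}^\perp$ (via $\id_{\rho(A)}=0$) and the reduction of the left admissible case to the opposite category are both sound. What the paper's approach buys is uniformity and brevity once the appendix machinery is in place; what yours buys is self-containedness and, in the second case, an explicit inverse functor rather than a mere existence statement. One small remark: you do not actually need thickness of $\mathcal{S}$ or $\mathcal{S}^\perp$ to form the Verdier quotients (the paper's appendix deliberately works with non-thick subcategories), so the appeal to Corollary~\ref{c:admissibility-and-perpendicularity} is harmless but dispensable.
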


\begin{proof}
  By
  parts \ref{enum:verdier-loc-CW-factors-CVW}
  and \ref{enum:verdier-loc-WC-factors-WVC}
  of Proposition~\ref{p:verdier-localization-induced-functor-ff},
  all these functors
  are full and faithful, and it is clear that they are
  essentially surjective.
\end{proof}

\begin{lemma}
  \label{l:right-admissible-orthogonals-generate-right-admissible}
  Let $\mathcal{S}_1,$ $\mathcal{S}_2$ be right admissible
  subcategories of a triangulated category $\mathcal{T}$ and assume
  that
  $\mathcal{T}(\mathcal{S}_2, \mathcal{S}_1)=0.$
  Then the triangulated envelope $\mathcal{D}:=\tria(\mathcal{S}_1, \mathcal{S}_2)$
  in $\mathcal{T}$ of the full subcategory $\mathcal{S}_1 \cup
  \mathcal{S}_2$ is a right admissible subcategory of $\mathcal{T}.$

  Similarly, if $\mathcal{S}_1$ and $\mathcal{S}_2$ are left
  admissible subcategories of $\mathcal{T}$ satisfying
  $\mathcal{T}(\mathcal{S}_2, \mathcal{S}_1)=0,$
  then
  $\tria(\mathcal{S}_1, \mathcal{S}_2)$ is left admissible in $\mathcal{T}.$
\end{lemma}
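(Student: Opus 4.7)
The plan is to produce, for every $A \in \mathcal{T},$ a triangle $A_\mathcal{D} \to A \to A_{\mathcal{D}^\perp} \to [1]A_\mathcal{D}$ with $A_\mathcal{D} \in \mathcal{D} := \tria(\mathcal{S}_1, \mathcal{S}_2)$ and $A_{\mathcal{D}^\perp} \in \mathcal{D}^\perp,$ by applying the two given right-admissibility triangles in succession and then combining them via the octahedral axiom.

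First I would apply right admissibility of $\mathcal{S}_2$ to $A$ to get a triangle $A_2 \to A \to A' \to [1]A_2$ with $A_2 \in \mathcal{S}_2$ and $A' \in \mathcal{S}_2^\perp,$ and then apply right admissibility of $\mathcal{S}_1$ to $A'$ to get a triangle $A_1 \to A' \to A'' \to [1]A_1$ with $A_1 \in \mathcal{S}_1$ and $A'' \in \mathcal{S}_1^\perp.$ The decisive input of the hypothesis is that $\mathcal{T}(\mathcal{S}_2, \mathcal{S}_1) = 0$ gives $\mathcal{S}_1 \subset \mathcal{S}_2^\perp,$ so both $A_1$ and $A'$ lie in the triangulated subcategory $\mathcal{S}_2^\perp;$ hence the third vertex $A''$ of the triangle lies there as well. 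Thus $A'' \in \mathcal{S}_1^\perp \cap \mathcal{S}_2^\perp,$ and since $\mathcal{D}$ is generated as a triangulated subcategory by $\mathcal{S}_1 \cup \mathcal{S}_2,$ this intersection equals $\mathcal{D}^\perp,$ so $A'' \in \mathcal{D}^\perp.$

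Next I would apply the octahedral axiom to the composition $A \to A' \to A''.$ This yields an object $C$ together with a triangle $A \to A'' \to C \to [1]A$ and a triangle $[1]A_2 \to C \to [1]A_1 \to [2]A_2.$ The latter triangle shows $C \in \mathcal{D}$ because $A_1, A_2 \in \mathcal{D}$ and $\mathcal{D}$ is triangulated. Rotating the former and setting $A_\mathcal{D} := [-1]C$ and $A_{\mathcal{D}^\perp} := A''$ yields the desired triangle $A_\mathcal{D} \to A \to A_{\mathcal{D}^\perp} \to [1]A_\mathcal{D},$ proving right admissibility of $\mathcal{D}.$

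The left-admissible case is formally dual: I would decompose $A$ first using left admissibility of $\mathcal{S}_1$ and then of $\mathcal{S}_2,$ observing that $\mathcal{T}(\mathcal{S}_2, \mathcal{S}_1) = 0$ also gives $\mathcal{S}_2 \subset \leftidx{^\perp}{\mathcal{S}_1}{},$ and combine via the octahedral axiom so that the first term lies in $\leftidx{^\perp}{\mathcal{S}_1}{} \cap \leftidx{^\perp}{\mathcal{S}_2}{} = \leftidx{^\perp}{\mathcal{D}}{}$ while the middle vertex is built from objects of $\mathcal{D}.$ I do not anticipate any serious obstacle; the only thing that needs care is to apply the two decomposition steps in the correct order (the one dictated by the semi-orthogonality) so that the residual object automatically lands in the correct triangulated orthogonal, and to read off from the octahedron that the other summand stays inside $\mathcal{D}.$
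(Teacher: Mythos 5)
Your proof is correct and follows essentially the same route as the paper's: decompose $A$ with respect to $\mathcal{S}_2$, decompose the residual piece with respect to $\mathcal{S}_1$, use $\mathcal{S}_1\subset\mathcal{S}_2^\perp$ to see that the final residue lies in $\mathcal{D}^\perp,$ and apply the octahedral axiom to the composite to exhibit the complementary vertex as an extension of objects of $\mathcal{S}_1$ and $\mathcal{S}_2.$ The object you call $[-1]C$ is exactly the object $U$ in the paper's proof, so the two arguments coincide up to rotation of triangles.
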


\begin{proof}
  Let $T \in \mathcal{T}$ be given. By right admissibility of
  $\mathcal{S}_2$ there is a triangle
  \begin{equation*}
    S_2 \ra T \xra{g_2} Q_2 \ra [1]S_2
  \end{equation*}
  with $S_2 \in \mathcal{S}_2$ and $Q_2 \in \mathcal{S}_2^\perp,$
  and right admissibility of $\mathcal{S}_1$ yields a triangle
  \begin{equation*}
    S_1 \ra Q_2 \xra{g_1} Q_1 \ra [1]S_1
  \end{equation*}
  with $S_1 \in \mathcal{S}_1$ and $Q_1 \in \mathcal{S}_1^\perp.$
  Note that $S_1 \in \mathcal{S}_1 \subset \mathcal{S}_2^\perp$ and
  $Q_2 \in \mathcal{S}_2^\perp$ imply that $Q_1 \in
  \mathcal{S}_2^\perp.$
  Hence $Q_1 \in \mathcal{D}^\perp.$
  Fit the composition $g_1g_2$ into a triangle
  \begin{equation}
    \label{eq:triangle-showing-T-right-admissible}
    U \ra T \xra{g_1g_2} Q_1 \ra [1]U
  \end{equation}
  The octahedral axiom applied to the morphisms $g_2$ and $g_1$
  provides a triangle
  \begin{equation*}
    S_2 \ra U \ra S_1 \ra [1]S_2.
  \end{equation*}
  This shows that $U \in
  \mathcal{D}.$
  Hence we see from \eqref{eq:triangle-showing-T-right-admissible}
  that $\mathcal{D}$ is right admissible.
\end{proof}

\begin{definition}
  \label{d:semi-orthogonal-decomposition}
  A sequence $(\mathcal{S}_1, \mathcal{S}_2, \dots
  \mathcal{S}_n)$ of subcategories of $\mathcal{T}$ is called
  \define{semi-orthogonal} if $\mathcal{T}(\mathcal{S}_j,
  \mathcal{S}_i)=0$ for all $j>i,$ and
  \define{complete}
  (in $\mathcal{T}$) if $\mathcal{T}$ is the triangulated
  envelope of 
  $\mathcal{S}_1 \cup \mathcal{S}_2 \cup \dots \cup
  \mathcal{S}_n.$ 
  A \define{semi-orthogonal decomposition}
  of $\mathcal{T}$ is a 
  complete semi-orthogonal sequence
  $(\mathcal{S}_1, \mathcal{S}_2, \dots \mathcal{S}_n)$
  of strict full triangulated subcategories,
  and
  is denoted by
  \begin{equation*}
    \mathcal{T} =\big\langle \mathcal{S}_1, \dots, \mathcal{S}_n
    \big\rangle. 
  \end{equation*}
  A \define{semi-orthogonal decomposition into admissible
    subcategories} 
  is a semi-orthogonal decomposition whose components are 
  admissible subcategories.
\end{definition}

\begin{lemma}
  \label{l:first-properties-semi-orthog-decomp}
  \rule{1mm}{0mm}
  \begin{enumerate}
  \item 
    \label{enum:admissible-and-semi-orth}
    If $\mathcal{S}$ is
    a right admissible subcategory of $\mathcal{T},$ then $\mathcal{T}=\langle \mathcal{S}^\perp,
    \mathcal{S}\rangle$ is a semi-orthogonal decomposition of $\mathcal{T}.$
    Similarly, if $\mathcal{S}$ is left admissible, then 
    $\langle \mathcal{S},
    \leftidx{^\perp}{\mathcal{S}}\rangle$ is a semi-orthogonal
    decomposition. 
  \item
    \label{enum:semi-orth-and-perps}
    If $\mathcal{T}=\langle \mathcal{U}, \mathcal{V} \rangle$ is
    a semi-orthogonal decomposition, then $\mathcal{V}$ is
    right admissible, $\mathcal{U}$ is left admissible,
    $\mathcal{U}=\mathcal{V}^\perp$ and
    $\mathcal{V}=\leftidx{^\perp}{\mathcal{U}}{}.$ 
  \item
    \label{enum:split-semi-orth-decomp}
    Let $\mathcal{T} =\langle \mathcal{S}_1, \dots, \mathcal{S}_n
    \rangle $ be a semi-orthogonal decomposition (into admissible
    subcategories), and let $1 \leq
    a < n.$ Let $\mathcal{D}_1:=\tria(\mathcal{S}_1 \cup \dots
    \cup \mathcal{S}_a)$ and $\mathcal{D}_2:=
    \tria(\mathcal{S}_{a+1} \cup \dots \cup \mathcal{S}_n)$
    denote the indicated triangulated envelopes.  Then
    $\mathcal{T}=\langle \mathcal{D}_1, \mathcal{D}_2\rangle$ and
    $\mathcal{D}_1= \langle \mathcal{S}_1, \dots,
    \mathcal{S}_a\rangle$ and $\mathcal{D}_2= \langle
    \mathcal{S}_{a+1}, \dots, \mathcal{S}_n\rangle$ are
    semi-orthogonal decompositions (into admissible
    subcategories).  In particular, 
    $\mathcal{D}_1= \mathcal{D}_2^\perp$ and
    $\mathcal{D}_2=\leftidx{^\perp}{\mathcal{D}_1}{}.$
  \end{enumerate}
\end{lemma}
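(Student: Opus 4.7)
\smallskip\noindent\textbf{Proof proposal.} The plan is to unpack part \ref{enum:admissible-and-semi-orth} directly from definitions, to derive part \ref{enum:semi-orth-and-perps} as an immediate application of Lemma~\ref{l:right-admissible-by-triangulated-and-classical-generators}, and then to obtain part \ref{enum:split-semi-orth-decomp} by an induction on $n$ that combines Lemma~\ref{l:right-admissible-orthogonals-generate-right-admissible} with part \ref{enum:semi-orth-and-perps}.

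For \ref{enum:admissible-and-semi-orth}, assume $\mathcal{S}$ is right admissible. Semi-orthogonality of the pair $(\mathcal{S}^\perp,\mathcal{S})$ is literally the definition of $\mathcal{S}^\perp$, since we need $\mathcal{T}(\mathcal{S},\mathcal{S}^\perp)=0$. Completeness is given directly by the triangles $A_\mathcal{S}\to A\to A_{\mathcal{S}^\perp}\to [1]A_\mathcal{S}$ guaranteed by Definition~\ref{d:right-left-admissible-subcats}: they show that every $A\in\mathcal{T}$ lies in the triangulated envelope of $\mathcal{S}^\perp\cup\mathcal{S}$. The left-admissible case is dual.

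For \ref{enum:semi-orth-and-perps}, suppose $\mathcal{T}=\langle\mathcal{U},\mathcal{V}\rangle$. The semi-orthogonality condition gives $\mathcal{T}(\mathcal{V},\mathcal{U})=0$, and each object of $\mathcal{U}\cup\mathcal{V}$ trivially has a decomposition triangle with respect to $(\mathcal{V},\mathcal{U})$ (take one piece to be zero). Since by completeness $\tria(\mathcal{U}\cup\mathcal{V})=\mathcal{T}$, Lemma~\ref{l:right-admissible-by-triangulated-and-classical-generators}\ref{enum:T-is-triang-envelope-of-E} applies with $\mathcal{E}=\mathcal{U}\cup\mathcal{V}$ and yields exactly the four claimed statements: $\mathcal{V}$ is right admissible, $\mathcal{U}$ is left admissible, $\mathcal{U}=\mathcal{V}^\perp$, and $\mathcal{V}=\leftidx{^\perp}{\mathcal{U}}{}$.

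For \ref{enum:split-semi-orth-decomp}, I would proceed by induction on $n$, the base case $n=2$ being vacuous. Semi-orthogonality of $(\mathcal{D}_1,\mathcal{D}_2)$ is inherited: if $X\in\mathcal{S}_j$ with $j>a$ and $Y\in\mathcal{S}_i$ with $i\leq a$, then $\mathcal{T}(X,Y)=0$ by assumption; applying the cohomological functors $\mathcal{T}(-,Y)$ and $\mathcal{T}(X,-)$ and using that the generators vanish on one side at a time extends this vanishing to the triangulated envelopes, so $\mathcal{T}(\mathcal{D}_2,\mathcal{D}_1)=0$. Completeness of $\langle\mathcal{D}_1,\mathcal{D}_2\rangle$ is trivial because $\tria(\mathcal{D}_1\cup\mathcal{D}_2)=\tria(\mathcal{S}_1\cup\dots\cup\mathcal{S}_n)=\mathcal{T}$. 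The key step, and the one that requires the admissibility hypothesis on each $\mathcal{S}_i$, is to show that $\mathcal{D}_1$ and $\mathcal{D}_2$ are themselves admissible: iterating Lemma~\ref{l:right-admissible-orthogonals-generate-right-admissible} (and its left-admissible counterpart) along the chains $\mathcal{S}_1,\mathcal{S}_2,\dots,\mathcal{S}_a$ and $\mathcal{S}_{a+1},\dots,\mathcal{S}_n$ — which is legal because the required Hom-vanishing between adjacent blocks comes from the ambient semi-orthogonality — produces right and left admissibility of $\mathcal{D}_1$ and $\mathcal{D}_2$ in $\mathcal{T}$. Once we know this, $\mathcal{T}=\langle\mathcal{D}_1,\mathcal{D}_2\rangle$ is a semi-orthogonal decomposition into admissible subcategories, and part \ref{enum:semi-orth-and-perps} (applied to this decomposition) gives $\mathcal{D}_1=\mathcal{D}_2^\perp$ and $\mathcal{D}_2=\leftidx{^\perp}{\mathcal{D}_1}{}$. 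Finally, the induced sequences $(\mathcal{S}_1,\dots,\mathcal{S}_a)$ in $\mathcal{D}_1$ and $(\mathcal{S}_{a+1},\dots,\mathcal{S}_n)$ in $\mathcal{D}_2$ remain semi-orthogonal (inherited from $\mathcal{T}$) and are complete in $\mathcal{D}_1$ respectively $\mathcal{D}_2$ by the very definition of the triangulated envelopes; admissibility of each $\mathcal{S}_i$ inside $\mathcal{D}_1$ or $\mathcal{D}_2$ follows from admissibility in $\mathcal{T}$ together with the fact that the Hom-computations witnessing adjoints only involve objects of $\mathcal{D}_1$ or $\mathcal{D}_2$. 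The main obstacle I expect is this last verification — that admissibility in $\mathcal{T}$ descends to admissibility in the subcategory $\mathcal{D}_i$ — but it should follow cleanly from Lemma~\ref{l:admissibility-TFAE} once the decomposition triangles for $\mathcal{S}_i$ are recognized as already living inside $\mathcal{D}_i$.
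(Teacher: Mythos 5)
Your proof is correct and follows essentially the same route as the paper: part (a) from the defining triangles (the paper just cites Lemma~\ref{l:admissibility-TFAE}, which packages the same content), part (b) by applying Lemma~\ref{l:right-admissible-by-triangulated-and-classical-generators} with $\mathcal{E}=\mathcal{U}\cup\mathcal{V}$, and part (c) by iterating Lemma~\ref{l:right-admissible-orthogonals-generate-right-admissible} to get admissibility of $\mathcal{D}_1$ and $\mathcal{D}_2$ and then invoking (b). The final verification you flag as a potential obstacle — that admissibility of $\mathcal{S}_i$ in $\mathcal{T}$ descends to $\mathcal{D}_i$ — is indeed immediate (restrict the adjoint of the inclusion $\mathcal{S}_i\hra\mathcal{T}$ to $\mathcal{D}_i$), and the paper dismisses it as obvious.
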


\begin{proof}
  \ref{enum:admissible-and-semi-orth}:
  Use Lemma~\ref{l:admissibility-TFAE}.

  \ref{enum:semi-orth-and-perps}: This is a consequence
  Lemma~\ref{l:right-admissible-by-triangulated-and-classical-generators}:
  take 
  $\mathcal{E}=\mathcal{S}_1 \cup \mathcal{S}_2.$

  \ref{enum:split-semi-orth-decomp}:
  If
  $\mathcal{T} =\langle \mathcal{S}_1, \dots, \mathcal{S}_n
  \rangle $ is a semi-orthogonal decomposition, all statements
  are trivial (the last one
  follows directly from
  \ref{enum:semi-orth-and-perps}).
  So let us assume that all components $\mathcal{S}_i$ are
  admissible in 
  $\mathcal{T}.$ 
  Then Lemma~\ref{l:right-admissible-orthogonals-generate-right-admissible}
  implies that $\mathcal{D}_1$ and $\mathcal{D}_2$ are admissible
  subcategories of $\mathcal{T}.$
  Moreover, each $\mathcal{S}_j,$ for $1
  \leq j \leq a$ (resp.~$a+1 \leq j \leq n$), is obviously
  admissible 
  in $\mathcal{D}_1$ (resp.~$\mathcal{D}_2$).
\end{proof}

\begin{corollary}
  % [of Lemma~\ref{l:right-admissible-by-triangulated-and-classical-generators}]
  \label{c:semi-od-and-karoubi-envelope}
  A semi-orthogonal decomposition $\mathcal{T}=\langle
  \mathcal{U}, \mathcal{V} \rangle$ (into admissibles) induces a
  semi-orthogonal decomposition (into admissibles)
  of the Karoubi envelope $\mathcal{T}^\natural$ of
  $\mathcal{T},$ namely $\mathcal{T}^\natural=\langle
  \mathcal{U}^\natural, \mathcal{V}^\natural \rangle.$
\end{corollary}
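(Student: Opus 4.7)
The plan is to verify each of the three conditions of Definition~\ref{d:semi-orthogonal-decomposition} directly. Throughout I will use the concrete description of $\mathcal{T}^\natural$: objects are pairs $(T,e)$ with $T \in \mathcal{T}$ and $e \in \End_\mathcal{T}(T)$ idempotent, and morphisms $(T,e) \to (T',e')$ are $\varphi \colon T \to T'$ in $\mathcal{T}$ satisfying $e' \varphi e = \varphi$. Recall further that $\mathcal{T}^\natural$ carries a natural triangulated structure in which any complemented idempotent endomorphism of a triangle in $\mathcal{T}$ splits to a triangle of $\mathcal{T}^\natural$ (the key point from Balmer--Schlichting). Semi-orthogonality $\mathcal{T}^\natural(\mathcal{V}^\natural, \mathcal{U}^\natural) = 0$ is then immediate, since any morphism $(V,e) \to (U,f)$ with $V \in \mathcal{V}$ and $U \in \mathcal{U}$ is an element of $\mathcal{T}(V,U) = 0$.

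For completeness I will lift the canonical decomposition triangle of each object to $\mathcal{T}^\natural$. Fix $X = (T,e) \in \mathcal{T}^\natural$ and consider the decomposition triangle $V_T \xra{a} T \xra{b} U_T \xra{c} [1]V_T$ with $V_T \in \mathcal{V}$ and $U_T \in \mathcal{U}$, provided by Lemma~\ref{l:first-properties-semi-orthog-decomp}.\ref{enum:semi-orth-and-perps}. The first task is to extend $e$ uniquely to an endomorphism $(e_V, e, e_U)$ of this triangle. Applying $\Hom_\mathcal{T}(V_T, -)$ and using the vanishings $\Hom_\mathcal{T}(V_T, [-1]U_T) = 0 = \Hom_\mathcal{T}(V_T, U_T)$ (which follow from the strictness of $\mathcal{V}$ and $\mathcal{U}$ under shifts together with $\mathcal{T}(\mathcal{V}, \mathcal{U}) = 0$) identifies $a_* \colon \End(V_T) \to \Hom(V_T, T)$ with $\ker b_*$; since $b \circ e \circ a \in \Hom(V_T, U_T) = 0$, there is thus a unique $e_V$ with $a e_V = e a$, and symmetrically a unique $e_U$ with $e_U b = b e$. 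Uniqueness applied to the two lifts $(e_V, e, e_U)$ and $(e_V^2, e^2, e_U^2) = (e_V^2, e, e_U^2)$ forces $e_V^2 = e_V$ and $e_U^2 = e_U$, so both are idempotent. Splitting in $\mathcal{T}^\natural$ then yields a triangle
\begin{equation*}
  (V_T, e_V) \to (T, e) \to (U_T, e_U) \to [1](V_T, e_V)
\end{equation*}
with first object in $\mathcal{V}^\natural$ and third in $\mathcal{U}^\natural$, which proves completeness.

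For the admissibility clause, I will run the same idempotent-lifting argument on the triangles witnessing right and left admissibility of each of $\mathcal{U}$ and $\mathcal{V}$ in $\mathcal{T}$: if $\mathcal{S} \subset \mathcal{T}$ is right admissible and $S_T \to T \to Q_T \to [1]S_T$ is a witnessing triangle with $S_T \in \mathcal{S}$ and $Q_T \in \mathcal{S}^\perp$, the vanishing $\mathcal{T}(\mathcal{S}, \mathcal{S}^\perp) = 0$ again gives uniqueness of the extension of $e$ and hence lifts idempotence; splitting produces a decomposition triangle of $X$ in $\mathcal{T}^\natural$ whose third term automatically lies in $(\mathcal{S}^\natural)^\perp$ by the same vanishing. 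Dually for left admissibility. Applied to $\mathcal{S} \in \{\mathcal{U}, \mathcal{V}\}$, this shows both subcategories are admissible in $\mathcal{T}^\natural$. The main technical nuance is the extraction of idempotence of $e_V$ and $e_U$ from the uniqueness in \cite[Prop.~1.1.9]{BBD}; once that is in hand, the triangulated structure on $\mathcal{T}^\natural$ furnishes the required triangles essentially for free.
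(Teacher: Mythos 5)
Your proof is correct, and at bottom it runs on the same engine as the paper's, but it is packaged quite differently. The paper disposes of this corollary in one line by applying Lemma~\ref{l:right-admissible-by-triangulated-and-classical-generators}.\ref{enum:T-is-thick-envelope-of-E} inside $\mathcal{T}^\natural$ with $\mathcal{E}=\mathcal{T}$ (noting that $\mathcal{U}^\natural$ and $\mathcal{V}^\natural$ are thick and idempotent complete, that semi-orthogonality passes to summands, and that $\mathcal{T}$ classically generates $\mathcal{T}^\natural$), together with Lemma~\ref{l:first-properties-semi-orthog-decomp} for admissibility; you instead verify everything by hand in the explicit $(T,e)$-model of the Karoubi envelope. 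The technical core is identical: extending the idempotent $e$ to an endomorphism of the decomposition triangle, using the vanishing $\mathcal{T}(\mathcal{V},\mathcal{U})=0$ and the uniqueness clause of \cite[Prop.~1.1.9]{BBD} to force $e_V^2=e_V$ and $e_U^2=e_U$, and then splitting, is exactly the argument inside the proof of Lemma~\ref{l:right-admissible-by-triangulated-and-classical-generators}.\ref{enum:T-is-thick-envelope-of-E}. What your route buys is a certain cleanliness: since every idempotent splits in $\mathcal{T}^\natural$, you avoid the asymmetric ``one of $\mathcal{U},\mathcal{V}$ idempotent complete'' bookkeeping of the lemma, and your direct treatment of admissibility (running the same lifting on the triangles witnessing admissibility of $\mathcal{U}$ and $\mathcal{V}$ in $\mathcal{T}$) is a legitimate shortcut past Lemma~\ref{l:first-properties-semi-orthog-decomp}. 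What it costs is redoing work the paper already has on the shelf, plus an explicit appeal to the Balmer--Schlichting triangulation of $\mathcal{T}^\natural$ (an idempotent endomorphism of a triangle of $\mathcal{T}$ splits into a triangle of $\mathcal{T}^\natural$) --- though the paper tacitly assumes this structure as well, so this is not a gap.
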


\begin{proof}
  Use
  Lemmata~\ref{l:right-admissible-by-triangulated-and-classical-generators}.\ref{enum:T-is-thick-envelope-of-E} and
  \ref{l:first-properties-semi-orthog-decomp}.
\end{proof}

\section{Embeddings of Verdier quotients}
\label{sec:app:embedd-verd-quot}

Verdier localization is described beautifully in
\cite[2.1]{neeman-tricat}. We give here some additional results.
In contrast to \cite{neeman-tricat} we do not assume that
triangulated subcategories are strict (= closed under
isomorphisms).

Let $\mathcal{D}$ be a triangulated category and $\mathcal{C}
\subset \mathcal{D}$ a full triangulated subcategory (not
necessarily thick). 
Let $F \colon \mathcal{D} \ra \mathcal{D}/\mathcal{C}$ be the Verdier 
localization functor (\cite[Theorem~2.1.8]{neeman-tricat}).
We denote by $\Mor_{\mathcal{C}}$ the subclass of morphisms (in
$\mathcal{D}$) that fit into a triangle with cone in
$\mathcal{C}.$

\begin{lemma}
  \label{l:verdier-loc-morphisms-equal}
  Let $f,g \colon X \ra Y$ be two morphisms in $\mathcal{D}.$
  The following conditions are equivalent:
  \begin{enumerate}
  \item
    $F(f)=F(g)$;
  \item
    there is a morphism $\alpha \colon X' \ra X$ in $\Mor_{\mathcal{C}}$ such that 
    $f \alpha= g \alpha \colon  X' \ra Y$;
  \item
    there is a morphism $\beta \colon Y \ra Y'$ in $\Mor_{\mathcal{C}}$ such that 
    $\beta f = \beta g \colon X \ra Y'$;
  \item
    the morphism $f-g \colon X \ra Y$ factors as
    $X \ra C \ra Y$
    with $C$ in $\mathcal{C}.$
  \end{enumerate}
\end{lemma}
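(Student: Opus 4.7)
The plan is to first reduce to a single morphism and then prove the equivalence by passing through the factorization condition (d). By additivity of $F$ and of composition, setting $h := f - g$, the four conditions become respectively: (a$'$) $F(h) = 0$ in $\mathcal{D}/\mathcal{C}$; (b$'$) there exists $\alpha \in \Mor_{\mathcal{C}}$ with $h \alpha = 0$; (c$'$) there exists $\beta \in \Mor_{\mathcal{C}}$ with $\beta h = 0$; (d$'$) $h$ factors as $X \ra C \ra Y$ with $C \in \mathcal{C}$. It suffices to prove (a$'$)$\iff$(b$'$)$\iff$(c$'$)$\iff$(d$'$).

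The easier equivalences (b$'$)$\iff$(d$'$) and (c$'$)$\iff$(d$'$) are by direct diagram chasing with triangles. For (d$'$)$\Rightarrow$(b$'$), suppose $h = v \comp u$ with $u \colon X \ra C$, $v \colon C \ra Y$ and $C \in \mathcal{C}$. Fit $u$ into a triangle $X' \xra{\alpha} X \xra{u} C \ra [1]X'$. Then the cone of $\alpha$ is (isomorphic to an object of) $\mathcal{C}$, so $\alpha \in \Mor_{\mathcal{C}}$, and $u \alpha = 0$ forces $h \alpha = v u \alpha = 0$. Conversely, for (b$'$)$\Rightarrow$(d$'$), given $\alpha \colon X' \ra X$ in $\Mor_{\mathcal{C}}$ fitted into a triangle $X' \xra{\alpha} X \xra{u} C \ra [1]X'$ with $C \in \mathcal{C}$, the vanishing $h \alpha = 0$ combined with the cohomological long exact sequence obtained by applying $\mathcal{D}(-, Y)$ produces a factorization $h = v \comp u$ for some $v \colon C \ra Y$, which is (d$'$). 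The equivalence (c$'$)$\iff$(d$'$) is proved symmetrically by rotating the triangle of $\beta$ and factoring on the right.

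The main obstacle and the step that needs the Verdier calculus of fractions is (a$'$)$\iff$(b$'$). Recall that morphisms in $\mathcal{D}/\mathcal{C}$ are equivalence classes of roofs $X \xla{s} X'' \xra{h''} Y$ with $s \in \Mor_{\mathcal{C}}$, and that $\Mor_{\mathcal{C}}$ is a saturated multiplicative system compatible with the triangulation (\cite[Thm.~2.1.8]{neeman-tricat}). Under this presentation, $F(h)$ is represented by $X \xla{\id_X} X \xra{h} Y$, and the zero morphism in $\mathcal{D}/\mathcal{C}(X, Y)$ is represented by $X \xla{\id_X} X \xra{0} Y$. The standard characterization of equality of fractions (the dual of \cite[Lemma~2.1.26]{neeman-tricat}) says that these two roofs define the same morphism in $\mathcal{D}/\mathcal{C}$ if and only if there is some $\alpha \colon X' \ra X$ in $\Mor_{\mathcal{C}}$ such that $h \alpha = 0 \alpha = 0$, which is precisely condition (b$'$). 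The implication (b$'$)$\Rightarrow$(a$'$) is immediate since $F(\alpha)$ is invertible in $\mathcal{D}/\mathcal{C}$, so $F(h) F(\alpha) = F(h\alpha) = 0$ forces $F(h) = 0$; the converse is where the fraction presentation is essential, and invoking this general fact finishes the proof. Alternatively one can prove (a$'$)$\Rightarrow$(d$'$) directly: a morphism inverted in $\mathcal{D}/\mathcal{C}$ by a two-sided fraction whose middle object can be chosen so that the defect of $h$ goes through an object of $\mathcal{C}$, from which (d$'$) follows and one recovers (b$'$) and (c$'$) via the already-established equivalences.
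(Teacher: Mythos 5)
Your proof is correct and is essentially the argument the paper has in mind: the paper simply cites \cite[Lemma~2.1.26]{neeman-tricat} and asserts the proof "is easily generalized," and your write-up (reduction to $h=f-g$, the triangle-rotation/long-exact-sequence argument for (b)$\Leftrightarrow$(d)$\Leftrightarrow$(c), and the roof calculus for (a)$\Leftrightarrow$(b)) is exactly that generalization spelled out. The only blemish is the final "alternatively" sentence, which is garbled, but it is inessential to the argument.
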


\begin{proof}
  This is a slightly extended version of
  \cite[Lemma~2.1.26]{neeman-tricat} using the description of
  morphisms in $\mathcal{D}/\mathcal{C}$ via "coroofs". The proof
  is easily generalized.
\end{proof}

\begin{proposition}
  \label{p:verdier-localization-induced-functor-ff}
  Let $\mathcal{D}$ be a triangulated category with full
  triangulated subcategories $\mathcal{C},$ $\mathcal{W},$
  $\mathcal{V}$ such that $\mathcal{V}$ is contained in both
  $\mathcal{W}$ and $\mathcal{C},$ i.\,e.\ pictorially
  \begin{equation*}
    \xymatrix{
      {\mathcal{V}} \ar@{}[r]|{\subset} \ar@{}[d]|{\cap} &
      {\mathcal{C}}\ar@{}[d]|{\cap}\\
      {\mathcal{W}} \ar@{}[r]|{\subset} &
      {\mathcal{D}.}\\
    }
  \end{equation*}
  Let $i$ be the inclusion $\mathcal{W} \subset \mathcal{D}.$ Then $i$ 
  factors to a triangulated functor $\ol{i} \colon 
  \mathcal{W}/\mathcal{V} \ra  
  \mathcal{D}/\mathcal{C},$ i.\,e.\ pictorially
  \begin{equation*}
    \xymatrix{
      {\mathcal{W}} \ar@{}[r]|{\subset}^i \ar[d]^G &
      {\mathcal{D}} \ar[d]^F \\
      {\mathcal{W}/\mathcal{V}} \ar[r]^{\ol{i}} &
      {\mathcal{D}/\mathcal{C}.}
    }
  \end{equation*}
  where $F$ and $G$ are the Verdier localization functors.

  \begin{enumerate}[label=(\Roman*)]
  \item 
    \label{enum:conditions-for-full-and-faithful}
    The following three conditions are equivalent, and if they
    hold, the functor 
    $\ol{i}$ is full and faithful.
    \begin{enumerate}[label=(ff\arabic*)]
    \item
      \label{enum:verdier-loc-postcompose-to-Mor-V}
      For all morphisms $s \colon  W \ra D$ in $\Mor_{\mathcal{C}}$ 
      with $W$ in $\mathcal{W}$ and $D$ in $\mathcal{D}$
      there is an object $W'$ in $\mathcal{W}$ and a 
      morphism $t \colon D \ra W'$ such that the morphism $ts \colon W \ra W'$ 
      in $\mathcal{W}$ is in $\Mor_{\mathcal{V}}.$
    \item 
      \label{enum:verdier-loc-CW-factors-CVW}
      Any morphism $C \ra W$ with $C \in \mathcal{C}$ and $W \in
      \mathcal{W}$ factors as $C \ra V \ra W$ with $V \in
      \mathcal{V}.$ 

      (Equivalently: For any morphism $s \colon C \ra W$ with $C \in
      \mathcal{C}$ and $W \in \mathcal{W}$ there is an object $W' \in
      \mathcal{W}$ and a morphism $t \colon  W \ra W'$ in $\Mor_\mathcal{V}$
      such that $ts=0.$)
    \item 
      \label{enum:verdier-loc-Hom-DW-in-DmodV-equal-those-in-DmodC}
      For all $D \in \mathcal{D}$ and $W \in \mathcal{W}$ the
      obvious 
      morphism
      \begin{equation}
        \label{eq:verdier-loc-Hom-DW-in-DmodV-equal-those-in-DmodC}
        j: \Hom_{\mathcal{D}/\mathcal{V}}(D,W) \ra
        \Hom_{\mathcal{D}/\mathcal{C}}(D,W)
      \end{equation}
      is bijective.
    \end{enumerate}
    These three conditions hold if the following condition
    \ref{enum:verdier-loc-CW-factors-CVW-via-classical-generation} 
    is satisfied.
    \begin{enumerate}[resume,label=(ff\arabic*)]
    \item 
      \label{enum:verdier-loc-CW-factors-CVW-via-classical-generation}
      $\mathcal{C}$ is classically generated by a collection
      $\mathcal{E}$ of objects in $\mathcal{D},$ i.\,e.\
      $\mathcal{C}=\thick(\mathcal{E}),$ and any morphism $E \ra W$ with
      $E \in \mathcal{E}$ and
      $W \in \mathcal{W}$ factors through an object of $\mathcal{V}.$
    \end{enumerate}

  \item 
    \label{enum:dual-conditions-for-full-and-faithful}
    Dually, 
    the following three conditions are equivalent, and if they hold, the functor
    $\ol{i}$ is full and faithful.
    \begin{enumerate}[label=(ff\arabic*)$^\opp$]
    \item
      \label{enum:verdier-loc-precompose-to-Mor-V}
      For all morphisms $s \colon  D \ra W$ in $\Mor_{\mathcal{C}}$ 
      with $D$ in $\mathcal{D}$ and $W$ in $\mathcal{W}$ 
      there is an object $W'$ in $\mathcal{W}$ and a 
      morphisms $t \colon W' \ra D$ such that the morphism $st \colon W' \ra W$ in 
      $\mathcal{W}$ is in $\Mor_{\mathcal{V}}.$
    \item 
      \label{enum:verdier-loc-WC-factors-WVC}
      Any morphism $W \ra C$ with $W \in \mathcal{W}$ and $C \in
      \mathcal{C}$ factors as $W \ra V \ra C$ with $V \in
      \mathcal{V}.$

      (Equivalently: For any morphism $s \colon W \ra C$ with $W \in \mathcal{W}$ and $C \in
      \mathcal{C}$ there is an object $W' \in \mathcal{W}$ and a
      morphism $t \colon  W' \ra W$ in $\Mor_\mathcal{V}$ such that $st=0.$)
    \item 
      \label{enum:verdier-loc-Hom-WD-in-DmodV-equal-those-in-DmodC}
      For all $W \in \mathcal{W}$ and $D \in \mathcal{D}$ the obvious
      morphism
      \begin{equation*}
        % \label{eq:verdier-loc-Hom-WD-in-DmodV-equal-those-in-DmodC}
        \Hom_{\mathcal{D}/\mathcal{V}}(W,D) \ra
        \Hom_{\mathcal{D}/\mathcal{C}}(W,D)
      \end{equation*}
      is bijective.

    \end{enumerate}
    Moreover, these three conditions hold if
    the following condition
    \ref{enum:verdier-loc-WC-factors-WVC-via-classical-generation} 
    is satisfied.
    \begin{enumerate}[resume, label=(ff\arabic*)$^\opp$]
    \item 
      \label{enum:verdier-loc-WC-factors-WVC-via-classical-generation}
      $\mathcal{C}$ is classically generated by a collection
      $\mathcal{E}$ of objects in $\mathcal{D},$ i.\,e.\
      $\mathcal{C}=\thick(\mathcal{E}),$ and any morphism $W \ra E$ with
      $W \in \mathcal{W}$ and $E \in 
      \mathcal{E}$ factors through an object of $\mathcal{V}.$
    \end{enumerate}
  \end{enumerate}
\end{proposition}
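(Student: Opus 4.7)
Part (II) follows from (I) applied to $\mathcal{D}^{\opp}$ (with $\mathcal{V}^{\opp} \subset \mathcal{W}^{\opp}, \mathcal{C}^{\opp}$), so I shall focus on (I). The functor $\ol{i}$ exists because $\mathcal{V} \subset \mathcal{C}$ forces $F \comp i$ to invert $\Mor_{\mathcal{V}}$-morphisms, which by the universal property of $G$ yields a unique factorization.

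To reduce the full faithfulness of $\ol{i}$ to (ff3), I would factor $\ol{i}$ as $\mathcal{W}/\mathcal{V} \xra{a} \mathcal{D}/\mathcal{V} \xra{b} \mathcal{D}/\mathcal{C}$ and show $a$ is fully faithful: any right fraction $W_1 \xra{\alpha} D \xla{\sigma} W_2$ representing a morphism of $\mathcal{D}/\mathcal{V}$ has $\sigma$ with cone in $\mathcal{V} \subset \mathcal{W}$, so $D$ is isomorphic in $\mathcal{D}$ to an object of $\mathcal{W}$ (since $\mathcal{W}$ is a full triangulated subcategory); transporting the roof along such an isomorphism yields a roof in $\mathcal{W}/\mathcal{V}$, and the analogous move on witnesses of vanishing gives injectivity. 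Since (ff3) is exactly bijectivity of $b$ on morphism sets $W_1 \to W_2$ for $W_1 \in \mathcal{W}$, this step proves the second assertion of (I) once the equivalence of conditions is established.

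The bulk of the work is the chain (ff3) $\Rightarrow$ (ff2) $\Rightarrow$ (ff1) $\Rightarrow$ (ff3). For (ff3) $\Rightarrow$ (ff2), given $c \colon C \to W$, its image in $\mathcal{D}/\mathcal{C}$ vanishes, hence so does its image in $\mathcal{D}/\mathcal{V}$ by (ff3), and the last characterization of Lemma~\ref{l:verdier-loc-morphisms-equal} produces the required factorization $C \to V \to W$. For (ff2) $\Rightarrow$ (ff1), complete $s$ to a triangle $W \xra{s} D \xra{p} C \xra{q} [1]W$, factor $q = q_2 q_1$ through some $V \in \mathcal{V}$ using (ff2), and complete $[-1]q_2 \colon [-1]V \to W$ to a triangle $[-1]V \to W \xra{u} W' \to V$; then $W' \in \mathcal{W}$, $u \in \Mor_{\mathcal{V}}$, and the composition $u \comp [-1]q = u \comp [-1]q_2 \comp [-1]q_1 = 0$ implies via the long exact sequence for $\Hom(-,W')$ on the rotated triangle that $u$ factors through $s$, producing $t \colon D \to W'$ with $ts = u$. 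For (ff1) $\Rightarrow$ (ff3), I use right fractions with denominator at $W$: for surjectivity of $j$, represent a class as $D \xra{\alpha} Y \xla{\tau} W$ with $\tau \in \Mor_{\mathcal{C}}$ and replace $\tau$ by $u\tau$ via (ff1) to land in $\Mor_{\mathcal{V}}$; for injectivity, a class $\alpha/\sigma$ of $\mathcal{D}/\mathcal{V}$ that vanishes in $\mathcal{D}/\mathcal{C}$ is witnessed by some $w \colon Y \to Y'$ with $w\sigma \in \Mor_{\mathcal{C}}$ and $w\alpha = 0$, and applying (ff1) to $w\sigma$ promotes the witness into $\Mor_{\mathcal{V}}$.

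For (ff4) $\Rightarrow$ (ff2), I would let $\mathcal{T} \subset \mathcal{D}$ be the full subcategory of objects $C$ with $\Hom_{\mathcal{D}/\mathcal{V}}(C, W) = 0$ for all $W \in \mathcal{W}$. The roof-in-$\mathcal{W}$ argument from the second paragraph shows that $C \in \mathcal{T}$ iff every morphism $C \to W$ in $\mathcal{D}$ factors through $\mathcal{V}$. As the preimage under a triangulated functor of the left orthogonal of a collection of objects, $\mathcal{T}$ is thick in $\mathcal{D}$; since (ff4) gives $\mathcal{E} \subset \mathcal{T}$, it follows that $\mathcal{C} = \thick(\mathcal{E}) \subset \mathcal{T}$, which is (ff2). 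The main obstacle will be keeping the bookkeeping of left versus right fractions consistent throughout, and making systematic use of the closure of $\mathcal{W}$ under extensions so that the "denominator" of each roof can be replaced by an object of $\mathcal{W}$ exactly where (ff1) is to be applied.
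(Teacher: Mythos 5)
Your proposal is correct and follows essentially the same route as the paper: the same cycle of implications (ff1)$\Rightarrow$(ff3)$\Rightarrow$(ff2)$\Rightarrow$(ff1), the same triangle-completion argument for (ff2)$\Rightarrow$(ff1), the same thick-subcategory argument for (ff4)$\Rightarrow$(ff2), and the same factorization of $\ol{i}$ through $\mathcal{D}/\mathcal{V}$ with the roof-denominator-in-$\mathcal{W}$ trick for full faithfulness. The only cosmetic differences are your use of the long exact Hom sequence where the paper completes a partial morphism of triangles, and coroofs versus roofs in the injectivity step of (ff1)$\Rightarrow$(ff3).
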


\begin{proof}
  We use implicitly some results of \cite{neeman-tricat}, e.\,g.\ Remark 2.1.23.
  Let $F' \colon  \mathcal{D} \ra \mathcal{D}/\mathcal{V}$ be the Verdier
  localization functor and 
  $j \colon  \mathcal{D}/\mathcal{V} \ra \mathcal{D} /\mathcal{C}$ the
  functor such that $jF'=F.$

  We start with the proof of \ref{enum:conditions-for-full-and-faithful}.

  \ul{\ref{enum:verdier-loc-postcompose-to-Mor-V} implies
    \ref{enum:verdier-loc-Hom-DW-in-DmodV-equal-those-in-DmodC}:}
  Let $D \in \mathcal{D}$ and $W \in \mathcal{W}.$
  We have to prove that
  \eqref{eq:verdier-loc-Hom-DW-in-DmodV-equal-those-in-DmodC} 
  is bijective. 

  Injectivity: Let $h \colon D \ra W$ be a morphism in
  $\mathcal{D}/\mathcal{V}.$  Then $h=F'(f)F'(g)\inv$ for some
  $D'$ in $\mathcal{D}$ and morphisms $D \xla{g} D' \xra{f} W$ 
  (a "roof") in $\mathcal{D}$ with $g \in
  \Mor_{\mathcal{V}}.$

  Assume that $j(h)=0.$ Then $F(f)F(g)\inv=0$ and hence $F(f)=0$;
  it is sufficient to show that $F'(f)=0.$
  Lemma~\ref{l:verdier-loc-morphisms-equal} shows that there is $s \colon  W \ra D''$
  in $\Mor_{\mathcal{C}}$ such that $sf=0 \colon  D' \ra D''.$
  Assumption~\ref{enum:verdier-loc-postcompose-to-Mor-V} applied to $s$
  yields $W'$ in $\mathcal{W}$ and $t \colon D'' \ra W'$ such that $ts \colon W \ra W'$ 
  is in $\Mor_{\mathcal{V}}.$ We obtain that
  $0 =tsf \colon  D' \xra{f} W \xra{ts} W'.$
  This implies $0=F'((ts)f)=F'(ts)F'(f).$ Note that $F'(ts)$ is
  invertible since $ts \in \Mor_{\mathcal{V}}.$ Hence $F'(f)=0.$

  Surjectivity: Let a morphism $a \colon D \ra W$ in
  $\mathcal{D}/\mathcal{C}$ be represented by a "coroof"
  \begin{equation*}
    D \xra{f} D' \xla{s} W
  \end{equation*}
  with $s \in \Mor_{\mathcal{C}}.$
  Assumption~\ref{enum:verdier-loc-postcompose-to-Mor-V} applied to $s$ 
  yields $W'$ in $\mathcal{W}$ and $t \colon D' \ra W'$ such that $ts \in 
  \Mor_{\mathcal{V}}.$
  Our coroof is 
  equivalent to the coroof
  \begin{equation*}
    D \xra{tf} W' \xla{ts} W
  \end{equation*}
  which represents a morphism $b \colon D \ra W$ in
  $\mathcal{D}/\mathcal{V},$ namely $b=F'(ts)\inv F'(tf).$ 
  Since $s$ and $ts$ are in $\Mor_{\mathcal{C}}$ the same is true
  for $t$ by the octahedral axiom.
  Hence
  \begin{equation*}
    j(b)=F(ts)\inv F(tf) = (F(t)F(s))\inv F(t)F(f)=F(s)\inv F(f)=a.
  \end{equation*}

  \ul{\ref{enum:verdier-loc-Hom-DW-in-DmodV-equal-those-in-DmodC}
    implies \ref{enum:verdier-loc-CW-factors-CVW}:} Let a
  morphism $C \ra W$ with $C \in \mathcal{C}$ and $W \in
  \mathcal{W}$ be given. It becomes zero in
  $\mathcal{D}/\mathcal{C}$ by
  Lemma~\ref{l:verdier-loc-morphisms-equal}.  By assumption it
  then becomes already zero in $\mathcal{D}/\mathcal{V}.$
  Lemma~\ref{l:verdier-loc-morphisms-equal} implies that $C \ra
  W$ factors through $\mathcal{V}.$

  \ul{\ref{enum:verdier-loc-CW-factors-CVW} implies
    \ref{enum:verdier-loc-postcompose-to-Mor-V}:}
  Let a morphism $s \colon  W \ra D$ in $\Mor_{\mathcal{C}}$ 
  with $W$ in $\mathcal{W}$ and $D$ in $\mathcal{D}$ be given.
  Fit $s$ into a triangle $W \xra{s} D \ra C \ra [1]W$ with $C \in
  \mathcal{C}.$ By assumption $C \ra [1]W$ factors as $C \ra V
  \ra[1]W$ with $V \in \mathcal{V}.$
  We fit the morphism $V \ra [1]W$ 
  into a triangle $W \ra W' \ra V \ra [1]W$ with $W' \in \mathcal{W}.$
  The partial morphism
  \begin{equation*}
    \xymatrix{
      {W} \ar[r]^s \gar[d] &
      {D} \ar[r] \ar@{..>}[d]^t &
      {C} \ar[r] \ar[d] &
      {[1]W} \gar[d] \\
      {W} \ar[r] &
      {W'} \ar[r] &
      {V} \ar[r] &
      {[1]W}
    }
  \end{equation*}
  can be completed by a morphism $t$ to a morphism of triangles, and
  the morphism $ts$ is the first morphism in the lower triangle and
  hence lies in $\Mor_\mathcal{V}.$

  \ul{\ref{enum:verdier-loc-CW-factors-CVW-via-classical-generation}
    implies \ref{enum:verdier-loc-CW-factors-CVW}:} A morphism $C
  \ra W$ with $C \in \mathcal{C}$ and $W \in \mathcal{W}$ factors
  through an object of $\mathcal{V}$ if and only if $C \ra W$
  becomes the zero morphism in $\mathcal{D}/\mathcal{V},$ by
  Lemma~\ref{l:verdier-loc-morphisms-equal}.  Using this one
  proves that the class of all objects $E'$ such that each
  morphism from $E'$ to an arbitrary object of $\mathcal{W}$
  factors through an object of $\mathcal{V}$ is closed under
  shifts, extensions and direct summands.  This implies the
  claim.

  \ul{\ref{enum:verdier-loc-Hom-DW-in-DmodV-equal-those-in-DmodC} implies
    that $\ol{i}$ is full and faithful:}
  Let $W',$ $W \in \mathcal{W}.$ Since $\ol{i}$ factors as
  $\mathcal{W}/\mathcal{V} \ra \mathcal{D}/\mathcal{V} \ra
  \mathcal{D}/\mathcal{C}$
  it is enough to show that
  \begin{equation*}
    \Hom_{\mathcal{W}/\mathcal{V}}(W',W) \ra
    \Hom_{\mathcal{D}/\mathcal{V}}(W',W)
  \end{equation*}
  is bijective. If $W' \xla{s} D \xra{f} W$ is a roof with $D \in
  \mathcal{D}$ and $s \in \Mor_\mathcal{V}$ representing a morphism in
  $\Hom_{\mathcal{D}/\mathcal{V}}(W',W),$ then $D \xra{s} W'$ fits
  into a triangle with cone in $\mathcal{V} \subset \mathcal{W}.$
  The second and third object of this triangle are in
  $\mathcal{W},$ so the first object $D$ is isomorphic to an
  object $W''$ of $\mathcal{W}.$
  Let $t \colon W'' \sira D$ be an isomorphism.
  Then our roof is isomorphic to the roof $W'
  \xla{st} W'' \xra{ft} W.$ This argument shows that the above
  map is 
  surjective as well as injective.

  We leave the proof of the "dual" statements in
  \ref{enum:dual-conditions-for-full-and-faithful} to the reader.
\end{proof}

% \bibliographystyle{amsalpha}
% \bibliography{literatur}

\begin{thebibliography}{SdSSdS09}

\bibitem[Aut]{stacks-project}
The Stacks~Project Authors, \emph{{\itshape Stacks Project}},
  \url{http://math.columbia.edu/algebraic_geometry/stacks-git}.

\bibitem[Bas63]{bass-big-projective-free}
Hyman Bass, \emph{Big projective modules are free}, Illinois J. Math.
  \textbf{7} (1963), 24--31.

\bibitem[BBD82]{BBD}
A.~A. Be{\u\i}linson, J.~Bernstein, and P.~Deligne, \emph{Faisceaux pervers},
  Analysis and topology on singular spaces, I (Luminy, 1981), Ast\'erisque,
  vol. 100, Soc. Math. France, Paris, 1982, pp.~5--171.

\bibitem[Bit04]{bittner-euler-characteristic}
Franziska Bittner, \emph{The universal {E}uler characteristic for varieties of
  characteristic zero}, Compos. Math. \textbf{140} (2004), no.~4, 1011--1032.

\bibitem[BK89]{bondal-kapranov-representable-functors}
A.~I. Bondal and M.~M. Kapranov, \emph{Representable functors, {S}erre
  functors, and reconstructions/mutations}, Izv. Akad. Nauk SSSR Ser. Mat.
  \textbf{53} (1989), no.~6, 1183--1205, 1337.

\bibitem[BLL04]{bondal-larsen-lunts-grothendieck-ring}
Alexey~I. Bondal, Michael Larsen, and Valery~A. Lunts, \emph{Grothendieck ring
  of pretriangulated categories}, Int. Math. Res. Not. (2004), no.~29,
  1461--1495.

\bibitem[BN93]{neeman-homotopy-limits}
Marcel B{\"o}kstedt and Amnon Neeman, \emph{Homotopy limits in triangulated
  categories}, Compositio Math. \textbf{86} (1993), no.~2, 209--234.

\bibitem[BO95]{bondal-orlov-semi-orthogonal-only-arXiv}
A.~Bondal and D.~Orlov, \emph{Semiorthogonal decomposition for algebraic
  varieties}, arXiv:alg-geom/9506012 (1995).

\bibitem[BvdB03]{bondal-vdbergh-generators}
A.~Bondal and M.~van~den Bergh, \emph{Generators and representability of
  functors in commutative and noncommutative geometry}, Mosc. Math. J.
  \textbf{3} (2003), no.~1, 1--36, 258.

\bibitem[Dri04]{drinfeld-dg-quotients}
Vladimir Drinfeld, \emph{D{G} quotients of {DG} categories}, J. Algebra
  \textbf{272} (2004), no.~2, 643--691.

\bibitem[God73]{godement}
Roger Godement, \emph{Topologie alg\'ebrique et th\'eorie des faisceaux},
  Hermann, Paris, 1973.

\bibitem[Gro61]{EGAIII-i}
A.~Grothendieck, \emph{\'{E}l\'ements de g\'eom\'etrie alg\'ebrique. {III}.
  \'{E}tude cohomologique des faisceaux coh\'erents (premi\`{e}re partie)},
  Inst. Hautes \'Etudes Sci. Publ. Math. (1961), no.~11, 167.

\bibitem[GW10]{goertz-wedhorn-AGI}
Ulrich G{\"o}rtz and Torsten Wedhorn, \emph{Algebraic geometry {I}}, Advanced
  Lectures in Mathematics, Vieweg + Teubner, Wiesbaden, 2010.

\bibitem[Har66]{hartshorne-residues-duality}
Robin Hartshorne, \emph{Residues and duality}, Lecture notes of a seminar on
  the work of A. Grothendieck, given at Harvard 1963/64. With an appendix by P.
  Deligne. Lecture Notes in Mathematics, No. 20, Springer-Verlag, Berlin, 1966.

\bibitem[Har77]{Hart}
\bysame, \emph{Algebraic geometry}, Springer-Verlag, New York, 1977, Graduate
  Texts in Mathematics, No. 52.

\bibitem[Huy06]{huy-fourmukai}
D.~Huybrechts, \emph{Fourier-{M}ukai transforms in algebraic geometry}, Oxford
  Mathematical Monographs, The Clarendon Press Oxford University Press, Oxford,
  2006.

\bibitem[Kol07]{kollar-singularities}
J{\'a}nos Koll{\'a}r, \emph{Lectures on resolution of singularities}, Annals of
  Mathematics Studies, vol. 166, Princeton University Press, Princeton, NJ,
  2007.

\bibitem[KS94]{KS}
Masaki Kashiwara and Pierre Schapira, \emph{Sheaves on manifolds}, Grundlehren
  der Mathematischen Wissenschaften [Fundamental Principles of Mathematical
  Sciences], vol. 292, Springer-Verlag, Berlin, 1994.

\bibitem[Liu02]{Liu}
Qing Liu, \emph{Algebraic geometry and arithmetic curves}, Oxford Graduate
  Texts in Mathematics, vol.~6, Oxford University Press, Oxford, 2002.

\bibitem[LO10]{lunts-orlov-enhancement}
Valery~A. Lunts and Dmitri~O. Orlov, \emph{Uniqueness of enhancement for
  triangulated categories}, J. Amer. Math. Soc. \textbf{23} (2010), no.~3,
  853--908.

\bibitem[LP11]{lin-pomerleano}
Kevin~H. Lin and Daniel Pomerleano, \emph{Global matrix factorizations},
  Preprint (2011),
  \href{http://arxiv.org/abs/arXiv:1101.5847}{arXiv:arXiv:1101.5847}.

\bibitem[LSa]{valery-olaf-fiber-measure-in-prep}
Valery~A. Lunts and Olaf~M. Schn\"urer, in preparation.

\bibitem[LSb]{valery-olaf-matfak-motmeas-in-prep}
\bysame, \emph{Matrix factorizations and motivic measures}, in preparation.

\bibitem[Lun10]{lunts-categorical-resolution}
Valery~A. Lunts, \emph{Categorical resolution of singularities}, J. Algebra
  \textbf{323} (2010), no.~10, 2977--3003.

\bibitem[Mat89]{matsumura-comm-ring}
Hideyuki Matsumura, \emph{Commutative ring theory}, second ed., Cambridge
  Studies in Advanced Mathematics, vol.~8, Cambridge University Press,
  Cambridge, 1989.

\bibitem[Mur07]{murfet-triang-cat-I}
Daniel Murfet, \emph{{T}riangulated {C}ategories {P}art i}, Note (2007), 1--88,
  \href{http://therisingsea.org/notes/TriangulatedCategories.pdf}{therisingsea.org/notes/TriangulatedCategories.pdf}.

\bibitem[Nee92]{neeman-connection-TTYBR}
Amnon Neeman, \emph{The connection between the {$K$}-theory localization
  theorem of {T}homason, {T}robaugh and {Y}ao and the smashing subcategories of
  {B}ousfield and {R}avenel}, Ann. Sci. \'Ecole Norm. Sup. (4) \textbf{25}
  (1992), no.~5, 547--566.

\bibitem[Nee01]{neeman-tricat}
\bysame, \emph{Triangulated categories}, Annals of Mathematics Studies, vol.
  148, Princeton University Press, Princeton, NJ, 2001.

\bibitem[Orl92]{orlov-monoidal}
D.~O. Orlov, \emph{Projective bundles, monoidal transformations, and derived
  categories of coherent sheaves}, Izv. Ross. Akad. Nauk Ser. Mat. \textbf{56}
  (1992), no.~4, 852--862.

\bibitem[Orl04]{orlov-tri-cat-of-sings-and-d-branes}
\bysame, \emph{Triangulated categories of singularities and {D}-branes in
  {L}andau-{G}inzburg models}, Tr. Mat. Inst. Steklova \textbf{246} (2004),
  no.~Algebr. Geom. Metody, Svyazi i Prilozh., 240--262.

\bibitem[Orl12]{orlov-mf-nonaffine-lg-models}
Dmitri Orlov, \emph{Matrix factorizations for nonaffine {LG}-models}, Math.
  Ann. \textbf{353} (2012), no.~1, 95--108.

\bibitem[OSS11]{okonek-schneider-spindler-vector-bundles}
Christian Okonek, Michael Schneider, and Heinz Spindler, \emph{Vector bundles
  on complex projective spaces}, Modern Birkh\"auser Classics,
  Birkh\"auser/Springer Basel AG, Basel, 2011.

\bibitem[Pos11a]{positselski-coh-analogues-matrix-fact-sing-cats}
Leonid Positselski, \emph{Coherent analogues of matrix factorizations and
  relative singularity categories}, arXiv:1102.0261 (2011).

\bibitem[Pos11b]{positselski-two-kinds}
Leonid Positselski, \emph{Two kinds of derived categories, {K}oszul duality,
  and comodule-contramodule correspondence}, Mem. Amer. Math. Soc. \textbf{212}
  (2011), no.~996, vi+133.

\bibitem[Sch]{olaf-folding-derived-categories-in-prep}
Olaf~M. Schn\"urer, in preparation.

\bibitem[SdSSdS09]{salas-direct-proof-formal-functions}
Fernando Sancho~de Salas and Pedro Sancho~de Salas, \emph{A direct proof of the
  theorem on formal functions}, Proc. Amer. Math. Soc. \textbf{137} (2009),
  no.~12, 4083--4088.

\end{thebibliography}

\def\cprime{$'$} \def\cprime{$'$} \def\cprime{$'$} \def\cprime{$'$}
  \def\Dbar{\leavevmode\lower.6ex\hbox to 0pt{\hskip-.23ex \accent"16\hss}D}
  \def\cprime{$'$} \def\cprime{$'$}
\providecommand{\bysame}{\leavevmode\hbox to3em{\hrulefill}\thinspace}
\providecommand{\MR}{\relax\ifhmode\unskip\space\fi MR }
% \MRhref is called by the amsart/book/proc definition of \MR.
\providecommand{\MRhref}[2]{%
  \href{http://www.ams.org/mathscinet-getitem?mr=#1}{#2}
}
\providecommand{\href}[2]{#2}

\end{document}